\documentclass{amsart}

 \usepackage[linktocpage=true]{hyperref}

\usepackage{microtype}

\usepackage{amsmath}

\usepackage{amssymb}
\usepackage{mathrsfs}
\usepackage[shortalphabetic]{amsrefs}
\usepackage{dirtytalk}
\usepackage{graphicx}
\usepackage{enumitem}
\usepackage[cal=boondoxo,scr=boondoxo]{mathalpha}

\newtheorem{thm}{Theorem}[section]

\newtheorem{lem}[thm]{Lemma}
\newtheorem{prop}[thm]{Proposition}

\newtheorem{rmk}[thm]{Remark}

\DeclareMathOperator{\odd}{odd}

\DeclareMathOperator{\K}{K}

\DeclareMathOperator{\G}{G}

\DeclareMathOperator{\Ku}{Ku}
\DeclareMathOperator{\der}{d}
\DeclareMathOperator{\mq}{\mathfrak{q}} 
\DeclareMathOperator{\ma}{\mathfrak{a}} 
 \DeclareMathOperator{\cq}{c}

 \renewcommand{\footnoterule}{\kern -3pt \hrule width 0.4\columnwidth \kern 2.6pt} 

\begin{document}

\title[Distribution of square-free palindromes]{Distribution of square-free palindromes\textsuperscript{1}
}

\author{Aleksandr Tuxanidy}

\address{Independent Scholar}

\email{aleksandr.tuxanidy@gmail.com}

\begin{abstract}
An exponent of distribution $1/16$ is established for square-free palindromes.
The main input is an upper bound for the number of palindromes, in arithmetic
progressions to large moduli, divisible by large squares. 
Our argument combines a simplifying reformulation with exponential-sum estimates, recent work on $6$-almost-prime palindromes, and the large sieve with square moduli of Baier-Zhao.

	\end{abstract}

\maketitle
\setcounter{footnote}{1}

\footnotetext[1]{{\em In memory of Juana Garc\'{i}a Castillo }}

\begingroup
\setlength{\parskip}{0pt}
\setlength{\parindent}{1em} 
\tableofcontents
\endgroup

\newpage

\section*{Notation}\label{sec:notation}

$\mathscr{P}_b(x)$ is the set of all $b$-palindromes at most $x$.

$\mathscr{P}_b^*(x)$ is the set of all numbers in $\mathscr{P}_b(x)$ coprime to $b^3-b$.

$\Pi_b(L)$ is the set of all $b$-palindromes in the interval $[b^L, b^{L+1})$.

$d_j(n)$ is the $j$-th digit of $n$ in its $b$-adic expansion $n = \sum_{j\geq 0}d_j(n)b^j$.

$X = b^L$ is a large variable. 

For $n \in [0, bX)$, $\rho(n) = X\sum_{j\geq 0}d_j(n)b^{-j}$ reverses the first $L+1$ digits of $n$.

$\odd(n)$ denotes the odd part of an integer $n$.

$|\mathcal{A}| = \#\mathcal{A}$ denotes the cardinality of a finite set $\mathcal{A}$.

$\sum_{n \sim N} = \sum_{N/2 < n \leq N}$. 

$\sum_{a(q)}$ is a sum over all residue classes $a$ modulo $q$.

$\sum_{a(q)}^*$ is a sum over all invertible residue classes $a$ modulo $q$.

$e(\alpha) = e^{2\pi i \alpha}$ is the complex exponential and $e_q(\alpha) = e(\alpha/q)$. 

$\G^*(a;q) = \frac{1}{\sqrt{q}}\sum_{n(q)}^*e_q\left(an^2\right)$ is a Gauss sum with coprimality constraint.

$\K_2(c,d;q) = \frac{1}{\sqrt{q}}\sum_{n(q)}^*e_q\left(c\overline{n}^2 + dn\right)$ is the quadratic Kloosterman sum. 

$\mu$ is the M\"{o}bius function.

$\tau(n) = \sum_{d \mid n }1$ is the divisor function.

$\varphi$ is Euler's totient function.

$(m,n)$ is the greatest common divisor (GCD) of $m,n$ (unless stated otherwise).

$\|f\|_1$, $\|f\|_\infty$ are the $L^1$-norms and $L^{\infty}$-norms of a function $f : \mathbb{R} \to \mathbb{C}$.

The Fourier transform of $f$ is normalized as $\widehat{f}(\xi) = \int_{\mathbb{R}}f(t)e(-\xi t)\der t$.

Given a statement $\mathcal{S}$, $\mathbf{1}_{\mathcal{S}}$ equals $1$ if $\mathcal{S}$ is true and is zero otherwise. 

$X \ll Y$, $Y \gg X$ and $X = O(Y)$ all signify $|X| \leq C|Y|$ for some constant $C > 0$. 

$X \asymp Y$ means $X \ll Y \ll X$. 

Dependence of implied constants on parameters is denoted by a subscript.

\section{Introduction}

In the year 1752, Euler highlighted in a letter to Goldbach several of the initial primes of the form $n^2+1$ (Conrad \cite{Conrad}).
In 1912, Landau \cite{Landau1912} singled out the question of whether there are infinitely many such primes, as one of his four ``unattackable'' problems about primes.
More than a century later, the problem remains open. Why?

Landau’s question underscores the limits to our understanding of sparse sets and the scarcity of general methods for detecting primes within them.
Nevertheless, a number of breakthroughs show that sparsity can be compatible with rich prime patterns, provided one can exploit additional structure.
For instance, Bourgain \cite{Bourgain(2015)} and Swaenepoel \cite{Swaenepoel(2020)} obtained asymptotic formulas for primes with a positive proportion of prescribed digits.
Friedlander-Iwaniec \cite{Friedlander-Iwaniec(1998)} proved that the sparse sequence $m^2+n^4$ contains infinitely many primes. Maynard \cite{Maynard(2019)} proved there are infinitely many primes whose decimal expansion has a prescribed missing digit, say $7$.
Their arguments suggest that the right notion is not merely ``thinness'', but the interplay between sparsity and structure. This theme will appear later in the present work.

When a problem about primes in a sparse set is difficult, our understanding can be furthered by considering simplifying approximations to the primes, such as almost-primes (with a bounded number of prime factors) or square-free numbers (not divisible by squares larger than $1$).
Even in this ``easier'' setting, genuinely new ideas are often required.
Such ideas can accumulate into a toolkit that, in favourable situations, brings the original prime-problem within reach. We point out two of the several interesting open problems on square-free numbers in sparse sets:

\begin{enumerate}[label=(\Roman*)]
	\item Is there an expected amount of $L$-bit square-free integers with $50\%$ of their base-$2$ digits prescribed arbitrarily - both in their values and in positions (indices; except for obvious exceptions)? Dietmann-Elsholtz-Shparlinski \cite{Dietmann-Elsholtz-Shparlinski} showed this holds with $ < 40\%$ in place of $50\%$.
	
	\item Let $c>1$ be non-integral. Does the Piatetski--Shapiro sequence $(\bigl\lfloor n^c \bigr\rfloor)$ contain square-free values with the expected order of magnitude (along $n$) for all $1<c<2$? Baker {\em et al.} \cite{Baker_et_al.(2013)} established the result for $1<c<149/87$.
\end{enumerate}

Another major development of the last few decades has occurred in digital number theory, the study of arithmetic sequences defined by constraints on digits. Foundational questions in this direction go back to Gelfond \cite{Gelfond(1968)}.
Early progress combining digital structure with sieve methods includes work of Fouvry-Mauduit \cite{Fouvry-Mauduit(1996)}.
A breakthrough was achieved by Mauduit-Rivat \cite{Mauduit-Rivat(2010)}, resolving Gelfond's problem on the sum of digits of primes. Alongside the earlier works \cite{Bourgain(2015), Maynard(2019), Swaenepoel(2020)} already mentioned, see also the recent works of Drmota-M\"ullner-Spiegelhofer \cite{Drmota-Mullner-Spiegelhofer(2025)} on primes as sums of Fibonacci numbers, Drmota-Rivat \cite{Drmota-Rivat(2025)} on digital functions along squares of primes, and Swaenepoel \cite{Swaenepoel(2025)} on squares with prescribed digits.

Let $b > 1$ be an integer. Our main objects of interest are $b$-palindromes, namely natural numbers whose $b$-adic expansion reads the same forwards and backwards.
Let $\mathscr{P}_b(x)$ be the set of all $b$-palindromes up to $x \geq 1$.
Note $|\mathscr{P}_b(x)| \asymp_b \sqrt{x}$, so $\mathscr{P}_b(x)$ is as sparse as the set of squares up to $x$. 

Palindromes have been intertwined with human culture for over two millennia\footnote{Reversible wordplay and verses designed to be read in more than one direction were already common in antiquity; cf.\ Martial \cite{Martial(1993)} and the discussion of related traditions in Kwapisz \cite{Kwapisz(2019)}. For early inscriptional evidence, the ROTAS/SATOR word-square is attested at Pompeii (pre-62~CE); see Encyclopedia Britannica \cite{Britannica-SatorSquare}.}. In recent decades, they have attracted attention in mathematics
\cite{Banks(2015),Banks-Hart-Sakata(2004), Banks-Shparlinski, Banks-Shparlinski(2006), Carrillo-Santana(2025), Chourasiya-Johnston(2025), Cilleruelo-Luca-Baxter(2018), Cilleruelo-Luca-Shparlinski(2009), Col(2009), Irving(2014), Johnston-Kerr, Luca-Togbe(2008),Rajasekaran-Shallit-Smith(2018), T-Panario}. Although it remains unknown if there are infinitely many $b$-palindromic primes, there has been progress on both the additive and multiplicative fronts.

On the additive side, Banks \cite{Banks(2015)} initiated the study of representing integers as sums of palindromes, and Cilleruelo-Luca-Baxter \cite{Cilleruelo-Luca-Baxter(2018)} proved that every natural number is a sum of at most three $b$-palindromes when $b\geq 5$; see also Rajasekaran-Shallit-Smith \cite{Rajasekaran-Shallit-Smith(2018)} for the remaining small bases. On the multiplicative side, Col \cite{Col(2009)} proved there exists an integer $k_b$, depending only on $b$, such that infinitely many $b$-palindromes $n$ have at most $k_b$ prime factors. This was the first result of its kind. More recently, it was shown in \cite{T-Panario} that one may take $k_b=6$ for every base $b$, and that $b$-palindromes have level of distribution $1/5$, uniformly in $b$.

Johnston-Kerr \cite{Johnston-Kerr} recently derived asymptotics, under a standard coprimality constraint, for the number of square-free $b$-palindromes up to $x$, thus answering a question of Banks-Shparlinski \cite{Banks-Shparlinski}. Their argument adapts the setup of Cilleruelo-Luca-Shparlinski \cite{Cilleruelo-Luca-Shparlinski(2009)} and then applies van der Corput-type methods to control certain critical ranges of square-divisor sizes. For readers unfamiliar with the van der Corput method (including the $q$-van der Corput variant) see for instance Iwaniec-Kowalski \cite{Iwaniec-Kowalski}.

Closely related to palindromes is the reversal function, which reverses the base-$b$ digits of an integer; palindromes are precisely its fixed points. Recent work of Bhowmik-Suzuki \cite{Bhowmik-Suzuki} and Dartyge-Rivat-Swaenepoel \cite{Dartyge-Rivat-Swaenepoel(2025)} independently established Siegel-Walfisz-type results for reversals of primes, while Dartyge-Rivat-Swaenepoel \cite{Dartyge-Rivat-Swaenepoel(2025)} also proved a Bombieri-Vinogradov-type theorem and derived sieve-theoretic applications.


\subsection{Main results}\label{subsec:Main results}
We study quantitative square-divisibility questions for palindromes in arithmetic progressions to large moduli. Our first main result, Theorem~\ref{thm: square divisors}, provides upper bounds of this type. Combining it with inputs from \cite{T-Panario} and the large sieve of Baier-Zhao \cite{Baier-Zhao} for square moduli, we deduce the Bombieri-Vinogradov-type Theorem~\ref{thm:equidistribution of square-free palindromes} for square-free palindromes, with exponent of distribution $1/16$.

For an integer $L\geq 0$, let $\Pi_b(L)$ be the set of all $b$-palindromes in the interval $[b^L, b^{L+1})$. Clearly $|\Pi_b(L)| \asymp_b b^{L/2}$. For integers $q,a$ with $q \geq 1$, define the set
\begin{equation}\label{palindromes in an arithmetic progression}
\Pi_b(L,q,a) = \left\{\ell \in \Pi_b(L) \ : \ \ell \equiv a (q), \ (\ell,b)=1\right\}.
\end{equation}
By the bound of Banks-Shparlinski \cite{Banks-Shparlinski, Banks-Shparlinski(2006)}, 
\begin{equation}\label{pals in arith progr. Banks-Shparlinski}
\max_{a \in \mathbb{Z}}|\Pi_b(L,q,a)| \ll_b \dfrac{|\Pi_b(L)|}{\sqrt{q}} + 1.
\end{equation}
To the best of our knowledge, this remains the strongest general bound available in this setting.

In this article we use the notation $n \sim N$ to signify $N/2 < n \leq N$.

\begin{thm}\label{thm: square divisors}
	Let $L,q\geq 1$ be integers with $(q,b)=1$ and let $N\geq 1$. Then
	\begin{align}
	&\max_{(a,q)=1}\sum_{n\sim N}\sum_{\substack{\ell \in \Pi_b(L,q,a) \\ n^2 \mid \ell  }} 1  \nonumber\\ 
	&\ll_{b,\epsilon} 
	\dfrac{|\Pi_b(L)|^{1 + \epsilon}}{\sqrt{q}}\left(\left(\dfrac{q^{1/2}}{N}\right)^{1/5} + \left(\dfrac{q^{1/3}}{N}\right)^{3/16}  +  \dfrac{N^{3/5}q^{7/10} + (Nq)^{5/8} + q}{|\Pi_b(L)|}  \right) \label{square divisors of Pi(L,q,a)}
	\end{align}
for any $\epsilon > 0$. In particular,
\begin{equation}\label{square divisors of Pi}
\sum_{n\sim N}\sum_{\substack{\ell \in \Pi_b(L) \\ (\ell,b)=1\\ n^2\mid \ell}}1 \ll_{b,\epsilon} \dfrac{|\Pi_b(L)|^{1 + \epsilon}}{N^{3/16}}
\end{equation}
for any $\epsilon > 0$.
\end{thm}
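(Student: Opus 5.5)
We combine the congruences $\ell\equiv a \ (q)$ and $n^2\mid \ell$ and count palindromes in residue classes modulo $qn^2$. The plan has four steps: reduce to coprime moduli, split the $n$-range, handle small $n$ with a Fourier expansion of the digit structure, and handle large $n$ with a dual count of multiples of $n^2$.

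\textbf{Reduction.} Since $(a,q)=1$ and $(\ell,b)=1$, any $n$ with $n^2\mid\ell$ satisfies $(n,q)=(n,b)=1$. So the conditions $\ell\equiv a\ (q)$ and $n^2\mid\ell$ merge into one residue class $\ell\equiv a_n \ (qn^2)$. The left side of \eqref{square divisors of Pi(L,q,a)} is therefore at most
\[
\sum_{\substack{n\sim N\\ (n,bq)=1}} |\Pi_b(L,qn^2,a_n)|.
\]
Write $X=b^L$ and parametrize a palindrome as $\ell = m + b^{L-k}\rho'(m)$, with $m$ ranging over roughly $\sqrt X$ free digits. We then split according to the size of $N$.

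\textbf{Small moduli: $qN^2\le |\Pi_b(L)|^{2-\delta}$.} Detect the congruence with additive characters:
\[
|\Pi_b(L,Q,c)|=\frac1Q\sum_{h(Q)}e_Q(-hc)\sum_{\ell\in\Pi_b(L)}e_Q(h\ell),\qquad Q=qn^2.
\]
The inner sum factors as a product over digit pairs, $\prod_j \sum_{d}e_Q\bigl(hd(b^j+b^{L-j})\bigr)$, as in Banks--Shparlinski and \cite{T-Panario}. The $h=0$ term gives the main contribution $|\Pi_b(L)|/(qn^2)$. Summed over $n\sim N$, this is $\ll|\Pi_b(L)|/(qN)$, which is dominated by the stated terms.

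For $h\neq0$ we will not bound each $n$ separately. Instead we sum over $n$ and average over the frequencies using the structure $Q=qn^2$: write $h/(qn^2)$, complete the sum over $n$, and apply the $L^1$-bounds for the product of digit sums from \cite{T-Panario}, namely the bound $\sum_{h(Q)}\prod_j|\cdot|\ll Q^{1-\eta}|\Pi_b(L)|^{\epsilon}\cdots$. Here Gauss sums $\G^*$ and quadratic Kloosterman sums $\K_2$ appear once we sum over $n$ modulo the square. Carrying out this completion and then Hölder/large-sieve type averaging should produce the terms $(q^{1/2}/N)^{1/5}$ and $(q^{1/3}/N)^{3/16}$. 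The exponents $1/5$ and $3/16$ come from optimizing the split between a "small $h$" range, where the exponential-sum bound is used, and a "large $h$" range, where the $L^1$ bound is used.

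\textbf{Large moduli.} When $N$ is large, we switch roles and count the $\ell$ directly. We have $\ell=n^2 r$ with $r\asymp X/N^2$ small, so we count pairs $(n,r)$ with $n^2r$ a palindrome in the given class. Fix the top and bottom digit blocks of $\ell$. The bottom $k$ digits of $\ell$ determine $n^2r \bmod b^k$, and the palindrome condition links these to the top digits, which are essentially $\lfloor n^2r/b^{L-k}\rfloor$. Counting lattice points near the curve $\{(n,r)\}$ with digit conditions, via van der Corput-type exponential sums in $n$ (as in Johnston--Kerr), should produce the error terms $N^{3/5}q^{7/10}$, $(Nq)^{5/8}$ and $q$. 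These appear in \eqref{square divisors of Pi(L,q,a)} divided by $|\Pi_b(L)|$, so after multiplying through they are absolute counts, consistent with such a lattice-point argument. The trivial "$+1$ per class" loss gives the term $q$.

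\textbf{Corollary and main obstacle.} Estimate \eqref{square divisors of Pi} follows by taking $q=1$ and summing over $a$, which is trivial for $q=1$. The terms $(1/N)^{1/5}$ and $(1/N)^{3/16}$ then give $N^{-3/16}$, and the error terms are acceptable when $N\le|\Pi_b(L)|^{c}$. Beyond that range, the trivial bound $\sum_n X/n^2\cdot(\text{density})$ combined with \eqref{pals in arith progr. Banks-Shparlinski} suffices.

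The hardest step will be the intermediate range of $N$. There the Fourier bound from \cite{T-Panario} loses about $\sqrt{Q}$, and the desired power savings must come from nontrivial cancellation in the completed sums over $n$ modulo $qn^2$, using the Weil bound for $\K_2$. I would first try to prove the $L^1$-average bound over all $h$ modulo $qn^2$ for the digit product, uniformly, and then treat the remaining sum via Cauchy--Schwarz over $n$.
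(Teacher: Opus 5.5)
There is a genuine gap. No step of your outline actually produces the power savings in (\ref{square divisors of Pi(L,q,a)}); the exponents $1/5$, $3/16$ and the terms $N^{3/5}q^{7/10}$, $(Nq)^{5/8}$, $q$ are only asserted to ``come out''.

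\textbf{Where the outline fails.} The concrete failure is your Fourier step. With $n$ kept inside the modulus $qn^2$, the $n$-dependence sits in the frequencies $h/(qn^2)$, with $h$ running over all classes modulo $qn^2$. There is then no sum over $n$ to complete, and no $\G^*$ or $\K_2$ sums arise.

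What this setup really permits is the digit-product $L^1$ bounds of \cite{T-Panario} combined with the Baier--Zhao large sieve for square moduli. That is exactly what the paper does in Section \ref{sec:medium square divisors}. It stops giving any saving once the moduli exceed about $|\Pi_b(L)|$; there one needs $Nq\ll X^{1/4-\epsilon}$ and $q\ll X^{1/15-\epsilon}$. Beyond that point each class modulo $qn^2$ holds $O(1)$ palindromes on average. That is precisely the range where the theorem has content and is used: Section \ref{sec:Proof of Theorem equidistribution of square-free palindromes} invokes it only for $DQ\gg x^{1/4-\epsilon}$.

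Your ``large moduli'' lattice-point count is not specified. The Johnston--Kerr method you cite is only known to give an unspecified saving $N^{-\epsilon_0}$, and only for $q=1$.

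\textbf{The secondary terms.} Your accounting here cannot be right. A ``$+1$ per class'' loss as in (\ref{pals in arith progr. Banks-Shparlinski}), summed over $n\sim N$, costs $N$. For $q=1$ and $N\ge X^{8/19+\delta}$ this already exceeds the right side of (\ref{square divisors of Pi}). The bracketed $q$ in (\ref{square divisors of Pi(L,q,a)}) is an absolute $\sqrt q$, not such a loss. For the same reason, your fallback ``trivial bound plus (\ref{pals in arith progr. Banks-Shparlinski})'' for the corollary would lose the factor $N^{-3/16}$. It is also unnecessary, since $n^2\mid\ell<bX$ forces $N\ll|\Pi_b(L)|$.

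\textbf{The missing idea.} You need to relax exact palindromicity so as to create a long free variable. Write $\mq$ for your modulus and $q=b^\lambda$, with $\lambda$ even and adjustable.

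First, bound $\Pi_b(L,\mq,\ma)$ by the quasi-palindromes at level $\lambda$. These are the numbers $a+qm$ with $a\in\mathcal{A}_\lambda$ ($\asymp q$ choices) and a free middle block $0\le m<bX/q^2$.

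Next, detect $n^2\mid a+qm$ by Poisson summation in $m$. Then B\'ezout reciprocity (Lemma \ref{lem:Bezout}) moves $n$ out of the modulus: $e_{n^2}(-ah\overline{q\mq})$ becomes $e_q(ah\overline{\mq n^2})\,e_{\mq}(\ma h\overline{qn^2})$ times the archimedean phase $e(-ah/(q\mq n^2))$, where $a=m+\rho(m)$.

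Only now can the $n$-sum be completed. This is where $\G^*$ and $\K_2$ genuinely appear, with a Sali\'e-type formula since $q$ is a square. The carry-free additivity of $\rho$ then gives an $A$-type process in $m$ that lowers both conductors.

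Propositions \ref{prop: main term for smooth S}, \ref{prop: N large for squares n^2 with n of size N} and \ref{prop:first estimate for E_>(r)} then give
\[
S\ll X^{\epsilon}\Bigl(\frac{X}{q\mq N}+q\bigl(N^{5/8}\mq^{1/8}+N^{3/5}\mq^{1/5}\bigr)+\sqrt{\mq}\Bigr).
\]
The exponents $1/5$ and $3/16$ come from choosing $q=b^\lambda$ to balance the first term against the second. The first term is the price of the quasi-palindromic majorant. The terms $N^{5/8}\mq^{1/8}$, $N^{3/5}\mq^{1/5}$ and $\sqrt{\mq}$ are what survives when the optimal level is the smallest allowed, $\lambda=2$. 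They are not the output of a separate large-moduli argument.
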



 Theorem \ref{thm: square divisors} follows from Propositions \ref{prop: main term for smooth S}, \ref{prop: N large for squares n^2 with n of size N}, \ref{prop:first estimate for E_>(r)}, relying heavily on exponential-sum estimates. Some of our methodology is detailed in the upcoming subsections. 
 
 The case when $q=1$ above and $Q=1$ below was considered recently by Johnston-Kerr \cite{Johnston-Kerr} who showed that, for any $N\geq 1$, the left hand side of (\ref{square divisors of Pi}) is $\ll_{b,\epsilon}|\Pi_b(L)|^{1 + \epsilon}/N^{\epsilon_0}$ for some $\epsilon_0 > 0$ (implicit from their work after modifications to Proposition 10.1 in \cite{T-Panario}).

In the following, $\mathscr{P}_b^*(x)$ denotes the set of all $b$-palindromes up to $x$ and coprime to $b^3-b$. By Lemma 9.1 in \cite{T-Panario},  $|\mathscr{P}_b^*(x)| \asymp_b \sqrt{x}$. The next theorem shows that square-free palindromes are equidistributed in arithmetic progressions on average over moduli up to level $x^{1/16-\epsilon}$, with the expected main term. Here $6/\pi^2$ is the natural density of square-free integers, while the factors $\mathfrak{S}(m_b)$ and $\mathfrak{S}(q)$ account for the local corrections forced by the base and the modulus.

\enlargethispage{\baselineskip}
\begin{thm}\label{thm:equidistribution of square-free palindromes}
For a natural number $n$, define
\begin{equation}\label{def of G_b}
\mathfrak{S}(n) = \prod_{p \mid n}\left(1 - \dfrac{1}{p^2}\right)^{-1}
\end{equation}
with the convention $\mathfrak{S}(1)=1$. Set $m_b = b^3-b$.

For any $x \geq 1$, $\epsilon > 0$ and $1 \leq Q \leq x^{1/16-\epsilon}$, 
\begin{equation}\label{equidistribution for square-free pals}
\sum_{\substack{q \leq Q \\ (q,m_b)=1}}\sup_{\substack{(a,q)=1 \\ y\leq x}}\left|\sum_{\substack{n\in \mathscr{P}_b^*(y) \\ n\equiv a (q)}}\mu^2(n) - \dfrac{6\mathfrak{S}\left(m_b\right)\mathfrak{S}\left(q\right)|\mathscr{P}_b^*(y)|}{\pi^2q}\right| 
\ll_{b,\epsilon} \sqrt{x}\exp\left(-\sigma \sqrt{\log x}\right)
\end{equation}
for some $\sigma > 0$ depending only on $b,\epsilon$. 
\end{thm}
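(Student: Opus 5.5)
The plan is the standard square-free sieve $\mu^2(n)=\sum_{d^2\mid n}\mu(d)$, split at a threshold $D$. Since $n\in\mathscr{P}_b^*(y)$ is coprime to $m_b$, only $d$ with $(d,m_b)=1$ contribute. For $(q,m_b)=1$ and $(a,q)=1$, the condition $d^2\mid n$ together with $(a,q)=1$ forces $(d,q)=1$. We then write
\begin{equation*}
\sum_{\substack{n\in\mathscr{P}_b^*(y)\\ n\equiv a(q)}}\mu^2(n)=\sum_{\substack{d\leq D\\ (d,qm_b)=1}}\mu(d)\sum_{\substack{n\in\mathscr{P}_b^*(y)\\ n\equiv a(q),\ d^2\mid n}}1+\sum_{\substack{d> D\\ (d,qm_b)=1}}\mu(d)\sum_{\substack{n\in\mathscr{P}_b^*(y)\\ n\equiv a(q),\ d^2\mid n}}1 .
\end{equation*}
The two inner conditions combine by CRT into a single progression modulo $qd^2$. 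We choose $D=x^{\delta}$ with $\delta$ small in terms of $\epsilon$; the precise choice is dictated by the tail estimate.

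\textbf{Small $d$.} We invoke the Bombieri--Vinogradov-type level of distribution $1/5$ for palindromes from \cite{T-Panario}, in a form with the $(b^3-b)$-coprimality built in (as in Lemma 9.1 there). This gives
\[
\sum_{n\in\mathscr{P}_b^*(y),\ n\equiv c\,(qd^2)}1=\frac{|\mathscr{P}_b^*(y)|}{qd^2}+\text{error},
\]
uniformly for $(c,qd^2)=1$, with errors summed over moduli $r=qd^2\leq QD^2\leq x^{1/5-\epsilon}$ bounded by $\sqrt{x}\exp(-\sigma\sqrt{\log x})$. For fixed $r$, the number of pairs $(q,d)$ with $qd^2=r$ is $\ll\tau(r)$, and it must be absorbed. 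Applying Cauchy--Schwarz with the trivial bound $\sqrt{x}/r$ costs only a power of $\log$, which the $\exp(-\sigma\sqrt{\log x})$ saving swallows. The main term $\sum_{(d,qm_b)=1}\mu(d)/d^2$ completes to $\frac{6}{\pi^2}\mathfrak{S}(m_b)\mathfrak{S}(q)$, with completion error $\ll\sqrt{x}/(qD)$. Summed over $q\leq Q$ this gives $\sqrt{x}\,x^{-\delta}\log x$, which is acceptable. The sup over $y\leq x$ is harmless because the level-of-distribution input already carries it, or can be handled by dyadic decomposition into the blocks $\Pi_b(L)$.

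\textbf{Large $d$, and the main obstacle.} We bound the tail trivially in absolute value and split $d$ dyadically, $d\sim N$ with $D<N\ll\sqrt{x}$. Decomposing $y\leq x$ into blocks $\Pi_b(L)$, Theorem~\ref{thm: square divisors} bounds each block. Summed over $q\leq Q$ (the factor $\sum_q q^{-1/2}\ll Q^{1/2}$), the leading terms contribute roughly
\[
x^{1/2+\epsilon'}\bigl(Q^{3/5}N^{-1/5}+Q^{1/2+1/16}N^{-3/16}\bigr).
\]
The lower-order terms contribute
\[
x^{\epsilon'}\bigl(N^{3/5}Q^{6/5}+N^{5/8}Q^{9/8}+Q^{3/2}\bigr).
\]
For $N$ up to a small power of $x$, the first group forces the constraint relating $Q$ and $D$. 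For large $N$ the first group is trivially small, but the second group grows with $N$. In the range where $N^{3/5}Q^{6/5}$ would exceed $\sqrt{x}$, we instead switch to the $q$-free estimate (\ref{square divisors of Pi}) together with a large-sieve bound. Concretely, we apply the Baier--Zhao large sieve for square moduli to the sequence of palindromes (or use Banks--Shparlinski (\ref{pals in arith progr. Banks-Shparlinski}) with modulus $qd^2$). This controls $\sum_{d\sim N}\sum_{q}\max_a|\cdot|$ by $(N^{3}+\sqrt{x})\sqrt{x}$-type quantities, which handles very large $N$.

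The main obstacle is balancing these ranges. One needs a power saving in each dyadic range of $N$, uniformly, and the exponents must line up so that every intermediate $N$ is covered. This is where $Q\leq x^{1/16-\epsilon}$ should emerge: with $Q=x^{1/16}$, the term $(q^{1/3}/N)^{3/16}$ and the terms $N^{5/8}Q^{9/8}$ and $N^{3/5}Q^{6/5}$ need $N$ to lie between about $Q^{\text{const}}$ and $x^{1/2}/Q^{\text{const}}$. We will check that the Baier--Zhao range covers the gap. Finally, the power savings beat $\exp(-\sigma\sqrt{\log x})$, and the small-$d$ threshold is taken to be a small power of $x$.
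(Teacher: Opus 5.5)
Your frame matches the paper's: $\mu^2=\sum_{d^2\mid n}\mu(d)$, CRT, Euler-product completion of the main term, dyadic $d$, and Theorem~\ref{thm: square divisors} for large $d$. The gap is the \emph{medium} range of square divisors. Your small-$d$ input is the level-$1/5$ result applied to moduli $qd^2$. It reaches only $d\le (x^{1/5-\epsilon}/Q)^{1/2}$, about $x^{0.07}$ when $Q\approx x^{1/16}$, and only $x^{\delta}$ with your choice of $D$. Theorem~\ref{thm: square divisors}, summed over $q\le Q$, saves only when $x^{1/2}\bigl((Q^3/N)^{1/5}+(Q^3/N)^{3/16}\bigr)$ is small. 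That means $N\gg Q^3x^{\eta}$, about $x^{3/16}$, which is not ``a small power of $x$''.

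For $d$ between these thresholds you have no estimate. Since you bound the tail in absolute value, you would need upper bounds of the correct order $\sqrt{x}/N$ there, which is equidistribution-strength information. The tools you name do not give it. Banks--Shparlinski with modulus $qd^2$ yields only $O(\sqrt{xQ}+QN)$ after summing. A bare $L^2$ Baier--Zhao bound for the palindrome sequence, followed by Cauchy--Schwarz, gives roughly $Q^{1/2}x^{3/4}N^{-1/4}$, which exceeds $\sqrt{x}$ for every $N\le\sqrt{x}$. Your ``$(N^3+\sqrt{x})\sqrt{x}$-type'' bound is worse than trivial. Moreover, the range where you deploy it is empty: $N^{3/5}Q^{6/5}\le x^{3/8}$ whenever $N\le\sqrt{x}$ and $Q\le x^{1/16}$. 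Even with the small range pushed to its limit, matching $D^2Q\le x^{1/5}$ against $D\gg Q^3$ allows only $Q\le x^{1/35}$. So the exponent $1/16$ cannot come out of your scheme, and ``we will check that the Baier--Zhao range covers the gap'' is exactly the missing argument.

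The paper fills this range with Proposition~\ref{prop:equidistribution involving squares}. It proves equidistribution modulo $qd^2$ on average over $d\sim D$, $q\sim Q$, with $DQ$ (not $D^2Q$) as the governing parameter. This holds for $DQ\ll x^{1/5-\epsilon}$, and for $DQ\ll x^{1/4-\epsilon}$ when $Q\ll x^{1/15-\epsilon}$. The key idea is to apply the square-moduli large sieve not to the palindromes but to part of the digital product $\Phi_N$:
\begin{enumerate}
\item After Fourier expansion, split $\Phi_N=P_1P_2$ (or into three factors with H\"older).
\item Bound the short factor in $L^2$ over the fractions $h/(qd^2)$ by counting divisors of $S(\mathbf u,\mathbf v)$ (Lemma~\ref{lem:L^2 bound}).
\item Use the shift identity of Lemma~\ref{lem:Algebraic property} to convert the long factor into a shorter $\Phi_{N-M}$.
\item Only then apply Baier--Zhao or moment bounds (Lemmas~\ref{lem:Moments involving square moduli}, \ref{lem:L^1 bound}, \ref{lem:L^1/L^2-bound}).
\end{enumerate}
Theorem~\ref{thm: square divisors} is then needed only when $DQ\gg x^{1/4-\epsilon}$, and $1/16$ is precisely the condition $x^{1/4}/Q\gg Q^3x^{\eta}$.

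A smaller, related flaw concerns absorbing the multiplicity of $r=qd^2$. Your Cauchy--Schwarz step uses a ``trivial'' bound $\sqrt{x}/r$ for palindromes in a progression mod $r$. No such bound is available; the best general one is $\sqrt{x}/\sqrt{r}+1$, and with it the saving disappears. The paper instead absorbs these $\tau$-type losses into the power saving $Q^{-\sigma_1}$ in the $L^1$ bound of Lemma~\ref{lem:T-Panario L^1 bound}.
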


Theorem \ref{thm:equidistribution of square-free palindromes} follows from Theorem \ref{thm: square divisors} and Proposition \ref{prop:equidistribution involving squares}. The latter relies on the machinery for $6$-almost-primes in \cite{T-Panario} as well as on the estimate of Baier-Zhao \cite{Baier-Zhao} (see also Baker \cite{Baker(2017)}) for the large sieve with square moduli.

Theorem \ref{thm: square divisors} and Proposition \ref{prop:equidistribution involving squares}, although powered by rather different tools and arguments, converge to the exponent $1/16$ of Theorem \ref{thm:equidistribution of square-free palindromes} as follows. 
First it can be shown that Theorem \ref{thm:equidistribution of square-free palindromes} holds if we have
\begin{align}\label{E(Q,D) in intro}
\sum_{\substack{q \sim Q \\ (q,m_b)=1}}\sup_{\substack{(a,q)=1 \\ y \leq x}}\sum_{\substack{d \sim D \\ (d,m_b q)=1}}\left|\sum_{n \in \mathscr{P}_b^*(y)}\left(\mathbf{1}_{d^2 \mid n\equiv a (q)} - \dfrac{1}{qd^2}\right)\right|  \ll_{b,\epsilon} \sqrt{x}\exp\left(-\sigma' \sqrt{\log x}\right)
\end{align}
for any $Q \leq x^{1/16-\epsilon}$ and $D \leq \sqrt{x}$. This follows from $\mu^2(n) = \sum_{d^2 \mid n}\mu(d)$, an Euler product expansion of 
$\sum_{\substack{(d,m_b q)=1}}\mu(d)/d^2$ 
and dyadic decompositions.

\begin{itemize}
	\item Theorem \ref{thm: square divisors} implies that if $DQ \gg x^{1/4-\epsilon}$, then (\ref{E(Q,D) in intro}) holds if $Q \ll x^{1/16-\epsilon}$.
	
	\item Proposition \ref{prop:equidistribution involving squares} implies that if $DQ \ll x^{1/4-\epsilon}$, then (\ref{E(Q,D) in intro}) holds if $Q \ll x^{1/15-\epsilon}$.
\end{itemize}

\subsection{Structural ingredients}\label{subsec:two structural ingredients}

The bounds in Theorem \ref{thm: square divisors} rely in part on structural properties of palindromes. 

Rather than work with palindromes directly, it is advantageous to consider instead quasi-palindromes at some high level $\lambda$. We say that a natural number $n$ is a {\em $b$-quasi-palindrome at level $\lambda \in \mathbb{N}$} if
$$
d_j(n)=d_{\lfloor \log_b n\rfloor-j}(n)
\qquad \forall \ \ 0\leq j<\lambda,
$$
where $d_j(n)$ denotes the $j$-th digit of $n$ in its $b$-adic expansion $n=\sum_j d_j(n)b^j$. Clearly, palindromes are also quasi-palindromes. The point is that in an interval $[b^L,b^{L+1})$, quasi-palindromes form a structured union of short arithmetic progressions (SAPs). This is used to reduce square-divisibility questions, for palindromes in an arithmetic progression AP, to analogous questions for numbers in SAPs $\cap $ AP. There is a loss in the expected main term when we cover the set of palindromes by that of quasi-palindromes at level $\lambda$. Nevertheless, the loss is acceptable provided $\lambda$ is not too small. We clarify next what is meant by all this. 

In what follows, we use the notation $\mq,\ma$ in place of the modulus $q$ and residue class $a$ from Theorem \ref{thm: square divisors}; the latter $q$ will be used instead to denote an integral power $b^\lambda$ arising from the quasi-palindromic majorant below. By the assumptions in Theorem \ref{thm: square divisors}, $(\mq,\ma b)=1$. We also set $X = b^L$. 

Let $ 1 <  \lambda < L/2$ be a large integer to be chosen later. Recalling the definition of $\Pi_b(L,\mq,\ma)$ in (\ref{palindromes in an arithmetic progression}) and removing palindromic digit-constraints around the middle,
$$
\sum_{n \sim N} \sum_{\substack{\ell \in \Pi_b(L,\mq,\ma) \\ n^2 \mid \ell}}1 
\leq 
\sum_{\substack{n \sim N \\ (n,b\mq)=1}} \sum_{\substack{X \leq \ell < bX \\ d_j(\ell) = d_{L-j}(\ell) \  \forall  \  0 \leq j < \lambda\\ (\ell,b)=1\\n^2 \mid \ell \\ \ell \equiv \ma (\mq) }}1.
$$
Setting $q = b^\lambda$ and using the $b$-adic representation of numbers, we can rewrite the above as 
\begin{equation}\label{decomposition_involving_quasi-palindromes}
\sum_{n \sim N} \sum_{\substack{\ell \in \Pi_b(L,\mq,\ma) \\ n^2 \mid \ell}}1 
\leq
\sum_{\substack{n \sim N \\ (n,q\mq)=1}} \sum_{a \in \mathcal{A}_\lambda} \sum_{\substack{0 \leq m < bX/q^2 \\ a + qm \equiv 0 (n^2) \\ a + qm \equiv \ma (\mq)}}1,
\end{equation}
where
\begin{equation}\label{A_lambda_again}
\mathcal{A}_\lambda = \left\{\sum_{0 \leq j < \lambda}\left(b^j + b^{L-j}\right)k_j \ : \ 0 \leq k_0, \ldots, k_{\lambda - 1} < b, \ (k_0,b)=1\right\}.
\end{equation}
In this manner, we turn the square-divisibility problem concerning palindromes in an AP, to an analogous one involving structured SAPs.

After a pass to harmonic analysis via Poisson summation and an appeal to B\'ezout's identity, we encounter (similarly as in Johnston-Kerr \cite{Johnston-Kerr}, but with a different entry-point) exponential sums comprised of both arithmetic harmonics (involving modular inverses) and archimedean harmonics (with monomial-type phases). 
An important feature of the argument is that palindromic structures yield an effective $A$-type process for $2$-dimensional sums involving the reversal function, lowering arithmetic and archimedean conductors simultaneously. We detail this in the following Subsection \ref{subsec:A-type_process}. This quickly brings the conductors into a range where the $B$-process becomes effective. Particularly, this allows us to treat most regimes (of square-divisor sizes) in unison and, more importantly, permits a derivation of the bounds in Theorem \ref{thm: square divisors}. Although there may be other significant uses for the reversal sums, these will be employed primarily as a conductor-lowering machine.

\subsection{A-type process}\label{subsec:A-type_process}

We explain a concept that we have found helpful in deriving bounds. It concerns a kind of ``two-fold'' reduction in the total\footnote{We use the term {\em total conductor} to signify the product $q(t + 1)$, where $q$ is the modulus (or arithmetic conductor) in the arithmetic harmonic, and $t$ is the oscillating parameter (or archimedean conductor) of the archimedean harmonic.} conductor of the harmonics, whereby both arithmetic and archimedean conductors are reduced simultaneously and by the same amount (yielding a quadratic drop in the total conductor). In effect, it has features of both the classical $A$-process and the $q$-van der Corput method. This is due to properties of the reversal function.

To explain it, let us first consider the reversal function 
$$\rho : [0, b^{L+1}) \cap \mathbb{Z} \to [0, b^{L+1}) \cap \mathbb{Z}$$ defined by 
$$
\rho(n) = \sum_{0\leq \ell \leq L}d_\ell(n)b^{L-\ell},
$$
where $d_\ell(n)$ denotes the $\ell$-th digit in the $b$-adic expansion of $n = \sum_{0 \leq \ell \leq L}d_\ell(n)b^\ell$. Clearly if $d_0(n) \neq 0$, then $\rho(n) \asymp b^L$. Note that if there's no carry in the $b$-adic addition of two integers $m,n \in [0, b^{L+1})$, then $\rho(m + n) = \rho(m) + \rho(n)$. Moreover if $b^\ell\mid n$ for some integer $ 0\leq \ell \leq L$, then $\rho(n) < b^{L+1-\ell}$. For convenience, set $X = b^L$. These few basic properties of $\rho$ have a useful implication for sums of the following form
$$
\dfrac{1}{q}\sum_{\substack{1 \leq m \leq q \\ (m,q)=1}}\alpha(m) \sum_{\substack{n \sim N \\ (n,q)=1}}e_q(h m\overline{n})e\left(q\dfrac{\rho(m)}{X}\dfrac{N}{n}\right)
$$
with $q \leq X$ a large integral power of $b$, $(h,q)=1$ and $1$-bounded complex numbers $\alpha(m)$. We have chosen this sum for the purposes of illustration, but other types in higher dimensions and with more complicated phases, different conductors, weights, or other objects such as Kloosterman-type sums (see Section \ref{sec:BAB-process}) replacing $e_q(hm\overline{n})$, etc., are possible.

Note the total conductor of the harmonics is $q^2$ (arithmetic $\times$ archimedean) and so a $B$-process on the $n$-sum should yield essentially a bound of size $q$ (square root of the total conductor). This is only adequate if $q$ is sufficiently small relative to $N$. On the other hand, if it is too large, we can proceed as follows. Let $b \leq s \mid q$ be an integral power of $b$. We may express each $1 \leq m \leq q$, $(m,q)=1$, uniquely as $m = c + qd/s$ for some $0 \leq c < q/s$, $(b,c)=1$, and $0 \leq d < s$. Using the additivity of $\rho$ (in the absence of carry) moving the $d$-sum to the inside, taking absolute values and bounding the $c$-sum, the above is
$$
\leq \max_{c}\sum_{\substack{ n \sim N \\ (n,s)=1}}\left|\dfrac{1}{s}\sum_{0\leq d < s}\alpha\left(c + \dfrac{qd}{s}\right)e_s(hd\overline{n}) e\left(q\dfrac{\rho(qd/s)}{X}\dfrac{N}{n}\right)\right|
$$
in absolute value. Since each $qd/s$ is divisible by $q/s$, a power of $b$, then $\rho(qd/s) \ll sX/q$. Hence the archimedean conductor (for the phase on the far-right) is $\ll s$. Taking also into account the modulus $s$ of the arithmetic component, this gives a total conductor of size $s^2$ (at worst). Thus with the input of a sum of size $s$ (which determines the gain from the diagonal portion after a subsequent Cauchy-Schwarz) the original total conductor, $q^2$, is reduced by a factor of $q^2/s^2$. We make use of this type of $A$-process in both Sections \ref{sec:BAB-process} and $\ref{sec:N small relative to q}$. It would be interesting to investigate 
$A,B$-sequences involving this $A$-type process, for broader classes of harmonic sums involving the reversal function. We do not pursue this here.

\enlargethispage{\baselineskip}
\section{Paper structure}\label{subsec:Organization}

We develop preparatory lemmas in Section \ref{sec:Preparatory_lemmas} and translate the counting problem into exponential-sum estimates in Proposition \ref{prop: main term for smooth S}. Here we pass from palindromes to quasi-palindromes, isolate the expected main term and treat the case when the archimedean conductors are small. The sums involved in the latter case are purely arithmetic and handled via completion together with correlation estimates for quadratic Kloosterman $\K_2$ sums. In Section \ref{sec:BAB-process} we study the range in which 
$N$ is large relative to the conductors, using a 
BAB-type argument; here
A denotes the
A-type process (discussed earlier) adapted for $\K_2$ sums in place of $e_q(hm\overline{n})$, etc.. In Section \ref{sec:N small relative to q} we treat the complementary range, where the conductors are large relative to 
$N$. Here conductors are lowered at the outset via the A-type process. Theorem \ref{thm: square divisors} is proved in Section \ref{sec:Proof of Theorem {thm:square divisors}} by combining the estimates from Propositions \ref{prop: main term for smooth S}, \ref{prop: N large for squares n^2 with n of size N}, \ref{prop:first estimate for E_>(r)}. In Section \ref{sec:medium square divisors} we prove Proposition \ref{prop:equidistribution involving squares}, which handles the small and medium ranges of square divisors by combining inputs from \cite{T-Panario} with the large sieve for square moduli of Baier-Zhao \cite{Baier-Zhao}. Theorem \ref{thm:equidistribution of square-free palindromes} is then proved in Section \ref{sec:Proof of Theorem equidistribution of square-free palindromes} by combining Theorem \ref{thm: square divisors} and Proposition \ref{prop:equidistribution involving squares}. We include in Appendix \ref{sec: large squares} an ``elementary''-type argument for very large squares, not used in the proofs of the main results.

\section{Preparatory lemmas}\label{sec:Preparatory_lemmas}

We gather several tools needed in the following sections. These are either well-known in the literature or follow directly from known facts.

\begin{lem}[Poisson summation]\label{lem:poisson summation}
Let $\eta: \mathbb{R} \to \mathbb{C}$ be a smooth compactly supported function such that, for some $\delta \geq 0$, $\|\eta^{(j)}\|_{\infty} \ll_{\delta,j} X^{\delta j}$ for each $j\geq 0$. Then for any $N \gg 1$ and integers $a,q$ with $q \geq 1$ and $N,q \leq X^{O(1)}$, 
$$
\sum_{n \equiv a (q)}\eta(n/N) = \dfrac{N\widehat{\eta}(0)}{q} + \dfrac{N}{q}\sum_{1 \leq |h| \leq H}\widehat{\eta}\left(\dfrac{Nh}{q}\right)e_q(ah) + O_{A,\epsilon, \delta}\left(X^{-A}\right)
$$
for any $A,\epsilon > 0$ and any $H \geq  qX^{\epsilon + \delta}/N$. 
\end{lem}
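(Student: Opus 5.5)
The plan is the standard one: write the progression $n \equiv a \ (q)$ as $n = a + qm$ with $m \in \mathbb{Z}$, apply the classical Poisson summation formula to the function $m \mapsto \eta((a+qm)/N)$, and then truncate the dual sum using the decay of $\widehat{\eta}$.

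\emph{Step 1 (Poisson).} Since $\eta$ is smooth and compactly supported, $g(t) = \eta((a+qt)/N)$ is Schwartz and Poisson summation applies:
$$
\sum_{m \in \mathbb{Z}} g(m) = \sum_{h \in \mathbb{Z}} \widehat{g}(h).
$$
The substitution $u = (a+qt)/N$ gives
$$
\widehat{g}(h) = \dfrac{N}{q}\, e\left(\dfrac{ah}{q}\right)\widehat{\eta}\left(\dfrac{Nh}{q}\right) = \dfrac{N}{q}\, e_q(ah)\,\widehat{\eta}\left(\dfrac{Nh}{q}\right).
$$
The term $h=0$ gives the main term $N\widehat{\eta}(0)/q$.

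\emph{Step 2 (decay of $\widehat{\eta}$).} Integrating by parts $k$ times gives, for $\xi \neq 0$,
$$
|\widehat{\eta}(\xi)| \leq (2\pi|\xi|)^{-k}\|\eta^{(k)}\|_1 \ll_{k,\delta} |\xi|^{-k} X^{\delta k}\,|\mathrm{supp}\,\eta|.
$$
The support of $\eta$ is a fixed compact set, so its length is $O(1)$. I take this as implicit in the hypotheses. If the support were allowed to grow, it would only need to be $X^{O(1)}$, and that factor is absorbed below.

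\emph{Step 3 (tail).} For $|h| > H \geq qX^{\epsilon+\delta}/N$ we have $N|h|/q > X^{\epsilon+\delta}$. Hence
$$
\dfrac{N}{q}\sum_{|h|>H}\left|\widehat{\eta}\left(\dfrac{Nh}{q}\right)\right| \ll_{k,\delta} \dfrac{N}{q}\left(\dfrac{q X^{\delta}}{N}\right)^{k}\sum_{|h|>H}|h|^{-k}.
$$
For $k \geq 2$ the right-hand side is
$$
\ll \dfrac{N}{q}\left(\dfrac{qX^{\delta}}{N}\right)^{k} H^{1-k} \leq \dfrac{N}{q}\, X^{\delta k}\left(\dfrac{q}{N}\right)^{k}\left(\dfrac{N}{qX^{\epsilon+\delta}}\right)^{k-1} = X^{\delta}\, X^{-(\epsilon+\delta)(k-1)} \cdot X^{\delta(k-1)}\cdot X^{-\delta(k-1)}.
$$
Bookkeeping the exponents more simply: the quantity equals $X^{\delta k - (\epsilon+\delta)(k-1)} = X^{\delta - \epsilon(k-1)}$. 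Choosing $k$ large in terms of $A$, $\epsilon$, $\delta$ makes this at most $X^{-A}$. The hypotheses $N, q \leq X^{O(1)}$ and $N \gg 1$ guarantee that no stray powers of $N$ or $q$ survive that could not be absorbed. This gives the stated error $O_{A,\epsilon,\delta}(X^{-A})$.

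The only delicate point is this last exponent bookkeeping, in particular making sure the dependence on the derivative bounds $X^{\delta j}$ is beaten by the factor $X^{-\epsilon k}$. Everything else is routine.
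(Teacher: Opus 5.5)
Your proof is correct and follows the paper's argument: write the sum as $\sum_m \eta((a+qm)/N)$, apply Poisson summation, isolate the $h=0$ term, and kill the tail $|h|>H$ by repeated integration by parts, which gives $\widehat{\eta}(\xi)\ll X^{\delta k}|\xi|^{-k}$. The paper also leaves implicit the bounded-support assumption you flag. One slip: your first displayed evaluation of the tail exponent is garbled (it simplifies to $X^{\delta-(\epsilon+\delta)(k-1)}$), but your corrected bookkeeping $X^{\delta k-(\epsilon+\delta)(k-1)}=X^{\delta-\epsilon(k-1)}$ is right and suffices.
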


\begin{proof}
Writing the sum on the left as
$$
\sum_{m}\eta\left(\dfrac{a + mq}{N}\right)
$$
and applying the usual Poisson summation formula to this sum, one derives	
$$
\sum_{n \equiv a (q)}\eta(n/N) = \dfrac{N}{q}\sum_{h}\widehat{\eta}\left(\dfrac{Nh}{q}\right)e_q(ah).
$$
The claim now follows after isolating the term with $h=0$ and noticing that $\widehat{\eta}(Nh/q) \ll_{\delta,A} X^{\delta A}|Nh/q|^{-A}$ for any $|h| > H$ and any $A > 0$. The last follows from several integrations by parts on the defining integral of $\widehat{\eta}$.   
\end{proof}	

\begin{lem}[Coprime sums]\label{lem:coprime sums}
With the same assumptions of Lemma \ref{lem:poisson summation},
$$
\sum_{(n,q)=1}\eta(n/N) = \dfrac{\varphi(q)\widehat{\eta}(0) N}{q} + O_\delta\left(X^{2\delta}\tau(q)\right). 
$$
\end{lem}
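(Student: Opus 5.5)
The plan is to reduce Lemma~\ref{lem:coprime sums} to Lemma~\ref{lem:poisson summation} by M\"obius inversion. Detecting the coprimality condition with $\sum_{d\mid (n,q)}\mu(d)$ gives
$$
\sum_{(n,q)=1}\eta(n/N) = \sum_{d \mid q}\mu(d)\sum_{n \equiv 0 (d)}\eta(n/N).
$$
Lemma~\ref{lem:poisson summation} is then applied to each inner sum, with modulus $d$ and residue $a=0$. The conditions $d \leq q \leq X^{O(1)}$ and $N \gg 1$ are satisfied, so the hypotheses of that lemma hold.

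Each inner sum then splits into three pieces:
\begin{enumerate}
\item the main term $N\widehat{\eta}(0)/d$;
\item the dual terms $\frac{N}{d}\sum_{1\leq |h|\leq H}\widehat{\eta}(Nh/d)$;
\item a remainder $O(X^{-A})$.
\end{enumerate}
Summing the main terms against $\mu(d)$ gives $N\widehat{\eta}(0)\sum_{d\mid q}\mu(d)/d = N\widehat{\eta}(0)\varphi(q)/q$, which is the stated main term. The remainders contribute $\tau(q)X^{-A}$, which is negligible.

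The main obstacle is the dual terms. Lemma~\ref{lem:poisson summation} does not make them small; it only truncates them. It is cleaner to bound them directly, without truncating. I would write the exact Poisson identity
$$
\sum_{n \equiv 0 (d)}\eta(n/N) = \frac{N}{d}\sum_h \widehat{\eta}(Nh/d),
$$
and bound the $h\neq 0$ part using the decay $|\widehat{\eta}(\xi)| \ll \min\bigl(\|\eta\|_1, \|\eta''\|_1|\xi|^{-2}\bigr)$.

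To keep the dependence on $\delta$ clean, two facts are useful. First, because $\eta$ has fixed compact support, $\|\eta''\|_1 \ll \|\eta''\|_\infty \ll X^{2\delta}$, and likewise $\|\eta\|_1 \ll 1$. Second, writing $T=N/d$, the sum $T\sum_{h\neq 0}\min\bigl(1, X^{2\delta}(Th)^{-2}\bigr)$ splits at $|h| \approx X^{\delta}/T$. The small-$h$ range contributes $\ll X^{\delta}$ and the tail contributes $\ll X^{\delta}$, so the whole dual part is $\ll X^{\delta} \le X^{2\delta}$. This holds whenever $T \ll X^{\delta}$. When $T$ is larger, only the tail is present, and it gives $X^{2\delta}/T \ll X^{\delta}$.

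The same bound follows more directly from Euler--Maclaurin or Poisson in its elementary form: $\sum_m \eta(dm/N) = \frac{N}{d}\widehat{\eta}(0) + O(\|\eta'\|_1) = \frac{N}{d}\widehat{\eta}(0) + O(X^{\delta})$. I would adopt this formulation to avoid case analysis. Summing the error $O(X^{2\delta})$ over the $\tau(q)$ divisors $d\mid q$ gives the claimed $O_\delta(X^{2\delta}\tau(q))$.
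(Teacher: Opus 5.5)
Your proposal is correct. Your final route, however, differs from the paper's.

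The paper also starts from M\"obius inversion. It then applies Lemma~\ref{lem:poisson summation} to each inner sum with $H \asymp dX^{\epsilon+\delta}/N$ and bounds the $\ll H$ surviving dual terms trivially using $\widehat{\eta}(t)\ll 1$. Each divisor therefore contributes $\frac{N}{d}\cdot H \ll X^{\epsilon+\delta}$, which becomes $X^{2\delta}$ on taking $\epsilon=\delta$. So your remark that the truncated dual terms must be bounded ``directly'' is unnecessary: the trivial bound on the truncated range is exactly what produces the stated error.

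Your closing argument is the comparison $\sum_m f(m)=\int f+O(\|f'\|_1)$ with $f(t)=\eta(dt/N)$. It is scale invariant, since $\|f'\|_1=\|\eta'\|_1\ll X^{\delta}$, and it bypasses Poisson summation entirely. This route has several advantages:
\begin{itemize}
\item It gives the sharper error $O(X^{\delta}\tau(q))$.
\item It needs neither $N\gg 1$ nor $q,N\le X^{O(1)}$.
\item It stays valid at $\delta=0$. There the paper's one-line argument literally yields $X^{\epsilon}\tau(q)$ rather than $\tau(q)$.
\end{itemize}
The paper's version has the advantage of being an immediate reuse of a lemma already in place.

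Both arguments implicitly use that the support of $\eta$ has bounded length. You need it to pass from $\|\eta'\|_\infty$ to $\|\eta'\|_1$, and the paper needs it to get $\widehat{\eta}\ll 1$.
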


\begin{proof}
By the M\"{o}bius inversion formula $\mathbf{1}_{(n,q)=1} = \sum_{d\mid (n,q)}\mu(d)$, the sum on the left equals
$$
\sum_{d\mid q}\mu(d)\sum_{n\equiv 0 (d)} \eta(n/N).
$$
The result now follows from Lemma \ref{lem:poisson summation}, the fact $\sum_{d\mid q}\mu(d)/d = \varphi(q)/q$ and the bound $\widehat{\eta}(t) \ll_\delta 1$. 	
\end{proof}	

\begin{lem}[B\'{e}zout's identity]\label{lem:Bezout}
For any coprime integers $m,n \geq 1$,
$$
\dfrac{1}{mn} \equiv \dfrac{\overline{m}}{n} + \dfrac{\overline{n}}{m} \pmod{1}.
$$
\end{lem}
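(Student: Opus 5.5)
The identity is elementary. I would prove it by clearing denominators and reducing to a congruence statement. Here $\overline{m}$ denotes an integer with $m\overline{m}\equiv 1 \pmod n$, and $\overline{n}$ an integer with $n\overline{n}\equiv 1 \pmod m$; these exist by coprimality. The claim is that
$$
\frac{\overline{m}}{n} + \frac{\overline{n}}{m} - \frac{1}{mn} = \frac{m\overline{m} + n\overline{n} - 1}{mn}
$$
is an integer. The key step is therefore to show that $mn$ divides $m\overline{m} + n\overline{n} - 1$.

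\emph{Divisibility by $n$.} Reduce modulo $n$. The term $n\overline{n}$ vanishes, and $m\overline{m}\equiv 1 \pmod n$, so
$$
m\overline{m} + n\overline{n} - 1 \equiv 1 + 0 - 1 = 0 \pmod n.
$$

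\emph{Divisibility by $m$.} The same argument with the roles of $m$ and $n$ swapped gives $m\overline{m} + n\overline{n} - 1 \equiv 0 + 1 - 1 = 0 \pmod m$.

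\emph{Conclusion.} Since $(m,n)=1$, the Chinese remainder theorem upgrades the two divisibilities to divisibility by $mn$. Hence the displayed fraction is an integer, which is exactly the congruence modulo $1$.

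It remains to note that the statement is well defined. Each inverse is determined only modulo the relevant denominator, and changing $\overline{m}$ by a multiple of $n$ (or $\overline{n}$ by a multiple of $m$) changes the corresponding fraction by an integer. So the identity does not depend on which representatives are chosen.

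There is no real obstacle. The only point needing care is the one just addressed: the modular inverses are taken modulo different moduli, $n$ and $m$ respectively, and the argument must fix them consistently. In the paper's applications one typically writes this as $e_{mn}(h) = e_n(h\overline{m})\,e_m(h\overline{n})$ for integers $h$, which follows immediately by multiplying the congruence by $h$.
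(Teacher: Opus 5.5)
Your proof is correct and follows essentially the same route as the paper, which simply says the identity is a consequence of the Chinese remainder theorem. You have written out the details: you check that $m\overline{m}+n\overline{n}-1$ vanishes modulo $n$ and modulo $m$ separately, then combine the two using $(m,n)=1$.
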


\begin{proof}
This is a consequence of
the Chinese remainder theorem.	
\end{proof}	

\begin{lem}[Sums of GCDs]\label{lem:sums of GCDs}
	Let $q \geq 1$ be an integer and let $N\geq 1$. Then
	$$
	\sum_{1 \leq n \leq N}(n,q) \leq N \tau(q). 
	$$	
\end{lem}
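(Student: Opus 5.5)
The plan is to group the summands according to the value of the gcd. For each divisor $d \mid q$, the integers $1 \leq n \leq N$ with $(n,q) = d$ are multiples of $d$, so there are at most $N/d$ of them. Each such term contributes exactly $d$, so the total contribution from this divisor is at most $d \cdot N/d = N$. Summing over the $\tau(q)$ divisors $d \mid q$ then gives the stated bound $N\tau(q)$.

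An alternative route, which I would mention only as a check, writes $(n,q) = \sum_{d \mid (n,q)} \varphi(d)$. Interchanging the order of summation yields
$$
\sum_{1 \leq n \leq N}(n,q) = \sum_{d\mid q}\varphi(d)\left\lfloor N/d\right\rfloor \leq N\sum_{d \mid q}\varphi(d)/d \leq N\tau(q),
$$
since $\varphi(d)/d \leq 1$.

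There is no genuine obstacle. The only point needing care is that the count of multiples of $d$ in $[1,N]$ is $\lfloor N/d\rfloor \leq N/d$, which holds for real $N \geq 1$ as well as for integer $N$.
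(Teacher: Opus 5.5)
Your proof is correct and is essentially the paper's argument. The paper majorizes $(n,q)\le\sum_{d\mid(n,q)}d$ and switches the order of summation, so each divisor $d\mid q$ contributes at most $d\lfloor N/d\rfloor\le N$; this is the same per-divisor count you perform, and your alternative via $(n,q)=\sum_{d\mid(n,q)}\varphi(d)$ is that same switch with an exact identity in place of the majorization.
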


\begin{proof}
	Follows from
	$
	(n,q) \leq \sum_{d \mid (q,n)}d 
	$ and a switch in the order of summation. 
\end{proof}

\begin{lem}[Properties of $\rho$]\label{lem:Reversal-type function}
	For an integer $L \geq 0$, define the function
	$$\rho : [0, b^{L+1}) \cap \mathbb{Z} \to [0, b^{L+1}) \cap \mathbb{Z}$$
	by
	\begin{equation}\label{def of rho in lemma}
	\rho(n) = \sum_{0 \leq \ell \leq L}d_\ell(n)b^{L-\ell},
	\end{equation}
	where $d_\ell(n)$ denotes the $\ell$-th digit of $n$ in its $b$-adic expansion
	$$
	n = \sum_{0 \leq \ell \leq L} d_\ell(n) b^\ell.
	$$
	Then $\rho$ satisfies the following properties:
	
	(A) $\rho$ is bijective.
	
	(B) For any integers $n \in [0, b^{L+1})$ and $0 \leq \ell \leq L$, if $n \leq b^\ell$, then
	$$
	b^{L-\ell} \mid \rho(n).
	$$
	
	(C) For any integers $n \in [0, b^{L+1})$ and $0 \leq \ell \leq L$, if $b^\ell \mid n$, then
	$$
	\rho(n) < b^{L+1-\ell}.
	$$
	
	(D) For any integers $m,n \in [0, b^{L+1})$ with no carry in their $b$-adic addition,
	$$
	\rho(m + n) = \rho(m) + \rho(n).
	$$
	
	(E) If $0 \leq n < b^{L+1}$ and $b \nmid n$, then $\rho(n) \geq b^{L}$. 
\end{lem}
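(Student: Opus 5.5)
The plan is to argue directly from the definition \eqref{def of rho in lemma}, viewing $\rho$ as acting on digit vectors. The basic observation is that every integer $n \in [0, b^{L+1})$ has a unique $b$-adic expansion $n = \sum_{0\leq \ell\leq L} d_\ell(n) b^\ell$ with digits $d_\ell(n)\in\{0,\dots,b-1\}$. This gives a bijection
$$
[0,b^{L+1})\cap\mathbb{Z} \longleftrightarrow \{0,\dots,b-1\}^{L+1}.
$$
Under this identification, $\rho$ is the map induced by the index reversal $\ell \mapsto L-\ell$. Each property then reduces to a statement about which digits vanish or how digits add.

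For (A), I will note that the index reversal permutes $\{0,\dots,b-1\}^{L+1}$ and is an involution. Since the digits of $\rho(n)$ are exactly $d_{L-\ell}(n)$, we get $\rho(\rho(n)) = n$. Hence $\rho$ is a bijection of $[0,b^{L+1})\cap\mathbb{Z}$ onto itself.

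For (B), I split into two cases.
\begin{itemize}
\item If $n < b^\ell$, then $d_j(n)=0$ for all $j\geq \ell$. So $\rho(n)=\sum_{j<\ell} d_j(n) b^{L-j}$, and each exponent satisfies $L-j \geq L-\ell+1$. Hence $b^{L-\ell+1}$, and in particular $b^{L-\ell}$, divides $\rho(n)$.
\item If $n=b^\ell$, then $\rho(n)=b^{L-\ell}$ directly.
\end{itemize}

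For (C), the hypothesis $b^\ell\mid n$ with $n<b^{L+1}$ forces $d_j(n)=0$ for $j<\ell$. Then
$$
\rho(n)=\sum_{\ell\leq j\leq L} d_j(n) b^{L-j} \leq (b-1)\sum_{0\leq k\leq L-\ell} b^k = b^{L-\ell+1}-1 < b^{L+1-\ell}.
$$

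For (D), the absence of carry means $d_j(m)+d_j(n)\leq b-1$ for every $j$. Consequently $\sum_j (d_j(m)+d_j(n))b^j$ is the $b$-adic expansion of $m+n$, which lies in $[0,b^{L+1})$. By uniqueness of the expansion, $d_j(m+n)=d_j(m)+d_j(n)$, and linearity of \eqref{def of rho in lemma} in the digits gives $\rho(m+n)=\rho(m)+\rho(n)$.

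For (E), $b\nmid n$ means $d_0(n)\geq 1$. All terms in \eqref{def of rho in lemma} are nonnegative, so $\rho(n)\geq d_0(n)b^L\geq b^L$.

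None of these steps presents a real obstacle. The only point needing care is the boundary case $n=b^\ell$ in (B), which is handled separately above. The other point to be careful about is invoking uniqueness of the $b$-adic expansion in (D), which is what ensures the digitwise sum is the genuine expansion of $m+n$.
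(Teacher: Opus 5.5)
Your proposal is correct and follows the same route as the paper, which simply asserts that all five properties follow from the definition of $\rho$ and the uniqueness of the $b$-adic representation on $[0,b^{L+1})$. You have written out the digit-level checks the paper leaves implicit, including the boundary case $n=b^\ell$ in (B) and the verification that $m+n<b^{L+1}$ in (D).
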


\begin{proof}
	Follows directly from the definition of $\rho$ and the uniqueness of the $b$-adic representation of integers in the interval $[0, b^{L+1})$. 	
\end{proof}

In what follows we define, for integers $c,d,q$ with $q \geq 1$, 
\begin{align}\label{def of Gauss type sum}
\G^*(c;q) &= \dfrac{1}{\sqrt{q}}\sum_{n(q)}^*e_q\left(cn^2\right),\\
\K_2(c,d;q) &= \dfrac{1}{\sqrt{q}}\sum_{n(q)}^*e_q\left(c\overline{n}^2 + dn\right)\label{def of Kloosterman-type S}.
\end{align}

\begin{lem}[Multiplicative property]\label{lem:multiplicative property}
For any integers $c,d,q,r$ with $q,r \geq 1$ and $(q,r) = 1$,
$$
\K_2(c,d;qr) = \K_2(c\overline{r},d\overline{r};q)\K_2(c\overline{q},d\overline{q};r).
$$	
\end{lem}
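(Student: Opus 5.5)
We prove the multiplicative property of $\K_2$ via the Chinese remainder theorem, in the usual way for Kloosterman-type sums. Since $(q,r)=1$, every invertible residue $n$ modulo $qr$ can be written uniquely as $n = x r + y q$ with $x$ an invertible class modulo $q$ and $y$ an invertible class modulo $r$. As $x$ and $y$ run over their reduced residue systems, $n$ runs exactly once over the invertible classes modulo $qr$. Also $\sqrt{qr} = \sqrt{q}\sqrt{r}$, so the normalisations match. It therefore suffices to show that the phase splits:
$$
\dfrac{c\overline{n}^2 + dn}{qr} \equiv \dfrac{c\overline{r}\,\overline{x}^2 + d\overline{r}\,x}{q} + \dfrac{c\overline{q}\,\overline{y}^2 + d\overline{q}\,y}{r} \pmod 1 ,
$$
where in each term the bars denote inverses modulo the displayed denominator.

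\textbf{Step 1: reduce to residues.} A rational $u/(qr)$ with $u \in \mathbb{Z}$ is determined modulo $1$ by $u \bmod qr$. By the Chinese remainder theorem, $u/(qr) \equiv \overline{r}u/q + \overline{q}u/r \pmod 1$, with $\overline{r}$ the inverse of $r$ modulo $q$ and $\overline{q}$ the inverse of $q$ modulo $r$. This is exactly Lemma \ref{lem:Bezout} multiplied by $u$. Apply it with $u = c\overline{n}^2 + dn$, where $\overline{n}$ is the inverse of $n$ modulo $qr$.

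\textbf{Step 2: compute each piece.} Modulo $q$ we have $n \equiv xr$, so $\overline{n} \equiv \overline{x}\,\overline{r}$ and $\overline{n}^2 \equiv \overline{x}^2\overline{r}^2 \pmod q$. Hence
$$
\overline{r}\,u \equiv c\overline{r}^3\overline{x}^2 + d x \pmod q .
$$
This does not yet have the stated shape. To fix it, we reparametrise instead by $n = x\overline{r}' r + y\overline{q}' q$, where $\overline{r}'$ is an integer inverse of $r$ modulo $q$ and $\overline{q}'$ an integer inverse of $q$ modulo $r$; equivalently $n \equiv x \pmod q$ and $n \equiv y \pmod r$. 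This is still a bijection of reduced residue systems. Then modulo $q$,
$$
\overline{r}\,u \equiv \overline{r}\,(c\overline{x}^2 + dx) = c\overline{r}\,\overline{x}^2 + d\overline{r}\,x \pmod q ,
$$
which is exactly the phase of $\K_2(c\overline{r},d\overline{r};q)$. Symmetrically, the modulo-$r$ piece gives the phase of $\K_2(c\overline{q},d\overline{q};r)$.

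\textbf{Step 3: factor.} The sum over $n$ modulo $qr$ now factors as a product of the sum over $x$ modulo $q$ and the sum over $y$ modulo $r$. Combined with the $\sqrt{qr}$ normalisation, this gives the identity.

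The only point needing care is choosing the CRT parametrisation $n \equiv x \pmod q$, $n \equiv y \pmod r$, rather than $n = xr + yq$. With the latter the twist would land on $c$ as $\overline{r}^3$ instead of the claimed $\overline{r}$. There is no genuine obstacle beyond this bookkeeping.
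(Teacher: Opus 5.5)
Your proof is correct and is the same Chinese remainder theorem argument the paper cites. Splitting $u/(qr)$ via Lemma~\ref{lem:Bezout} and parametrising $n$ by its residues modulo $q$ and modulo $r$ factors both the phase and the $\sqrt{qr}$ normalisation as claimed. Your side remark is also accurate: the parametrisation $n=xr+yq$ gives the equivalent form $\K_2(c\overline{r}^3,d;q)$, which turns into $\K_2(c\overline{r},d\overline{r};q)$ after the substitution $x\mapsto \overline{r}x$.
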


\begin{proof}
	Follows from the Chinese remainder theorem.
\end{proof}

In the following, $\odd(n)$ denotes the odd part of an integer $n$; that is, if $n = 2^\ell r$ with $r$ odd, then $\odd(n) = r$.

\begin{lem}[Coprime Gauss sums]\label{lem: Gauss sum variant}
	If $q = 2^\ell r$ is an integer with $r$ odd and $(a,q)=1$, then
	\begin{equation}\label{eqn:gauss sum bound}
	\G^*(a;q) \ll \mathbf{1}_{\ell \leq 3} \mu(r) \tau(q).
	\end{equation}
	For arbitrary $a$,
	$$
	\G^*(a;q) \ll \mu\left(\odd\left(\dfrac{q}{(a,q)}\right)\right)\mathbf{1}_{16 \nmid q/(a,q)} \tau(q)\sqrt{(a,q)}.
	$$
\end{lem}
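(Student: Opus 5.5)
We reduce to prime powers with Lemma~\ref{lem:multiplicative property}'s analogue for Gauss sums, and then evaluate $\G^*$ at prime powers directly. Since $n\mapsto n^2$ is compatible with the Chinese remainder theorem, for $q=q_1q_2$ with $(q_1,q_2)=1$ we have $\G^*(a;q)=\G^*(a\overline{q_2};q_1)\G^*(a\overline{q_1};q_2)$; this is the same reasoning as Lemma~\ref{lem:multiplicative property}. It therefore suffices to show, for $(a,p)=1$:
\begin{itemize}
\item $|\G^*(a;p^k)|\le 2$ when $k=1$, and $\G^*(a;p^k)=0$ when $p$ is odd and $k\ge2$;
\item for $p=2$, $|\G^*(a;2^k)|\ll 1$, and $\G^*(a;2^k)=0$ when $k\ge4$.
\end{itemize}
Taking products then gives a bound of $C^{\omega(q)}\ll\tau(q)$, with the vanishing encoded by $\mathbf{1}_{\ell\le3}\mu(r)$. (We read $\mu(r)$ as $|\mu(r)|$, or rather as the indicator that $r$ is square-free, since the bound is in absolute value.)

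\textbf{Odd primes.} For $k=1$, write $\sum^*_{n(p)}e_p(an^2)=\sum_{n(p)}e_p(an^2)-1$. The classical Gauss sum has modulus $\sqrt p$, so $|\G^*|\le 1+p^{-1/2}$.

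For $k\ge2$, write $n=u+p^{k-1}v$ with $u$ running over units mod $p^{k-1}$ and $v$ mod $p$. Then $an^2\equiv au^2+2aup^{k-1}v \pmod{p^k}$, because $p^{2k-2}\equiv0 \pmod{p^k}$ when $k\ge2$. The inner $v$-sum is $\sum_v e_p(2auv)=0$, since $p\nmid 2au$. Hence $\G^*=0$.

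\textbf{The prime $2$.} The same substitution $n=u+2^{k-1}v$ gives an inner sum $\sum_{v(2)}e_2(auv)$, which is not zero in general. So we shift by $2^{k-2}$ instead. For $k\ge4$, write $n=u+2^{k-2}v$ with $u$ odd mod $2^{k-2}$ and $v$ mod $4$. Then
$$n^2=u^2+2^{k-1}uv+2^{2k-4}v^2,$$
and $2k-4\ge k$ for $k\ge4$, so the last term vanishes mod $2^k$. The inner sum is $\sum_{v(4)}e_2(auv)$, which is $0$ because $au$ is odd. Hence $\G^*(a;2^k)=0$ for $k\ge4$. For $k\le3$ the trivial bound $|\G^*|\le\sqrt{2^k}\le 2\sqrt2$ suffices. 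This gives \eqref{eqn:gauss sum bound}.

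\textbf{Arbitrary $a$.} Let $g=(a,q)$, $a=ga'$ and $q=gq'$, so $(a',q')=1$. Then $e_q(an^2)=e_{q'}(a'n^2)$ depends only on $n$ mod $q'$. Each class mod $q'$ lifts to exactly $g$ classes mod $q$, and the unit condition mod $q$ refines the unit condition mod $q'$. Because of this refinement we cannot simply multiply $\G^*(a';q')$ by $g$.

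Instead, write $\mathbf{1}_{(n,q)=1}=\sum_{d\mid(n,q)}\mu(d)$ for the primes dividing $q$ but not $q'$, or more cleanly use multiplicativity to reduce to prime powers again. At a prime power $p^k$ with $p^j\,\|\,a$:
\begin{itemize}
\item If $j\ge k$, the sum counts units, giving $|\G^*|\le\varphi(p^k)/p^{k/2}\le p^{k/2}=\sqrt{(a,p^k)}$.
\item If $j<k$, the sum equals $p^{j}\sum^*_{n(p^{k-j})}e_{p^{k-j}}(a'n^2)$, divided by $p^{k/2}$. This is $p^{j/2}\G^*(a';p^{k-j})$, and the first part handles that factor.
\end{itemize}
Multiplying over primes gives the factor $\sqrt{(a,q)}\,\tau(q)$. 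The vanishing conditions transfer to $q'=q/(a,q)$: its odd part must be square-free and $16\nmid q'$.

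The only delicate point is the $2$-adic vanishing. The choice of shift $2^{k-2}$, and the restriction $k\ge4$ it forces, is exactly what produces the threshold $\ell\le3$.
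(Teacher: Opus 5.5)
Your proof is correct, but it takes a different route from the paper. The paper cites Lemma 3 of Heath-Brown--Tolev for the coprime bound \eqref{eqn:gauss sum bound} and simply asserts that the bound for arbitrary $a$ ``follows from the first''. You instead derive everything directly:
\begin{itemize}
\item the CRT factorization $\G^*(a;q_1q_2)=\G^*(a\overline{q_2};q_1)\G^*(a\overline{q_1};q_2)$;
\item vanishing at odd $p^k$ with $k\ge 2$, via the shift $n=u+p^{k-1}v$;
\item $2$-adic vanishing for $k\ge 4$, via the shift $n=u+2^{k-2}v$. This checks out, since $2k-4\ge k$ and $\sum_{v(4)}(-1)^{auv}=0$ for odd $au$.
\end{itemize}

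What your route buys, beyond being self-contained, is an explicit treatment of the step the paper leaves implicit. Globally, $\sum^*_{n(q)}e_{q'}(a'n^2)$ is not equal to $(a,q)\sum^*_{n(q')}e_{q'}(a'n^2)$ when some prime divides $(a,q)$ but not $q'=q/(a,q)$. Your localization to prime powers removes this issue. When $k-j\ge 1$, being a unit mod $p^k$ and being a unit mod $p^{k-j}$ are the same condition, which gives $p^{j/2}\G^*(a';p^{k-j})$. When $j\ge k$, the trivial count $\varphi(p^k)/p^{k/2}\le\sqrt{(a,p^k)}$ suffices. The citation, in exchange, buys brevity.

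One point of precision about your final bound. The phrase ``$C^{\omega(q)}\ll\tau(q)$'' is only valid for $C\le 2$. What you actually have is $|\G^*(a;q)|\le O(1)\cdot 2^{\omega(q)}\sqrt{(a,q)}$. This uses your bound $|\G^*(a';p)|\le 1+p^{-1/2}\le 2$ at each odd prime, together with a single bounded factor at $p=2$. The constant $2\sqrt2$ from the $2$-adic trivial bound must not be raised to the power $\omega(q)$.
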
	

\begin{proof}
	For the first inequality, see for instance Lemma 3 of Heath-Brown and Tolev \cite{Heath-Brown-Tolev} and the definition in Equation (7) there. The second inequality follows from the first.	
\end{proof}	

In the following, we use the notation $e_q(n^{-1}) := e_q(\overline{n})$. Thus for any positive integer $k$, $e_q(n^{-k})  := e_q(\overline{n}^k)$. 

\begin{lem}[Sums with binomial Laurent phases]\label{lem: Shparlinski bound}
	For any (possibly negative)	integers $c,d,k,\ell,q$ with $q \geq 1$, $k\neq \ell$ and $k,\ell \neq 0$,
	$$
	\sum_{\substack{n(q)}}^*e_q\left(cn^k + d n^\ell\right)\ll_{k,\ell, \epsilon} q^\epsilon \sqrt{q(c,d,q)}
	$$
	for any $\epsilon > 0$. In particular,
	$$
	\K_2(c,d;q) \ll_\epsilon q^\epsilon \sqrt{(c,d,q)}
	$$
	for any $\epsilon > 0$. 
\end{lem}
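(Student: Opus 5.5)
The plan is to reduce to coprime data and to prime-power moduli, and then to use Weil's bound for primes and a stationary-phase (Postnikov/Cochrane--Zheng) analysis for higher prime powers. Write $S(c,d;q)=\sum_{n(q)}^*e_q(cn^k+dn^\ell)$ and set $g=(c,d,q)$, $c=gc'$, $d=gd'$, $q=gq'$. The phase then depends only on $n \bmod q'$, and each unit class mod $q'$ lifts to exactly $\varphi(q)/\varphi(q')\le g$ unit classes mod $q$. Hence
$$|S(c,d;q)|\le g\,|S(c',d';q')|.$$
It therefore suffices to prove $S(c',d';q')\ll_{k,\ell,\epsilon} q'^{\epsilon}\sqrt{q'}$ when $(c',d',q')=1$, because then $g\sqrt{q'}=\sqrt{gq}=\sqrt{q(c,d,q)}$.

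Next, the Chinese remainder theorem gives a twisted multiplicativity analogous to Lemma~\ref{lem:multiplicative property}. For $q=q_1q_2$ with $(q_1,q_2)=1$, $S(c,d;q)$ factors as $S(c\overline{q_2},d\overline{q_2};q_1)\,S(c\overline{q_1},d\overline{q_1};q_2)$; this uses the substitution $n\mapsto$ the CRT lift and $e_q(x)=e_{q_1}(x\overline{q_2})e_{q_2}(x\overline{q_1})$. The twisted coefficients keep the coprimality condition $(c,d,p)=1$ at each prime. It is then enough to show $|S(c,d;p^m)|\le C_{k,\ell}\,p^{m/2}$ whenever $p\nmid(c,d)$. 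Multiplying over the primes of $q$ gives $C^{\omega(q)}\sqrt q\ll_\epsilon q^\epsilon\sqrt q$. Primes $p\le P_0(k,\ell)$ with bounded exponent contribute at most a constant by the trivial bound. The $\K_2$ statement follows at once by taking $k=-2$, $\ell=1$ and dividing by $\sqrt q$.

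For $m=1$, I view $f(x)=cx^k+dx^\ell$ as a rational function on $\mathbb{G}_m$ with poles only at $0$ and $\infty$, of total pole order at most $|k|+|\ell|$. If $p>|k|+|\ell|$ and $p\nmid(c,d)$, then $f$ is not of the form $h^p-h+\text{const}$, since $k\ne\ell$ and both are nonzero and smaller than $p$. Weil's bound for such sums then gives $|S|\le(|k|+|\ell|)\sqrt p$. For $m\ge2$ and $p$ large, I split $n=u+p^{\lceil m/2\rceil}v$ and expand the phase to first order in $v$ (second order when $m$ is odd). Summing over $v$ restricts $u$ to solutions of the critical-point congruence
$$f'(u)=u^{\ell-1}\bigl(kcu^{k-\ell}+\ell d\bigr)\equiv0 \pmod{p^{\lfloor m/2\rfloor}}.$$
For odd $m$, the extra middle term gives a Gauss sum of modulus $\sqrt p$. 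If $p\nmid c\,d\,k\,\ell\,(k-\ell)$, the congruence $u^{k-\ell}\equiv-\ell d\overline{kc}$ has at most $|k-\ell|$ nonsingular roots mod $p$, each lifting uniquely by Hensel. If exactly one of $c,d$ is divisible by $p$, then $f'(u)$ is a unit times a power of $u$, so there are no critical points and the sum vanishes. In the generic case this gives $|S|\le|k-\ell|\,p^{m/2}$.

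The main obstacle is the remaining finitely many primes $p$ dividing $k\ell(k-\ell)$, where $m$ can be unbounded and the roots of the critical-point congruence may be singular. Here I would use the Cochrane--Zheng refinement. Factor out the bounded power $p^{t}\,\|\,\gcd(k,\ell,k-\ell)$-related content of $f'$, and run the same splitting with $m$ shifted by $t$. The number of critical points mod $p^{\lfloor (m-t)/2\rfloor}$ is then bounded in terms of $k,\ell$, for example by counting solutions of $x^{k-\ell}\equiv a$ in $(\mathbb{Z}/p^j)^*$, which is $\le 2|k-\ell|\,p^{v_p(k-\ell)}$. This loses only a factor $p^{O_{k,\ell}(1)}=O_{k,\ell}(1)$. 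Since there are only $O_{k,\ell}(1)$ such primes, the losses remain a constant depending only on $k,\ell$, and the bound follows.
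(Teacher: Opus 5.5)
Your proposal is correct in substance, but it takes a genuinely different route from the paper. The paper gives no argument and simply cites Theorem~1 of Shparlinski. You instead rebuild the bound locally:
\begin{itemize}
\item remove $(c,d,q)$ by lifting unit classes;
\item use twisted multiplicativity to reduce to prime powers;
\item apply Weil's bound for Laurent polynomials at $m=1$;
\item apply $p$-adic stationary phase at $p^m$ for $m\ge 2$.
\end{itemize}
This buys a self-contained proof and a slightly sharper shape, $|S|\ll_{k,\ell} C_{k,\ell}^{\omega(q)}\sqrt{q(c,d,q)}$, which shows that the $q^{\epsilon}$ comes only from $\omega(q)$. Your even-$m$ identity $S=p^{j}\sum_{f'(u)\equiv 0 \pmod{p^j}}e_{p^{2j}}(f(u))$ is also the same Sali\'e-type mechanism as Lemma~\ref{lem:Quadratic Kloosterman sums with square moduli}. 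The citation buys brevity and hides the bookkeeping at small primes and high prime powers.

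Two spots in your sketch need tightening.

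First, for odd $m=2j+1$ at good primes, split as $n=u+p^{j}v$ with $v$ taken modulo $p^{j+1}$. Your split at $p^{\lceil m/2\rceil}$ produces no quadratic term and loses a factor $\sqrt p$, which is not uniform in $p$. With the correct split, the resulting Gauss sum has modulus $\sqrt p$. The reason is that at a root of $g(u)=kcu^{k-\ell}+\ell d$ one has $f''(u)=u^{\ell-1}g'(u)=kc(k-\ell)u^{k-2}$. This is a unit under your generic hypothesis $p\nmid cdk\ell(k-\ell)$.

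Second, at the bad primes you do not need any Cochrane--Zheng refinement. The exact first-order split at $p^{\lceil m/2\rceil}$ already gives
\[
|S(c,d;p^m)|\le p^{\lceil m/2\rceil}\,\#\{u\in(\mathbb{Z}/p^{\lfloor m/2\rfloor}\mathbb{Z})^*: f'(u)\equiv 0 \pmod{p^{\lfloor m/2\rfloor}}\}.
\]
The relevant content of $f'$ is $t=\min(v_p(kc),v_p(\ell d))$, not something attached to $\gcd(k,\ell)$ alone. Since $p\nmid(c,d)$, it satisfies $t\le\max(v_p(k),v_p(\ell))$.

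The count above is then at most $2|k-\ell|\,p^{t}$, for two reasons:
\begin{itemize}
\item after dividing out $p^t$, if exactly one coefficient is a $p$-unit there are no critical points;
\item otherwise one counts solutions of $x^{k-\ell}\equiv a$, and $x^{e}\equiv a$ has at most $2|e|$ solutions in $(\mathbb{Z}/p^{i}\mathbb{Z})^*$ for every $p$ and $i$.
\end{itemize}
The total loss $2|k-\ell|\,p^{t+1/2}$ is therefore $O_{k,\ell}(1)$, since only $O_{k,\ell}(1)$ bounded primes are involved.
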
	

\begin{proof}
	See Theorem 1 of Shparlinski \cite{Shparlinski}. 
\end{proof}

\begin{lem}[Completion of sums with squared inverses]\label{lem:completion of squared inverses}
With the same assumptions of \textnormal{Lemma \ref{lem:poisson summation}},  
\begin{align*}
&\sum_{(n,q)=1 }\eta(n/N)e_q\left(a\overline{n}^2\right)\\
 &= \dfrac{N\mathcal{\G^*}(a;q)\widehat{\eta}(0)}{\sqrt{q}} + \dfrac{N}{\sqrt{q}}\sum_{1 \leq |h| \leq H}\widehat{\eta}\left(\dfrac{Nh}{q}\right)  \K_2(a,h;q)
+O_{A,\epsilon,\delta}\left(X^{-A}\right)\\
&\ll 
 N\tau(q)\mu^2\left(\odd\left(\dfrac{q}{(a,q)}\right)\right)\mathbf{1}_{16 \nmid q/(a,q)} \sqrt{\dfrac{(a,q)}{q}} + \sqrt{q}X^{2\epsilon + \delta}
\end{align*}
for any $A,\epsilon > 0$ and any $H \geq qX^{\epsilon + \delta}/N$.
\end{lem}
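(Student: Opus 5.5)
We will prove the last lemma (completion of sums with squared inverses) by splitting the sum into residue classes modulo $q$, applying Poisson summation to each class, and then bounding the resulting complete sums.

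First we complete the sum. Writing $n\equiv m \ (q)$ with $m$ running over invertible classes modulo $q$, the left-hand side equals
$$
\sum_{m(q)}^* e_q\left(a\overline{m}^2\right)\sum_{n\equiv m (q)}\eta(n/N).
$$
To each inner sum we apply Lemma \ref{lem:poisson summation} with the same $H \geq qX^{\epsilon+\delta}/N$. The error there is $O(X^{-A})$ per class, so the total error is $q\cdot O(X^{-A})$; since $q\le X^{O(1)}$, this is again $O(X^{-A'})$ after renaming $A$.

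We then swap the $m$- and $h$-sums. The $h=0$ term gives
$$
\frac{N\widehat\eta(0)}{q}\sum_{m(q)}^* e_q\left(a\overline m^2\right).
$$
Substituting $m\mapsto \overline m$ turns this complete sum into $\sqrt q\,\G^*(a;q)$, so the term equals $N\G^*(a;q)\widehat\eta(0)/\sqrt q$. For $h\neq0$ the complete sum is
$$
\sum_{m(q)}^*e_q\left(a\overline m^2+hm\right)=\sqrt q\,\K_2(a,h;q),
$$
directly from the definition (\ref{def of Kloosterman-type S}). This gives the stated identity.

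Next we bound each term.

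\emph{Main term.} We have $|\widehat\eta(0)|\le\|\eta\|_1\ll 1$, using $\|\eta\|_\infty\ll 1$ and the support of $\eta$ being fixed. The second bound of Lemma \ref{lem: Gauss sum variant} gives
$$
\G^*(a;q)\ll \mu^2\left(\odd(q/(a,q))\right)\mathbf 1_{16\nmid q/(a,q)}\tau(q)\sqrt{(a,q)}.
$$
Here $\mu$ may be replaced by $\mu^2$, since only the absolute value is bounded. This yields the first term of the bound.

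\emph{Remaining terms.} By Lemma \ref{lem: Shparlinski bound}, $\K_2(a,h;q)\ll q^{\epsilon}\sqrt{(a,h,q)}$. Using $(a,h,q)\le(h,q)$ and the trivial bound $|\widehat\eta|\ll 1$, these terms contribute
$$
\ll \frac{N q^{\epsilon}}{\sqrt q}\sum_{1\le |h|\le H}\sqrt{(h,q)}\le \frac{Nq^\epsilon}{\sqrt q}\sum_{|h|\le H}(h,q)\ll \frac{Nq^\epsilon}{\sqrt q}H\tau(q),
$$
by Lemma \ref{lem:sums of GCDs}. Taking $H \asymp qX^{\epsilon+\delta}/N$ (which we may, since the expansion holds for any admissible $H$), this is $\ll \sqrt q\, X^{\epsilon+\delta}q^{\epsilon}\tau(q)$. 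Because $q\le X^{O(1)}$, the factor $q^\epsilon\tau(q)$ is $\ll X^{\epsilon}$ after rescaling $\epsilon$, so the contribution is $\ll \sqrt q X^{2\epsilon+\delta}$.

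The only delicate point is the uniformity of these manipulations. Lemma \ref{lem:poisson summation} must be applied uniformly in the residue $m$, and the choice of $H$ in the bound must be reconciled with the arbitrary $H$ allowed in the identity. Both are harmless: the tail $|h|>H$ is negligible, so truncating at the minimal admissible $H$ changes the sum only by $O(X^{-A})$.
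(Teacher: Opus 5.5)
Your proposal is correct and follows the paper's own proof, which simply says the lemma follows from Lemma~\ref{lem:poisson summation} (applied class by class modulo $q$), Lemma~\ref{lem: Gauss sum variant}, Lemma~\ref{lem: Shparlinski bound} and Lemma~\ref{lem:sums of GCDs}, exactly as you do. One small fix: when you pass from $\sqrt{(h,q)}$ to $(h,q)$, keep the range $1\le |h|\le H$, because including $h=0$ adds a term $(0,q)=q$ that the bound $H\tau(q)$ does not absorb.
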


\begin{proof}
	Follows directly from Lemmas \ref{lem:poisson summation}, \ref{lem: Gauss sum variant}, \ref{lem: Shparlinski bound}, \ref{lem:sums of GCDs}.
\end{proof}

\begin{lem}[Twisted Poisson summation]\label{lem:twisted Poisson summation}
Let $\eta: \mathbb{R} \to \mathbb{C}$ be a smooth compactly supported function and let $\phi: \mathbb{R} \to \mathbb{R}$ be smooth. Then for any reals $N,T \neq 0$ and integers $a,q$ with $q \geq 1$,
$$
\sum_{n\equiv a (q)}\eta(n/N)e\left(T\phi(n/N)\right) = \dfrac{N}{q\sqrt{T}}\sum_{h} e_q(ah) \mathfrak{J}_{\eta,\phi}\left(\dfrac{Nh}{q}; T\right),
$$
where
$$
\mathfrak{J}_{\eta,\phi}(\xi; T) = \sqrt{T}\int_{\mathbb{R}}\eta(x) e\left(T\phi(x) - \xi x\right)dx.
$$	
\end{lem}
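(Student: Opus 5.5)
The identity should follow from the classical Poisson summation formula applied to a single smooth, compactly supported function. The plan mirrors the proof of Lemma~\ref{lem:poisson summation}, but we keep the whole dual sum and track the normalisation by $\sqrt{T}$.

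First, write $n = a + mq$ with $m \in \mathbb{Z}$, so the left-hand side becomes $\sum_{m} F(m)$, where
$$
F(t) = \eta\left(\frac{a+tq}{N}\right)e\left(T\phi\left(\frac{a+tq}{N}\right)\right).
$$
Since $\eta$ is smooth and compactly supported and $\phi$ is smooth, $F$ is smooth and compactly supported. Its Fourier transform therefore decays faster than any power, and Poisson summation $\sum_m F(m) = \sum_h \widehat{F}(h)$ holds with absolute convergence.

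Next, compute $\widehat{F}(h) = \int_{\mathbb{R}} F(t)e(-ht)\,\der t$ by the change of variables $x = (a+tq)/N$, so that $t = (Nx - a)/q$ and $\der t = (N/q)\,\der x$. The factor $e(-ht)$ becomes $e(-hNx/q)\,e(ha/q) = e_q(ah)\,e(-Nhx/q)$, which gives
$$
\widehat{F}(h) = \frac{N}{q}\,e_q(ah)\int_{\mathbb{R}}\eta(x)\,e\left(T\phi(x) - \frac{Nh}{q}x\right)\der x = \frac{N}{q\sqrt{T}}\,e_q(ah)\,\mathfrak{J}_{\eta,\phi}\left(\frac{Nh}{q};T\right).
$$
Summing over $h$ yields the claim. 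If $N<0$ the orientation of the integral flips, so one uses $|N|$ in the Jacobian; I would either note that the formula is intended for $N>0$, or absorb the sign by replacing $\eta(x)$ with $\eta(-x)$.

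There is no real obstacle here. The only care needed is the normalisation: $\sqrt{T}$ is just a formal factor, cancelled by the $\sqrt{T}$ built into the definition of $\mathfrak{J}_{\eta,\phi}$. If $T<0$, one fixes a branch of $\sqrt{T}$ and uses the same choice in the definition of $\mathfrak{J}_{\eta,\phi}$, so the identity holds verbatim.
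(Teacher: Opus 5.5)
Your proposal is correct and follows the paper's proof: write $n=a+qm$, apply classical Poisson summation to $m\mapsto f\bigl((a+qm)/N\bigr)$ with $f=\eta\, e(T\phi)$, and change variables in the Fourier integral. Your remark about $N<0$ is also right, since there the prefactor must be $|N|$ rather than $N$. This is harmless because the paper only applies the lemma with $N>0$.
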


\begin{proof}
Letting $f(x) = \eta(x) e(T\phi(x))$, one observes that the sum on the left equals
$$
\sum_{n\equiv a (q)} f(n/N) = \sum_{m}f\left(\dfrac{a + qm}{N}\right) 
$$
and applies the usual Poisson summation formula to the $m$-sum.	
\end{proof}	

\begin{lem}[Oscillatory integrals]\label{lem:Huxley stat phase integrals}
Let $c < d$ be real numbers. Suppose $f,g : \mathbb{R} \to \mathbb{R}$ are smooth functions  such that, for some $\Theta_f, \Omega_f, \Omega_g$ with $ \Omega_f \gg d-c$, 
$$
f^{(j)}(x) \ll \dfrac{\Theta_f}{\Omega_f^j}, \hspace{2em} g^{(k)}(x) \ll \dfrac{1}{\Omega_g^k}
$$
for $j=2,3$ and $k = 0,1,2$. Assume $g(c) = g(d)=0$ and let
$$
\mathfrak{I} = \int_{c}^d g(x) e(f(x))dx.
$$
Then we have the following two facts.

(A) Suppose $f',f''$ do not vanish on $[c,d]$. Let $\Lambda = \min_{[c,d]}|f'(x)|$. Then
$$
\mathfrak{J} \ll \dfrac{\Theta_f}{\Omega_f^2 \Lambda^3}\left(1 + \dfrac{\Omega_f}{\Omega_g} + \dfrac{\Omega_f^2}{\Omega_g^2}\dfrac{\Lambda}{\Theta_f/\Omega_f}\right).
$$

(B) Suppose $f$ changes sign from negative to positive at the unique point $x_0 \in (c,d)$. Let $\kappa = \min(d-x_0, x_0 - c)$. Further assume $f''(x) \gg \Theta_f/\Omega_f^2$ and $f^{(4)}(x) \ll \Theta_f/\Omega_f^4$. Then
$$
\mathfrak{I} = \dfrac{g(x_0)e(f(x_0) + 1/8)}{\sqrt{f''(x_0)}} + O\left(\dfrac{\Omega_f^4}{\Theta_f^2 \kappa^3} + \dfrac{\Omega_f}{\Theta_f^{3/2}} + \dfrac{\Omega_f^3}{\Theta_f^{3/2} \Omega_g^2}\right).
$$
\end{lem}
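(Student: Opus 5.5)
The plan is to cite the standard first- and second-derivative oscillatory integral estimates (Huxley, \emph{Area, Lattice Points and Exponential Sums}, Lemmas 5.5.5 and 5.5.6, whose hypotheses match ours). In parallel, I will check directly that they follow from repeated integration by parts and a local Taylor expansion. I read the hypothesis in (B) as saying that $f'$ (not $f$) changes sign at $x_0$, since only that makes $x_0$ a stationary point. I also read $\mathfrak{J}$ in (A) as $\mathfrak{I}$.

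For (A), I would write $e(f(x)) = \frac{1}{2\pi i f'(x)}\frac{d}{dx}e(f(x))$ and integrate by parts once. The boundary terms vanish because $g(c)=g(d)=0$. This leaves $-\frac{1}{2\pi i}\int_c^d (g/f')'\,e(f)\,dx$.

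To get the stated dependence on $\Lambda^{-3}$, I would integrate by parts a second time. The boundary terms now involve $(g/f')'/f'$ at $c,d$, where $g$ vanishes, so they reduce to $g'(x)/f'(x)^2$. These terms must be controlled by monotonicity rather than by vanishing: since $f''\neq 0$, $f'$ is monotone, and $|f'|\ge\Lambda$ throughout. The remaining integral is $\int |((g/f')'/f')'|\,dx$. Expanding gives terms of the shape
\[
\frac{g''}{f'^2},\qquad \frac{g'f''}{f'^3},\qquad \frac{g f'''}{f'^3},\qquad \frac{g f''^2}{f'^4}.
\]
I would bound each with $g^{(k)}\ll\Omega_g^{-k}$ and $f^{(j)}\ll\Theta_f\Omega_f^{-j}$. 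Using $|f'|\ge\Lambda$ and the monotonicity of $f'$ (so $\int|f''|/|f'|^{k}\ll\Lambda^{1-k}$), the integral can be taken without losing a factor $d-c$. Collecting terms and using $\Omega_f\gg d-c$ gives exactly the three-term bound stated in (A).

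For (B), I would split $[c,d]$ with a smooth partition of unity into a neighbourhood of $x_0$ of width comparable to $\kappa$ and the two outer pieces. On the outer pieces $|f'(x)|\gg|x-x_0|\,\Theta_f/\Omega_f^2$ by the lower bound on $f''$, so part (A) applies. This produces error terms of size $\Omega_f^4/(\Theta_f^2\kappa^3)$ together with terms involving $\Omega_g$.

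On the central piece I would substitute $f(x)=f(x_0)+\tfrac12 f''(x_0)y^2$ (Morse lemma), using $f^{(4)}\ll\Theta_f\Omega_f^{-4}$ to control the change of variables and its Jacobian. I would then evaluate the resulting Fresnel integral, $\int e(\tfrac12 f''(x_0)y^2)\,dy = e(1/8)/\sqrt{f''(x_0)}$. The Taylor remainders in $g$ and in the Jacobian contribute $\Omega_f/\Theta_f^{3/2}$ and $\Omega_f^3/(\Theta_f^{3/2}\Omega_g^2)$.

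The main obstacle is the bookkeeping in (B): matching the cutoff at scale $\kappa$ with the truncated Fresnel integral so that the tail errors are exactly $\Omega_f^4/(\Theta_f^2\kappa^3)$. I would follow Huxley's proof of Lemma 5.5.6 verbatim for this step rather than re-derive it.
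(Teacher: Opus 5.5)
Your proposal takes the same route as the paper, whose proof consists only of the remark that the lemma is a special case of Huxley's oscillatory-integral estimates as formulated by Munshi. Your reading of the two misprints ($\mathfrak{J}$ for $\mathfrak{I}$ in (A), and $f'$ rather than $f$ changing sign at $x_0$ in (B)) is correct, and it matches how (B) is later applied at the stationary point of $Tx^{-2}+hx$. If you write out your integration-by-parts check for (A), note that only $f''$ and $f'''$ are controlled, so the cruder bound $1/(\Omega_g\Lambda^2)$ is not always dominated by the stated right-hand side. Instead, bound the endpoint terms $g'/f'^2$ via $|g'(c)|,|g'(d)|\le (d-c)\sup|g''|\ll \Omega_f/\Omega_g^2$ (Rolle, since $g(c)=g(d)=0$), and bound the $g'f''/f'^3$ integrand pointwise using $d-c\ll\Omega_f$ rather than via monotonicity of $f'$.
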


\begin{proof}
	This is a special case of Huxley \cite{Huxley} phrased similarly as in Munshi \cite{Munshi}.
\end{proof}	

\begin{lem}[Integrals with squared inverses]\label{lem:integrals with squared inverses}
Let $\eta: \mathbb{R} \to \mathbb{R}$ be a smooth function compactly supported on some interval $[c,d]$ with $c > 0$ and $1 \ll  c < d \ll 1$ such that, for some fixed $\delta > 0$,  $\|\eta^{(j)}\|_\infty \ll_j X^{\delta j}$ for each $j\geq 0$. Let $T \gg 1$ positive and $h\in \mathbb{R}$. Then 
\begin{align}\label{stat for squared inverses}
\int_{\mathbb{R}}\eta(x)e\left(Tx^{-2} + hx\right)dx = 
\dfrac{\widetilde{\eta}(h/T) }{\sqrt{T}}  e\left(\dfrac{3}{2^{2/3}}T^{1/3}h^{2/3}\right)
+ O\left(\dfrac{X^{2\delta}}{\max\left(T^{3/2}, |h|^{3/2}\right)}\right) ,
\end{align}
where
\begin{equation}\label{def of tilde{eta}}
\widetilde{\eta}(t) =  \dfrac{e(1/8)2^{2/3}\mathbf{1}_{t > 0}t^{-2/3}\eta\left(2^{1/3}t^{-1/3}\right)}{\sqrt{6}}.  
\end{equation}
\end{lem}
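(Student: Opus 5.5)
The plan is to apply Lemma \ref{lem:Huxley stat phase integrals} to the integral with $g=\eta$ and phase $f(x)=Tx^{-2}+hx$, splitting according to whether $f$ has a stationary point near the support of $\eta$. First I record the basic data. We have $f'(x)=-2Tx^{-3}+h$, $f''(x)=6Tx^{-4}$ and $f^{(j)}(x)\ll_j T$ for $x\asymp 1$. Also $f''>0$ throughout, so $f'$ is increasing. Thus on a fixed neighbourhood $[c/2,2d]$ of the support we may take $\Theta_f=T$, $\Omega_f\asymp 1$ (so that $\Omega_f\gg d-c$), $f''\gg\Theta_f/\Omega_f^2$ and $f^{(4)}\ll \Theta_f/\Omega_f^4$. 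The hypothesis $\|\eta^{(k)}\|_\infty\ll X^{\delta k}$ gives $\Omega_g=X^{-\delta}$. I will apply the lemma on the interval $[c/2,2d]$, at whose endpoints $\eta$ vanishes. This choice is what keeps the distance $\kappa$ from the stationary point to the endpoints $\gg 1$.

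\textbf{Stationary case.} Suppose $h>0$ and $x_0:=(2T/h)^{1/3}\in[3c/4,3d/2]$, which forces $h\asymp T$. Then $f'$ changes sign from negative to positive exactly at $x_0$, and $\kappa\gg1$. A direct computation gives
$$f(x_0)=T^{1/3}h^{2/3}\left(2^{-2/3}+2^{1/3}\right)=\frac{3}{2^{2/3}}T^{1/3}h^{2/3},$$
and also $f''(x_0)=6T(h/2T)^{4/3}$. Hence, with $t=h/T$,
$$\frac{\eta(x_0)}{\sqrt{f''(x_0)}}=\frac{2^{2/3}t^{-2/3}\eta(2^{1/3}t^{-1/3})}{\sqrt6\sqrt T}.$$
Together with the factor $e(1/8)$ from part (B), this is exactly $\widetilde\eta(h/T)T^{-1/2}$ times the stated phase. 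The error terms in (B) are $\Omega_f^4/(\Theta_f^2\kappa^3)\ll T^{-2}$, $\Omega_f/\Theta_f^{3/2}\ll T^{-3/2}$ and $\Omega_f^3/(\Theta_f^{3/2}\Omega_g^2)\ll X^{2\delta}T^{-3/2}$. All of these are $\ll X^{2\delta}/\max(T,|h|)^{3/2}$ because $|h|\asymp T$.

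\textbf{Non-stationary case.} This covers $h\le0$, and also $h>0$ with $x_0\notin[3c/4,3d/2]$. In this case $\eta(x_0)=0$ (or $h\le 0$, so that $\mathbf 1_{t>0}=0$), and the main term in \eqref{stat for squared inverses} vanishes. It then suffices to bound the integral. Since $f'$ is monotone and avoids $0$ near the support, with a margin, we get $\Lambda=\min|f'|\gg \max(T,|h|)\gg1$. Indeed, if $h\le0$ both terms of $f'$ have the same sign. If $x_0$ lies far from the support, then one of $2Tx^{-3}$ and $h$ dominates the other by a constant factor. Part (A) of Lemma \ref{lem:Huxley stat phase integrals} then gives
$$\frac{T}{\Lambda^3}\left(1+X^{\delta}+X^{2\delta}\frac{\Lambda}{T}\right)\ll \frac{X^{2\delta}}{\Lambda^2}\ll\frac{X^{2\delta}}{\max(T,|h|)^{3/2}}.$$
Here I used $T\le\Lambda$ and $\Lambda\gg1$.

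\textbf{Main obstacle.} The delicate point is the bookkeeping at the boundary between the two cases. We need uniform lower bounds for $|f'|$ when $x_0$ lies just outside $[3c/4,3d/2]$, and simultaneously $\kappa\gg1$ when $x_0$ lies inside. The choice of the nested intervals $[c,d]\subset[3c/4,3d/2]\subset[c/2,2d]$ with fixed constants handles this. I expect the rest to be a routine verification of the hypotheses of Huxley's lemma.
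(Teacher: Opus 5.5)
Your argument is the paper's: you split according to whether the stationary point $x_0=(2T/h)^{1/3}$ lies near the support. In the non-stationary range, where $\widetilde{\eta}(h/T)=0$, you use part (A) of Lemma~\ref{lem:Huxley stat phase integrals} with $\Lambda\gg\max(T,|h|)$. Otherwise you use part (B) with $\Theta_f=T$, $\Omega_f=1$, $\Omega_g=X^{-\delta}$ and $\kappa\gg1$. Your evaluations of $f(x_0)$, $f''(x_0)$ and of the error terms are correct.

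One slip needs fixing. In the non-stationary case you cannot apply (A) on $[c/2,2d]$ as announced, because for $x_0\in[c/2,3c/4)\cup(3d/2,2d]$ the derivative $f'$ vanishes inside that interval, so $\Lambda=0$. Apply (A) on $[c,d]$ instead, which is legitimate since $\eta(c)=\eta(d)=0$. There the margin $[c,d]\subset[3c/4,3d/2]$ gives exactly the bound $\Lambda\gg\max(T,|h|)$ you claim. Keep $[c/2,2d]$ only for part (B).
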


\begin{proof}
Write $Tx^{-2} + hx = T\phi(x)$, where $\phi(x) = x^{-2} + h'x$ with $h' = h/T$. Note
$$
\phi'(x) = -2x^{-3} + h'
$$
and
$$
\phi^{(j)}(x) = (-1)^j (j+1)! x^{-j - 2}
$$
for $j\geq 2$. Clearly, for $x$ in the support of $\eta$, we have $\phi^{(j)}(x) \asymp_j 1$ for each $j\geq 2$. Note that if $h' \geq K$ or $h' \leq k$ for some sufficiently large $0 < K \ll 1$ and small $0 < k \ll 1$, respectively (with each depending only on $c,d \asymp 1$) then, for $x$ in the support of $\eta$, we have $\phi'(x) \gg \max(1, |h'|)$. We may apply Lemma \ref{lem:Huxley stat phase integrals} (A) with $f(x) = T\phi(x)$, $\Theta_f = T$, $\Omega_f = 1$, $\Lambda = T\max(1, |h'|) = \max(T,|h|)$, $\Omega_g = X^{-\delta}$. This gives (for this case of $h' = h/T$)
$$
\int_{\mathbb{R}}\eta(x)e(Tx^{-2} + hx)dx \ll \dfrac{X^{2\delta}}{\max(T^2, h^2)}.
$$
If we further impose that $k$ and $K$ are sufficiently small and large, respectively, so that also $\widetilde{\eta}(h/T = h') = 0$, it then follows that (\ref{stat for squared inverses}) holds in our case of $h'$. 

Consider now the case when $k < h' < K$ with $1 \ll k < K \ll 1$ positive as above. In this case, $h \asymp T$. Note $\phi$ is stationary (i.e., $\phi'(x)=0$) at the unique point 
$$
x_0 = \left(\dfrac{2}{h'}\right)^{1/3} \asymp 1. 
$$
Since $\eta$ is compactly supported on $[c,d]$, we may find $u,v$ positive with $1 \ll u < c < d < v \ll 1$ and $x_0 \in (u,v)$ such that $\kappa = \min(v -x_0, x_0-u) \gg 1$ and
$$
\int_{\mathbb{R}}\eta(x)e(T\phi(x))dx = \int_{u}^v\eta(x)e(T\phi(x))dx.
$$ 
We may now apply Lemma \ref{lem:Huxley stat phase integrals} (B) with $f=T\phi$ and $\Theta_f = T \asymp \max(T, |h|)$, $\Omega_f = 1$, $\Omega_g = X^{-\delta}$ as before. The result follows after computing the error term and $\phi(x_0), \phi''(x_0)$.
\end{proof}

\begin{lem}[Twisted sums with squared inverses]\label{lem:twisted sums with squared inverses}
Let $q,r \geq 1$, $(q,r)=1$ and $a$ be integers with $q,r \leq X^{O(1)}$. Then with the same assumptions of Lemma \ref{lem:integrals with squared inverses} and the extra assumptions $ X^{2\delta} \ll T \ll X^{O(1)}$ and $1 \ll N \ll X^{O(1)}$, 	
\begin{align*}
&\sum_{(n,qr)=1}\eta(n/N)e_q\left(a \overline{n}^2\right)e\left(T\dfrac{N^2}{n^2}\right)\\ 
&= \dfrac{N}{\sqrt{qT}} \sum_{d \mid r} \dfrac{\mu(d)}{d} \sum_{h} \widetilde{\eta}\left(\dfrac{Nh}{dqT}\right)\K_2(a, -\overline{d}h;q)e\left(3T \left(\dfrac{Nh}{2dq T}\right)^{2/3}\right)+ O\left(\sqrt{\dfrac{q}{T}} X^{3\delta}\right),
\end{align*}
where $\widetilde{\eta}$ is as defined in Lemma \ref{lem:integrals with squared inverses}. 
\end{lem}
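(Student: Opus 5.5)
First remove the coprimality to $r$ by M\"obius inversion, $\mathbf{1}_{(n,r)=1}=\sum_{d\mid (n,r)}\mu(d)$. Since $(q,r)=1$ and $(n,q)=1$ is kept, the left side becomes
$$
\sum_{d\mid r}\mu(d)\sum_{(m,q)=1}\eta\Bigl(\frac{m}{N/d}\Bigr)e_q\bigl(a\overline{d}^2\overline{m}^2\bigr)e\Bigl(T\frac{(N/d)^2}{m^2}\Bigr),
$$
after writing $n=dm$ and noting $(d,q)=1$. The inner sum is again a sum of the original shape, with $N$ replaced by $N/d$ and $a$ by $a\overline{d}^2$.

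Next split $m$ into residue classes $m\equiv c\ (q)$ with $c$ invertible. For each class apply Lemma~\ref{lem:twisted Poisson summation}, with $\phi(x)=x^{-2}$ and $N$ replaced by $N/d$. This gives
$$
\frac{N}{dq}\sum_h\Bigl(\sum_{c(q)}^* e_q\bigl(a\overline{d}^2\overline{c}^2+hc\bigr)\Bigr)\int_{\mathbb{R}}\eta(x)e\Bigl(Tx^{-2}-\frac{Nh}{dq}x\Bigr)dx .
$$
The character sum equals $\sqrt{q}\,\K_2(a\overline{d}^2,h;q)$. Substituting $c\mapsto \overline{d}c$ gives
$$
\K_2(a\overline{d}^2,h;q)=\K_2(a,\overline{d}h;q),
$$
since $\overline{\overline{d}c}^{\,2}=d^2\overline{c}^2$. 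Replacing $h$ by $-h$ to match the sign of the frequency variable yields $\K_2(a,-\overline{d}h;q)$.

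Now evaluate the integral by Lemma~\ref{lem:integrals with squared inverses} with frequency $Nh/(dq)$. The main term is
$$
T^{-1/2}\,\widetilde{\eta}\Bigl(\frac{Nh}{dqT}\Bigr)\,e\Bigl(\frac{3}{2^{2/3}}T^{1/3}\Bigl(\frac{Nh}{dq}\Bigr)^{2/3}\Bigr).
$$
This phase equals $e\bigl(3T(Nh/(2dqT))^{2/3}\bigr)$, which matches the claimed formula. Inserting it gives exactly the stated main term, with prefactor $\frac{N}{dq}\sqrt{q}\,T^{-1/2}=\frac{N}{d\sqrt{qT}}$.

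The main obstacle is controlling the accumulated error terms uniformly in $d$ and $h$.
\begin{itemize}
\item The error from Lemma~\ref{lem:integrals with squared inverses} is $X^{2\delta}/\max(T^{3/2},|h'|^{3/2})$, where $h'=Nh/(dq)$.
\item It is multiplied by $\frac{N\sqrt{q}}{dq}|\K_2|\ll \frac{N}{d\sqrt q}q^{\epsilon}\sqrt{(h,q)}$, using Lemma~\ref{lem: Shparlinski bound}.
\end{itemize}
I would sum over $h$ in two ranges.
\begin{itemize}
\item For $|h'|\le T$, there are about $dqT/N+1$ values of $h$, each contributing $T^{-3/2}$. This gives $\ll \frac{N}{d\sqrt q}\bigl(\frac{dqT}{N}+1\bigr)T^{-3/2}X^{2\delta+\epsilon}$, which is roughly $\sqrt{q/T}\,X^{2\delta+\epsilon}$ plus $N/(d\sqrt q\,T^{3/2})$.
\item For $|h'|>T$, the tail $\sum_{h}|h'|^{-3/2}$ converges and gives the same order.
\end{itemize}
The gcd factors $(h,q)$ average to $X^{\epsilon}$ by Lemma~\ref{lem:sums of GCDs}.

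The leftover term $N/(d\sqrt q\,T^{3/2})$ must also be absorbed into $\sqrt{q/T}X^{3\delta}$. For this I would rely on the hypothesis $T\gg X^{2\delta}$. If needed, I would also observe that when $N$ is large compared with $qT$, only $h=0$ has $Nh/(dqT)$ of size about $1$; the $h\neq0$ terms then sit far from the stationary range, where case (A) of Lemma~\ref{lem:Huxley stat phase integrals} gives the sharper decay $X^{2\delta}/\max(T,|h'|)^2$.

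Finally, $\tau(r)\ll X^{\epsilon}$ and $\sum_{d\mid r}1/d\ll X^{\epsilon}$ are absorbed by taking $\epsilon<\delta$. This is where I expect the bookkeeping to be delicate, and I would check the extreme ranges of $N$ against $qT$ carefully.
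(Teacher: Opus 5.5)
Your route matches the paper's. You use M\"obius inversion over $d\mid r$, residue classes mod $q$, and Lemma~\ref{lem:twisted Poisson summation} with $\phi(x)=x^{-2}$. The substitution $c\mapsto\overline{d}c$ gives $\K_2(a,-\overline{d}h;q)$. Lemma~\ref{lem:integrals with squared inverses} handles the integrals, and Lemmas~\ref{lem: Shparlinski bound} and \ref{lem:sums of GCDs} handle the error. The gap is the zero frequency $h=0$, which you never separate.

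The ``$+1$'' in your count of $h$ with $|h'|\le T$ is exactly $h=0$. It produces the leftover $N/(d\sqrt{q}\,T^{3/2})$; up to $q^{\epsilon}$ it is really $\sqrt{(a,q)}\,N/(d\sqrt q\,T^{3/2})$, since at $h=0$ Lemma~\ref{lem: Shparlinski bound} gives $\sqrt{(a,0,q)}=\sqrt{(a,q)}$. This term cannot be absorbed into $\sqrt{q/T}\,X^{3\delta}$. That would require $N\ll dqTX^{3\delta}$, and no hypothesis bounds $N$ in terms of $qT$. For instance, with $q=r=1$, $T=X^{2\delta}$, $N=X$, the leftover is $X^{1-3\delta}$ against an allowed $X^{2\delta}$.

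Neither remedy you suggest works. $T\gg X^{2\delta}$ alone is not enough. Your fallback is aimed at the $h\neq0$ terms, which your tail estimate already handles. Moreover, when $N\gg qT$ no $h$ has $Nh/(dqT)\asymp 1$; $h=0$ gives $0$. Even case (A) of Lemma~\ref{lem:Huxley stat phase integrals} applied at $h=0$ only gives $X^{2\delta}/T^{2}$, a fixed power of $T$, which cannot beat arbitrarily large $N$.

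The missing idea is that the $h=0$ integral is negligibly small, not merely $O(T^{-3/2})$. On the support of $\eta$ the phase $Tx^{-2}$ has derivative $-2Tx^{-3}\asymp T$ and no stationary point. So $j$ integrations by parts, using $\|\eta^{(j)}\|_\infty\ll X^{\delta j}$, give $\int_{\mathbb{R}}\eta(x)e(Tx^{-2})\,dx\ll_j (X^{\delta}/T)^{j}\ll X^{-\delta j}$, i.e.\ $\ll_A X^{-A}$.

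Hence the $h=0$ term is $\ll (N/d)X^{-A}$. This is negligible precisely because $N\ll X^{O(1)}$, a hypothesis your argument never used. Since $\widetilde\eta(0)=0$, discarding $h=0$ leaves the stated main term unchanged.

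With $h=0$ removed, there are $\ll dqT/N$ nonzero $h$ with $|h|\le dqT/N$, with no $+1$. Your two-range bookkeeping then gives $\sum_{h\neq0}\sqrt{(h,q)}\min\bigl(1,(dqT/(N|h|))^{3/2}\bigr)\ll dqT\tau(q)/N$. The total error is $\ll\tau(q)\tau(r)\sqrt{q/T}\,X^{2\delta+\epsilon}\ll\sqrt{q/T}\,X^{3\delta}$, exactly as in the paper.
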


\begin{proof}
	By the M\"{o}bius inversion formula, 
the sum on the left equals
\begin{align*}
&\sum_{d \mid r}\mu(d) \sum_{\substack{(n,q)=1 \\ n \equiv 0 (d)}} \eta(n/N)e_q\left(a\overline{n}^2\right)e\left(T\dfrac{N^2}{n^2}\right) \\
&=  \sum_{d \mid r}\mu(d) \sum_{\substack{(n,q)=1 }} \eta(dn/N)e_q\left(a\overline{dn}^2\right)e\left(T\dfrac{N^2}{d^2n^2}\right).
\end{align*}
Splitting the $n$-sum according to the class of $n$ modulo $q$, the above is
\begin{equation}\label{splitting the n-sum according to the class of n mod q}
\sum_{d \mid r}\mu(d) \sum_{m(q)}^* e_q\left(a \overline{dm}^2\right)\sum_{\substack{n \equiv m (q) }} \eta(dn/N)e\left(T\dfrac{N^2}{d^2n^2}\right). 
\end{equation}
Letting $\phi(x) = x^{-2}$, Lemma \ref{lem:twisted Poisson summation} shows that the $n$-sum above equals
$$
\dfrac{N}{dq \sqrt{T}}\sum_{h}e_{q}(-m h) \mathfrak{J}_{\eta,\phi}\left(-\dfrac{Nh}{dq}; T\right),
$$	
where $\mathfrak{J}_{\eta,\phi}$ is defined as in Lemma \ref{lem:twisted Poisson summation}. Inserting this in (\ref{splitting the n-sum according to the class of n mod q}), switching orders of summation and substituting $m$ with $\overline{d}m$, we find that (\ref{splitting the n-sum according to the class of n mod q}) equals
\begin{equation}\label{after Poisson}
\dfrac{N}{\sqrt{qT}}\sum_{d \mid r} \dfrac{\mu(d)}{d}  \sum_{h}\K_2(a, -\overline{d}h;q)\mathfrak{J}_{\eta,\phi}\left(-\dfrac{Nh}{dq}; T\right). 
\end{equation}
Since $\|\eta^{(j)}\|_\infty \ll_j X^{\delta j}$ for each $j\geq 0$ and $T \gg X^{2\delta}$ by assumption, several integrations by parts show 
$
\mathfrak{J}_{\eta,\phi}\left(0; T\right) \ll_A X^{-A}
$
for any $A > 0$. Thus (\ref{after Poisson}) equals 
\begin{equation}\label{nonzero frequencies}
\dfrac{N}{\sqrt{qT}}\sum_{d \mid r} \dfrac{\mu(d)}{d}  \sum_{h \neq 0}\K_2(a, -\overline{d}h;q)\mathfrak{J}_{\eta,\phi}\left(-\dfrac{Nh}{dq}; T\right) + O_A\left(X^{-A}\right)
\end{equation}
for any $A > 0$. 
By Lemma \ref{lem:integrals with squared inverses}, 
\begin{align}
\mathfrak{J}_{\eta,\phi}\left(-\dfrac{Nh}{dq}; T\right) &= \widetilde{\eta}\left(\dfrac{Nh}{dqT}\right) e\left(\dfrac{3}{2^{2/3}} \left(\dfrac{\sqrt{T}Nh}{dq}\right)^{2/3}\right)\nonumber\\
&\qquad +O\left(\dfrac{X^{2\delta}}{T\max\left(1, (N|h|/dqT)^{3/2}\right)}\right).\nonumber
\end{align}
By Lemma \ref{lem: Shparlinski bound}, the contribution of the error term to (\ref{nonzero frequencies}) is
$$
\ll_\epsilon \dfrac{NX^{\epsilon + 2\delta}}{T^{3/2}\sqrt{q}}\sum_{d \mid r} \dfrac{1}{d} \sum_{h\geq 1} \sqrt{(h,q)}\min\left(1, \left(\dfrac{dqT}{Nh}\right)^{3/2}\right). 
$$
An application of Lemma \ref{lem:sums of GCDs} and summation by parts show that the sum over $h \geq 1$ is 
$
\ll dq\tau(q)T/N. 
$
Hence the expression above is
$
\ll \sqrt{q}X^{3\delta}/\sqrt{T}
$
and so (\ref{nonzero frequencies}) yields the proposition.
\end{proof}

\begin{lem}[Quadratic Kloosterman sums with square moduli]\label{lem:Quadratic Kloosterman sums with square moduli} 
	Let $c,d$ and $q \geq 1$ be integers. Then
	$$
	\K_2\left(c,d;q^2\right) = \sum_{\substack{1 \leq \ell \leq q \\ (\ell,q)=1 \\ d\ell^3 \equiv 2c (q)}} e_{q^2}\left(c \overline{\ell}^2 + d\ell\right).
	$$	
\end{lem}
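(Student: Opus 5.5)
The plan is to evaluate $\K_2(c,d;q^2)$ directly by the standard ``$p$-adic stationary phase'' decomposition of the summation variable modulo $q^2$. The idea is to split each residue modulo $q^2$ into a part modulo $q$ and a correction of size $q$, expand the phase to first order in the correction, and then sum out the correction, which is a complete linear sum.

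First, every invertible residue $n$ modulo $q^2$ is written uniquely as $n = \ell + qm$ with $1\leq \ell\leq q$, $(\ell,q)=1$, and $m$ running over all residues modulo $q$. Coprimality to $q^2$ is equivalent to coprimality to $q$, so this is a bijection onto $(\mathbb{Z}/q^2\mathbb{Z})^*$.

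Next, the inverse modulo $q^2$ is expanded. Since $(\ell+qm)(\overline{\ell}-qm\overline{\ell}^2)\equiv 1-q^2m^2\overline{\ell}^2\equiv 1 \pmod{q^2}$, we get
$$
\overline{n}\equiv \overline{\ell}-qm\overline{\ell}^2 \pmod{q^2},
\qquad\text{hence}\qquad
\overline{n}^2\equiv \overline{\ell}^2-2qm\overline{\ell}^3 \pmod{q^2}.
$$
Here $\overline{\ell}$ denotes an inverse of $\ell$ modulo $q^2$; its reduction modulo $q$ is the inverse of $\ell$ modulo $q$. Substituting gives
$$
c\overline{n}^2+dn \equiv c\overline{\ell}^2+d\ell + qm\bigl(d-2c\overline{\ell}^3\bigr)\pmod{q^2}.
$$

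Then the $m$-sum is carried out:
$$
\sum_{m(q)} e_{q}\bigl(m(d-2c\overline{\ell}^3)\bigr)=q\,\mathbf{1}_{d\equiv 2c\overline{\ell}^3\,(q)}.
$$
This is well defined because the bracket only matters modulo $q$. The congruence $d\equiv 2c\overline{\ell}^3 \pmod q$ is equivalent to $d\ell^3\equiv 2c\pmod q$, since $\ell$ is invertible modulo $q$. Dividing by the normalization $\sqrt{q^2}=q$ in the definition (\ref{def of Kloosterman-type S}) then yields exactly the stated formula.

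There is no real obstacle; the only point needing care is the inverse expansion. One must check that the second-order term $q^2m^2$ vanishes modulo $q^2$, and that reducing $\overline{\ell}$ (taken modulo $q^2$) to modulo $q$ gives the inverse there. This is what makes the condition depend only on $\ell$ modulo $q$.
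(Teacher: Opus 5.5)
Your proposal is correct and follows essentially the same route as the paper: write $n=\ell+qm$, expand $\overline{\ell+qm}\equiv\overline{\ell}-qm\overline{\ell}^2 \pmod{q^2}$, and sum out $m$ by orthogonality (Sali\'e's method). The only cosmetic difference is that the paper derives the inverse expansion from $\overline{1+qn}\equiv 1-qn \pmod{q^2}$, whereas you verify it by direct multiplication.
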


\begin{proof}
We follow Sali\'e's method (see for instance Cochrane-Zheng \cite{Cochrane-Zheng}). Recall that by definition, 
$$
\K_2(c,d;q^2) = \dfrac{1}{q} \sum_{\substack{n(q^2)}}^* e_{q^2}\left(f(n)\right),
$$
where $f(n) \equiv c\overline{n}^2 + dn \ (q^2)$.  Splitting the sum according to the class of $n$ modulo $q$, we have
$$
\K_2(c,d;q^2) =  \sum_{\substack{1 \leq \ell \leq q \\ (\ell,q)=1}} \dfrac{1}{q}\sum_{m(q)}e_{q^2}\left(f(\ell + qm)\right).
$$
For any integer $n$, note
$
\overline{1 + qn} \equiv 1 - qn \left(q^2\right).
$
This follows after multiplying both sides by $1 + qn$. Thus if $(\ell,q)=1$, 
$$
\overline{\ell + qm} \equiv \overline{\ell \left(1 + qm\overline{\ell}\right)} \equiv \overline{\ell}(1 - qm\overline{\ell}) \ (q^2). 
$$
Hence 
$$
\overline{\ell + qm}^2 \equiv \overline{\ell}^2 - 2qm\overline{\ell}^3 \ (q^2)
$$
and
$$
f(\ell + qm) \equiv c\overline{\ell}^2 - 2cqm\overline{\ell}^3 + d\ell + dqm \ (q^2).  
$$
It follows
\begin{align*}
\K_2(c,d;q^2) &= \sum_{\substack{1 \leq \ell \leq q \\ (\ell,q)=1}} e_{q^2}\left(c\overline{\ell}^2 + d\ell\right)\dfrac{1}{q}\sum_{m(q) }e_q\left(\left(d - 2c\overline{\ell}^3\right)m\right) \\
&=
\sum_{\substack{1 \leq \ell \leq q \\ (\ell,q)=1 \\ d\ell^3 \equiv 2c (q)}} e_{q^2}\left(c\overline{\ell}^2 + d\ell\right)
\end{align*}
by orthogonality.
\end{proof}	

\begin{lem}[Correlations of $\K_2$ sums]\label{lem:Correlations of K_2 sums}
Let $c,d,q$ be integers with $q \geq 1$. Then
$$
\left|\sum_{n(q)}\K_2(n,c;q)\overline{\K_2(n,d;q)}\right| \leq \left(c^2-d^2,q\right)\tau(q). 
$$	
\end{lem}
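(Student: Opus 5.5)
The plan is to expand the left side directly from the definition and reduce it to a character-sum count. By definition,
\[
\sum_{n(q)}\K_2(n,c;q)\overline{\K_2(n,d;q)}=\frac{1}{q}\sum_{x(q)}^*\sum_{y(q)}^* e_q(cx-dy)\sum_{n(q)}e_q\bigl(n(\overline{x}^2-\overline{y}^2)\bigr).
\]
Orthogonality in $n$ makes the inner sum equal to $q$ when $\overline{x}^2\equiv\overline{y}^2 \ (q)$, and $0$ otherwise. For units $x,y$ this condition is the same as $x^2\equiv y^2 \ (q)$. Writing $y\equiv ux$, it says that $u$ runs over the square roots of unity modulo $q$. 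The left side therefore equals
\[
\sum_{\substack{u(q)\\ u^2\equiv 1 (q)}}\ \sum_{x(q)}^* e_q\bigl((c-du)x\bigr).
\]

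Each inner sum is a Ramanujan sum $c_q(c-du)$, so $|c_q(c-du)|\le (c-du,q)$. This follows from $c_q(m)=\sum_{e\mid (m,q)}e\,\mu(q/e)$, which gives $|c_q(m)|\le\sigma((m,q))$. That estimate is slightly too weak. A cleaner route is to use $|c_q(m)|\le (m,q)$, which does hold (it is a standard bound). So it remains to show
\[
\sum_{u^2\equiv1 (q)}(c-du,q)\le (c^2-d^2,q)\,\tau(q).
\]

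For each $u$ with $u^2\equiv 1$, put $g=(c-du,q)$. Then $c\equiv du \ (g)$, so $c^2\equiv d^2u^2\equiv d^2 \ (g)$, hence $g\mid (c^2-d^2,q)$. Each term is thus at most $(c^2-d^2,q)$. Also, the number of square roots of unity modulo $q$ is at most $\tau(q)$: it is multiplicative, equals $2$ at odd prime powers and at most $4$ at powers of $2$, and in each case is bounded by the corresponding divisor-count factor, except at $2^k$ with $k\ge3$. There the count is $4$ while $\tau(2^k)=k+1\ge4$, so the bound still holds. At $k=2$ the count is $2\le 3$, and at $k=1$ it is $1\le 2$. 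Combining the two facts gives the claim.

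The main obstacle is to secure the sharp Ramanujan bound $|c_q(m)|\le (m,q)$ rather than $\sigma((m,q))$. I would prove it via multiplicativity of $c_q$. At a prime power $p^k$, the value $c_{p^k}(m)$ is either $\varphi(p^j)$-type (namely $p^k-p^{k-1}$ when $p^k\mid m$), or $-p^{k-1}$ when $p^{k-1}\,\|\,(m,p^k)$, or $0$. In every case its absolute value is at most $(m,p^k)$.
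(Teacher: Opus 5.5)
Your proof is correct and follows the paper's route: you expand, use orthogonality in $n$ to reduce to $\sum_{u^2\equiv 1\,(q)} c_q(c-du)$, and bound each Ramanujan sum by $|c_q(m)|\le (m,q)$. The paper instead handles the last step locally over prime powers, using the explicit roots $\pm1,\pm(1+2^{k-1})$ and the identity $c_{2^k}(n-d2^{k-1})=(-1)^d c_{2^k}(n)$; your observation that $(c-du,q)\mid(c^2-d^2,q)$ for every root $u$, together with the count of roots being at most $\tau(q)$, makes that case analysis unnecessary.
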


\begin{proof}
	Expanding the $\K_2$ sums, switching orders of summation and using orthogonality, the left hand side above equals
	$$
	\left|\sum_{\substack{\ell_1,\ell_2 (q) \\ (\ell_2\overline{\ell_1})^2  \equiv 1 (q)}}^*e_q\left(c\ell_1 - d\ell_2\right)\right| = \left|\sum_{\ell^2 \equiv 1 (q)}\cq_q(c - d\ell)\right|
	$$
	with $\cq_q$ Ramanujan's sum.
	The last follows after substituting $\ell_2$ with $\ell_2\ell_1$ and then summing over $\ell_1$.
	Using the multiplicativity of $\cq_q$ in the modulus and the Chinese remainder theorem, the above becomes
	$$
	\prod_{p^k \mid \mid q}\left|\sum_{\ell^2\equiv 1 (p^k)}\cq_{p^k}(c - d\ell)\right|.
	$$
	If $p$ is odd or $k=2$, the solutions to $\ell^2 \equiv 1 (p^k)$ are $\pm 1$, whence the sum is $\leq (c+d,p^k) + (c-d, p^k)$ by basic bounds for $\cq_{p^k}$. If $p=2$ and $k\geq 3$, the solutions are $\pm1, \pm (1 + 2^{k-1})$. Since $\cq_{2^k}(n-d2^{k-1}) = (-1)^d\cq_{2^k}(n)$ for any integer $n$, the sum is
	$
	\leq 2\left(c - d,2^k\right) + 2\left(c+d,2^k\right)
	$
	when $p=2$. 
	The result follows from $(c\pm d, p^k) \leq ((c \pm d)(c\mp d),p^k)$. 
\end{proof}	

\begin{lem}[Twisted incomplete sums of $\K_2$ sums]\label{lem:sums of K_2 sums}
Let $N \geq 1$ and let $q\geq 1$ be an integer. Then
$$
\sup_{\substack{\alpha \in \mathbb{R} \\ (a,q)=1\\c\in \mathbb{Z} }} \left|\sum_{n \leq N}e(\alpha n)\K_2(an,c;q)\right| \ll_\epsilon   q^{\epsilon}\min\left(N, \  \sqrt{q} +  \dfrac{N}{\sqrt{q}} \right)
$$	
for any $\epsilon > 0$. 
\end{lem}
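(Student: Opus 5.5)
The bound is the minimum of two estimates, so I will prove each separately: the trivial bound $q^{\epsilon}N$ from Lemma \ref{lem: Shparlinski bound}, and the bound $q^{\epsilon}(\sqrt{q}+N/\sqrt{q})$ by opening up the $\K_2$ sums and summing a geometric series in $n$. Everything will be uniform in $\alpha\in\mathbb{R}$, $c\in\mathbb{Z}$ and $(a,q)=1$, so the supremum is harmless.

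\emph{Trivial bound.} By Lemma \ref{lem: Shparlinski bound}, $\K_2(an,c;q)\ll_\epsilon q^{\epsilon}\sqrt{(an,c,q)}$. Since $(a,q)=1$, the number $(an,c,q)$ divides $(n,q)$, so $\sqrt{(an,c,q)}\leq (n,q)$. Lemma \ref{lem:sums of GCDs} then gives
$$\sum_{n\leq N}|\K_2(an,c;q)|\ll_\epsilon q^{\epsilon}N\tau(q)\ll_\epsilon q^{2\epsilon}N.$$

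\emph{Nontrivial bound.} I insert the definition of $\K_2$ and exchange the order of summation:
$$\sum_{n\leq N}e(\alpha n)\K_2(an,c;q)=\frac{1}{\sqrt{q}}\sum_{m(q)}^{*}e_q(cm)\sum_{n\leq N}e\Bigl(n\Bigl(\alpha+\frac{a\overline{m}^2}{q}\Bigr)\Bigr).$$
The inner sum is geometric, so it is $\ll\min\bigl(N,\|\alpha+a\overline{m}^2/q\|^{-1}\bigr)$, where $\|\cdot\|$ is the distance to the nearest integer. Write $r\equiv a\overline{m}^2 \ (q)$. As $m$ runs over the invertible classes modulo $q$, $r$ runs over invertible classes, and each class is hit at most as often as the congruence $x^2\equiv \overline{r}a\ (q)$ has solutions. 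By the Chinese remainder theorem this number is at most $2^{\omega(q)+2}\ll_\epsilon q^{\epsilon}$. Hence the whole expression is
$$\ll_\epsilon \frac{q^{\epsilon}}{\sqrt{q}}\sum_{r(q)}\min\Bigl(N,\Bigl\|\alpha+\frac{r}{q}\Bigr\|^{-1}\Bigr).$$

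The points $\alpha+r/q$ are $1/q$-spaced modulo $1$. At most $O(1)$ of them lie within $1/q$ of an integer, and I bound each of those terms by $N$. The remaining terms are bounded by $\ll\sum_{1\leq k\leq q}q/k\ll q\log q$. So the $r$-sum is $\ll N+q\log q$, and the twisted sum is $\ll_\epsilon q^{2\epsilon}(\sqrt{q}+N/\sqrt{q})$.

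Combining the two estimates and renaming $\epsilon$ gives the lemma. The only step needing care is the uniform $q^{\epsilon}$ bound on the number of square roots of a unit modulo $q$, including the factor coming from the $2$-part of $q$; this is standard, so I do not expect a genuine obstacle.
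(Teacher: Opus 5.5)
Your proof is correct and follows the paper's argument. The trivial bound comes from Lemma \ref{lem: Shparlinski bound} (you add Lemma \ref{lem:sums of GCDs}), and the nontrivial bound comes from opening $\K_2$, grouping $m$ by the class $a\overline{m}^2 \bmod q$ with multiplicity $\ll \tau(q)$ (the paper's weights $\beta(\ell)$), and then using the geometric-series bound together with the $1/q$-spacing of $\alpha + \ell/q$; the only cosmetic difference is that you take absolute values before grouping instead of carrying $\beta(\ell)$.
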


\begin{proof}
The first inequality with the bound $\ll_\epsilon q^\epsilon N$ follows from Lemma \ref{lem: Shparlinski bound}. With regards to the second inequality, expanding $\K_2(an,c;q)$ according to its definition and switching orders of summation, the $n$-sum above equals
\begin{equation}\label{trig}
\dfrac{1}{\sqrt{q}}\sum_{m(q)}^*e_q(cm)\sum_{n\leq N}e\left(\dfrac{a\overline{m}^2n}{q} + \alpha n\right) = \dfrac{1}{\sqrt{q}}\sum_{\substack{\ell(q)}}^* \beta(\ell)\sum_{n\leq N}e\left(\dfrac{a\ell n}{q} + \alpha n\right),
\end{equation}	
where 
$$
\beta(\ell) = \sum_{\substack{m(q) \\ \overline{m}^2\equiv \ell (q)}}^*e_q(c m) \ll \tau(q).
$$
Now the result follows from 
$$
\left|\sum_{n\leq N}e\left(\dfrac{a\ell n}{q} + \alpha n\right)\right| \leq \min\left(N, \dfrac{1}{\|\frac{a\ell}{q} + \alpha\|}\right)
$$
and the $1/q$-spacing mod $1$ of the points $\alpha + a\ell/q$ as $\ell$ runs over units mod $q$.
\end{proof}

\begin{lem}[Large sieve involving squares]\label{lem:Large sieve with square moduli}
	
	Let $D,N \geq 1$ and let $q$ be a natural number.	Then
	\begin{equation}\label{Delta}
	N\sup_{\alpha \in \mathbb{R}} \sum_{d \leq D} \sum_{\substack{h(qd^2) \\ \|\frac{h}{qd^2} - \alpha\|\leq 1/N}}^* 1 \ll_\epsilon \Delta_\epsilon(D,N,q)
	\end{equation}
	for any $\epsilon > 0$, where
	\begin{equation}\label{Delta def}
	\Delta_\epsilon(D,N,q) = (DN)^\epsilon\left(1 + \dfrac{q}{N}\right)\left(qD^3 + N\sqrt{D}\right) .
	\end{equation}
	 Moreover for any sequence $(\gamma_n)$ of complex numbers,
	\begin{equation}\label{L^2 version}
	\sum_{d\leq D}\sum_{h(qd^2)}^*\left|\sum_{|n| \leq N}\gamma_n e_{qd^2}(hn)\right|^2 \ll_{\epsilon}\Delta_\epsilon(D,N,q)\sum_{|n|\leq N}|\gamma_n|^2.
	\end{equation}
\end{lem}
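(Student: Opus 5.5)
The plan is to prove the counting bound (\ref{Delta}) first and then deduce the $L^2$ inequality (\ref{L^2 version}) from it by a standard duality argument. For (\ref{L^2 version}), recall the general principle behind the large sieve (e.g.\ via Gallagher's lemma, or the duality form used by Baier--Zhao \cite{Baier-Zhao}). For any finite family of points $x_r\in\mathbb{R}/\mathbb{Z}$,
$$
\sum_r\Bigl|\sum_{|n|\leq N}\gamma_n e(nx_r)\Bigr|^2\ll N\sup_{\alpha}\#\{r:\ \|x_r-\alpha\|\leq 1/N\}\sum_{|n|\leq N}|\gamma_n|^2 .
$$
This holds because each point behaves like a $1/N$-separated point once multiplicities in $1/N$-windows are accounted for. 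Applying it with the points $h/(qd^2)$, $d\leq D$, $(h,qd^2)=1$, shows that (\ref{L^2 version}) follows from (\ref{Delta}). So all the work is in (\ref{Delta}).

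For (\ref{Delta}), fix $\alpha$ and a $d\leq D$. If some reduced $h/(qd^2)$ satisfies $\|h/(qd^2)-\alpha\|\leq 1/N$, then multiplying by $qd^2$ gives $\|qd^2\alpha\|\leq qd^2/N$. Moreover, the number of such $h$ is at most $1+2qd^2/N$. Splitting $d$ dyadically, $d\sim D'$ with $D'\leq D$, the left side of (\ref{Delta}) is therefore
$$
\ll N\log D\,\max_{D'\leq D}\Bigl(1+\dfrac{qD'^2}{N}\Bigr)\,\#\Bigl\{d\sim D':\ \|(q\alpha)d^2\|\leq \dfrac{qD'^2}{N}\Bigr\}.
$$
This reduces everything to counting $d$ in a dyadic range for which the quadratic polynomial $\beta d^2$, with $\beta=q\alpha$, lies within $\delta=qD'^2/N$ of an integer. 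This is exactly the counting problem in Baier--Zhao's treatment of the square-moduli sieve.

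To bound $\#\{d\sim D':\|\beta d^2\|\leq\delta\}$, I would approximate $\beta$ by Dirichlet's theorem: $\beta=b/r+\theta$ with $(b,r)=1$, $r\leq \tau$ and $|\theta|\leq 1/(r\tau)$, where $\tau$ is a parameter to be chosen (roughly $\tau\asymp D'\sqrt{N/q}$, or adapted to $D'$). There are two cases. If $|\theta| D'^2$ is small compared with $1/r$, the condition forces $bd^2$ to lie in $O(1+\delta r)$ residue classes modulo $r$. Squares are roughly equidistributed mod $r$, with $\ll r^\epsilon$ square roots per class, so the count is $\ll D'^\epsilon(1+D'/r)(1+\delta r)$. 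If instead $|\theta|$ dominates, I would use the variation of $\theta d^2$ along $d$: cut $d$ into blocks on which $\theta d^2$ moves by about $1/r$ and repeat the residue-class count on each block. Alternatively, Poisson-summation/Weyl differencing in $d$ gives a bound of shape $D'^\epsilon\bigl(\delta D'+\sqrt{D'}+\delta^{1/2}D'/\ldots\bigr)$. Optimizing the choice of $\tau$ and inserting the result into the display above should produce terms of size $qD^3$ (from $N\cdot(qD^2/N)\cdot D$-type contributions) and $N\sqrt D$ (from the square-root cancellation term). The factor $(1+q/N)$ accounts for the $h$-multiplicity $1+qd^2/N$ when $q>N$.

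The main obstacle is the counting step for $\|\beta d^2\|\leq\delta$ uniformly in $\alpha$, in particular the intermediate range where neither the rational part $b/r$ nor the error $\theta$ clearly dominates. My first approach would be to follow Baier--Zhao's argument essentially verbatim, with $q\alpha$ in place of $\alpha$, noting that their proof never uses the specific form of $\alpha$, only that it is a real number. If the bookkeeping with the extra $q$ becomes problematic, I would instead quote Baker \cite{Baker(2017)}, who treats moduli of the shape $qd^2$ directly.
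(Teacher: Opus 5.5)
Your deduction of (\ref{L^2 version}) from (\ref{Delta}) is fine: Gallagher's lemma gives $\sum_r|\sum_{|n|\le N}\gamma_n e(nx_r)|^2\ll N\sup_\alpha\#\{r:\|x_r-\alpha\|\le 1/N\}\sum_n|\gamma_n|^2$.

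\textbf{The gap is in your reduction of (\ref{Delta}).} After multiplying by $qd^2$ you keep only the condition $\|q\alpha d^2\|\le qd^2/N$. This condition no longer involves $h$, and in particular it discards $(h,qd^2)=1$, which cannot be dropped. Take $\alpha=0$, or any $\alpha\in q^{-1}\mathbb{Z}$: every $d$ satisfies your condition, so your majorant is $\gg ND$. The true count, however, is at most $1$ once $N>qD^2$, because a reduced fraction $h/(qd^2)$ with $qd^2>1$ lies at distance at least $1/(qd^2)$ from $0$. Already for $q=1$, $N=D^3$, your majorant is $\gg D^4$ while $\Delta_\epsilon(D,N,1)\asymp D^{7/2+O(\epsilon)}$. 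More generally it exceeds $\Delta_\epsilon$ by a power of $D$ throughout $qD^{2+\kappa}\le N\le D^{O(1)}$, for $\epsilon$ small in terms of $\kappa$.

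So no treatment of $\#\{d:\|\beta d^2\|\le\delta\}$, however refined (Dirichlet approximation, Weyl differencing, Poisson), can yield (\ref{Delta}) uniformly in $\alpha$: the quantity you reduced to is itself too large. The same defect shows up in your claim that each class mod $r$ has $\ll r^\epsilon$ square roots. That fails for non-unit classes; for example $d^2\equiv 0\pmod{p^2}$ has $p=\sqrt r$ roots modulo $r=p^2$. These are exactly the classes that coprimality of $h$ controls. For instance, an exact coincidence $h/(qd^2)=b/r$ of two reduced fractions forces $qd^2=r$, so it occurs for at most one $d$.

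\textbf{The fix is to scale by $q$ rather than by $qd^2$.} Suppose $(h,qd^2)=1$ and $h\equiv h'\pmod{d^2}$. Then $qh/(qd^2)\equiv h'/d^2\pmod 1$ with $(h',d)=1$, so $\|h/(qd^2)-\alpha\|\le 1/N$ implies $\|h'/d^2-q\alpha\|\le q/N$. The $q$ residues $h$ lying over a given $h'$ give the points $h'/(qd^2)+j/q$, $0\le j<q$. These are $1/q$-spaced, so at most $1+2q/N$ of them lie in a window of radius $1/N$.

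Hence the left side of (\ref{Delta}) is at most $N(1+2q/N)\sup_\beta\#\{(d,h'):d\le D,\ (h',d)=1,\ \|h'/d^2-\beta\|\le q/N\}$. The Baier--Zhao inequality for square moduli, applied with length $N/q$ to the test vector $a_n=e(-n\beta)$ (or the trivial bound $\ll D^3$ when $q>N$), bounds this count by $\ll (DN)^\epsilon(qD^3/N+\sqrt D)$. Multiplying out gives exactly $\Delta_\epsilon(D,N,q)$, with the factor $1+q/N$ being the lift multiplicity.

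This is what the paper's one-line proof, citing Baier--Zhao and Lemma 2 of Baker, amounts to. Your fallback of quoting Baker therefore coincides with the paper, but the argument you actually propose does not go through as written.
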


\begin{proof}
	These are a consequence of the result of Baier-Zhao \cite{Baier-Zhao} concerning the large sieve with square moduli. See Baker's \cite{Baker(2017)} refinement in Lemma 2 there.
\end{proof}



\section{Square divisors and quasi-palindromes}\label{sec:Square divisors and quasi-palindromes}

In this section we establish Proposition \ref{prop: main term for smooth S} below. We will use the notations $\mq,\ma$ in place of $q,a$ in Theorem \ref{thm: square divisors}; the last two will be reserved instead to symbolize other (different) quantities. 

Unless otherwise stated, we view 
$b > 1$ as a fixed integer and allow implied constants to depend on $b$. This is done in order to avoid several appearances of $b$ as a subscript in the implied constants. 

Recall we are interested in bounding sums of the form
$$
S = \sum_{n\sim N}\sum_{\substack{\ell \in \Pi_b(L) \\ (\ell,b)=1 \\ n^2 \mid \ell \\ \ell \equiv \ma (\mq) }}1
$$
for $L,N$ large and integers $\ma,\mq$ with $1 \leq \mq \leq b^{O(L)}$ and $(\ma b,\mq)=1$. In what follows we use the notation
\begin{equation}\label{X = b^L definition}
X = b^L.
\end{equation}
Note we may assume $X^\delta \ll N \ll \sqrt{X}$ for some small and fixed $\delta > 0$.

Let $\lambda > 0$ be a large integer to be chosen later satisfying
$$
b^\lambda \leq \sqrt{X}.
$$
For technical reasons we also restrict $\lambda$ to be even.
We have the bound
$$
S \leq \sum_{\substack{n\sim N \\ (n,b\mq)=1}}\sum_{\substack{X \leq \ell < bX \\ d_j(\ell) = d_{L-j}(\ell)  \ \forall \ 0 \leq j < \lambda \\  (\ell ,b)=1 \\ n^2 \mid \ell \\ \ell \equiv \ma (\mq)}}1,
$$
which we rewrite as
$$
S \leq \sum_{a \in \mathcal{A}_\lambda} \sum_{\substack{n \sim N \\(n,b\mq)=1 }}\sum_{\substack{0 \leq m < bX/b^{2\lambda} \\ a + b^\lambda m \equiv 0 (n^2) \\ a + b^\lambda m \equiv \ma (\mq)}}1,
$$
where
\begin{equation}\label{def of A_lambda}
\mathcal{A}_\lambda = \left\{\sum_{0 \leq j < \lambda}c_j\left(b^j + b^{L-j}\right) \ : \ 0 \leq c_0, \ldots, c_{\lambda-1} < b, \ (c_0,b)=1 \right\}.
\end{equation}

The notation $b^{\lambda}$ is somewhat cumbersome and we set
\begin{equation}\label{q =b^lambda definition}
q = b^{\lambda}.
\end{equation}
Note the restriction $(n,b)=1$ is equivalent to $(n,q)=1$. Moreover, as $\lambda$ is even by assumption, $q$ is a square. This is helpful as then the quadratic Kloosterman sums with (square) modulus $q$ have global Sali\'e-type formulae, derivable without use of the Chinese remainder theorem (see Lemma \ref{lem:Quadratic Kloosterman sums with square moduli}). 

We fix smooth compactly supported functions $\eta,\theta : \mathbb{R} \to \mathbb{R}_0^+$ with $\eta$ supported on $[4^{-1},4]$ and $\theta$ supported on $[-2b,2b]$ satisfying 
$
\eta(t) \geq 1$ for $1/2 \leq t \leq 1$, $\theta(t) \geq 1$ for $|t| \leq b$ and 
$\|\eta^{(j)}\|_\infty, \|\theta^{(j)}\|_\infty \ll_j 1$ for each $j\geq 0$. For minor technical reasons we also restrict $\theta$ to be even. 
Then
\begin{equation}\label{eqn:smoothing S }
S \leq S_{\eta,\theta}(X,N,q,\mq,\ma),
\end{equation}
where
\begin{equation}\label{def of smooth S}
S_{\eta,\theta}(X,N,q,\mq,\ma) = \sum_{a \in \mathcal{A}_\lambda} \sum_{\substack{(n,q\mq)=1}} \eta(n/N)\sum_{\substack{m \equiv -a\overline{q} \  (n^2) \\ m \equiv (\ma-a)\overline{q} \ (\mq)}}\theta(q^2m/X).
\end{equation}

The aim in this section is to establish the following proposition. First for an integer $m \in [0, bX)$, recall we have defined
\begin{equation}\label{first def of rho}
\rho(m) = X\sum_{j\geq 0} d_j(m)b^{-j},
\end{equation}
where $d_j(m)$ denotes the $j$-th digit of $m$ in its $b$-adic expansion 
$$m = \sum_{j\geq 0}d_j(m)b^j.$$ 

\begin{prop}\label{prop: main term for smooth S}
With the assumptions and notations as before, and with the additional notations $M = X/N^2$ and $\eta_2(t) = \eta(t)/t^2$,  
\begin{align*}
&S_{\eta,\theta}(X,N,q,\mq,\ma) = \dfrac{\widehat{\eta_2}(0)\widehat{\theta}(0) \varphi^2(b) \varphi(\mq) }{b^2 \mq }\dfrac{MN}{q\mq}\\
&\qquad + X^{7\epsilon}O\left( \sqrt{\mq} + \sqrt{N} +  \min\left(\dfrac{N}{q\sqrt{\mq}}, \ \dfrac{MN}{\mq q}\right)  + \dfrac{M }{ q\mq} +  \sup_{X^{2\epsilon} \leq r \leq qX^{\epsilon}}\dfrac{r|E_>(r)|}{q}\right)
\end{align*}
for any fixed $\epsilon > 0$,
where
\begin{equation}\label{def of E>(r) in prop}
E_>(r) = \dfrac{M}{q\mq r} \sum_{\substack{h \asymp q\mq r/M \\ 1 \leq m \leq q \\ (m,q)=1 \\ (n,q\mq)=1}}f_h(n/N)e_q\left(mh\overline{\mq n^2}\right)e_{\mq}\left(\ma h\overline{q n^2}\right)e\left(-h\dfrac{\rho(m) + m}{q\mq n^2}\right)
\end{equation}
with
\begin{equation}\label{def of f_h}
f_h(t) = \eta_2(t)\widehat{\theta}\left(\dfrac{Mh}{q^2\mq t^2}\right).
\end{equation}
\end{prop}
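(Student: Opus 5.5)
Apply Poisson summation to the innermost $m$-sum in (\ref{def of smooth S}), then analyse the resulting frequencies $h$ according to size. Fix $a\in\mathcal{A}_\lambda$ and $n$ with $(n,q\mq)=1$. Since $(n,\mq)=1$, the two congruences on $m$ combine by the Chinese remainder theorem into one class modulo $n^2\mq$. Lemma \ref{lem:poisson summation}, with modulus $n^2\mq$ and length $X/q^2$, gives
$$\sum_m\theta(q^2m/X)=\frac{X\widehat\theta(0)}{q^2n^2\mq}+\frac{X}{q^2n^2\mq}\sum_{h\neq0}\widehat\theta\Big(\frac{Xh}{q^2n^2\mq}\Big)e_{n^2\mq}(hm_0),$$
where $m_0$ is the CRT residue. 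Frequencies with $|h|\gg q^2N^2\mq X^{\epsilon}/X=q^2\mq X^{\epsilon}/M$ contribute $O(X^{-A})$. I will not truncate at $q^2\mq/M$, though; the split below is made relative to $q\mq/M$.

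\emph{Main term.} Summing the $h=0$ term over $n$ with Lemma \ref{lem:coprime sums}, applied to $\eta_2$ and modulus $q\mq$, gives $\frac{X\widehat\theta(0)}{q^2\mq N^2}\cdot\frac{N\widehat{\eta_2}(0)\varphi(q\mq)}{q\mq}$ plus an error. Multiplying by $|\mathcal{A}_\lambda|=\varphi(b)q/b$ and using $\varphi(q)/q=\varphi(b)/b$ yields exactly $\widehat{\eta_2}(0)\widehat\theta(0)\varphi^2(b)\varphi(\mq)MN/(b^2\mq^2 q)$. The coprimality error, multiplied by $q$, is absorbed into the stated $O$-terms; it is of size at most $qM/(q^2\mq)\cdot X^\epsilon$, which matches $M/(q\mq)$.

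\emph{Nonzero frequencies.} By Bézout (Lemma \ref{lem:Bezout}), $e_{n^2\mq}(hm_0)$ factors as $e_{n^2}(-ha\overline{q\mq})\,e_{\mq}(h(\ma-a)\overline{qn^2})$. Apply Lemma \ref{lem:Bezout} again to $q$ and $n^2\mq$ to rewrite $e_{n^2}(-ha\overline{q\mq})$ as $e_q(ha\overline{n^2\mq})\,e(-ha/(qn^2\mq))$. Now parametrise $a\in\mathcal{A}_\lambda$ by $m=\sum_{j<\lambda}c_jb^j$, with $1\le m\le q$ and $(m,q)=1$. Then $a=m+\rho(m)$, where $\rho$ is the reversal of (\ref{first def of rho}) up to the obvious normalisation. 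Also $a\equiv m\pmod q$, and the factor $e_{\mq}(-ha\overline{qn^2})$ combines with the others, after reorganising the residue (using $a\equiv\cdot$), to give the phases $e_q(mh\overline{\mq n^2})\,e_{\mq}(\ma h\overline{qn^2})\,e(-h(\rho(m)+m)/(q\mq n^2))$ of (\ref{def of E>(r) in prop}), with weight $f_h(n/N)$. If some $e_{\mq}(-ah\cdots)$ term survives, it must be handled too. I expect the bookkeeping to be the fiddly part: the sign conventions, and checking that the $\mq$-part really reduces to $\ma$. The frequencies with $|h|\asymp q\mq r/M$ for $X^{2\epsilon}\le r\le qX^\epsilon$, taken dyadically, produce $\frac{r}{q}E_>(r)$ times a factor $X^\epsilon$ for the dyadic count. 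This uses the normalisation $\frac{M}{q\mq r}\cdot\frac{r}{q}\cdot q\cdot(\ldots)$, with $\theta$ even to fold $\pm h$.

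\emph{Small frequencies} $1\le|h|\le q\mq X^{2\epsilon}/M$. Here the archimedean phase $h(\rho(m)+m)/(q\mq n^2)\ll hX/(q\mq N^2)\le X^{2\epsilon}$ has small conductor. I will remove it, together with $f_h$, by partial summation in $n$, so that only arithmetic harmonics remain. Summing over $m$ first completes $e_q(mh\overline{\mq n^2})$ over units, which gives a Ramanujan sum $\cq_q(h\overline{\mq n^2})$ and forces $(h,q)$ to be large. The remaining $n$-sum carries $e_{\mq}(\ma h\overline{qn^2})$. Completion via Lemma \ref{lem:completion of squared inverses} bounds it by $N\sqrt{(h,\mq)/\mq}$ plus $\sqrt{\mq}X^{\epsilon}$. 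Summing over $h$ with Lemma \ref{lem:sums of GCDs} and multiplying by the prefactor $X/(q^2N^2\mq)=M/(q^2\mq)$ should yield the terms $\sqrt{\mq}$, $\min(N/(q\sqrt{\mq}),MN/(q\mq))$ and $M/(q\mq)$. The $\sqrt N$ term would come from a crude bound in the tiny range $h\le X^\epsilon$, or from the Gauss-sum main terms. Where the Ramanujan sum is not small I would instead use the Kloosterman correlations of Lemma \ref{lem:Correlations of K_2 sums}.

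The main obstacle is this last range. The claimed bounds must come out without losing a factor $q$, which requires the $m$-sum to produce genuine cancellation despite the coupling through $\rho(m)$. If partial summation fails to decouple $\rho$, I will instead put $\rho(m)\ll X$ into a Taylor expansion of length $X^{O(\epsilon)}$.
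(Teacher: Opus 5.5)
Up to the splitting of the frequencies you follow the paper: Poisson modulo $n^2\mq$, the main term via Lemma~\ref{lem:coprime sums}, B\'ezout, $a=m+\rho(m)$ with $q\mid\rho(m)$, and the dyadic pieces $\frac{r}{q}E_>(r)$. Your bookkeeping worry is harmless. Apply Lemma~\ref{lem:Bezout} to $q\mq$ and $n^2$, and split $\overline{n^2}/(q\mq)$ by the Chinese remainder theorem. This gives $\overline{q\mq}/n^2\equiv 1/(q\mq n^2)-\overline{\mq n^2}/q-\overline{qn^2}/\mq \pmod 1$. The resulting factor $e_{\mq}(ah\overline{qn^2})$ cancels the $-a$ in $e_{\mq}((\ma-a)h\overline{qn^2})$, so the $\mq$-part does reduce to $\ma$.

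The genuine gap is the range $1\le|h|\le q\mq X^{2\epsilon}/M$, which is where you yourself locate the difficulty.

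\textbf{Why your approach fails.} Summing over $m$ first does not produce a Ramanujan sum, because the weight $e(-h(\rho(m)+m)/(q\mq n^2))$ depends on $m$. By Property (E) of Lemma~\ref{lem:Reversal-type function}, $\rho(m)\asymp X$, so this phase is $\asymp |h|M/(q\mq)$. That can be as large as $X^{2\epsilon}$, and it varies with $m$ through the digit reversal. Partial summation in $n$ only moves the weight into $\partial_t f_{h,m}(t)$, which still depends on $m$. A Taylor expansion is unavailable because the phase is not small, and it would anyway leave weights $(\rho(m)+m)^j$ inside the $m$-sum. So neither the restriction on $(h,q)$ nor the claimed bounds follow.

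\textbf{The missing idea.} Never ask the raw $m$-sum for cancellation. The paper proceeds as follows.
\begin{itemize}
\item It substitutes $h\mapsto qh/d$ with $d\mid q$, $(h,d)=1$, and removes $(n,q)=1$ by M\"obius inversion.
\item It completes the $n$-sum modulo $d\mq$ (Lemma~\ref{lem:completion of squared inverses}). The $m$-dependent phase stays inside the smooth weight $g_{qh/d,m}$, whose derivatives are $\ll X^{2\epsilon j}$.
\item The zero frequency carries $\G^*(hm\overline{v\mq};d)$, which vanishes unless $d\mid 4b$ by Lemma~\ref{lem: Gauss sum variant}. This is the job you wanted the Ramanujan sum to do, but it holds for each $m$ separately. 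So the $m$-sum can be bounded trivially, and partial summation in $h$ with Lemma~\ref{lem:sums of K_2 sums} gives $\min(N/(q\sqrt{\mq}),MN/(q\mq))$.
\item The nonzero frequencies $k$ carry $\K_2(hm\overline{v\mq},k\overline{\mq};d)\,\K_2(\ma h\overline{vd},k\overline{d};\mq)$. Write $\widehat{g_{qh/d,m}}$ as an integral: the $m$-dependent weight does not depend on $k$, so it comes out of the absolute value.
\item One then extends $m$ to all residues modulo $d$ and applies Cauchy--Schwarz over $(h,m)$. The correlations $\frac1d\sum_{m(d)}\K_2(m,k_1\overline{\mq};d)\overline{\K_2(m,k_2\overline{\mq};d)}$ are bounded by Lemma~\ref{lem:Correlations of K_2 sums}.
\end{itemize}
The diagonal $k_1=k_2$ is where the $\sqrt N$ term comes from, and the off-diagonal gives $\sqrt{\mq}$. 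Your sketch has no natural source for $\sqrt N$, which is another sign that this Cauchy--Schwarz step is missing.
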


We now proceed to prove the proposition.

\subsection{Translation to exponential sums}
For convenience, let 
$$S_{\eta,\theta} = S_{\eta,\theta}(X,N,q,\mq,\ma).$$
Combining the two congruences in (\ref{def of smooth S}) via the Chinese remainder theorem, applying Poisson summation (Lemma \ref{lem:poisson summation}) to the $m$-sum and then using B\'ezout's identity (Lemma \ref{lem:Bezout}) gives
\begin{align*}
S_{\eta,\theta} &= \dfrac{X}{q^2\mq} \sum_{\substack{a \in \mathcal{A}_\lambda \\ |h| \leq N^2q^2\mq X^{\epsilon}/X \\ (n,q\mq)=1}} \dfrac{\eta(n/N)}{n^2}\widehat{\theta}\left(\dfrac{Xh}{q^2\mq n^2}\right)e_{n^2}\left(-a h \overline{q \mq}\right)e_{\mq}\left((\ma - a)h\overline{qn^2}\right)\\
&\qquad + O\left(X^{-A}\right)
\end{align*}
for any fixed $A,\epsilon > 0$. 
By Lemma \ref{lem:coprime sums} and the fact $\#\mathcal{A}_\lambda = \varphi(b)q/b$, the term with $h=0$ contributes
$$
\dfrac{X\widehat{\theta}(0)}{q^2\mq}\sum_{a \in \mathcal{A}_\lambda}\sum_{(n,b\mq)=1}\dfrac{\eta(n/N)}{n^2} = \dfrac{\varphi^2(b)\varphi(\mq)\widehat{\eta_2}(0)\widehat{\theta}(0) X}{b^2Nq\mq^2}\left(1 + O\left(\dfrac{\tau(\mq)}{N}\right)\right) ,
$$
where
$
\eta_2(t) = \eta(t)/t^2.
$
Multiplying and dividing by $N^2$ and noticing
$N^2\eta(n/N)/n^2 = \eta_2(n/N)$, 
we have 
\begin{equation}\label{main term + error}
S_{\eta,\theta} = \dfrac{\varphi^2(b)\varphi(\mq)\widehat{\eta_2}(0)\widehat{\theta}(0) X}{b^2Nq\mq^2}\left(1 + O\left(\dfrac{\tau(\mq)}{N}\right)\right) + E ,
\end{equation}
where
\begin{equation}\label{def of E}
 E = \dfrac{X}{q^2\mq N^2} \sum_{\substack{a \in \mathcal{A}_\lambda \\ 1 \leq |h| \leq N^2q^2\mq X^{\epsilon}/X \\ (n,q\mq)=1}} \eta_2(n/N)\widehat{\theta}\left(\dfrac{Xh}{q^2\mq n^2}\right)e_{n^2}\left(-a h \overline{q \mq}\right)e_{\mq}\left((\ma - a)h\overline{qn^2}\right).
\end{equation}
By B\'{e}zout's identity, 
$$
\dfrac{\overline{q \mq}}{n^2} \equiv \dfrac{1}{q\mq n^2} -\dfrac{\overline{n}^2}{q\mq} \equiv  \dfrac{1}{q\mq n^2} - \dfrac{\overline{\mq n^2}}{q} - \dfrac{\overline{q n^2}}{\mq}\pmod{1}.  
$$
It follows
$$
E = \dfrac{M}{q^2\mq} \sum_{\substack{a \in \mathcal{A}_\lambda \\ 1 \leq |h| \leq q^2\mq X^{\epsilon}/M \\ (n,q\mq)=1}} \eta_2\left(\dfrac{n}{N}\right)\widehat{\theta}\left(\dfrac{Xh}{q^2\mq n^2}\right)e_q\left(ah\overline{\mq n^2}\right)e_{\mq}\left(\ma h\overline{q n^2}\right)e\left(-\dfrac{ah}{q\mq n^2}\right),
$$
where
$
M = X/N^2.
$

To each $a \in \mathcal{A}_\lambda$ corresponds a unique integer $1 \leq m \leq q$ with $(m,q)=1$ such that
$
a = m + \rho(m),
$
where $\rho$ is defined as in Lemma \ref{lem:Reversal-type function} with $b^L = X$ there (as assumed thus far). 
The correspondence is bijective. It is important to note $\rho(m) \equiv 0 (q)$ for any such $m$. Indeed, this follows from the constraint $m \leq q$, Property (B) in Lemma \ref{lem:Reversal-type function} and the assumptions $q \leq \sqrt{X}$ with $q,X$ powers of $b$. Thus
\begin{align*}
E &= \dfrac{M}{q^2\mq} \sum_{\substack{ 1 \leq |h| \leq q^2\mq X^{\epsilon}/M \\ 1 \leq m \leq q\\ (m,q)=1\\(n,q\mq)=1}} f_h\left(\dfrac{n}{N}\right)e_q\left(mh\overline{\mq n^2}\right)e_{\mq}\left(\ma h\overline{q n^2}\right)e\left(-h\dfrac{\rho(m) + m}{q\mq n^2}\right)
\end{align*}
with $f_h$ defined in (\ref{def of f_h}). Splitting the $h$-sum, we have
\begin{equation}\label{splitting E in terms of E< and E>}
E = E_< + E_>
\end{equation}
with $E_<, E_>$ defined as 
\begin{align}
E_< &= \dfrac{M}{q^2\mq} \sum_{\substack{ 1 \leq |h| \leq q\mq X^{2\epsilon}/M \\ 1 \leq m \leq q\\ (m,q)=1\\(n,q\mq)=1}} f_h\left(\dfrac{n}{N}\right)e_q\left(mh\overline{\mq n^2}\right)e_{\mq}\left(\ma h\overline{q n^2}\right)e\left(-h\dfrac{\rho(m) + m}{q\mq n^2}\right),\label{def of E<}\\
E_> &= \dfrac{M}{q^2\mq} \sum_{\substack{ \frac{q\mq X^{2\epsilon}}{M} \leq |h| \leq \frac{q^2\mq X^{\epsilon}}{M} \\ 1 \leq m \leq q\\ (m,q)=1\\(n,q\mq)=1}} f_h\left(\dfrac{n}{N}\right)e_q\left(mh\overline{\mq n^2}\right)e_{\mq}\left(\ma h\overline{q n^2}\right)e\left(-h\dfrac{\rho(m) + m}{q\mq n^2}\right).\label{def of E>}
\end{align}
With regards to $E_>$, a dyadic decomposition gives
\begin{equation}\label{bound in terms of E>(r)}
E_> \ll (\log X) \sup_{X^{2\epsilon} \leq r \leq qX^\epsilon}\dfrac{r|E_{>}(r)|}{q},
\end{equation}
where
\begin{equation}\label{def of E_>(r)}
E_{>}(r) = \dfrac{M}{q\mq r} \sum_{\substack{h \asymp q\mq r/M \\ 1 \leq m \leq q \\ (m,q)=1 \\ (n,q\mq)=1}}f_h(n/N)e_q\left(mh\overline{\mq n^2}\right)e_{\mq}\left(\ma h\overline{q n^2}\right)e\left(-h\dfrac{\rho(m) + m}{q\mq n^2}\right)
\end{equation}
with some suitable implied constants in the $\asymp$. We tackle $E_>(r)$ in the upcoming sections. For now we only bound $E_<$. 

\subsection{The sum $E_<$}\label{subsec:the sum E_<} 

Consider $E_<$ defined in (\ref{def of E<}) which we rewrite as  
\begin{equation}\label{eqn:start of upper bounds for E_<}
E_< = \dfrac{M}{q^2\mq} \sum_{1 \leq |h| \leq q\mq X^{2\epsilon}/M} \sum_{ \substack{1 \leq m \leq q \\ (m,q)=1}}\sum_{(n,q\mq)=1} f_{h,m}(n/N)e_q\left(hm\overline{\mq n^2}\right)e_{\mq}\left(\ma h\overline{q n^2}\right),
\end{equation}
where
\begin{equation}\label{def of f_{h,m}}
f_{h,m}(t) = f_h(t)e\left(-h\dfrac{\rho(m) + m}{q\mq N^2t^2} \right) = \eta_2(t)\widehat{\theta}\left(\dfrac{Mh}{q^2\mq t^2}\right)e\left(-h\dfrac{\rho(m) + m}{q\mq N^2t^2} \right).
\end{equation}
For $h,m$ satisfying the constraints under the sums, by the assumption $q^2 \leq X = MN^2$ and the fact that
$
X/q
$
divides $\rho(m)$
for $1 \leq m \leq q$ (see Lemma \ref{lem:Reversal-type function}) we have
$$
\dfrac{m|h|}{q\mq N^2} \leq  \dfrac{M|h|}{q^2\mq} \leq \dfrac{\rho(m)|h|}{q \mq N^2} \ll X^{2\epsilon}.
$$
Since $\eta_2$ is compactly supported on some fixed interval bounded away from zero, several differentiations show
\begin{equation}\label{size of derivatives of f_h,m}
\left\|f_{h,m}^{(j)}\right\|_\infty\ll_j X^{2\epsilon j}
\end{equation}
for any $j\geq 0$. 

Splitting the $h$-sum according to the GCD of $h,q$ and substituting variables,
$$ 
E_< = \dfrac{M}{q^2\mq} \sum_{\substack{d\mid q }}\sum_{\substack{1 \leq |h| \leq \frac{d\mq X^{2\epsilon}}{M} \\ (h,d)=1}} \sum_{ \substack{1 \leq m \leq q \\ (m,q)=1}}\sum_{(n,q\mq)=1} f_{qh/d,m}(n/N)e_d\left(hm\overline{\mq n^2}\right)e_{\mq}\left(\ma h\overline{d n^2}\right).
$$
Note the restriction $(n,q)=1$ is equivalent to $(n,cd)=1$ for some divisor $c$ of $b$ coprime to $d$. Writing $\mathbf{1}_{(n,c)=1} = \sum_{c' \mid (n,c)}\mu(c')$, substituting $n$ with $c'n$, switching orders of summation and taking absolute values, it follows
\begin{equation}\label{eqn:bound for E_< in terms of E_<(d;,u,v)}
E_< \ll   \sum_{\substack{d\mid q }}\max_{\substack{u,v\mid b^2 \\ (uv,d)=1}} |E_<(d;u,v)|,
\end{equation}
where
\begin{equation}\label{def of E<(d;u,v)}
E_<(d;u,v) = \dfrac{M}{q^2\mq}  \sum_{\substack{1 \leq |h| \leq \frac{d\mq X^{2\epsilon}}{M} \\1 \leq m \leq q\\
		(h,d)=1\\
		(m,q)=1\\
		(n,d\mq)=1
}} 
f_{qh/d,m}(un/N)e_d\left(hm\overline{\mq v n^2}\right)e_{\mq}\left(\ma h\overline{dv n^2}\right).   
\end{equation}
Combining the two exponentials into one, applying Lemma \ref{lem:completion of squared inverses} and then using the Chinese remainder theorem, we have
\begin{align*}
&\sum_{(n,d\mq)=1} f_{qh/d,m}(un/N)e_d\left(hm\overline{\mq v n^2}\right)e_{\mq}\left(\ma h\overline{dv n^2}\right)\\
&= \dfrac{N\widehat{g_{qh/d,m}}(0)\G^*(\ma h\overline{dv};\mq)\G^*(hm\overline{v\mq};d)}{\sqrt{d\mq}} +O_A\left(X^{-A}\right)
\\
&\qquad 
+ \dfrac{N}{\sqrt{d\mq}} \sum_{1 \leq |k| \leq d\mq X^{3\epsilon}/N} \widehat{g_{qh/d,m}}\left(\dfrac{Nk}{d\mq}\right)
\K_2\left(hm\overline{v\mq}, k\overline{\mq};d\right)
\K_2\left(\ma h\overline{dv}, k\overline{d}; \mq \right)
\end{align*}
for any $A > 0$, where for convenience we have set
\begin{equation}\label{def of g}
g_{qh/d,m}(t) = f_{qh/d,m}(ut).
\end{equation}

Inserting that in (\ref{def of E<(d;u,v)}) we obtain
\begin{equation}\label{eqn:Sigma + S}
E_<(d;u,v) = \dfrac{MN\Sigma}{q^2d^{1/2}\mq^{3/2}}  + \dfrac{MN}{q^2d^{1/2}\mq^{3/2}}\sum_{ \substack{1 \leq m \leq q \\ (m,q)=1}}\mathcal{S}(m) + O_A\left(X^{-A}\right),
\end{equation}
where
\begin{align}\label{def of Sigma}
 \Sigma &= \sum_{\substack{1 \leq |h| \leq \frac{d\mq X^{2\epsilon}}{M} \\ 1 \leq m \leq q \\  (h,d)=(m,q)=1}} \sum_{1 \leq |k| \leq \frac{d\mq X^{3\epsilon}}{N}} \widehat{g_{qh/d,m}}\left(\dfrac{Nk}{d\mq}\right)
 \K_2\left(hm\overline{v\mq}, k\overline{\mq};d\right)
 \K_2\left(\ma h\overline{vd}, k\overline{d}; \mq \right)
 \end{align}
and
\begin{equation}\label{def of S(m,d)}
\mathcal{S}(m) =\sum_{\substack{1 \leq |h| \leq \frac{d\mq X^{2\epsilon}}{M} \\ (h,d)=1}}\widehat{g_{qh/d,m}}(0)\G^*(\ma h\overline{vd};\mq)\G^*(hm\overline{v\mq};d)
\end{equation}
with $(m,q)=1$ by assumption. 

\subsection{Upper bound for $\Sigma$}\label{subsec:upper bound for Sigma}
Since $g_{qh/d,m}(t)$ (defined in (\ref{def of g})) is supported on some interval $t \asymp 1$, 
$$
\widehat{g_{qh/d,m}}\left(\dfrac{Nk}{d\mq}\right) = \int_{t \asymp 1} g_{qh/d,m}(t)e(-Nkt/d\mq)\der t.
$$
Switching orders of summation/integration and defining
\begin{equation}\label{def of H,K}
H := \dfrac{d\mq X^{2\epsilon}}{M}, \hspace{2em} K := \dfrac{d\mq X^{3\epsilon}}{N},
\end{equation}
we may majorize $\Sigma$ as
\begin{align}\label{Sigma in terms of Sigma_gamma}
\Sigma &\ll \sup_{\substack{\gamma  }}\sum_{\substack{1 \leq |h| \leq H \\ (h,d)=1}}\sum_{m(q)} \left|\sum_{1 \leq |k| \leq K}\gamma_k\K_2\left(hm\overline{v\mq}, k\overline{\mq};d\right)
\K_2\left(\ma h\overline{vd}, k\overline{d}; \mq \right)\right| \nonumber\\
&= q\sup_{\substack{\gamma  }} \sum_{\substack{1 \leq |h| \leq H \\ (h,d)=1}}\dfrac{1}{d}\sum_{m(d)} \left|\sum_{1 \leq |k| \leq K}\gamma_k\K_2\left(m, k\overline{\mq};d\right)
\K_2\left(\ma h\overline{vd}, k\overline{d}; \mq \right)\right|,
\end{align}
where, in the supremum, $\gamma$ runs over sequences of $1$-bounded complex numbers $\gamma_k$.
The last equality holds since each $(hv\mq,d)=1$ and the summand is $d$-periodic in $m$ with $d\mid q$ by assumption. 

Denote by $\Sigma_\gamma$ the expression to the right of $\sup_{\gamma}$ on the last equality. 
Applying Cauchy-Schwarz, expanding the square and switching orders of summation, 
$$
\Sigma^2_\gamma \ll H \sum_{\substack{1 \leq |k_1|, |k_2| \leq K \\ 1 \leq |h| \leq H \\ (h,d)=1}}\gamma_{k_1}\overline{\gamma_{k_2}}  \K_2\left(\ma h\overline{vd}, k_1\overline{d}; \mq \right)\overline{\K_2\left(\ma h\overline{vd}, k_2\overline{d}; \mq \right)}\mathscr{Z}(k_1,k_2),
$$
where
$$
\mathscr{Z}(k_1,k_2) = 
\dfrac{1}{d}\sum_{m(d)}\K_2\left(m, k_1\overline{\mq};d\right)\overline{\K_2\left(m, k_2\overline{\mq};d\right)}.
$$
By Lemma \ref{lem: Shparlinski bound}, the diagonal terms with $k_1 = k_2$ contribute
$$
\ll X^{\epsilon/2} H\sum_{1 \leq h \leq H}(h,\mq)\sum_{1 \leq k \leq K}(k,d) \ll X^\epsilon H^2K.
$$
By Lemma \ref{lem:Correlations of K_2 sums}, the off-diagonal terms contribute
\begin{align*}
&\ll \dfrac{HX^{\epsilon/2}}{d}\sum_{1 \leq h \leq H}(h,\mq)\sum_{1 \leq k_1 \neq k_2 \leq K}(k_1^2-k_2^2,d)\\
& \ll \dfrac{H^2X^{\epsilon/2}\tau(\mq)}{d} \sum_{1 \leq k_1 \neq k_2 \leq K}(k_1-k_2,d)(k_1 + k_2,d)\\
& \ll \dfrac{H^2X^{\epsilon/2}\tau(\mq)}{d} \sum_{1 \leq \ell \leq K}(\ell,d)\sum_{1 \leq k \ll K}(k,d)\\
&\ll 
\dfrac{H^2K^2X^{\epsilon}}{d}.
\end{align*}
The inequality before the last follows from Weyl-differencing and a slight increase in the length of one sum. It follows $\Sigma_\gamma X^{-\epsilon} \ll H\sqrt{K} + HK/\sqrt{d}$. By (\ref{Sigma in terms of Sigma_gamma}) and the definitions of $H,K$ in (\ref{def of H,K})
\begin{equation}\label{Sigma final bound}
\Sigma \ll \dfrac{qd^2\mq^2X^{6\epsilon}}{MN}\left( \sqrt{\dfrac{N}{d\mq}} +  \dfrac{1}{\sqrt{d}}\right).
\end{equation}

\subsection{Upper bound for $\mathcal{S}(m)$}\label{subsec:Upper bound for S(m)}
Recall that by definition,
$$
\mathcal{S}(m) =\sum_{\substack{1 \leq |h| \leq \frac{d\mq X^{2\epsilon}}{M} \\ (h,d)=1}}\widehat{g_{qh/d,m}}(0)\G^*(\ma h\overline{vd};\mq)\G^*(hm\overline{v\mq};d)
$$
for $1 \leq m \leq q$ with $(m,q)=1$. 
By the definitions of $f_{qh/d}, g_{qh/d}$ in (\ref{def of f_{h,m}}), (\ref{def of g}) with $u \mid b^2$ there, 
$$
g_{qh/d}(t) = \eta_2(ut)\widehat{\theta}\left(\dfrac{Mh}{qd \mq u^2t^2}\right)e\left(-h \dfrac{\rho(m) + m}{d\mq N^2 u^2t^2}\right) .
$$
Since $\eta_2(t)$ is supported on $t \asymp 1$, so is $g_{qh/d}(t)$. For any such $t$ and by the assumption $X = MN^2$, each
$$
ut, \ \dfrac{Mh}{qd \mq u^2t^2}, \ h \dfrac{\rho(m) + m}{d\mq N^2 u^2t^2} \ll X^{2\epsilon}.  
$$
Thus if we expand the Fourier transform $\widehat{g_{qh/d,m}}(0)$, switch orders of summation and integration, sum by parts and conjugate the sums over $h < 0$, 
$$
\mathcal{S}(m) \ll X^{2\epsilon}\sup_{1 \leq H \leq d\mq X^{2\epsilon}/M}\left|\sum_{\substack{1 \leq h \leq H \\ (h,d)=1}}\G^*(\ma h\overline{vd};\mq)\G^*(hm\overline{v\mq};d)\right|.
$$
By Lemma \ref{lem: Gauss sum variant}, if $(a,d)=1$ and $\G^*(a;d) \neq 0$, then $\odd(d)$ is square-free and $16 \nmid d$. Since $d \mid q$ and $q$ is a power of $b$ by assumption, the last implies $d \mid 4b$. Recalling $(hmv\mq,d)=1$ by assumptions, 
\begin{align}
\mathcal{S}(m) &\ll \mathbf{1}_{d \mid 4b}X^{2\epsilon}\sup_{1 \leq H \leq d\mq X^{2\epsilon}/M}\left|\sum_{\substack{1 \leq h \leq H \\ (h,d)=1}}\G^*(\ma h\overline{vd};\mq)\G^*(hm\overline{v\mq};d)\right| \nonumber\\
&\ll
\mathbf{1}_{d \mid 4b}X^{2\epsilon}\sup_{\substack{1 \leq H \leq d\mq X^{2\epsilon}/M \\ c \in \mathbb{Z}}}\left|\sum_{\substack{1 \leq h \leq H }}\G^*(\ma h\overline{vd};\mq)e_d(ch)\right|\nonumber\\
&\ll
\mathbf{1}_{d \mid 4b}\min\left(\dfrac{\mq }{M}, \ \sqrt{\mq}  \right)X^{5\epsilon}. \label{S(m) final bound}
\end{align}
The inequality before the last follows after a Fourier expansion modulo $d$ of 
$$h \mapsto \mathbf{1}_{(h,d)=1}\G^*(hm\overline{v\mq};d).$$ The last inequality follows from Lemma \ref{lem:sums of K_2 sums}.

\subsection{Concluding}\label{subsec:upper bound for E_<} 

By (\ref{Sigma final bound}), (\ref{S(m) final bound}) and (\ref{eqn:Sigma + S}),
$$
E_{<}(d;u,v)X^{-6\epsilon} \ll \dfrac{d\sqrt{N} + d\sqrt{\mq}}{q} + \mathbf{1}_{d\mid 4b}\min\left(\dfrac{N}{q\sqrt{\mq}},  \ \dfrac{MN}{q\mq}\right).
$$
By (\ref{eqn:bound for E_< in terms of E_<(d;,u,v)}),
$$
E_<X^{-7\epsilon} \ll \sqrt{N} + \sqrt{\mq} + \min\left(\dfrac{N}{q\sqrt{\mq}},  \ \dfrac{MN}{q\mq}\right). 
$$
Proposition \ref{prop: main term for smooth S} now follows from this, (\ref{main term + error}), (\ref{splitting E in terms of E< and E>}), (\ref{bound in terms of E>(r)}) and (\ref{def of E_>(r)}).

\section{Large $N$ relative to conductors}\label{sec:BAB-process}

Here we prove the following bound, applicable in a regime with $N$ sufficiently large relative to $q,\mq$. The argument in the proof goes along the lines of one $B$-process on the $n$-sum, followed by an $A$-type process on the $m$-sum involving the reversal function, followed by a further $B$-process on the dual sum emanating from the first $B$-process. Following the proof, we show, in Proposition \ref{prop:Factorizing the d-sum}, that a relevant sum appearing in the proof of Proposition \ref{prop: N large for squares n^2 with n of size N}, can be factorized into a product of two large sums. This facilitates a more efficient application of Cauchy-Schwarz, compared to that employed in the proof of Proposition \ref{prop: N large for squares n^2 with n of size N}, from which stronger bounds may be derived. Note however that Proposition \ref{prop:Factorizing the d-sum} is an  addenda incorporated after the establishment of our main results and will not be employed in their proofs. See also the discussion following the proof of Proposition \ref{prop: N large for squares n^2 with n of size N} and Remark \ref{Remark on factorization consequences} at the end.

\begin{prop}\label{prop: N large for squares n^2 with n of size N}
	With the same assumptions of \textnormal{Proposition \ref{prop: main term for smooth S}} and $X^{2\epsilon} \ll r \ll qX^\epsilon$,
	$$
	E_>(r) \ll  X^{4\epsilon}\left(q^{3/2}\mq^{1/4}N^{1/4} + q^{7/4}\mq^{1/2} + q\sqrt{N}\right).
	$$
	\end{prop}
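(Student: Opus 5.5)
The plan is the BAB scheme announced at the start of this section: a $B$-process in $n$, then the reversal $A$-type process in $m$, then a second $B$-process on the dual variable.

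\textbf{Step 1: first $B$-process.} Fix $h \asymp q\mq r/M$ and $m$ in (\ref{def of E_>(r)}). The $n$-sum has arithmetic phase $e_{q\mq}(c\,\overline{n}^2)$ modulo $q\mq$, obtained by combining the two exponentials via the Chinese remainder theorem. Its archimedean phase is $e(T N^2/n^2)$ with
\[
T=-h(\rho(m)+m)/(q\mq N^2).
\]
Since $X/q \mid \rho(m)$ and $\rho(m)\asymp X$, we have $T\asymp r$. I would apply Lemma \ref{lem:twisted sums with squared inverses} with modulus $q\mq$ and weight $f_h$. Its derivative bounds follow as in (\ref{size of derivatives of f_h,m}). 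This converts the $n$-sum into
\[
\frac{N}{\sqrt{q\mq r}}\sum_{k}\widetilde{\eta}(\cdot)\,\K_2(c,-k;q\mq)\,e\bigl(3T(Nk/2q\mq T)^{2/3}\bigr),
\]
plus an error $O(\sqrt{q\mq/r}\,X^{3\epsilon})$. The dual variable has length $k\asymp q\mq r/N$. Summed trivially over $h$ and $m$ and multiplied by the prefactor $M/(q\mq r)$, the error term contributes $\ll q^{3/2}\mq^{1/2}X^{O(\epsilon)}$. That is not yet acceptable, so I would carry it along and recheck it against the final terms. Next I factor $\K_2(\cdot;q\mq)=\K_2(\cdot;q)\K_2(\cdot;\mq)$ by Lemma \ref{lem:multiplicative property}. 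The modulus-$q$ factor is $\K_2(mh\overline{\mq}\cdots,\,k\overline{\mq};q)$. Since $q$ is a square, Lemma \ref{lem:Quadratic Kloosterman sums with square moduli} writes it as a short sum over $\ell\le\sqrt q$ with $e_q(\cdot)$ phases, and these phases are linear-fractional in $m$.

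\textbf{Step 2: the $A$-type process in $m$.} Choose a power $s$ of $b$ with $s\mid q$, and write $m=c+qd/s$ as in Subsection \ref{subsec:A-type_process}. By Lemma \ref{lem:Reversal-type function}(D), $\rho(m)=\rho(c)+\rho(qd/s)$ with $\rho(qd/s)\ll sX/q$. The archimedean phase $T^{1/3}(Nk/q\mq)^{2/3}$ is then a $d$-perturbation of size $\ll s\,(\cdots)$. The arithmetic dependence on $d$ enters only through the residue of $m$ modulo $s$ in the Salié expansion. I would bound the $c$-sum and the $h$-sum trivially. Then I apply Cauchy--Schwarz over $(c,h,k)$, keeping the sum over $d<s$ inside the square, and open the square in $(d_1,d_2)$. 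The diagonal $d_1=d_2$ gives a saving of $\sqrt s$.

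\textbf{Step 3: second $B$-process on the $k$-sum.} For $d_1\neq d_2$, the $k$-sum has phase $k^{2/3}$ times a coefficient proportional to the difference of $(\rho+m)^{1/3}$ terms, which is of size $s$-reduced. It also carries a $\K_2(\cdot;\mq)$-type twist and a modulus-$s$ arithmetic twist. I would complete modulo $\mq s$, apply stationary phase via Lemma \ref{lem:Huxley stat phase integrals}, and use Lemma \ref{lem: Shparlinski bound} for the resulting complete sums. This should give $\sqrt{\text{conductor}}\approx s\sqrt{\mq}$ plus a length term. Optimizing $s$ between the diagonal and off-diagonal contributions should produce the terms $q^{3/2}\mq^{1/4}N^{1/4}$ and $q^{7/4}\mq^{1/2}$. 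The term $q\sqrt N$ should come from boundary or zero-frequency contributions, where no saving from the $B$-process is available.

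\textbf{Main obstacle.} I expect Step 3 to be the hardest. Controlling the off-diagonal $k$-sums requires that the difference phase be nondegenerate, with no spurious stationary points coming from $d_1-d_2$ being small. It also requires correlations of $\K_2(\cdot;\mq)$ that do not lose $\sqrt{\mq}$. If uniform nondegeneracy fails, I would first fall back on Lemma \ref{lem:sums of K_2 sums} and Lemma \ref{lem:Correlations of K_2 sums} for the arithmetic part alone.
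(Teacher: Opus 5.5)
Your overall architecture matches the paper's: a $B$-process in $n$, the reversal $A$-process via $m=c+qd/s$, Cauchy--Schwarz with the $d$-sum inside the square, and a second $B$-process. However, your Cauchy--Schwarz step loses a power of $\mq$, and with your own Step~3 estimate the optimization does not reach the stated bound.

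The factor $\gamma_{h,k}=\K_2(\sigma\ma h\overline{q},-k\overline{q};\mq)$ does not depend on $d$. Keeping it inside the square therefore only places the nonnegative weight $|\gamma_{h,k}|^2$ on the off-diagonal $k$-sums. There is no correlation to exploit there, and completing modulo $\mq s$ costs the $\sqrt{\mq s\Delta}\ll s\sqrt{\mq}X^{\epsilon}$ you predict per $(c,h,d_1,d_2)$. The bookkeeping is then:
\begin{itemize}
\item roughly $q^3\mq^2r^2/(sMN)$ outer terms, with diagonal equal to $s$ times that;
\item off-diagonal $\asymp q^2\mq^{3/2}rs^2/M$;
\item prefactor $MN/(q\mq r)^{3/2}$, and $r\ll qX^{\epsilon}$.
\end{itemize}
This gives $E_>(r)\ll X^{O(\epsilon)}\bigl(q^2\mq^{1/2}s^{-1/2}+q\mq^{1/4}(Ns)^{1/2}\bigr)$. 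Optimizing yields $q^{3/2}\mq^{3/8}N^{1/4}+q^{7/4}\mq^{1/2}+q\mq^{1/4}\sqrt N$, which misses the statement by factors $\mq^{1/8}$ and $\mq^{1/4}$.

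The missing idea is the paper's grouping. The $d$-sum $\alpha(a,h,\ell,n)$ depends on $\ell$ only modulo $s$, so reduce $\ell$ modulo $s$ and record the multiplicity $\beta$. Then apply Cauchy--Schwarz over $(a,h,\ell \bmod s,n)$, with $\alpha^2$ in one factor $\mathscr{Y}$ and $|\beta\gamma_{h,n}|^2$ in the other factor $\mathscr{Z}$.
\begin{itemize}
\item $\mathscr{Z}$ is bounded trivially using $|\gamma_{h,n}|^2\ll\mq^{\delta}(n,\mq)$, which gives $\mathscr{Z}\ll q^3\mq^2r^2X^{\epsilon}/(MNs)$.
\item $\mathscr{Y}$ carries no $\mq$-structure, so the second $B$-process is purely modulo $s$. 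The zero frequency produces Gauss sums $\G^*(\sigma h(d-d')\overline{\mq};s)$, which vanish unless $s\mid 4bh(d-d')$; this is what controls the small-$\Delta$, near-diagonal range you were worried about. The nonzero frequencies produce Kummer sums $\Ku_2(-2\sigma ahk,\sigma h(d-d')\overline{\mq};s)$.
\end{itemize}
This yields $\mathscr{Y}\ll X^{4\epsilon}\bigl(q^3\mq^2r^2/(MN)+q^2\mq rs^2/M\bigr)$, and hence $E_>(r)X^{-4\epsilon}\ll q^2\mq^{1/2}s^{-1/2}+q(Ns)^{1/2}$.

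Second, you must take $s\mid\sqrt q$, not merely $s\mid q$. Only then does $qd/s\equiv 0\pmod{\sqrt q}$ hold. In that case the Sali\'e congruence $k\ell^3\equiv-2\sigma hm\pmod{\sqrt q}$ involves only $c$, and $d$ enters solely through $e_s(\sigma hd\,\overline{\mq\ell^2})$. It does not enter through ``$m$ modulo $s$'', which is just $c$ modulo $s$. For $s>\sqrt q$ the congruence ties $\ell$ to $d$, and the expansion of the square breaks down.

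This cap is exactly the source of $q^{7/4}\mq^{1/2}$: take $s=\sqrt q$ when $q\sqrt{\mq/N}\gg\sqrt q$. The term $q\sqrt N$ comes from the other endpoint $s=b$ when $q\sqrt{\mq/N}\ll 1$. Neither term arises from zero frequencies.

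Finally, the error from the first $B$-process is $q^{3/2}\mq^{1/2}X^{3\epsilon}/\sqrt r\le q^{7/4}\mq^{1/2}X^{3\epsilon}$. It is already acceptable and needs no further treatment.
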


\begin{proof}
By definition,
$$
E_>(r) = \dfrac{M}{q\mq r}  \sum_{ \substack{h \asymp q\mq r/M \\ 1 \leq m \leq q \\ (m,q)=1 \\ (n,q\mq)=1} }f_{h}(n/N)e_q\left(h m\overline{\mq n^2}\right)e_{\mq}\left(\ma h \overline{qn^2}\right)e\left(-\dfrac{h\rho(m) + hm}{q\mq n^2} \right),
$$
where
$$
f_{h}(t) = \eta_2(t)\widehat{\theta}\left(\dfrac{Mh}{q^2\mq t^2}\right). 
$$
Note also that since $\theta$ is real-valued and even by assumption, so is $ \widehat{\theta}$. Hence after conjugating the sum over $h > 0$, 
\begin{equation}\label{bound for E> in terms of sigma}
E_>(r) \ll \max_{\substack{\sigma \in \{\pm 1\}}}\left|E_{>}(r;\sigma)\right|,
\end{equation}
where
$$
E_{>}(r;\sigma) =  \dfrac{M}{q\mq r}  \sum_{ \substack{1 \leq h \asymp q\mq r/M \\ 1 \leq m \leq q \\ (m,q)=1 \\ (n,q\mq)=1} }f_{h}(n/N)e_q\left(\sigma h m\overline{\mq n^2}\right)e_{\mq}\left(\sigma \ma h \overline{qn^2}\right)e\left(T(m,h)\dfrac{N^2}{n^2} \right) 
$$
with
\begin{equation}\label{def of T(m,h)}
T(m,h) = T(m,h;q,\mq,N) = \dfrac{h \rho(m) + hm}{q\mq N^2}.
\end{equation}

For $h,m$ satisfying the constraints under the sum and by the assumption $M = X/N^2$, 
\begin{equation}\label{size of T}
T(m,h) \asymp \dfrac{h\rho(m)}{q\mq N^2} \asymp \dfrac{r\rho(m)}{X} \asymp r \gg X^{2\epsilon}
\end{equation}
by Property (E) in Lemma \ref{lem:Reversal-type function} and the assumption $r \gg X^{2\epsilon}$. We also have
\begin{equation}\label{bound for derivative of f_h}
\left\|f_{h}^{(j)}\right\|_\infty \ll_j 1 + \left(\dfrac{Mh}{q^2\mq}\right)^j \ll_j 1 + \dfrac{r^j}{q^j} \ll_j X^{\epsilon j}
\end{equation}
for each $j\geq 0$, by the assumption $r \ll qX^\epsilon$. Define $\widetilde{f}_{h}$ similarly as $\widetilde{\eta}$ was defined in Lemma \ref{lem:integrals with squared inverses} and let $F_{h}(t) = \sqrt{t} \widetilde{f}_{h}(t)$. Note $t \asymp 1$ positive in the support of $t \mapsto F_h(t)$ and
\begin{equation}\label{derivative of F_h}
\left\|F_{h}^{(j)}\right\|_\infty \ll_j X^{\epsilon j}
\end{equation}
for each $j \geq 0$. This follows from (\ref{bound for derivative of f_h}). 
Setting $T = T(m,h)$, combining the two arithmetic exponentials into one,  applying Lemma \ref{lem:twisted sums with squared inverses} and then using the Chinese remainder theorem,
\begin{align*}
&\sum_{(n,q\mq)=1}f_{h}\left(\dfrac{n}{N}\right)e_q\left(\sigma h m\overline{\mq n^2}\right)e_{\mq}\left(\sigma \ma h \overline{qn^2}\right)e\left( \dfrac{ TN^2}{n^2} \right) \\
&= \sqrt{N}\sum_{n\geq 1} F_{h}\left(\dfrac{Nn}{q\mq T}\right)\dfrac{\K_2\left(\sigma hm\overline{\mq}, - n\overline{\mq}; q\right)\K_2\left(\sigma \ma h\overline{q}, - n\overline{q}; \mq\right)}{\sqrt{n}}e\left(3T \left(\dfrac{Nn}{2q\mq T}\right)^{2/3}\right)\\
&\qquad
+ O\left(\sqrt{\dfrac{q\mq }{T}} X^{3\epsilon}\right).
\end{align*}
Since $T \asymp r$, the contribution of the error term to $E_>(r;\sigma)$ is 
$\ll q^{3/2} \sqrt{\mq} X^{3\epsilon}/\sqrt{r} $.
Recall that $q$ is a square by assumption and so $\sqrt{q}$ is an integer. Then by Lemma \ref{lem:Quadratic Kloosterman sums with square moduli}, 
$$
\K_2\left(\sigma hm\overline{\mq}, - n\overline{\mq}; q\right) = \sum_{\substack{1 \leq \ell \leq \sqrt{q} \\ (\ell,q)=1 \\ n\ell^3 \equiv -2\sigma hm (\sqrt{q})}} e_{q}\left(\sigma h m \overline{\mq \ell^2} -n \ell\overline{\mq}\right).
$$
Thus
\begin{align*}
&E_{>}(r;\sigma) = O\left(\dfrac{q^{3/2}\sqrt{\mq} X^{3\epsilon}}{\sqrt{r}}\right)+\\
&\dfrac{M }{q\mq r} \sum_{\substack{1 \leq h \asymp \frac{q\mq r}{M} \\ 1 \leq \ell \leq \sqrt{q}\\1 \leq m \leq q \\ n\geq 1 \\ (\ell m,q)=1 \\ n\ell^3 \equiv -2\sigma h m (\sqrt{q})}}
\sqrt{\dfrac{N}{n}}F_{h}\left(\dfrac{Nn}{q\mq T}\right)\
\gamma_{h,n}e_{q}\left(\sigma h m \overline{\mq \ell^2} -n \ell\overline{\mq}\right)e\left(3T^{\frac{1}{3}} \left(\dfrac{Nn}{2q\mq}\right)^{\frac{2}{3}}\right)
,
\end{align*}
where
\begin{equation}\label{gamma_h,n}
\gamma_{h,n} = \K_2\left(\sigma \ma h\overline{q}, - n\overline{q}; \mq\right).
\end{equation}

Let $b \leq s \mid \sqrt{q}$, to be chosen later, be an integral power of $b$. Then so is $q/s$ and $ \sqrt{q} \mid q/s$. We may write $m = a + qd/s$ with $1 \leq a \leq q/s$, $(a,q)=1$, and $0 \leq d < s$. Note the fact $\sqrt{q} \mid q/s$ implies $-2\sigma h(a + qd/s)  \equiv -2\sigma a h (\sqrt{q})$ uniformly in $d$. Moreover in the support of $n \mapsto F_h(nN/q\mq T)$, we have $n \asymp q\mq T/N \asymp q\mq r/N$. The last holds by (\ref{size of T}). Thus $\sqrt{N/n} \asymp N/\sqrt{q\mq r}$. Then after moving the $d$-sum to the inside and taking absolute values, 
\begin{align}\label{moving the d-sum to the inside}
 &E_{>}(r;\sigma)\ll 
\dfrac{q^{3/2} \sqrt{\mq}X^{3\epsilon}}{\sqrt{r}} \nonumber\\
&+
\dfrac{M N}{(q\mq r)^{\frac{3}{2}}} \sum_{\substack{1 \leq a \leq q/s \\1 \leq h \asymp \frac{q\mq r}{M} \\ \ell (\sqrt{q})\\ 1 \leq n \asymp q\mq r/N \\ (a \ell ,q)=1 \\ n\ell^3 \equiv -2\sigma ah  (\sqrt{q})}} \left| \gamma_{h,n}\sum_{0\leq d < s}
F_{h}\left(\dfrac{Nn}{q\mq T}\right)
e_{s}\left(\sigma h d \overline{\mq \ell^2} \right)e\left(3T^{\frac{1}{3}} \left(\dfrac{Nn}{2q\mq}\right)^{\frac{2}{3}}\right)\right|.
\end{align}
 Recall also that $T = T(m,h)$
so that now 
\begin{equation}\label{T rewritten}
T = T\left(a + \dfrac{dq}{s},h\right).
\end{equation}

Consider the $\ell$-sum, which we move to the inside. It equals
$$
\sum_{\substack{\ell (\sqrt{q})\\ n\ell^3 \equiv -2\sigma ah (\sqrt{q})}}^* \alpha(a,h,\ell,n),
$$
where $\alpha(a,h,\ell,n) \geq 0$ denotes the absolute value of the $d$-sum in (\ref{moving the d-sum to the inside}). Note $\alpha(a,h,\ell,n)$ is $s$-periodic in the variable $\ell$ and recall also that $s \mid \sqrt{q}$. Thus if we split the sum according to the class of $\ell$ modulo $s$, 
\begin{align}\label{reduction to mod s}
\sum_{\substack{\ell (\sqrt{q})  \\ n\ell^3 \equiv -2\sigma ah (\sqrt{q})}}^* \alpha(a,h,\ell,n)
= 
\sum_{\substack{m(s) \\ nm^3 \equiv -2\sigma a h (s)}}^* \alpha(a,h,m,n) \beta(-2\sigma ah,m,n)
,
\end{align}
where for integers $k,m,n$,
\begin{equation}\label{beta}
\beta(k,m,n) = \sum_{\substack{\ell (\sqrt{q}) \\ \ell\equiv m (s)\\ n\ell^3 \equiv k (\sqrt{q})  }}^* 1.
\end{equation}
Inserting that in (\ref{moving the d-sum to the inside}) and applying Cauchy-Schwarz, we have
\begin{equation}\label{bound in terms of Y,Z}
E_{>}(r;\sigma) \ll \dfrac{MN\sqrt{\mathscr{Y}\mathscr{Z}}}{(q\mq r)^{3/2}}+ \dfrac{q^{3/2}\sqrt{\mq}X^{3\epsilon}}{\sqrt{r}} ,
\end{equation}
where
\begin{equation}\label{def of Y and Z}
\mathscr{Y} = \sum_{\substack{1 \leq a \leq q/s \\1 \leq h \asymp \frac{q\mq r}{M} \\ m (s)\\ 1 \leq n \asymp q\mq r/N \\ (a m ,s)=1 \\ m^3n \equiv -2\sigma ah  (s)}}\alpha^2(a,h,m,n), \hspace{2em}
\mathscr{Z} = \sum_{\substack{1 \leq a \leq q/s \\1 \leq h \asymp \frac{q\mq r}{M} \\ m (s)\\ 1 \leq n \asymp q\mq r/N \\ (a m ,s)=1 \\ m^3n \equiv -2\sigma ah  (s)}}\left| \beta(-2\sigma ah,m,n)\gamma_{h,n}\right|^2. 
\end{equation}
Note we may assume $q\mq r\gg M,N$, as otherwise the sums are zero. 

Let us first consider $\mathscr{Z}$. By the definition of $\gamma_{h,n}$ in (\ref{gamma_h,n}) and Lemma \ref{lem: Shparlinski bound}, we have $|\gamma_{h,n}|^2 \ll_{\delta}\mq^{\delta}(n,\mq)$ for any $\delta > 0$. We can majorize $\mathscr{Z}$ as
$$
\mathscr{Z} \ll_\delta X^{\delta}\sum_{\substack{k \ll q^2\mq r/Ms \\ m (s)\\ n \asymp q\mq r/N}}(n,\mq)\beta^2(k,m,n).
$$
Recalling the definition of $\beta$ in (\ref{beta}), expanding the square and switching orders of summation, we have
$$
\mathscr{Z} \ll_\delta  X^{\delta}\sum_{n\asymp q\mq r/N}(n,\mq)\sum_{\substack{\ell,\ell' (\sqrt{q}) \\ \ell \equiv \ell' (s)\\ n\ell^3 \equiv n\ell'^3 (\sqrt{q})}}^*\sum_{\substack{m(s) \\ m\equiv \ell (s)}}\sum_{\substack{k \ll q^2\mq r/Ms \\ k\equiv n\ell^3 (\sqrt{q})}}1.
$$
Since $q\mq r \gg M$ and $s \mid \sqrt{q}$ by assumption, we have $q^2\mq r/Ms \gg \sqrt{q}$. Then the inner sum is $\ll q^{3/2}\mq r/Ms$. The $m$-sum equals $1$. Moreover for any integer $\ell'$ coprime to $\sqrt{q}$, 
$$
\sum_{\substack{\ell (\sqrt{q}) \\ n\ell^3 \equiv n \ell'^3 (\sqrt{q})}}^*1 = (n,\sqrt{q})\sum_{\substack{\ell (\sqrt{q}/(n,\sqrt{q})) \\ \ell^3 \equiv 1 (\sqrt{q}/(n,\sqrt{q}))}}^*1 \ll (n,\sqrt{q}) 
$$
and 
$$\sum_{n \asymp q\mq r/N} (n,\sqrt{q})(n,\mq)  = \sum_{n \asymp q\mq r/N} (n,\sqrt{q}\mq)\ll \dfrac{q\mq r \tau(q\mq)}{N}$$ 
by Lemma \ref{lem:sums of GCDs}. It follows
\begin{equation}\label{bound for Z}
\mathscr{Z} \ll  \dfrac{q^3\mq^2 r^2 X^{\epsilon}}{MNs}.
\end{equation}

Let us now consider $\mathscr{Y}$ defined in (\ref{def of Y and Z}). Extending the summation over $1 \leq n \asymp qr/N$ to all of $\mathbb{Z}$, substituting $\overline{m}$ with $m$, expanding the square and switching orders of summation, we have
\begin{align}\label{expanding the square of Y}
&\mathscr{Y} \leq \nonumber \\
& \sum_{\substack{1 \leq a \leq \frac{q}{s} \\ 0 \leq d,d' < s \\h \asymp \frac{q\mq r}{M} \\ m (s)\\ (a m ,s)=1 }} e_s\left(\sigma h (d-d')\overline{\mq}m^2\right)\sum_{n \equiv -2\sigma ahm^3 (s)} G_{a,d,d',h}\left(\dfrac{Nn}{q\mq r}\right)e\left(\Delta_{a,d,d',h}\left(\dfrac{Nn}{q\mq r}\right)^{\frac{2}{3}}\right),
\end{align}
where
\begin{equation}\label{def of G}
G_{a,d,d',h}(t) = F_h\left(\dfrac{rt}{T(a + qd/s, h)}\right)\overline{F_h\left(\dfrac{rt}{T(a + qd'/s, h)}\right)}
\end{equation}
and
\begin{equation}\label{def of Delta}
\Delta_{a,d,d',h} = \dfrac{3 r^{2/3}}{2^{2/3}}\left(T^{1/3}\left(a + \dfrac{qd}{s}, h\right) - T^{1/3}\left(a + \dfrac{qd'}{s}, h\right)\right).
\end{equation}

Recall $0 < T \asymp r \gg X^{2\epsilon}$ by (\ref{size of T}) and $F_h(t)$ is smooth compactly supported on some positive interval $t \asymp 1$. Then so is $G_{a,d,d',h}$ and  
\begin{equation}\label{derivative of G}
\left\|G_{a,d,d',h}^{(j)}\right\|_\infty \ll_j X^{\epsilon j}
\end{equation} for each $j \geq 0$, 
by (\ref{derivative of F_h}). 

With regards to the archimedean conductor $\Delta_{a,d,d',h}$ defined above, let us first recall that, by the definition in (\ref{def of T(m,h)}) and the bound in (\ref{size of T}),
$$
T\left(a + \dfrac{qd}{s}, h\right) = \dfrac{h\rho(a + qd/s) + ha + hqd/s}{q \mq N^2} \asymp r \gg X^{2\epsilon}.
$$
Since $q/s$ is an integral power of $b$ (as so are $s\mid \sqrt{q}$ and $q$ by assumptions) and $1 \leq a \leq q/s$ with $(a,s)=1$ (hence $(a,b)=1$) then Property (D) of Lemma \ref{lem:Reversal-type function} implies $\rho(a + qd/s) = \rho(a) + \rho(qd/s)$. For any $x,y > 0$ with $x \asymp y$, the fundamental theorem of calculus gives
$$
x^{1/3} - y^{1/3} = \dfrac{1}{3}\int_y^x \dfrac{dz}{z^{2/3}} \ll \dfrac{x-y}{x^{2/3}}.
$$
Since $T \asymp r$, it follows
\begin{equation}\label{size of U part 1}
\Delta_{a,d,d',h} \ll \dfrac{h|\rho(qd/s) - \rho(qd'/s) + q(d-d')/s|}{q\mq N^2}.
\end{equation}
For $0 \leq d,d' < s$, we have $qd/s,qd'/s < q$. By Property (B) of Lemma \ref{lem:Reversal-type function}, $X/q$ divides both $\rho(qd/s), \rho(qd'/s)$. Since $q \leq X/q$ (as $q \leq \sqrt{X}$ by assumption) and $\rho$ is injective on $[0, bX)$, then $|d-d'|q/s \leq  |\rho(qd/s) - \rho(qd'/s)|$. Thus for any $h \asymp q\mq r/M$ and by the assumption $X = MN^2$,  
\begin{equation}\label{Delta(a,d,d',h) in terms of Delta(d,d')}
\Delta_{a,d,d',h} \ll \dfrac{r|\rho(qd/s) - \rho(qd'/s)|}{X}.
\end{equation}
Note that since $q/s$ is an integral power of $b$, then Property (C) of Lemma \ref{lem:Reversal-type function} implies
\begin{equation}\label{bound for Delta(d,d')}
\rho(qd/s) - \rho(qd'/s) \ll \dfrac{Xs}{q},
\end{equation}
whence
\begin{equation}\label{Delta(a,d,d',h) in terms of Delta(d,d')}
\Delta_{a,d,d',h} \ll \dfrac{rs}{q} \ll sX^{\epsilon}.
\end{equation}
The last inequality holds since $r \ll qX^\epsilon$ by assumption. 

Let us now return to (\ref{expanding the square of Y}) which we split and rewrite as 
\begin{equation}\label{split Y}
\mathscr{Y} \leq \mathscr{Y}_< + \mathscr{Y}_>,
\end{equation}
where
\begin{equation}\label{def of Y1 and Y2}
\mathscr{Y}_< = \sum_{\substack{1 \leq a \leq q/s \\ 0 \leq d,d' < s \\ h \asymp q\mq r/M \\  (a,s)=1 \\ \Delta_{a,d,d',h} \ll X^{2\epsilon}}}\Phi\left(a,d,d',h\right), \hspace{2em} 
\mathscr{Y}_> = \sum_{\substack{1 \leq a \leq q/s \\ 0 \leq d,d' < s \\ h \asymp q\mq r/M \\  (a,s)=1 \\ \Delta_{a,d,d',h} \gg X^{2\epsilon}}}\Phi\left(a,d,d',h\right), 
\end{equation}
with
\begin{align}\label{def of Phi}
&\Phi\left(a,d,d',h\right) \nonumber\\
& = \sum_{m(s)}^* e_s\left(\sigma h(d-d')\overline{\mq}m^2\right) \sum_{n\equiv -2\sigma a h m^3 (s)}G_{a,d,d',h}\left(\dfrac{Nn}{q\mq r}\right)e\left(\Delta_{a,d,d',h}\left(\dfrac{Nn}{q\mq r}\right)^{2/3}\right).
\end{align}

Let us start by considering $\mathscr{Y}_<$. Note we may write the summand of the $n$-sum as $H_{a,d,d',h}(Nn/q\mq r), 
$
where 
\begin{equation}\label{def of H}
H_{a,d,d',h}(t) = G_{a,d,d',h}(t)e(\Delta_{a,d,d',h}t^{2/3}).\end{equation}
 By (\ref{derivative of G}) and the constraint $\Delta_{a,d,d',h} \ll X^{2\epsilon}$ under the sum in (\ref{def of Y1 and Y2}) corresponding to $\mathscr{Y}_<$, we have $\|H_{a,d,d',h}^{(j)}\|_\infty \ll_j X^{2\epsilon j}$ for each $j\geq 0$. Applying Lemma \ref{lem:poisson summation} and switching orders of summation, we have
\begin{align}\label{poisson for small amplitude}
\Phi(a,d,d',h) &= \dfrac{q\mq r\widehat{H_{a,d,d',h}}(0)\G^*(\sigma h (d-d')\overline{\mq};s)}{N\sqrt{s}} +O_A\left(X^{-A}\right)\nonumber\\
& + \dfrac{q\mq r}{\sqrt{s}N}\sum_{1 \leq |k| \leq \frac{sNX^{3\epsilon}}{q\mq r}}\widehat{H_{a,d,d',h}}\left(\dfrac{q\mq rk}{Ns}\right)\Ku_2\left( -2\sigma ahk, \sigma h (d-d')\overline{\mq};  s\right)
\end{align}
for any $A > 0$, where $\G^*$ is as defined in (\ref{def of Gauss type sum}) and $\Ku_2$ is the Kummer-type sum
\begin{equation}\label{Kummer def}
\Ku_2(c,d;s) = \dfrac{1}{\sqrt{s}}\sum_{m(s)}^*e_s\left(cm^3 + dm^2\right) \ll_\delta s^{\delta}\sqrt{(c,d,s)}.
\end{equation}
The bound above holds for any $\delta > 0$, by Lemma \ref{lem: Shparlinski bound}. For any integer $c$, Lemma \ref{lem: Gauss sum variant} gives 
\begin{align*}
\G^*(c;s) &\ll \mu^2\left(\odd\left(\dfrac{s}{(c,s)}\right)\right)\mathbf{1}_{16 \nmid s/(c,s)} \tau(s)\sqrt{(c,s)}\\
&\ll
\mathbf{1}_{s\mid 4bc}\tau(s)\sqrt{s}.
\end{align*}
The last inequality holds since $s$ is a power of $b$ by assumption. 
Thus by (\ref{poisson for small amplitude}), the assumption $(a,s)=1$ and the two bounds above for $\Ku_2, \G^*$, 
\begin{align*}
\Phi(a,d,d',h) &\ll_{A,\delta} \dfrac{q\mq r\tau(s)\mathbf{1}_{s\mid 4bh(d-d')} }{N} 
+ X^{-A} + \dfrac{q\mq r s^{\delta}}{N\sqrt{s}}\sum_{1 \leq k \leq NsX^{3\epsilon}/q\mq r}\sqrt{(2hk,s)}.
\end{align*}
Hence by the definition of $\mathscr{Y}_<$ in (\ref{def of Y1 and Y2}),
\begin{align}
\mathscr{Y}_< &\ll_{\delta} \dfrac{q^2\mq r\tau(s)}{Ns} \sum_{\substack{0 \leq d,d' < s \\ h \asymp q\mq r/M \\ s \mid 4bh(d-d')}}1 + \dfrac{q^2\mq rs^{1/2 + \delta}}{N}
\sum_{\substack{h \asymp q\mq r/M \\ 1 \leq k \leq NsX^{3\epsilon}/q\mq r}}\sqrt{(2hk,s)} \nonumber\\
 &\ll 
 \dfrac{q^2\mq  r X^{o(1)}}{Ns}\left(\dfrac{q\mq rs}{M} + s \sum_{\substack{1 \leq n \ll \frac{q\mq rs}{M} \\ s \mid n}}1\right) 
 + \dfrac{q^2\mq r\sqrt{s}X^{\delta + o(1)}}{N} \sum_{1 \leq n \ll \frac{NsX^{3\epsilon}}{M}}\sqrt{(n,s)} \nonumber .
\end{align}
It follows 
\begin{equation}\label{bound for Y<}
\mathscr{Y}_< \ll \dfrac{q^3 \mq^2 r^2 X^{o(1)}}{MN} + \dfrac{q^2 \mq r s^{3/2} X^{4\epsilon}}{M}.
\end{equation}

Consider now $\mathscr{Y}_>$ defined in (\ref{def of Y1 and Y2}). Applying Lemma \ref{lem:twisted Poisson summation} to the $n$-sum in the definition of $\Phi$ in (\ref{def of Phi}) and switching orders of summation, we have
$$
\Phi(a,d,d',h) = \dfrac{q\mq r}{N\sqrt{s \Delta_{a,d,d',h}}} \sum_{k} \Ku_2\left(-2\sigma a h k, \sigma h(d-d')\overline{\mq};s\right)J_{a,d,d',h}\left(\dfrac{q\mq rk}{Ns}\right),
$$
where
\begin{align}\label{def of J}
J_{a,d,d',h}\left(\dfrac{q\mq rk}{Ns}\right) &= \sqrt{\Delta_{a,d,d',h}} \widehat{H_{a,d,d',h}}\left(\dfrac{q\mq rk}{Ns}\right) \nonumber\\ &=\sqrt{\Delta_{a,d,d',h}}\int_{\mathbb{R}}G_{a,d,d',h}(t) e\left(\Delta_{a,d,d',h} t^{2/3} - \dfrac{q\mq rk }{Ns}t \right)dt
\end{align}
with $G_{a,d,d',h}, \Delta_{a,d,d',h}, \Ku_2$ defined in (\ref{def of G}), (\ref{def of Delta}) and (\ref{Kummer def}) respectively. Recall that $0 < t \asymp 1$ in the support of $G_{a,d,d',h}(t)$,  $\|G_{a,d,d',h}^{(j)}\|_\infty \ll_j X^{\epsilon j}$ for each $j\geq 0$ (by (\ref{derivative of G})) and $\Delta_{a,d,d',h} \gg X^{2\epsilon}$ by the constraint under the defining sum of $\mathscr{Y}_>$ in (\ref{def of Y1 and Y2}). As such, several integrations by parts, on the defining integral of $J_{a,d,d'h}(q\mq rk/Ns)$, show we may localize $k$ to some interval $k \asymp Ns\Delta_{a,d,d',h}/q\mq r$ with suitable implied constants in the $\asymp$ (here we make use of the non-stationary nature of the phase $\Delta_{a,d,d',h} t^{2/3} - q\mq rk t/Ns$ for $k$ outside of this interval and $t \asymp 1$ in the support of $G_{a,d,d',h}$). Thus
\begin{align}\label{localizing k}
&\Phi(a,d,d',h)  = O_{A}\left(X^{-A}\right)\nonumber\\
& + \dfrac{q\mq r}{N\sqrt{s \Delta_{a,d,d',h}}} \sum_{k \asymp Ns\Delta_{a,d,d',h}/q\mq r} \Ku_2\left(-2\sigma a h k, \sigma h(d-d')\overline{\mq};s\right)J_{a,d,d',h}\left(\dfrac{q\mq rk}{Ns}\right)
\end{align}
for any $A > 0$. Let $k \asymp Ns\Delta_{a,d,d',h}/q\mq r$. We may write the exponential's phase in (\ref{def of J}) as
$
\Delta_{a,d,d',h} t^{2/3} - q\mq rk t/Ns = \Delta_{a,d,d',h}\phi(t),
$
where 
$$
\phi(t) = t^{2/3} - \dfrac{q\mq rkt}{Ns \Delta_{a,d,d',h}}.
$$
Clearly $\phi''(t) \gg 1$ for $t \asymp 1$ in the support of $G_{a,d,d',h}$. Hence by 
van der Corput's lemma (see for instance Lemma 2.5 in the lecture notes by Tao \cite{Tao}) 
$$
\int_{0 < t \asymp 1} e\left(\Delta_{a,d,d',h}\phi(t)\right)dt \ll \dfrac{1}{\sqrt{\Delta_{a,d,d',h}}}.
$$
Thus if we integrate by parts once and recall $\|G_{a,d,d',h}'\|_\infty \ll X^\epsilon$, we find that
$$
J_{a,d,d',h}\left(\dfrac{q\mq rk}{Ns}\right) \ll X^{\epsilon}.
$$
By this, (\ref{localizing k}), (\ref{Kummer def}) and the assumption $(a,s)=1$,
\begin{equation}\label{final bound for Phi}
\Phi(a,d,d',h) \ll_A X^{-A} + \dfrac{q\mq r X^{2\epsilon}}{N\sqrt{s |\Delta_{a,d,d',h}|}} \sum_{k \asymp Ns\Delta_{a,d,d',h}/q\mq r}\sqrt{(2hk,s)}.
\end{equation}

By the definition of $\mathscr{Y}_>$ in (\ref{def of Y1 and Y2}), a dyadic decomposition, the bound $\Delta_{a,d,d',h} \ll sX^{\epsilon}$ in (\ref{Delta(a,d,d',h) in terms of Delta(d,d')}), and (\ref{final bound for Phi}),
\begin{align*}
\mathscr{Y}_> &\ll(\log X) \sup_{X^{2\epsilon} \ll D \ll sX^{\epsilon}} \sum_{\substack{1 \leq a \leq q/s \\ 0 \leq d,d' < s \\ h \asymp q\mq r/M \\ \Delta_{a,d,d',h} \asymp D}}\left| \Phi(a,d,d',h)\right|\\
&\ll_A X^{-A} + \sup_{X^{2\epsilon} \ll D \ll sX^{\epsilon}}\dfrac{q^2\mq rs^{1/2} X^{2\epsilon + o(1)}}{N|D|^{1/2}} \sum_{1 \leq n \ll NDs/M}(n,s)^{1/2}\\
&\ll \dfrac{q^2 \mq r s^2 X^{3\epsilon}}{M}.
\end{align*}

By the bound above, (\ref{bound for Y<}) and (\ref{split Y}),
\begin{equation}\label{bound for Y}
\mathscr{Y}X^{-4\epsilon} \ll \dfrac{q^3 \mq^2 r^2}{MN} + \dfrac{q^2 \mq r s^2}{M}.
\end{equation}

By (\ref{bound for Z}), (\ref{bound in terms of Y,Z}), (\ref{bound for E> in terms of sigma}) and the assumption $r \ll qX^{\epsilon}$,  
\begin{equation}\label{combining bounds}
E_>(r) X^{-4\epsilon}\ll \dfrac{q^{2}\mq^{1/2}}{s^{1/2}} + qN^{1/2}s^{1/2}
\end{equation}
for any proper power $s \leq \sqrt{q}$ of $b$.

If $q\sqrt{\mq/N} \ll 1$, then choosing $s = b$ gives
\begin{equation}\label{small s}
E_>(r) X^{-4\epsilon}\ll q^2\mq^{1/2} + q\sqrt{N} \ll q\sqrt{N}. 
\end{equation}
If $1 \ll q\sqrt{\mq/N} \ll \sqrt{q}$, a choice of $s \asymp q\sqrt{\mq/N}$ yields
\begin{equation}\label{medium s}
E_>(r) X^{-4\epsilon}\ll q^{3/2}\mq^{1/4}N^{1/4}. 
\end{equation}
Finally if $q\sqrt{\mq/N} \gg \sqrt{q}$, we select $s = \sqrt{q}$ obtaining
\begin{equation}\label{large s}
E_>(r) X^{-4\epsilon}\ll q^{7/4}\mq^{1/2} + q^{5/4}\sqrt{N} \ll q^{7/4}\mq^{1/2}.  
\end{equation}
In any case, 
$$
E_>(r) X^{-4\epsilon} \ll q^{3/2}\mq^{1/4}N^{1/4} + q^{7/4}\mq^{1/2} + q\sqrt{N}
$$
as required.
\end{proof}

\subsection*{Factorizing the $d$-sum in \textnormal{(\ref{moving the d-sum to the inside})}}\label{subsec:Factorizing the $d$-sum}

Since $\beta$ defined in (\ref{beta}) is essentially $\asymp 1$	(at least on average) we have (essentially) applied Cauchy-Schwarz with only one large sum on the inside, namely, the $d$-sum in (\ref{moving the d-sum to the inside}). As we know, Cauchy-Schwarz is more effective when there is, instead, a product of two large sums on the inside. Here we show that it is indeed possible (on average) to factorize that sum as a product of two large sums. For simplicity we assume $\mq=1$ here, but the proof easily adapts to the general case. See Proposition \ref{prop:Factorizing the d-sum} below. As mentioned earlier, the material in this subsection (noticed and added some time after attaining the main results of the article) will not be employed in the proof of our main results. This would otherwise imply lengthy revisions of the work. Nonetheless, we have decided to incorporate it as it may be beneficial to the researcher seeking to improve our results and/or adapt ideas in another context involving the reversal function. See also Remark \ref{Remark on factorization consequences} at the end.

\begin{prop}\label{prop:Factorizing the d-sum}
	Let $f(k) = \rho(k) + k$ and let 
	$$\rho_s(k) = \sum_{0 \leq j < \log_b(s)}d_j(k)b^{\log_b(s) - 1 - j}.$$ Then for any powers $b \mid U \mid s \mid \sqrt{q}$ of $b$, the normalized sum in \textnormal{(\ref{moving the d-sum to the inside})}, with $\mq = 1$ there, is
	\begin{equation}\label{factoring_d-sum}
	\ll q\sqrt{r}X^{2\epsilon }+
	\dfrac{MNX^{3\epsilon}}{(qr)^{\frac{3}{2}}} \sup_{\substack{\gamma \in [0,1] \\ \xi \ll X^{2\epsilon}/X}}\sum_{\substack{1 \leq a \leq q/s \\ 1 \leq h \asymp qr/M \\ \ell(\sqrt{q}) \\ 1 \leq n \asymp qr/N\\(a\ell,q)=1 \\ n\ell^3 \equiv -2\sigma a h (\sqrt{q})}}\mathscr{U}_U(a,h,\ell,n;\gamma,\xi)\mathscr{V}_U(a,h,\ell,n;\gamma,\xi),
	\end{equation}
	where $\mathscr{U}_U = \mathscr{U}_U(a,h,\ell,n;\gamma,\xi)$ and  	$\mathscr{V}_U = \mathscr{V}_U(a,h,\ell,n;\gamma,\xi)$ are defined as
	\begin{equation}\label{Usum}
	\mathscr{U}_U = \left|\sum_{0 \leq m < \frac{s}{U}}e_s\left(\sigma h m\overline{\ell}^2\right)e\left(\dfrac{h^{\frac{1}{3}}n^{\frac{2}{3}}f(qm/s)}{2^{\frac{2}{3}}q f^{\frac{2}{3}}(a)}\right) e\left(\xi f\left(\dfrac{qm}{s}\right) + \gamma \rho_s(m)\right)\right|
	\end{equation}
	and 
	\begin{equation}\label{Vsum}
	\mathscr{V}_{U} = \left|\sum_{0 \leq c < U}e_U\left(\sigma h c\overline{\ell}^2\right)e\left(\dfrac{h^{\frac{1}{3}}n^{\frac{2}{3}}f(qc/U)}{2^{\frac{2}{3}}q f^{\frac{2}{3}}(a)}\right) e\left(\xi f\left(\dfrac{qc}{U}\right) + \gamma \rho_s\left(\dfrac{cs}{U}\right)\right)\right|,
	\end{equation}
	respectively.
\end{prop}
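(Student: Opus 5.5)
The plan is to return to the inner $d$-sum in (\ref{moving the d-sum to the inside}) with $\mq=1$, and make its phase and weight additively separable in $d$ by a Taylor expansion plus a Fourier expansion. Then splitting $d$ into a high and a low block of $b$-adic digits makes the sum factor as a product $\mathscr{U}_U\mathscr{V}_U$.

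First I write the phase explicitly. For $m=a+qd/s$ with $(a,b)=1$, Property (D) of Lemma \ref{lem:Reversal-type function} gives $f(a+qd/s)=f(a)+f(qd/s)$. Hence
$$T=\dfrac{h\left(f(a)+f(qd/s)\right)}{qN^2}=T_0(1+x_d),\qquad T_0=\dfrac{hf(a)}{qN^2},\qquad x_d=\dfrac{f(qd/s)}{f(a)}.$$
Property (E) gives $f(a)\asymp X$, and Property (C) gives $f(qd/s)\ll Xs/q$, so $x_d\ll s/q\le q^{-1/2}$. Expanding $(1+x_d)^{1/3}=1+x_d/3+R(x_d)$ with $R(x)\ll x^2$, the phase $3T^{1/3}(Nn/2q)^{2/3}$ splits as follows:
\begin{itemize}
\item a $d$-independent piece, which is discarded under the absolute value;
\item the linear piece $h^{1/3}n^{2/3}f(qd/s)/(2^{2/3}qf^{2/3}(a))$, which is exactly the middle phase in (\ref{Usum}) and (\ref{Vsum});
\item the remainder $3T_0^{1/3}(Nn/2q)^{2/3}R(x_d)$.
\end{itemize}
Since $T_0\asymp r$, the remainder is $\ll rx_d^2\ll rs^2/q^2\ll X^{\epsilon}$, using $r\ll qX^\epsilon$ and $s\le\sqrt q$. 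So the remainder has bounded oscillation, but it is not additive in $d$.

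The key step is to absorb the remainder and the weight $F_h(Nn/qT)$ into one smooth function of the single real variable $f(qd/s)$. Concretely, $F_h(Nn/qT)e(3T_0^{1/3}(Nn/2q)^{2/3}R(x_d))=g(f(qd/s)/X)$, with $g$ supported on a bounded range and $\|g^{(j)}\|_\infty\ll_j X^{O(\epsilon) j}$. I then use Fourier inversion, $g(y)=\int\widehat g(\eta)e(\eta y)\,\der\eta$. Integrating by parts shows $\widehat g(\eta)$ is negligible for $|\eta|\gg X^{2\epsilon}$, and $\|\widehat g\|_1\ll X^{O(\epsilon)}$. Setting $\xi=\eta/X$ replaces the weight by $e(\xi f(qd/s))$ with $\xi\ll X^{2\epsilon}/X$, at the cost of a factor $X^{O(\epsilon)}$ and a supremum over $\xi$.

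The truncation tails, together with the error term $q^{3/2}X^{3\epsilon}/\sqrt r$ already present in (\ref{moving the d-sum to the inside}), should account for the first term $q\sqrt r X^{2\epsilon}$. This is where I expect the bookkeeping to be delicate, and I would recheck exactly how the normalization $MN/(qr)^{3/2}$ interacts with the error terms.

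Next I factor the sum. Write $d=m+(s/U)c$ with $0\le m<s/U$ and $0\le c<U$. Then $qd/s=qm/s+qc/U$, and there is no carry because $qm/s<q/U$ and $q/U$ is a power of $b$. Property (D) gives $f(qd/s)=f(qm/s)+f(qc/U)$, and similarly $\rho_s(d)=\rho_s(m)+\rho_s(cs/U)$. Also $e_s(\sigma hd\overline{\ell}^2)=e_s(\sigma hm\overline{\ell}^2)e_U(\sigma hc\overline{\ell}^2)$. So every factor splits multiplicatively in $(m,c)$, and the $d$-sum equals the product of the $m$-sum and the $c$-sum, which gives $\mathscr{U}_U\mathscr{V}_U$.

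The twist $\gamma\rho_s(\cdot)$ with $\gamma\in[0,1]$ is included for flexibility. It is harmless since it is additive in the same way, and $\gamma=0$ is allowed. I expect it is needed to encode the digit-dependence of $\rho(qd/s)$: by Property (B), $\rho(qd/s)$ is essentially $(X/q)$ times the reversal of $d$, up to fractional scaling that a frequency $\gamma$ can capture. So if the Taylor remainder or the weight is most naturally expressed through $\rho_s(d)$, I would Fourier-expand in $\rho_s(d)/s$ instead and collect the resulting frequency into $\gamma$. Taking the supremum over $\gamma,\xi$ outside the $(a,h,\ell,n)$-sum, and bounding the remaining integral measure by $X^{\epsilon}$, then yields (\ref{factoring_d-sum}).

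The main obstacle is the step that makes the non-additive remainder harmless, i.e. proving the uniform derivative bounds on $g$ in terms of $f(qd/s)/X$. This needs $rs^2/q^2\ll X^{\epsilon}$ together with (\ref{derivative of F_h}).
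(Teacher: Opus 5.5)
Your factorization step is correct and matches the paper: $d=m+cs/U$ produces no carries, and $\rho$, $\rho_s$ and the character are all additive. Your treatment of the weight $F_h(Nn/qT)$ by Fourier inversion in $f(qd/s)$, giving $\xi\ll X^{2\epsilon}/X$, is also what the paper does.

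\textbf{The gap.} Your key step fails. The Taylor remainder $3T_0^{1/3}(Nn/2q)^{2/3}R(x_d)\approx -c_0x_d^2/9$, with $c_0\asymp r$, cannot be absorbed into $e(\xi f(qd/s))$ with $\xi\ll X^{2\epsilon}/X$.

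\begin{itemize}
\item For such $\xi$ one has $\xi f(qd/s)\ll X^{2\epsilon}s/q\le X^{2\epsilon}q^{-1/2}$.
\item Hence any $\int_{|\eta|\ll X^{2\epsilon}}\widehat g(\eta)e(\eta y)\,d\eta$ with $\|\widehat g\|_1\ll X^{O(\epsilon)}$ varies by at most $X^{O(\epsilon)}s/q$ on the relevant range $0\le y=f(qd/s)/X\ll s/q$.
\item The remainder phase varies on that range by $\asymp rs^2/q^2$. This is $\asymp X^{\epsilon}$ when, say, $r\asymp qX^{\epsilon}$ and $s=\sqrt q$.
\end{itemize}

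Equivalently, as a function of $y$ the remainder has second derivative $\asymp r$, which can be as large as $qX^{\epsilon}$. So the claim $\|g^{(j)}\|_\infty\ll X^{O(\epsilon)j}$ is false. Cutting off at the true scale $s/q$ only gives frequencies $\xi$ up to about $X^{O(\epsilon)}q/(sX)$, and frequencies of size $rs/(qX)$ are unavoidable. Both lie outside the range in (\ref{factoring_d-sum}). So the obstacle you flag is not bookkeeping: it is impossible in the variable $f(qd/s)/X$.

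\textbf{The missing idea.} The non-additive part must be expressed through $\rho_s(d)$, not $f(qd/s)$. By $\rho(qd/s)=bX\rho_s(d)/q$ and $f(qd/s)^2=\rho(qd/s)^2+O(Xs)$, the quadratic term becomes $-c\,\rho_s(d)^2$ with $cs^2\ll X^{\epsilon}$, up to a phase error $O(rs/X)$.

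The cubic Taylor error is $O(X^{\epsilon}/\sqrt q)$ per $d$. Summed over $d$ and over $(a,h,\ell,n)$ and then normalized, this is what produces the term $q\sqrt r X^{2\epsilon}$. The term $q^{3/2}X^{3\epsilon}/\sqrt r$ cannot account for it: it lies outside the normalized sum, and it is not $\ll q\sqrt r$ when $r<q$.

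The paper then proceeds as follows:
\begin{itemize}
\item It substitutes $d\mapsto\rho_s(d)$, an involution of $[0,s)$, so the term becomes a smooth $e(-c'd^2)$.
\item It removes this factor by Abel summation.
\item It completes the resulting sums over $d<t$ using $\mathbf{1}_{k=0}=\int_0^1e(k\gamma)\,d\gamma$, at a cost of $\log X$.
\item It undoes the involution. This is where $e(\gamma\rho_s(d))$ comes from.
\end{itemize}

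Your fallback remark also works and is slightly more direct. With $u=\rho_s(d)/s\in[0,1)$, the factor $e(-cs^2u^2)$ has $u$-derivatives $\ll X^{O(\epsilon)}$. Fourier expansion in $u$ then yields twists $e(\gamma\rho_s(d))$ with $|\gamma|\ll X^{O(\epsilon)}/s$ (reduced mod $1$) at cost $X^{O(\epsilon)}$, and $\rho_s$ is additive across $d=m+cs/U$. But this has to be the main argument: the Taylor remainder goes entirely into $\gamma$, and only the nearly constant weight goes into $\xi$.
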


We now prove the proposition. First note the
arithmetic component 
$e_s(\sigma h d\overline{\ell}^2)$
 in (\ref{moving the d-sum to the inside}) is linear in $d$ and it easily factorizes if we express $d = m + cs/U$ for $0 \leq m < s/U$, $0 \leq c < U$ with $U \mid s$ some power of $b$ (which we are free to choose). 

Consider now the analytic component $e(3T^{1/3}(Nn/2q)^{2/3})$ there. By the definitions in (\ref{T rewritten}) and (\ref{def of T(m,h)}), the phase equals
\begin{align*}
3T^{1/3}\left(\dfrac{Nn}{2q}\right)^{2/3} &=  \dfrac{3h^{1/3}n^{2/3}}{2^{2/3}q}\left(\rho\left(a + \dfrac{dq}{s}\right) +  a + \dfrac{dq}{s}\right)^{1/3}\\
&=
\dfrac{3h^{1/3}n^{2/3}}{2^{2/3}q}\left(\rho\left(a\right) + a + \rho\left(\dfrac{dq}{s}\right) +  \dfrac{dq}{s}\right)^{1/3}.
\end{align*}
The last holds by the fact $1\leq a < q/s$ and the additive property of $\rho$ in the absence of carry. Letting 
$$f(k) = \rho(k) + k$$
 and recalling $(a,b)=1$, Lemma \ref{lem:Reversal-type function} implies $f(a) \asymp X$ and $f(qd/s) \ll Xs/q$. Since $s\leq \sqrt{q}$ by assumption, the binomial theorem gives
$$
\left(f(a) + f\left(\dfrac{dq}{s}\right)\right)^{1/3} = f(a)^{1/3} + \dfrac{f(qd/s)}{3f(a)^{2/3}} -\dfrac{f(qd/s)^2}{9f(a)^{5/3}} + O\left(\dfrac{f(qd/s)^3}{f(a)^{8/3}} \ll \dfrac{X^{1/3}s^3}{q^3}\right).
$$
Since $f(qd/s) \ll Xs/q$, $0\leq d < s$ and $f(a) \asymp X$, we have
$$
f^2\left(\dfrac{qd}{s}\right) = \rho^2\left(\dfrac{qd}{s}\right) + O\left(Xs\right)
$$
and 
$$
\left(f(a) + f\left(\dfrac{dq}{s}\right)\right)^{1/3} = f(a)^{1/3} + \dfrac{f(qd/s)}{3f(a)^{2/3}} -\dfrac{\rho(qd/s)^2}{9f(a)^{5/3}} + O\left( \dfrac{X^{1/3}s^3}{q^3} + \dfrac{s}{X^{2/3}}\right).
$$
Since $h \asymp qr/M$, $n\asymp qr/N$ and $X \asymp MN^2$ by assumptions, it follows
$$
3T^{1/3}\left(\dfrac{Nn}{2q}\right)^{2/3} = \dfrac{3h^{1/3}n^{2/3}}{2^{2/3}q}\left(f(a)^{1/3} + \dfrac{f(qd/s)}{3f(a)^{2/3}} -\dfrac{\rho(qd/s)^2}{9f(a)^{5/3}}\right) + O\left(\dfrac{rs^3}{q^3} + \dfrac{rs}{X}\right).
$$
Since $s \leq \sqrt{q}$, $r\ll qX^\epsilon$ and $q \leq \sqrt{X}$ by assumption, the error term is $\ll X^{\epsilon}/\sqrt{q}$. Hence
$$
e\left(3T^{\frac{1}{3}}\left(\dfrac{Nn}{2q}\right)^{\frac{2}{3}}\right) = e\left(\dfrac{3h^{\frac{1}{3}}n^{\frac{2}{3}}}{2^{\frac{2}{3}}q}\left(f(a)^{1/3} + \dfrac{f(qd/s)}{3f(a)^{2/3}} -\dfrac{\rho(qd/s)^2}{9f(a)^{5/3}}\right)\right) + O\left(\dfrac{X^\epsilon}{\sqrt{q}}\right).
$$
Denoting by $\alpha(a,h,\ell,n)$ the absolute value of the $d$-sum in (\ref{moving the d-sum to the inside}) with $\mq =1$ there, it follows
\begin{align}\label{an approximation of alpha}
\alpha(a,h,\ell,n) &= \left|\sum_{0\leq d < s}F_h\left(\dfrac{Nn}{qT}\right)e_s\left(\sigma h d \overline{\ell}^2\right)e\left(\dfrac{h^{\frac{1}{3}}n^{\frac{2}{3}}}{2^{\frac{2}{3}}q}\left( \dfrac{f(qd/s)}{f(a)^{2/3}} -\dfrac{\rho(qd/s)^2}{3f(a)^{5/3}}\right)\right)\right|\\
&\qquad +O\left(\dfrac{sX^\epsilon}{\sqrt{q}}\right).\nonumber
\end{align}
Note that since $s\leq \sqrt{q}$, the error term is more than acceptable for our purposes. The term in the phase containing $f(qd/s)$ is nice since it is $d$-additive in the absence of carry (by the additive property of $\rho$). However we still need to handle the term with $\rho^2(qd/s)$. From previous facts and assumptions on the size of our variables,
$$
-\dfrac{h^{1/3}n^{2/3}\rho(qd/s)^2}{3\times 2^{2/3}qf(a)^{5/3}} \ll \dfrac{s^2 r}{q^2} \ll X^\epsilon. 
$$
If the arithmetic function $\rho$ were differentiable with suitable derivative bounds, we could easily remove this term from the phase by partial summation (at a negligible cost). It is not, but we can use an involutive property of $\rho$ to our advantage. Since $q/s$ is a power of $b$ by assumption, say $q/s = b^k$, the definition of $\rho$ implies
\begin{align*}
\rho\left(\dfrac{qd}{s}\right) &= X\sum_{j\geq 0}d_{j}\left(b^k d\right)b^{-j} = X\sum_{j\geq 0}d_{j-k}\left(d\right)b^{-j} = X\sum_{j\geq k}d_{j-k}\left(d\right)b^{-j}\\
&=
X\sum_{j\geq 0}d_{j}\left(d\right)b^{-j - k}\\
&=
\dfrac{bX\rho_s(d)}{q},
\end{align*}
where
$$
\rho_s(d) = \sum_{0\leq j < \log_b s} d_j(d)b^{\log_b(s) - 1 - j}.
$$
We note $\rho_s$ is an involution on $[0,s) \cap \mathbb{Z}$ and 
$$
\rho\left(\dfrac{q\rho_s(d)}{s}\right) = \dfrac{bXd}{q}.
$$
Thus if we substitute $d$ with $\rho_s(d)$, use the identity above and recall 
$$T = T\left(a + \dfrac{qd}{s},h\right),$$ 
(\ref{an approximation of alpha}) becomes
\begin{align*}
\alpha(a,h,\ell,n) = \left| \sum_{0 \leq d < s}\psi(a,\rho_s(d),h,\ell,n)e\left(-\dfrac{h^{1/3}n^{2/3}X^2b^2d^2}{2^{2/3}3q^3f^{5/3}(a)}\right)\right| + O\left(\dfrac{sX^\epsilon}{\sqrt{q}}\right),
\end{align*}
where
\begin{equation}\label{def of psi(a,d,h,ell,n)}
\psi(a,d,h,\ell,n) = F_h\left(\dfrac{Nn}{qT(a + qd/s, h)}\right)e_s\left(\sigma h d\overline{\ell}^2\right)e\left(\dfrac{h^{\frac{1}{3}}n^{\frac{2}{3}}f(qd/s)}{2^{\frac{2}{3}}q f^{\frac{2}{3}}(a)}\right).
\end{equation}
Applying Abel's summation formula, and differentiating the exponential involving $d^2$, gives
\begin{align*}
&\alpha(a,h,\ell,n) + O\left(\dfrac{sX^\epsilon}{\sqrt{q}}\right)\\
&\ll \left|\sum_{0 \leq d < s}\psi(a,\rho_s(d),h,\ell,n)\right| + \int_0^s \dfrac{h^{\frac{1}{3}}n^{\frac{2}{3}}X^2t}{q^3f^{\frac{5}{3}}(a)} \left|\sum_{0 \leq d < t}\psi(a,\rho_s(d),h,\ell,n)\right|\der t\\
&\ll 
\left|\sum_{0 \leq d < s}\psi(a,\rho_s(d),h,\ell,n)\right| + \dfrac{X^\epsilon}{s}\int_0^s  \left|\sum_{0 \leq d < t}\psi(a,\rho_s(d),h,\ell,n)\right|\der t.
\end{align*}
Note in the last line we used the assumptions $h \asymp qr/M$, $n \asymp qr/N$, $MN^2 = X$, $r \ll qX^\epsilon$ and $s \leq \sqrt{q}$. 

It may be beneficial to undo our earlier substitution of $d$ with $\rho_s(d)$. However the $d$-sum in the integral is incomplete. We may complete it to the original range $0 \leq d < s$ via a well-known trick: 

An application of the identity $$\mathbf{1}_{k=0} = \int_0^1e( k \gamma) \der \gamma
$$
valid for integers $k$, gives 
\begin{align*}
\sum_{0 \leq d < t}\psi(a,\rho_s(d),h,\ell,n) &= \sum_{0 \leq d < s}\psi(a,\rho_s(d),h,\ell,n) \sum_{\substack{0 \leq m < t \\ m = d}}1\\
 &= \int_0^1 \sum_{0 \leq m < t}e(-\gamma m) \sum_{0 \leq d < s}\psi(a,\rho_s(d),h,\ell,n)e(\gamma d)\der \gamma\\
 &=
 \int_0^1 \sum_{0 \leq m < t}e(-\gamma m) \sum_{0 \leq d < s}\psi(a,d,h,\ell,n)e(\gamma \rho_s(d))\der \gamma.
\end{align*}
In the last line we used the fact that $\rho_s$ is an involution on $[0,s) \cap \mathbb{Z}$. It follows
\begin{align}\label{bound for alpha in terms of psi}
\alpha(a,h,\ell,n)  &\ll \left|\sum_{0 \leq d < s}\psi(a,d,h,\ell,n)\right| +  \dfrac{sX^\epsilon}{\sqrt{q}} \nonumber\\
&\qquad + \dfrac{X^\epsilon}{s}\int_{0}^s \int_0^1\left|\sum_{0 \leq m < t}e(\gamma m)\right|\left|\sum_{0 \leq d < s}\psi(a,d,h,\ell,n)e(\gamma \rho_s(d))\right|\der \gamma \der t.
\end{align}

Let us now consider the factor
$$
F_h\left(\dfrac{Nn}{qT(a + qd/s,h)}\right)
$$
in the definition of $\psi(a,d,h,\ell,n)$ in (\ref{def of psi(a,d,h,ell,n)}). Recall $F_h(t)$ is smooth compactly supported on some positive interval $t\asymp 1$ satisfying $\|F_h^{(j)}\|_\infty \ll_j X^{\epsilon j}$ for each $j\geq 0$ (see (\ref{derivative of F_h})). Then so is the function $G_h(t) = F_h(1/t)$ and $\|G_h^{(j)}\|_\infty \ll_j X^{\epsilon j}$ for each $j\geq 0$. By definition and the Fourier inversion theorem, 
\begin{align*}
F_h\left(\dfrac{Nn}{qT(a + qd/s,h)}\right) &= G_h\left(\dfrac{qT(a + qd/s,h)}{Nn}\right)\\
& = \int_{\mathbb{R}}\widehat{G_h}(\xi)e\left(\dfrac{qT(a + qd/s,h) \xi}{Nn}\right) d\xi\\
&=
\int_{\mathbb{R}}\widehat{G_h}(\xi)e\left(\dfrac{h(f(a) + f(qd/s))\xi}{N^3n}\right) d\xi
\end{align*}
by the definition of $T(a + qd/s,h)$ in (\ref{def of T(m,h)}) and the additive property of $\rho$ and $f$. Substituting $h\xi/N^3n$ with $\xi$ gives
\begin{align*}
F_h\left(\dfrac{Nn}{qT(a + qd/s,h)}\right) = \dfrac{N^3 n}{h} \int_{\mathbb{R}}\widehat{G_h}\left(\dfrac{N^3n}{h}\xi\right)e\left(f(a)\xi + f\left(\dfrac{qd}{s}\right)\xi\right) d\xi.
\end{align*}
Since $h \asymp qr/M$, $n \asymp qr/N$ and $X = MN^2$ by assumptions, $N^3n/h \asymp X$. Then by the rapid decay of $\widehat{G_h}$ at $\pm \infty$,
\begin{align*}
F_h\left(\dfrac{Nn}{qT(a + qd/s,h)}\right) &= \dfrac{N^3 n}{h} \int_{\xi \ll X^{\epsilon + \delta}/X}\widehat{G_h}\left(\dfrac{N^3n}{h}\xi\right)e\left(f(a)\xi + f\left(\dfrac{qd}{s}\right)\xi\right) d\xi\\
&\qquad + O_{A,\delta}\left(X^{-A}\right)
\end{align*}
for any $A,\delta > 0$. By the definition of $\psi$ in (\ref{def of psi(a,d,h,ell,n)}), the bound in (\ref{bound for alpha in terms of psi}) and the size $N^3n/h \asymp X$, 
\begin{align*}
& \alpha(a,h,\ell,n)\\
& \ll_\delta 
X \int_{\xi \ll X^{\epsilon + \delta}/X} \left|\sum_{0 \leq d < s}e_s\left(\sigma h d\overline{\ell}^2\right)e\left(\dfrac{h^{\frac{1}{3}}n^{\frac{2}{3}}f(qd/s)}{2^{\frac{2}{3}}q f^{\frac{2}{3}}(a)}\right) e\left(\xi f\left(\dfrac{qd}{s}\right)\right)\right|d\xi + \dfrac{sX^\epsilon}{\sqrt{q}}\\
& \qquad +
\dfrac{X^{1 + \epsilon}}{s} \int_0^s\int_0^1\int_{\xi \ll X^{\epsilon + \delta}/X}\left|\sum_{0\leq m < t}e(\gamma m)\right|\\
&\qquad 
\times \left|\sum_{0 \leq d < s}e_s\left(\sigma h d\overline{\ell}^2\right)e\left(\dfrac{h^{\frac{1}{3}}n^{\frac{2}{3}}f(qd/s)}{2^{\frac{2}{3}}q f^{\frac{2}{3}}(a)}\right) e\left(\xi f\left(\dfrac{qd}{s}\right) + \gamma \rho_s(d)\right)\right|\der \xi \der \gamma \der t.
\end{align*}
Switching orders of summation/integration, taking suprema and using the fact 
$$
\sup_{0\leq t < s}\int_{0}^1\left| \sum_{0 \leq m < t}e(\gamma m)\right|\der \gamma 
\ll \int_{0}^{1/2}\min\left(s, \dfrac{1}{\gamma}\right)\der \gamma\ll \log s \ll \log X,
$$
it follows
\begin{align*}
\sum_{\substack{1 \leq a \leq q/s \\ 1 \leq h \asymp qr/M \\ \ell(\sqrt{q}) \\ 1 \leq n \asymp qr/N\\(a\ell,q)=1 \\ n\ell^3 \equiv -2\sigma a h (\sqrt{q})}} \alpha(a,h,\ell,n) 
&\ll X^{3\epsilon}
\sup_{\substack{\gamma \in [0,1] \\ \xi \ll X^{2\epsilon}/X}} \sum_{\substack{1 \leq a \leq q/s \\ 1 \leq h \asymp qr/M \\ \ell(\sqrt{q}) \\ 1 \leq n \asymp qr/N\\(a\ell,q)=1 \\ n\ell^3 \equiv -2\sigma a h (\sqrt{q})}}\widetilde{\alpha}(a,h,\ell,n;\gamma,\xi) + E,
\end{align*}
where
$$
\widetilde{\alpha}(a,h,\ell,n;\gamma,\xi) = \left|\sum_{0 \leq d < s}e_s\left(\sigma h d\overline{\ell}^2\right)e\left(\dfrac{h^{\frac{1}{3}}n^{\frac{2}{3}}f(qd/s)}{2^{\frac{2}{3}}q f^{\frac{2}{3}}(a)}\right) e\left(\xi f\left(\dfrac{qd}{s}\right) + \gamma \rho_s(d)\right)\right|
$$
and
\begin{align*}
E &= \dfrac{sX^\epsilon }{\sqrt{q}}\sum_{\substack{1 \leq a \leq q/s \\ 1 \leq h \asymp qr/M \\ \ell(\sqrt{q}) \\ 1 \leq n \asymp qr/N\\(a\ell,q)=1 \\ n\ell^3 \equiv -2\sigma a h (\sqrt{q})}}1 \ll
\dfrac{sX^{\epsilon + o(1)}\mathbf{1}_{qr \gg M}}{\sqrt{q}}\sum_{\substack{ \ell(\sqrt{q}) \\ 1 \leq n \asymp qr/N }} \sum_{\substack{k \ll q^2r/sM
		\\k \equiv n\ell^3 (\sqrt{q})} }  1\\
&\ll \dfrac{q^{5/2} r^2 X^{\epsilon + o(1)}}{MN}.
\end{align*}

We can now use the additive property (in the absence of carry) to factorize the $d$-sum as a product of two large sums. Writing $d = m + cs/U$ with $0 \leq m < s/U$, $0 \leq c < U$ for some power $U \mid s$ of $b$ (which we are free to choose) we have
$$
\widetilde{\alpha}(a,h,\ell,n;\gamma,\xi) = \mathscr{U}_U(a,h,\ell,n;\gamma,\xi)\mathscr{V}_U(a,h,\ell,n;\gamma,\xi),
$$
where $\mathscr{U}_U(a,h,\ell,n;\gamma,\xi),\mathscr{V}_U(a,h,\ell,n;\gamma,\xi)$ are defined in (\ref{Usum}), (\ref{Vsum}). The proof is concluded after multiplying both sides by the normalizing factor $MN/(qr)^{3/2}$.

\begin{rmk}\label{Remark on factorization consequences}
Applying Cauchy-Schwarz on the sum in \textnormal{(\ref{factoring_d-sum})} and proceeding with arguments in the style of those in the proof of \textnormal{Proposition \ref{prop: N large for squares n^2 with n of size N}}, it seems it should be feasible (unverified personal notes) to show that the sum in \textnormal{(\ref{factoring_d-sum})} is (after optimizing the choice of $U$) essentially
$$
\ll \left(\textnormal{Trivial Bound}\right)\times \left(\dfrac{1}{\sqrt{s}} + \dfrac{1}{(qr/N)^{3/8}}\right).
$$
Similar arguments can and should be applied to one of the sums in the upcoming section (after a similar $B$-process, the analogue of the $d$-sum there is essentially a shifted version of the $d$-sum here). Further improvements are expected. For instance, we have not exploited cancellation coming from the $h$-sum. It should also be possible to choose $s$ larger than $\sqrt{q}$ in order to improve the diagonal portion after Cauchy-Schwarz. Here one may have to describe $\K_2$ in terms of cubic roots and work over $p$-adics on the arithmetic side. See for instance \textnormal{Mangerel} \cite{Mangerel} and the related content in \textnormal{Mili\'cevi\'c} \cite{Milicevic(2015)},  \textnormal{Mili\'cevi\'c-Zhang} \cite{Milicevic-Zhang(2023)}. 
\end{rmk}

\section{Large conductors relative to $N$}\label{sec:N small relative to q}

Here we establish the following bound, primarily intended for a regime with $q,\mq$ rather large relative to $N$. In this regime, the total conductor of the harmonics is too large, rendering completion of the $n$-sum unsatisfactory. However, as discussed in the introduction, we can first apply an $A$-type process to the $m$-sum involving the reversal function, thereby lowering the total conductor to manageable levels where completion methods ($B$-process) start to become effective. After applying Cauchy-Schwarz and expanding the squares, it should be possible here to adapt the arguments, of the previous section, in order to improve upper bounds. Nevertheless the upper bound below suffices for our purposes.

\begin{prop}\label{prop:first estimate for E_>(r)}
With the same assumptions of \textnormal{Proposition \ref{prop: main term for smooth S}} and $X^{2\epsilon} \ll r \ll qX^\epsilon$, 
	$$
	E_>(r) \ll X^{4\epsilon} \left(qN^{5/8} + \sqrt{q} N \right) \left(1 + \dfrac{q}{r}\right).
	$$
	\end{prop}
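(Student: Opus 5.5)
We plan to begin with the $A$-type process on the $m$-sum in the definition (\ref{def of E>(r) in prop}) of $E_>(r)$, before any completion in $n$. First, as in (\ref{bound for E> in terms of sigma}), we conjugate to restrict to $h\geq 1$. Then, for a power $b\leq s\mid q$ of $b$ to be chosen, we write $m=a+qd/s$ with $1\leq a\leq q/s$, $(a,b)=1$ and $0\leq d<s$. Property (D) of Lemma \ref{lem:Reversal-type function} gives $\rho(m)+m=f(a)+f(qd/s)$ with $f(k)=\rho(k)+k$, and Property (C) gives $f(qd/s)\ll Xs/q$. The phase then factors as
\[
e_q\bigl(ah\overline{\mq n^2}\bigr)\,e\Bigl(-\tfrac{hf(a)}{q\mq n^2}\Bigr)\cdot e_s\bigl(dh\overline{\mq n^2}\bigr)\,e\Bigl(-\tfrac{hf(qd/s)}{q\mq n^2}\Bigr).
\]
Here the first factor depends only on $(a,h,n)$. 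In the $d$-part, the archimedean conductor is $hf(qd/s)/(q\mq N^2)\ll rs/q\ll sX^{\epsilon}$ and the modulus is $s$. We then move the $d$-sum inside, take absolute values over $a$ (which costs $q/s$), and keep $h$ and $n$ outside. This gives
\[
E_>(r)\ll \frac{M}{q\mq r}\cdot\frac{q}{s}\max_a\sum_{h\asymp q\mq r/M}\sum_{n\asymp N}\Bigl|\sum_{0\leq d<s}e_s\bigl(dh\overline{\mq n^2}\bigr)e\Bigl(-\tfrac{hf(qd/s)}{q\mq n^2}\Bigr)\Bigr|,
\]
up to smooth weights. The point of this step is to reduce the total conductor from $q^2$ to about $s^2$.

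Next we apply Cauchy--Schwarz over $(h,n)$ and expand the square, obtaining a double sum over $d,d'$. For each fixed $(h,d,d')$, the $n$-sum has modulus $s$ (after Chinese remainder with $\mq$, the $\mq$-part is absent in the $d$-sum, so only $s$ remains). Its phase is $e_s\bigl(h(d-d')\overline{\mq n^2}\bigr)e\bigl(-h(f(qd/s)-f(qd'/s))/(q\mq n^2)\bigr)$, with archimedean size $\Delta\ll rs/q$. The diagonal $d=d'$ contributes $s\cdot(\#h)\cdot N$. For $d\neq d'$, the modulus $s/(h(d-d'),s)$ matters. We complete the $n$-sum using Lemma \ref{lem:completion of squared inverses} when $\Delta\ll X^{2\epsilon}$, and otherwise using Lemma \ref{lem:twisted sums with squared inverses} together with Lemma \ref{lem: Shparlinski bound} for $\K_2$. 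This bounds the $n$-sum by roughly $\sqrt{s}(1+\Delta)^{1/2}X^{\epsilon}\sqrt{(h(d-d'),s)}$ plus a main term of size $N\sqrt{(h(d-d'),s)/s}$ coming from $\G^*$, which by Lemma \ref{lem: Gauss sum variant} is nonzero only when $s\mid 4bh(d-d')$. Summing over $d,d'$ and $h$ with Lemma \ref{lem:sums of GCDs} and collecting terms, we expect
\[
E_>(r)\ll X^{O(\epsilon)}\frac{M}{\mq r s}\cdot\frac{q}{1}\Bigl(\frac{q\mq r}{M}\Bigr)^{1/2}N^{1/2}\Bigl(sN+s^2\cdot\frac{q\mq r}{M}\bigl(\tfrac{N}{s^{1/2}}+s^{1/2}\bigr)\Bigr)^{1/2},
\]
schematically. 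The factor $(1+q/r)$ in the target should come from the regime where $\#h\asymp q\mq r/M$ is small relative to $q/s$, so that the normalization $M/(q\mq r)$ produces $q/r$. We then choose $s$ as a power of $b$ balancing the diagonal $\sqrt{N/s}$ against the off-diagonal $s^{1/2}$-type terms, subject to $s\leq q$. The target $qN^{5/8}$ suggests taking $s\asymp N^{3/4}$-ish. The term $\sqrt q N$ should cover the case where $s$ is constrained at $b$ or at $q$, with $N$ large.

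The main obstacle will be bounding the off-diagonal $n$-sums uniformly, because $\Delta$ varies with $(d,d')$ and the modulus $s$ shares factors with $h(d-d')$. We will handle this with a dyadic split on $\Delta$, as in the treatment of $\mathscr{Y}_<,\mathscr{Y}_>$ in the proof of Proposition \ref{prop: N large for squares n^2 with n of size N}, together with the lower bound $|\rho(qd/s)-\rho(qd'/s)|\geq |d-d'|q/s$ from injectivity. If the stationary phase bookkeeping becomes too intricate, the fallback is the crude van der Corput bound $\ll \Delta^{1/2}+N/\Delta^{1/2}$ per residue class mod $s$, which should still give exponent $5/8$ after optimizing $s$.
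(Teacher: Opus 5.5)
There is a genuine gap. A single $A$-type split $m=a+qd/s$, trivial summation over $a$, and Cauchy--Schwarz over $(h,n)$ with one long $d$-sum inside cannot produce the exponent $5/8$.

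Take $r\asymp q$, which the statement must cover. Normalize the $h$-average to mass $\asymp 1$. Your Cauchy--Schwarz step then gives
\[
E_>(r)\ll \frac{q}{s}\sqrt{N}\,\bigl(sN+\mathrm{OD}\bigr)^{1/2}X^{O(\epsilon)},
\]
where $sN$ is the diagonal $d=d'$ and $\mathrm{OD}$ is the off-diagonal. We have $\Delta_{d,d'}\asymp (r/q)\,|\rho_s(d)-\rho_s(d')|$ with $\rho_s$ a bijection of $[0,s)$, so a typical off-diagonal pair has $\Delta\asymp s$. The completion bound $\sqrt{s\Delta}$ per pair, which is what Lemmas \ref{lem:twisted sums with squared inverses} and \ref{lem: Shparlinski bound} give, then yields $\mathrm{OD}\asymp s^3$. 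Hence the best this route gives is $q\sqrt{N}\,(N/s+s)^{1/2}$, that is $qN^{3/4}$ at $s\asymp\sqrt N$.

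The diagonal alone already forces $s\gg N^{3/4}$ in order to reach $qN^{5/8}$. But at such $s$ the off-diagonal gives $q\sqrt{Ns}\geq qN^{7/8}$. So your schematic display and the suggestion $s\asymp N^{3/4}$ do not survive the bookkeeping. Whenever $q\gg\sqrt N$ the result exceeds the target, and by a factor $N^{1/8}$ once $q\gg N^{3/4}$. Your van der Corput fallback, summed over the classes mod $s$, is weaker than completion.

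Separately, the factor $1+q/r$ does not come from the length of the $h$-range. In the paper it is the trivially bounded contribution of pairs with $r\Delta_{d,s}(d')\ll X^{1+\delta}$. These pairs are few because $\Delta_{d,s}(d')$ is a multiple of $Xs/(qD)$.

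The missing idea is to factor the nontrivial part of the $m$-sum into two long sums before applying Cauchy--Schwarz. The paper writes $m=a+b^\iota c+b^\kappa d$, so that by Property (D) the $m$-sum becomes $\Sigma_3\Sigma_2\Sigma_1$. Only $\Sigma_3$, of length $U=b^\iota$, is bounded trivially. Cauchy--Schwarz is applied to the product, $\sum_{h,n}|\Sigma_2\Sigma_1|\leq(\sum|\Sigma_1|^2)^{1/2}(\sum|\Sigma_2|^2)^{1/2}$. This gives $E_>(r)\ll b^\iota\sqrt{\mathscr{Z}_1\mathscr{Z}_2}$ with no $\sqrt N$ loss.

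Each $\mathscr{Z}(D,s)$, a sum of length $D$ with modulus $s$, is bounded by $DN+qDN/r+sD^2$ up to $X^{O(\epsilon)}$. This uses Weyl differencing, the divisibility count above, and Lemma \ref{lem:twisted sums with squared inverses}. One obtains
\[
E_>(r)X^{-4\epsilon}\ll \Bigl(qN\bigl(1+\tfrac{q}{r}\bigr)+\frac{q^3}{V^2}\Bigr)^{1/2}\Bigl(UN\bigl(1+\tfrac{q}{r}\bigr)+\frac{qV}{U}\Bigr)^{1/2}.
\]

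Now take $U\asymp q/N^{3/4}$ and $V\asymp q/\sqrt N$. Then $\Sigma_1$ has length and modulus $\sqrt N$, and $\Sigma_2$ has length $N^{1/4}$ and modulus $N^{3/4}$. The diagonal contribution is the same as yours with $s=N^{3/4}$. The difference is that each off-diagonal term $sD^2$ now matches its diagonal ($N^{3/2}$ and $N^{5/4}$ respectively), instead of being $s^3=N^{9/4}$. This is what yields $qN^{5/8}(1+q/r)$. When $q\ll N^{3/4}$ one takes $U\asymp 1$ and obtains $\sqrt q\,N(1+q/r)$.
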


We now proceed to prove the proposition.

Moving the $m$-sum to the inside, taking absolute values and noting $f_h(t) \ll \eta(t)$, we have
$$
E_>(r) \ll 
\dfrac{M}{q\mq r}\sum_{h \asymp q\mq r/M}\sum_{\substack{(n,q)=1 }}\eta\left(\dfrac{n}{N}\right) \left|\sum_{\substack{1 \leq m \leq q \\ (m,q)=1}} e_q\left(hm\overline{\mq n^2}\right)e\left( - \dfrac{h\rho(m) + hm}{q\mq n^2}\right)\right|.
$$

Recall Property (D) of Lemma \ref{lem:Reversal-type function}, which states that $\rho$ satisfies the additive property 
$$\rho(m + n) = \rho(m) + \rho(n)$$ 
for integers $0 \leq m,n < bX$ with no carry in their $b$-adic addition. Let us now introduce two integer variables $\iota, \kappa$, to be chosen later, satisfying $1 \leq \iota < \kappa < \lambda$. Writing $$m = a + b^\iota c + b^\kappa d$$ 
with $1 \leq a < b^\iota$, $(a,b)=1$, $0 \leq c < b^{\kappa - \iota}$, $0 \leq d < b^{\lambda - \kappa}$, and then using the additive property of $\rho$ and the exponentials, we can rewrite the sum as
$$
\dfrac{M}{q\mq r}\sum_{h \asymp q\mq r/M}\sum_{\substack{(n,q)=1 }}\eta\left(\dfrac{n}{N}\right)\left|\Sigma_{3}\Sigma_{2}\Sigma_{1}(n,h)\right|,
$$
where
\begin{align*}
\Sigma_{3}(n,h) &= \sum_{\substack{1 \leq a < b^\iota \\ (a,b)=1}} e_q\left(ha\overline{\mq n^2}\right)e\left( - \dfrac{h\rho(a) + ha}{q\mq n^2}\right),\\
\Sigma_{2}(n,h) &= \sum_{0 \leq c < b^{\kappa-\iota}} e_q\left(hcb^{\iota}\overline{\mq n^2}\right)e\left( - \dfrac{h\rho(cb^{\iota}) + hcb^\iota}{q\mq n^2}\right),\\
\Sigma_{1}(n,h) 
&=\sum_{0 \leq d < b^{\lambda - \kappa}} e_q\left(hdb^{\kappa}\overline{\mq n^2}\right)e\left(- \dfrac{h\rho(db^{\kappa}) + hdb^\kappa}{q\mq n^2}\right).
\end{align*}
Bounding $\Sigma_3$ trivially by $b^{\iota}$ and applying Cauchy-Schwarz, it follows
$$
E_>(r) \ll  b^{\iota}\sqrt{\mathscr{Z}_1\mathscr{Z}_2},
$$
where
\begin{align*}
\mathscr{Z}_j &= \dfrac{M}{q\mq r}\sum_{h \asymp q\mq r/M}\sum_{\substack{(n,q)=1 }}\eta(n/N)\left|\Sigma_j(n,h)\right|^2
\end{align*}
for $j \in \{1,2\}$.

For an integer $s \mid q$, define
\begin{equation}\label{eqn: def of rho_ell}
\rho_s(d) = \rho\left(\dfrac{q}{s} d\right).
\end{equation}
Substituting $q/b^\iota$ with $s$ in the first sum above, or substituting $q/b^\kappa$ with $s$ in the second sum, 
we note each of the $\Sigma_j$ sums above may be written as
$$
\Sigma(n,h,D,s) = \sum_{0 \leq d < D} e_s\left(h\overline{\mq n^2} d\right)e\left(- \dfrac{h \rho_s(d)}{q\mq n^2} - \dfrac{h  d}{s\mq n^2}  \right)
$$
for some powers $b \leq D \leq s <  q$ of $b$. Since the restriction $(n,q)=1$ is equivalent to $(n,s)=1$ for proper powers $q,s$ of $b$, it then suffices to consider a generic sum such as
\begin{equation}\label{generic}
\mathscr{Z}(D,s) = \dfrac{M}{q\mq r}\sum_{h \asymp q\mq r/M}\sum_{(n,s)=1}\eta(n/N)\left|\Sigma(n,h,D,s)\right|^2.
\end{equation}
With our notation,
\begin{equation}\label{eqn: bound for F in terms of Z}
E_>(r) \ll b^\iota  \sqrt{\mathscr{Z}\left(b^{\lambda-\kappa},b^{\lambda-\kappa}\right)
	\mathscr{Z}\left(b^{\kappa-\iota},b^{\lambda-\iota}\right)}.
\end{equation}

It is convenient to remove the term 
$- hd/s\mq n^2$ from the phase above. To this end, we note that for $1 \leq D \leq s$, $h \asymp q\mq r/M $, $n \asymp N$ and our assumptions $MN^2 = X$, $q \leq \sqrt{X}$, $r \ll qX^\epsilon$, we have
$$
\dfrac{h  }{s\mq n^2} \ll \dfrac{X^\epsilon}{s} \leq \dfrac{X^\epsilon}{D}.
$$ 
Letting 
\begin{equation}\label{Sigma}
\widetilde{\Sigma}(n,h,t,s) = \sum_{0 \leq d < t} e_s\left(h\overline{\mq n^2} d\right)e\left(- \dfrac{h \rho_s(d)}{q\mq n^2}  \right),
\end{equation}
Abel's summation formula and Cauchy-Schwarz yield
\begin{align*}
\Sigma(n,h,D,s) &\ll \left|\widetilde{\Sigma}(n,h,D,s)\right| + \dfrac{X^\epsilon}{D} \int_1^D\left|\widetilde{\Sigma}(n,h,t,s)\right| dt\\
&\leq
\left|\widetilde{\Sigma}(n,h,D,s)\right| + \left(\dfrac{X^{2\epsilon}}{D}\int_1^D\left|\widetilde{\Sigma}(n,h,t,s)\right|^2 dt\right)^{1/2}.
\end{align*}
Since $|A + B|^2 \ll |A|^2 + |B|^2$ for any $A,B$, it follows
\begin{align}
\mathscr{Z}(D,s)  
&\ll X^{2\epsilon}\sup_{1 \leq t \leq D}\widetilde{\mathscr{Z}}(t,s), \label{eqn: tilde Z}
\end{align}
where
\begin{equation}\label{tilde Z}
\widetilde{\mathscr{Z}}(t,s) = \dfrac{M}{q\mq r}\sum_{h\asymp q\mq r/M}\sum_{\substack{(n,s)=1}}\eta(n/N)\left|\widetilde{\Sigma}(n,h,t,s)\right|^2.
\end{equation}

We turn our attention to $\widetilde{\mathscr{Z}}(t,s)$.
Expanding the square via Weyl-differencing and switching orders of summation, the above becomes
\begin{align*}
\widetilde{\mathscr{Z}}(t,s) 
&=
\sum_{\substack{0\leq d',d+d' < t}}\dfrac{M}{q\mq r}\sum_{h \asymp  q\mq r/M}\sum_{\substack{(n,s)=1 }}\eta(n/N)e_s\left(dh\overline{\mq n^2}\right)e\left(-\dfrac{h\Delta_{d,s}(d')}{q\mq n^2}\right),
\end{align*}
where
$$
\Delta_{d,s}(d') = \rho_s(d'+d)-\rho_s(d').
$$

By Property (B) of Lemma \ref{lem:Reversal-type function}, for any integer $0 \leq d \leq t\leq D$, we have that $\rho_s(d)$ (which we recall was defined as $\rho_s(d) = \rho(qd/s)$) is divisible by $Xs/qD$ (this is an integer as $D\mid s$ and $q \mid X$ by assumption; recall also that $b \leq D \leq s < q$ are powers of $b$). Hence 
\begin{equation}\label{eqn: Delta is divisible by large integer}
\dfrac{Xs}{qD} \mid \Delta_{d,s}(d').
\end{equation}
Then given any interval $I$ of $\mathbb{R}$ of finite length $|I|$, there are at most 
$$\ll \dfrac{qD|I|}{Xs} + 1$$
possible values in $I$ that $\Delta_{d,s}(d') $ may attain. For any one such of these values, say $m$, 
note
$$
\sum_{\substack{0\leq d',d+d' < t \\ \Delta_{d,s}(d') = m}}1 = \sum_{0 \leq d' < t} \sum_{\substack{-d' \leq d < t-d' \\ \rho_s(d + d') = m + \rho_s(d')}}1 \leq \sum_{0 \leq d' < t}1 \leq t.
$$
The second to last inequality holds since $\rho$ is injective on $[0, bX) \cap \mathbb{Z}$. In particular, for any given $\delta > 0$ small, the terms with $\Delta_{d,s}(d') \ll X^{1+\delta}/r$ contribute 
$$
\ll \left(\dfrac{X^{\delta}qD}{rs} + 1\right)DN
$$
to $\widetilde{\mathscr{Z}}(t,s) $. 
Thus
\begin{align}\label{asymptotic for Z}
\widetilde{\mathscr{Z}}(t,s) 
&=
\sum_{\substack{0\leq d',d+d' < t \\ r\Delta_{d,s}(d') \gg X^{1+\delta}}}\dfrac{M}{q\mq r}\sum_{h \asymp q\mq r/M} \mathscr{S}_\eta \left(dh\overline{\mq},N,s, -\dfrac{h\Delta_{d,s}(d')}{q\mq N^2}\right)\\
&\qquad
+O\left(\dfrac{X^{\delta}qD^2N}{rs} + DN \right),\nonumber
\end{align}
where
\begin{equation}\label{def of mathscr S}
\mathscr{S}_\eta(k,N,s,T) = \sum_{(n,s)=1}\eta(n/N)e_s\left(k\overline{n}^2\right)e\left(T\dfrac{N^2}{n^2}\right) .
\end{equation}
The diagonal term, with $d =0$, does not appear in the sum since $\Delta_{0,s}=0$. 

Note for any such $d,d',h$ obeying the restrictions under the sums and with the assumption $X = MN^2$,
\begin{equation}\label{eqn: lower bound for amplitude}
\dfrac{h\Delta_{d,s}(d')}{q\mq N^2} \gg X^\delta. 
\end{equation}
By Property (C) of Lemma \ref{lem:Reversal-type function}, $\rho_s(d) := \rho(qd/s) < bXs/q$ for any $1 \leq d \leq s$ and thus $$\Delta_{d,s}(d') \ll \dfrac{Xs}{q}. $$ Hence we also have
\begin{equation}\label{eqn: upper bound for amplitude}
\dfrac{h\Delta_{d,s}(d')}{q\mq N^2} \ll \dfrac{rs}{q} \ll sX^\epsilon. 
\end{equation}
The last inequality holds since $r \ll qX^\epsilon$ by assumption. Then by (\ref{eqn: lower bound for amplitude}) and (\ref{eqn: upper bound for amplitude}), we are thus in the situation of Lemma \ref{lem:twisted sums with squared inverses} with $X^\delta \ll T \ll sX^\epsilon$, $\|\eta^{(j)}\|_\infty \ll_j 1$ for each $j\geq 0$ (by assumption) and $r = 1$. Conjugating if necessary so that $T > 0$, Lemmas \ref{lem:twisted sums with squared inverses}, \ref{lem: Shparlinski bound} and \ref{lem:sums of GCDs} imply
\begin{align*}
\mathscr{S}_\eta(k,N,s,T) &\ll \dfrac{NX^{\delta/2}}{\sqrt{sT}} \sum_{1 \leq n \asymp sT/N} \sqrt{(k,n,q)} + X^{\delta} \sqrt{\dfrac{s}{T}}\\
&\ll \sqrt{sT}X^{\delta}\\
&\ll s X^{\epsilon + \delta} 
\end{align*}
for $T \ll s X^{\epsilon}$. Inserting the bound in (\ref{asymptotic for Z}) (after conjugating if necessary on some terms so that $h \Delta_{d,s}(d') > 0$) and using the bound in (\ref{eqn: upper bound for amplitude}) gives
\begin{align}
\widetilde{\mathscr{Z}}(t,s) &\ll DN + \dfrac{X^{\delta}qD^2N}{rs} +  X^{\epsilon+\delta} sD^2 \nonumber \\
&\leq  DN + \dfrac{X^{\delta}qDN}{r} +  X^{\epsilon+\delta} sD^2 \label{first bound for Z tilde}
\end{align}
for $t \leq D \leq s$ and any fixed $\delta > 0$.
We choose $\delta = \epsilon$. Recalling (\ref{eqn: tilde Z}) and inserting this in (\ref{eqn: bound for F in terms of Z}) yields (after moving the $b^\iota$ factor, on the outside there, inside of a square root)
\begin{align}\label{eqn: square roots}
E_>(r)X^{-4\epsilon} 
&\ll 
\sqrt{b^{\lambda - \kappa}N + \dfrac{qb^{\lambda - \kappa}N}{r} +   b^{3(\lambda - \kappa)} }
\sqrt{b^{\kappa + \iota}N + \dfrac{qb^{\kappa + \iota}N}{r} +  b^{\lambda - \iota + 2\kappa } } \nonumber\\
&= 
\sqrt{b^{\lambda }N + \dfrac{qb^{\lambda }N}{r} +   b^{3\lambda - 2\kappa} }
\sqrt{b^{\iota}N + \dfrac{qb^{\iota}N}{r} +  b^{\lambda - \iota + \kappa } } \nonumber\\
&=
\sqrt{qN + \dfrac{q^2N}{r} +  \dfrac{q^3}{V^2}  }
\sqrt{UN + \dfrac{qUN}{r} + \dfrac{q  V}{U} } \nonumber
\end{align}
for any $1 \leq \iota < \kappa < \lambda$, where $U = b^\iota$, $V = b^\kappa$, and (as we have assumed thus far) $q = b^\lambda$. 
We choose $U,V$ (or equivalently $\iota, \kappa$) as follows:
$$
(U,V)\asymp
\begin{cases}
(q/N^{3/4}, q/\sqrt{N}) &\mbox{ if } q \gg N^{3/4}\\
(1, q/\sqrt{N})&\mbox{ if } \sqrt{N} \ll q \ll N^{3/4}\\
(1,1) &\mbox{ if } q \ll \sqrt{N}.
\end{cases}
$$
The notation $(U,V) \asymp (A, B)$ means that $U \asymp A$ and $V \asymp B$. In other words, we choose $\iota,\kappa$ so that
\begin{align*}
U \asymp \max\left(1, \dfrac{q}{N^{3/4}}\right), \hspace{2em}
V \asymp \max\left(1, \dfrac{q}{\sqrt{N}}\right).
\end{align*}
With this choice, the product of the square roots above is
\begin{align*}
&\ll\sqrt{qN\left(1 + \dfrac{q}{r}\right) + \dfrac{q^3}{(q/\sqrt{N})^2}}\sqrt{\left(1 + \dfrac{q}{N^{3/4}}\right)N\left(1 + \dfrac{q}{r}\right) + q\dfrac{1 + q/\sqrt{N}}{q/N^{3/4}}}
\\
&\asymp \sqrt{qN\left(1 + \dfrac{q}{r}\right)} \sqrt{\left(N + qN^{1/4}\right)\left(1 + \dfrac{q}{r}\right)}\\
&\asymp \left(qN^{5/8} + \sqrt{q}N\right)\left(1 + \dfrac{q}{r}\right). 
\end{align*}
Thus
$$
E_>(r) \ll  X^{4\epsilon}\left(qN^{5/8} + \sqrt{q}N\right)\left(1 + \dfrac{q}{r}\right).
$$
This establishes Proposition \ref{prop:first estimate for E_>(r)}.

\section{Proof of Theorem \ref{thm: square divisors}}\label{sec:Proof of Theorem {thm:square divisors}}

	Combining the two bounds in Propositions \ref{prop: N large for squares n^2 with n of size N}	and \ref{prop:first estimate for E_>(r)} gives, for any $X^{2\epsilon} \ll r\ll qX^{\epsilon}$, 
	\begin{align*}
	\dfrac{rE_{>}(r)X^{-5\epsilon}}{q} &\ll \min\left(qN^{5/8} + \sqrt{q}N, \ q^{3/2}\mq^{1/4}N^{1/4} + q^{7/4}\mq^{1/2} + q \sqrt{N}\right)\\
	&\ll
	qN^{5/8} + \min\left(\sqrt{q} N , \ q^{3/2}\mq^{1/4}N^{1/4} + q^{7/4}\mq^{1/2}\right)\\
	&\asymp
	qN^{5/8} + q\min\left(\dfrac{N}{\sqrt{q}}, \ \max\left(\sqrt{q}\mq^{1/4}N^{1/4}, \ q^{3/4}\mq^{1/2}\right)\right).
	\end{align*}
	We have
	$$
	\min\left(\dfrac{N}{\sqrt{q}}, \ \sqrt{q}\mq^{1/4}N^{1/4}\right) \leq N^{5/8} \mq^{1/8}
	$$
	and
	$$
	\min\left(\dfrac{N}{\sqrt{q}}, \ q^{3/4}\mq^{1/2}\right) \leq N^{3/5}\mq^{1/5}. 
	$$
	Then
	$$
	\min\left(\dfrac{N}{\sqrt{q}}, \ \max\left(\sqrt{q}\mq^{1/4}N^{1/4}, \ q^{3/4}\mq^{1/2}\right)\right) \leq N^{5/8}\mq^{1/8} + N^{3/5}\mq^{1/5} 
	$$
	and
	$$
	\dfrac{rE_{>}(r)X^{-5\epsilon}}{q} \ll q\left(N^{5/8}\mq^{1/8} + N^{3/5}\mq^{1/5} \right).
	$$	
	Proposition \ref{prop: main term for smooth S} now gives
	\begin{align}
	&S_{\eta,\theta}(X,N,q,\mq,\ma) = \dfrac{ \widehat{\eta_2}(0)\widehat{\theta}(0)\varphi^2(b)\varphi(\mq)}{b^2\mq} \dfrac{MN}{q\mq}\\
	&\qquad + X^{\epsilon}O\left( \sqrt{\mq} + \sqrt{N} + \min\left(\dfrac{N}{q\sqrt{\mq}}, \dfrac{MN}{q\mq}\right) + \dfrac{M}{q\mq} + qN^{5/8}\mq^{1/8} + qN^{3/5}\mq^{1/5} \right) \nonumber\\
	&\ll \dfrac{MNX^{\epsilon}}{q\mq} + \left(\sqrt{\mq} +qN^{5/8}\mq^{1/8} + qN^{3/5}\mq^{1/5} \right)X^{\epsilon} \nonumber\\
	&\ll \dfrac{X^{1 + \epsilon}}{q\mq N} + q\left( N^{5/8}\mq^{1/8} + N^{3/5}\mq^{1/5} \right)X^{\epsilon} + \sqrt{\mq}X^{\epsilon}\label{a bound for smooth S}
	\end{align}
	for any fixed $\epsilon > 0$. The last holds since  $M = X/N^2$ by assumption. 
	
	If 
	$
	X/\mq N \ll N^{5/8}\mq^{1/8} + N^{3/5}\mq^{1/5},
	$
	we may choose $q = b^{2}$ (recalling $q = b^\lambda$ by assumption, equivalently $\lambda = 2$). This gives
	$$
	S_{\eta,\theta}(X,N,q,\mq,\ma)X^{-\epsilon} \ll  N^{5/8}\mq^{1/8} + N^{3/5}\mq^{1/5} + \sqrt{\mq}
	$$
	for any fixed $\epsilon > 0$. On the other hand, if $
	X/\mq N \gg N^{5/8}\mq^{1/8} + N^{3/5}\mq^{1/5},
	$ we may choose $\lambda > 0$ even so that
	$$
	q \asymp \sqrt{\dfrac{X/\mq N}{N^{5/8}\mq^{1/8} + N^{3/5}\mq^{1/5}}}.
	$$
	In this case, (\ref{a bound for smooth S}) gives
	$$
	S_{\eta,\theta}(X,N,q,\mq,\ma)X^{-\epsilon} \ll \dfrac{\sqrt{X}}{\mq^{7/16}N^{3/16}} + \dfrac{\sqrt{X}}{\mq^{2/5}N^{1/5}} + \sqrt{\mq}. 
	$$
	Regardless of the case,
	$$
	S_{\eta,\theta}(X,N,q,\mq,\ma)X^{-\epsilon} \ll \dfrac{\sqrt{X}}{\mq^{7/16}N^{3/16}} + \dfrac{\sqrt{X}}{\mq^{2/5}N^{1/5}} + \mq^{1/8}N^{5/8} + \mq^{1/5}N^{3/5} + \sqrt{\mq}.
	$$
	We can rewrite the above as 
	\begin{align*}
	S_{\eta,\theta} 
	\ll
	\dfrac{\sqrt{X}^{1 + \epsilon}}{\sqrt{\mq}}\left(\left(\dfrac{\mq^{1/2}}{N}\right)^{1/5} + \left(\dfrac{\mq^{1/3}}{N}\right)^{3/16} + \dfrac{N^{3/5}\mq^{7/10} + (N\mq)^{5/8} + \mq}{\sqrt{X}}  \right)
	\end{align*}
	with the shorthand notation $S_{\eta,\theta} = S_{\eta,\theta}(X,N,q,\mq,\ma)$. Now (\ref{square divisors of Pi(L,q,a)}) follows from (\ref{eqn:smoothing S }) after substituting the notations $\mq,\ma$ here with $q,a$ and using $|\Pi_b(L)| \asymp_b b^{L/2} = \sqrt{X}$. The inequality in (\ref{square divisors of Pi}) follows after we set $\mq = \ma = 1$ here and note $N^{5/8}/\sqrt{X} \ll N^{-3/16}$ (since $N\ll \sqrt{X}$ by assumption).

\section{Medium and small square divisors}\label{sec:medium square divisors}

Theorem \ref{thm: square divisors} is most effective when the square divisors $d^2$ there (after replacing the notation $n$ with $d$) are sufficiently large relative to $q$. In particular, letting $X = b^{L}$, it controls moduli $q \ll X^{1/16-\epsilon}$ as soon as $d \gg X^{3/16}$. On the other hand, since palindromes are known to have level of distribution $X^{1/5 }$ \cite{T-Panario} we then also have control of the cases when $d \ll \sqrt{X^{1/5-\epsilon}/X^{1/16-\epsilon}} \approx X^{0.07}$. 
In order to handle the medium cases, we adapt some of the arguments in \cite{T-Panario} while using the estimate of Baier-Zhao \cite{Baier-Zhao} (see also Baker \cite{Baker(2017)}) for the large sieve with square moduli (in Lemma \ref{lem:Large sieve with square moduli} here). We also employ the $L^1$ and moment bounds in \cite{T-Panario} (see Lemmas \ref{lem:T-Panario L^1 bound} and \ref{lem:Moment bounds} below). The main conclusion of this section is the following proposition.

\begin{prop}\label{prop:equidistribution involving squares}
For any $D,Q,x\geq 1$ and any fixed $\epsilon > 0$, we have the following two results.

If $DQ \ll x^{1/5-\epsilon}$, then
\begin{equation}\label{DQ <= x^{1/5}}
\sum_{\substack{d \sim D \\ q \sim Q \\ (dq,b^3-b)=1}}\sup_{y \leq x}\max_{a \in \mathbb{Z}}\left|\sum_{n\in \mathscr{P}_b^*(y)}\left(\mathbf{1}_{n\equiv a (qd^2)} - \dfrac{1}{qd^2}\right)\right|
\ll \sqrt{x}\exp\left(-\sigma \sqrt{\log x}\right)
\end{equation}
for some $\sigma > 0$ depending only on $b,\epsilon$.

If $DQ \ll x^{1/4-\epsilon}$ and $Q \ll x^{1/15-\epsilon}$, then
\begin{equation}\label{DQ <= x^{1/4} and Q <= x^{1/15}}
\sum_{\substack{d \sim D \\ q \sim Q \\ (dq,b^3-b)=1}}\sup_{y \leq x}\max_{a \in \mathbb{Z}}\left|\sum_{n\in \mathscr{P}_b^*(y)}\left(\mathbf{1}_{n\equiv a (qd^2)} - \dfrac{1}{qd^2}\right)\right|
\ll \sqrt{x}\exp\left(-\sigma \sqrt{\log x}\right)
\end{equation}
for some $\sigma > 0$ depending only on $b,\epsilon$.
\end{prop}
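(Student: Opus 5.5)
The plan is to follow the Fourier-analytic framework of \cite{T-Panario} for palindromes in progressions, with the moduli $qd^2$ treated as a family to which the large sieve with square moduli (Lemma \ref{lem:Large sieve with square moduli}) applies. First reduce to dyadic $y$ and to a single length $L$ with $y\asymp b^L$. The coprimality to $b^3-b$ and to $b$ is handled as in \cite{T-Panario}. Then expand the congruence condition by orthogonality:
$$
\mathbf{1}_{n\equiv a (qd^2)} - \frac{1}{qd^2} = \frac{1}{qd^2}\sum_{e\mid qd^2,\, e>1}\ \sum_{h(e)}^* e_e\bigl(h(n-a)\bigr).
$$
This reduces the left side of (\ref{DQ <= x^{1/5}}) to sums of $|\widehat{\Pi}(h/e)|$ over primitive fractions with denominators $e\mid qd^2$, where $\widehat{\Pi}(\alpha)=\sum_{n\in\Pi_b(L)}e(\alpha n)$ is the palindrome generating function. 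Its digit structure gives a near-product form $\prod_j |\cos$-type factors$|$.

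\textbf{Part (\ref{DQ <= x^{1/5}}).} Since $qd^2\le DQ\cdot D \le (DQ)^2$ does not fit directly under level $1/5$, I would not invoke the level-of-distribution theorem with modulus $qd^2$. Instead I would split $e=e_1e_2$ with $e_1\mid q$ and $e_2\mid d^2$, and use the $L^1$ bound and moment bounds of \cite{T-Panario} (Lemmas \ref{lem:T-Panario L^1 bound}, \ref{lem:Moment bounds}). These are of the shape $\sum_{e\sim E}\sum_h^*|\widehat\Pi(h/e)|\ll \sqrt{x}\,E^{2-c}$ together with large-sieve-type bounds for sums over fractions near a fixed point. The key intermediate claim is
$$
\sum_{d\sim D}\sum_{q\sim Q}\frac{1}{qd^2}\sum_{1<e\mid qd^2}\ \sum_{h(e)}^*|\widehat\Pi(h/e)| \ll \sqrt{x}\,(DQ)^{1-c'}x^{-\ldots}.
$$
I would prove it by grouping by $e$: the number of pairs $(q,d)$ with $e\mid qd^2$ is controlled in terms of the square-full part of $e$. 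The usual Siegel–Walfisz-type input for small $e$ supplies the $\exp(-\sigma\sqrt{\log x})$ saving, as in \cite{T-Panario}. The threshold $DQ\le x^{1/5-\epsilon}$ should be exactly the range where the $\ell^1$/moment bound wins, since effectively one sums over $\asymp D^2Q$ residue classes but only about $DQ$ moduli.

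\textbf{Part (\ref{DQ <= x^{1/4} and Q <= x^{1/15}}).} Here I would follow the $6$-almost-prime machinery: split $\Pi$ into a product of a ``head'' sum and a ``tail'' sum via the quasi-palindromic/digit factorization of $\widehat\Pi$, apply Cauchy–Schwarz, and bound one factor with the $L^\infty$/moment bound. The other factor, an $L^2$ sum over $h\bmod qd^2$ with $d\sim D$ and $q$ fixed, is handled with the $L^2$ large sieve (\ref{L^2 version}). This gives the saving factor
$$
\Delta_\epsilon(D,N,q)/N \approx (1+q/N)\bigl(qD^3/N+\sqrt D\bigr),
$$
with $N\approx\sqrt{x}$-size digit blocks. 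Summing over $q\sim Q$ and optimizing the block split, the conditions $DQ\le x^{1/4}$ and $Q\le x^{1/15}$ should emerge: the $\sqrt D$ term versus the $qD^3$ term drives the balance.

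\textbf{Main obstacle.} The hardest step is getting the large sieve savings to beat the trivial count $\sum_{d,q}\#\{h\}\asymp D^3Q^2$ uniformly. This requires choosing the block length $N$ in Lemma \ref{lem:Large sieve with square moduli} compatibly with the product structure of $\widehat\Pi$, and tracking the dependence on $\sup_y$ and $\max_a$. The first thing I would try is a Gallagher-type smoothing in $y$, with $a$ removed via absolute values on the $h$-sums.
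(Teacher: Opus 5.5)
There is a genuine gap, mainly in the first part. Your overall framework matches the paper's: orthogonality, reduction to $\Phi_N$ at primitive fractions $h/(rc^2)$, Lemma \ref{lem:T-Panario L^1 bound} for the Siegel--Walfisz range, and Baier--Zhao for the second part. But the first part lacks its decisive ingredient.

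After the reduction one must bound $\frac{1}{CDR}\sum_{c\sim C,\,r\sim R}\sum_{h(rc^2)}^*\Phi_N(h/rc^2+\beta)$ with $C\le D$ and $CR\ll DQ$. Your grouping by the square-full part of $e$ produces exactly this weight.

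\emph{Why Lemma \ref{lem:T-Panario L^1 bound} alone fails.} Applied to all moduli up to $C^2R$, it gives $\frac{C^3R}{D}x^{3/10+o(1)}$. This is admissible only if $C^3R/D\ll x^{1/5-\epsilon'}$. For $C\asymp D$, $R\asymp Q$ this reads $D^2Q\ll x^{1/5}$, far from $DQ\ll x^{1/5}$ (e.g.\ $Q=1$, $D=x^{1/5-\epsilon}$). The loss is in the number of residue classes: the moduli $rc^2$ form a sparse subset of $[C^2R,8C^2R]$. So no counting of pairs $(q,d)$ with $e\mid qd^2$ can recover it.

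\emph{What the paper does instead.} Lemma \ref{lem:L^1 bound} is an $L^1$ bound that sees this sparsity:
\begin{itemize}
\item write $\Phi_N=P_1P_2P_3$ with $b^M\asymp CR$, and apply H\"older with exponents $\frac1\ell,\frac1\ell,\frac1{2K}$;
\item bound the two short blocks by the orthogonality/divisor estimate of Lemma \ref{lem:L^2 bound};
\item bound the long block via Lemma \ref{lem:Algebraic property} and Lemma \ref{lem:Moments involving square moduli}, i.e.\ $2K$-th moments transferred to the fractions $h/(rc^2)$ by the square-moduli large sieve.
\end{itemize}
The $\sqrt{C}$-term of that large sieve only yields a saving $C^{-3/4K}$, which must beat the loss in the moment bound, so this requires $C\gg b^{\epsilon N}$. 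Hence the paper splits:
\begin{itemize}
\item $C\ll x^{\epsilon/2}$: use Lemma \ref{lem:T-Panario L^1 bound}, now with $C^2R\ll x^{1/5-\epsilon/2}$; its second term supplies the $\exp(-\sigma\sqrt{\log x})$ saving;
\item $C\gg x^{\epsilon/2}$: Lemma \ref{lem:L^1 bound} gives a power saving.
\end{itemize}
Your ``key intermediate claim'', with an unspecified exponent, is exactly this argument. It does not follow from the tools as you combine them.

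\emph{Second part: the $L^\infty$ bound is too weak.} The factor not handled by the large sieve cannot be bounded in $L^\infty$. With $U=b^M$, take $\sum P_1^2\le U^2Q^2D^3$ together with the bound for $\sum P_2^2$ from Lemmas \ref{lem:Algebraic property} and \ref{lem:Large sieve with square moduli}. The best choice of $U$ then saves only $(QD^{5/2})^{1/4}$ over the trivial $Q^2D^3b^N$, whereas a saving exceeding $QD$ is needed. The paper instead uses Lemma \ref{lem:L^2 bound}, $\sum P_1^2\ll Q^2D^3U+QD^2U^2b^{o(N)}$, obtained by counting $qd^2\mid S(\mathbf u,\mathbf v)$ with the divisor bound. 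This yields Lemma \ref{lem:L^1/L^2-bound}, with bound $b^{N+o(N)}QD^2\bigl(DQ/b^{N/2}+(Q^2/D)^{1/8}\bigr)$.

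\emph{Second part: where $Q\ll x^{1/15-\epsilon}$ comes from.} The term $(Q^2/D)^{1/8}$ is useless for $D\le Q^2$, so the condition $Q\ll x^{1/15-\epsilon}$ does not simply emerge from optimizing the block split. One first settles $DQ\ll x^{1/5-\epsilon}$ by the first part. Only in the remaining range $D\gg x^{1/5-\epsilon}/Q$ is $(Q^2/D)^{1/8}\ll(Q^3x^{-1/5+\epsilon})^{1/8}$ small.

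A minor point: no Gallagher smoothing is needed for $\sup_y$, since Lemma \ref{lem:Sum-to-product} bounds incomplete palindrome sums uniformly in $y$.
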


We need a few lemmas. 
First for a real variable $\alpha$ and integer $N\geq 0$, define 
\begin{equation}\label{little phi}
\phi_b(\alpha) = \left|\sum_{0 \leq m < b}e(\alpha m)\right|
\end{equation}
and
\begin{equation}\label{Big Phi}
\Phi_N\left(\alpha\right) = \prod_{1 \leq n < N}\phi_b\left(\alpha\left(b^n + b^{2N -n}\right)\right)
\end{equation}
with the convention $\Phi_N\left(\alpha\right) = 1$ if $N\leq 1$.
Note from the definition of $\Pi_b(2N)$ and the additivity of exponentials, 
\begin{equation}\label{sum-to-product}
\left|\sum_{n \in \Pi_b(2N)}e(\alpha n) \right| \leq b^2\Phi_N(\alpha).
\end{equation}
We also have the following lemma. First define the set $\mathscr{P}_b^0(x)$ by 
\begin{equation}\label{P^0(x)}
\mathscr{P}_b^0(x) = \left\{n \in \mathscr{P}_b(x) \ : \ \lfloor \log_b n\rfloor \equiv 0 (2)\right\}.
\end{equation}

\begin{lem}[Incomplete sums]\label{lem:Sum-to-product}
Let $x\geq 1$ and $\alpha$ real. Then
$$
\left|\sum_{n \in \mathscr{P}_b^0(x)}e(\alpha n)\right| \leq b^2\sum_{0 \leq N \leq \frac{1}{2}\log_b x}\sum_{0 \leq M \leq N}\Phi_M\left(\alpha b^{N-M}\right).
$$	
\end{lem}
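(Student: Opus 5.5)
The plan is to split $\mathscr{P}_b^0(x)$ by digit-length and then factor each length block into a product of geometric sums. An element $n\in\mathscr{P}_b^0(x)$ has $\lfloor\log_b n\rfloor = 2N$ for some integer $N\geq 0$ with $b^{2N}\leq n\leq x$, so $0\leq N\leq \frac12\log_b x$. For each such $N$, the palindromes with $2N+1$ digits form the full set $\Pi_b(2N)$ unless $b^{2N+1}>x$. So I split the sum by $N$ and apply the triangle inequality:
\[
\Bigl|\sum_{n\in\mathscr{P}_b^0(x)}e(\alpha n)\Bigr|\leq \sum_{0\leq N\leq \frac12\log_b x}\Bigl|\sum_{\substack{n\in \Pi_b(2N)\\ n\leq x}}e(\alpha n)\Bigr| .
\]
For every $N$ except the top one, the condition $n\leq x$ is vacuous. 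Those terms are then bounded by (\ref{sum-to-product}) by $b^2\Phi_N(\alpha)$, which is the $M=N$ term of the inner sum in the claim.

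The real work is the incomplete block $\{n\in\Pi_b(2N): n\leq x\}$. I decompose it lexicographically, from the outside in. Write $n=\sum_{0\leq j<N}c_j(b^j+b^{2N-j})+c_Nb^N$, with outer digit pairs $c_0,c_1,\dots$ running from the most significant position inward. The condition $n\leq x$ is a lexicographic comparison of the top digits with those of $x$. So the set splits as a disjoint union over $k=0,1,\dots,N$ of sets in which $c_0,\dots,c_{k-1}$ agree with the top digits of $x$, $c_k$ is strictly smaller than the corresponding digit of $x$, and all inner digits are free. There is one extra piece where all top digits agree with $x$. That piece is a single residue of the middle, or a few elements at most, and is absorbed as a bounded number of terms.

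On the $k$-th piece, the fixed outer digits contribute a unimodular constant. The digit $c_k$ ranges over an initial segment, which I bound trivially by $b$. The free inner digits $c_{k+1},\dots,c_{N-1},c_N$ range completely. Their positions are $b^j+b^{2N-j}$ for $k<j<N$. Setting $M=N-k$, $j=i+(N-M)$, this is $b^{N-M}(b^{i}+b^{2M-i})$ for $0<i<M$, together with the middle digit at $b^N=b^{N-M}b^M$. By the additivity of exponentials, the inner sum factors as $\prod_{0<i<M}\sum_{c}e(\alpha b^{N-M}c(b^i+b^{2M-i}))$ times the middle factor. Taking absolute values gives exactly $\Phi_M(\alpha b^{N-M})$ times at most $b$ for the middle digit. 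The boundary digit costs another factor of $b$, so each piece is at most $b^2\Phi_M(\alpha b^{N-M})$, and $k=0,\dots,N$ corresponds to $M=N,\dots,0$.

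The main obstacle is bookkeeping. I must check that the constant stays $b^2$ and that the leftover piece plus the conventions $\Phi_M=1$ for $M\leq1$ fit inside the stated double sum. First I would verify that the top block's pieces $0\leq M\leq N$, together with the complete blocks (each needing only its $M=N$ term), are dominated by the full double sum. The $M=0$ or $1$ terms, equal to $1$, cover the leftover pieces of size $\ll b^2$. The coprimality in $\mathscr{P}_b^0$ is not imposed, so no restriction on $c_0$ needs tracking.
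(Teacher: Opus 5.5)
Your proposal is correct and is the standard argument: split by length, then split the top block lexicographically at the first free digit $c_k$ where $n$ drops below $x$, and factor the free pairs $c_{k+1},\dots,c_{N-1}$ into $\Phi_{N-k}(\alpha b^{k})$. The paper gives no proof of its own here; it cites Banks--Hart--Sakata, Col and Lemma 6.1 of \cite{T-Panario}, which use this same decomposition. For the exact constant $b^2$ (rather than $\ll b^2$), note that the $k=N$ piece has at most $b-1$ elements and the leftover piece at most one, so together they fit inside the $M=0$ term $b^2\Phi_0=b^2$.
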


\begin{proof}
This is implicit in the earlier works of  Banks-Hart-Sakata \cite{Banks-Hart-Sakata(2004)} and Col \cite{Col(2009)}. See for instance Lemma 6.1 in \cite{T-Panario} for a proof. 	
\end{proof}

\begin{lem}[$L^1$-bound]\label{lem:T-Panario L^1 bound}
Let $N,Q \geq 1$. Then
$$
\max_{k\in \mathbb{Z}}\sum_{\substack{1 < q \leq Q \\ (q,b^3-b)=1}}\sum_{h(q)}^*\Phi_N\left(\dfrac{h}{q} + \dfrac{k}{b^3-b}\right)\ll_\epsilon Q^2b^{3N/5 + \epsilon N} + b^N Q^{1 - \sigma_1}\exp\left(-\dfrac{\sigma_\infty N}{\log Q}\right)
$$
for any $\epsilon > 0$,
where $\sigma_1, \sigma_\infty > 0$ are some values depending only on $b,\epsilon$. 
\end{lem}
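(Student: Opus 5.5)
This lemma is the $L^1$-bound of \cite{T-Panario}, and the plan is to reproduce its proof from the product structure of $\Phi_N$. Fix $k$ and set $\beta = k/(b^3-b)$. The first step is to factor $\Phi_N(\alpha)$ into blocks of consecutive indices $n$. Split $\{1,\dots,N-1\}$ into intervals $I_1,\dots,I_J$ of a fixed large length $K$. On each block apply H\"older, or just the trivial bound $\phi_b\le b$, to discard a few factors. Each block product
$$\prod_{n\in I_j}\phi_b\bigl(\alpha(b^n+b^{2N-n})\bigr)$$
depends on $\alpha b^{\min I_j}$ modulo $1$ through a function of $\alpha$ with bounded \say{digit complexity}. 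This is the standard Banks--Hart--Sakata/Col decomposition.

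The core is a Fourier-analytic bound for $\phi_b$ on a single block. Write $\phi_b(\theta)=|\sin(\pi b\theta)/\sin(\pi\theta)|$. Then $\prod_{n<N}\phi_b(\alpha(b^n+b^{2N-n}))$ is large only when the $b$-adic digits of $\alpha$ near the relevant scales are nearly constant, that is, close to $0$ or $b-1$. For rationals $h/q+\beta$ with $(q,b^3-b)=1$ and $q>1$, the $b$-adic expansion is purely periodic with period $\mathrm{ord}_q(b)$. A rational with denominator coprime to $b^3-b$ cannot have long constant-digit runs, because a run of constant digits forces $q$ to divide $b-1$ or $b+1$ (or $b^2-1$) up to bounded error. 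This is the reason for the coprimality condition. It yields the ``minor arc'' saving: for all but few $h$, a positive proportion of the blocks contribute a factor at most $b^{K(1-c)}$.

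The sum is then organized in two regimes. In the first, use the spacing of the points $h/q$, which are $Q^{-2}$-separated, together with the large-sieve-type $L^1$ estimate
$$\int_0^1\Phi_N(\alpha)\,d\alpha\ll b^{N(1/2+o(1))}$$
and its discrete analogue. With a Sobolev--Gallagher argument and derivative bounds for $\Phi_N$ of size $b^{2N}$, sampling at $Q^2$ points first gives the crude bound
$$\sum_q\sum_h\Phi_N \ll Q^2\sup\Phi_N^{\text{(partial)}}+\dots .$$
To reach the exponent $3/5$, apply this only to a truncated product over $n$ in a range of length about $N/5$ near the middle. Bound the remaining factors by the sup-norm saving $b^{\gamma n}$ available whenever the denominator exceeds $b^{n}$. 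Optimizing the cut gives $Q^2b^{3N/5+\epsilon N}$. In the second regime, $Q$ is small relative to $b^N$. There, use the periodicity of the digits: the period $\mathrm{ord}_q(b)\ll q$, and each full period contributes a factor of at most $b^{\mathrm{ord}_q(b)}(1-c/q^{C})$. This gives $b^N\exp(-\sigma_\infty N/\log Q)$ after choosing blocks of length about $\log Q$. The factor $Q^{1-\sigma_1}$ comes from counting $h,q$ together with a small power saving from averaging over $h$, via $\sum_h^*$ of a product against the uniform distribution of $hb^n/q$.

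The main obstacle is the middle-range optimization producing the exponent $3/5$, which requires the combination of the $L^1$-norm with the sup-norm decay. I would attempt it first via a dyadic decomposition of $q$ and H\"older with exponents tuned so that the $Q^2$ term dominates exactly at $b^{3N/5}$.
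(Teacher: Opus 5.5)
The paper does not prove this lemma; it quotes Proposition 8.1 of \cite{T-Panario}. Your sketch therefore has to stand on its own, and its central step, the derivation of the term $Q^2b^{3N/5+\epsilon N}$, does not work.

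Gallagher-type sampling of $\Phi_N$ at the $Q^{-2}$-spaced points costs $\int_0^1|\Phi_N'|$. Since $\Phi_N$ is the modulus of a trigonometric polynomial of degree $\asymp b^{2N}$, the best it gives is $(Q^2+b^{2N})\|\Phi_N\|_1\le (Q^2+b^{2N})b^{N/2}$. That is worse than the trivial $Q^2b^N$ for all $Q\le b^{3N/4}$.

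Your remedy is to apply this only to a middle block of length $\approx N/5$, which does have small degree since $\prod_{M<n<N}\phi_b(\alpha(b^n+b^{2N-n}))=\Phi_{N-M}(b^M\alpha)$, and to bound the other $\approx 4N/5$ factors pointwise. Even if the middle block were $O(1)$ on average, reaching $Q^2b^{3N/5}$ would force the remaining factors to be $\le b^{(3/5+\epsilon)N}$ at every point $h/q+\beta$. That is a uniform pointwise saving of $b^{(1/5-\epsilon)N}$, and nothing of the sort is available.

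\begin{itemize}
\item The pointwise information one has for $(q,b^3-b)=1$ is a saving of shape $\exp(-cN/\log q)$. This is $O(1)$ for $q\approx b^{2N/5}$, and it is what produces the second term.
\item The block version you invoke (a saving $b^{\gamma n}$ whenever the denominator exceeds $b^n$) is false. Take a prime $q\nmid b^3-b$ and any $j_0$. Dirichlet's theorem gives a unit $h$ with $\|hb^{j_0}/q\|\le q^{-1/2}$ and $\|hb^{2N-j_0-n}/q\|\le q^{-1/2}$. Then $\|h(b^j+b^{2N-j})/q\|\le 2b^nq^{-1/2}$ for $j_0\le j<j_0+n$, so these $n$ factors have product $\gg b^n$ as soon as $b^{2n}\le q/C_b$.
\item Each factor involves the two scales $b^n$ and $b^{2N-n}$, so a block is not a function of $\alpha b^{\min I_j}$ alone.
\end{itemize}

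The missing idea is that the saving must come from averaging over $(q,h)$ in every piece. The factors of huge degree need an arithmetic, not analytic, $L^2$ input. The paper's own adaptations to moduli $qd^2$ show the mechanism (Lemmas \ref{lem:L^2 bound}, \ref{lem:Algebraic property}, \ref{lem:Moments involving square moduli}, \ref{lem:L^1 bound}, \ref{lem:L^1/L^2-bound}). Take $b^M\asymp Q$.

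\begin{itemize}
\item The outer factors $n\le 2M$ are controlled in $L^2$ by orthogonality. The frequencies $\sum_n(u_n-v_n)(b^n+b^{2N-n})$ are nonzero off the diagonal and $\ll b^{2N}$, so they have only $b^{o(N)}$ divisors $q$. This gives $Q^2b^M+Qb^{2M+o(N)}$ even though these factors have degree $\asymp b^{2N}$, where the large sieve is useless.
\item The inner factors become $\Phi_{N-2M}(h/q+b^{2M}\beta)$, of degree $\asymp F^2$ with $F=b^{N-2M}\asymp b^N/Q^2$. These are handled by the large sieve together with the $2K$-th moment bound of Lemma \ref{lem:Moment bounds}.
\item H\"older with exponents $(\ell,\ell,2K)$ then gives roughly $Qb^N\bigl((Q^5/b^{2N})^{1/2K}+Q^{-1/2K}\bigr)$, up to a loss $b^{O(N/K^{3/2})}$. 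The exponent $1/5$ is exactly the condition $Q\le F^2$.
\item For $Q\ge b^{(2/5-\epsilon)N}$, the plain Cauchy--Schwarz version $Q^2b^{N/2+o(N)}+Qb^{N+o(N)}$ already gives $Q^2b^{3N/5+\epsilon N}$.
\end{itemize}

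Your second regime has matching problems. Counting $(q,h)$ gives $Q^2$, so $Q^{1-\sigma_1}$ needs an averaged bound on a short block, not a pointwise one. A per-period loss $1-c/q^C$ yields only $\exp(-cN/q^{C+1})$. The $\exp(-\sigma_\infty N/\log Q)$ saving needs a proof that $h(b^n+b^{2N-n})/q+\beta$ cannot stay near $\mathbb{Z}$ for more than $\approx\log_b q$ consecutive $n$, and that is where $(q,b^3-b)=1$ enters.
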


\begin{proof}
See Proposition 8.1 in \cite{T-Panario}.
\end{proof}

\begin{lem}[Moment bounds]\label{lem:Moment bounds}
For any integers $N,K\geq 2$,
$$
\int_0^1\Phi_N^{2K}(\alpha)\der \alpha \leq b^{2(K-1)N + 2}\left(1 + O\left(\dfrac{1}{\sqrt{K}} + \dfrac{b^2}{K}\right)\right)^{2N}.
$$
The implied constant in the error term is absolute (and does not depend on $b$). 
\end{lem}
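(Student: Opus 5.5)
The plan is to expand the $2K$-th power of each factor into a Fourier series, integrate term by term to get a weighted count of solutions of a linear Diophantine equation, and bound that count by a two-sided carry (transfer-matrix) argument with a local limit theorem as the analytic input.

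\emph{Step 1: Fourier expansion.} Since $\phi_b(\beta)^{2K}=\bigl|\sum_{0\le m<b}e(\beta m)\bigr|^{2K}$, I would write
$$\phi_b(\beta)^{2K}=\sum_{|j|\le K(b-1)}c_K(j)e(\beta j),$$
where $c_K(j)$ is the number of $2K$-tuples of digits with $m_1+\dots+m_K-m_1'-\dots-m_K'=j$. Thus $\sum_j c_K(j)=b^{2K}$, and $c_K$ is the $2K$-fold convolution of uniform digit distributions. Multiplying over $1\le n<N$ and integrating over $[0,1]$, orthogonality gives
$$\int_0^1\Phi_N^{2K}(\alpha)\,\der\alpha=\sum_{\substack{(j_1,\dots,j_{N-1})\\ \sum_n j_n(b^n+b^{2N-n})=0}}\ \prod_{n<N}c_K(j_n).$$
So the task is to show that the equation $\sum_n j_n(b^n+b^{2N-n})=0$ costs a factor of about $b^{-2}$ per variable, relative to the trivial total mass $b^{2K(N-1)}$, up to a factor $(1+O(K^{-1/2}+b^2K^{-1}))$ per variable and the harmless $b^2$ from boundary effects.

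\emph{Step 2: two carry chains.} I would split the equation into its low part $\sum_n j_nb^n$, which occupies positions $1,\dots,N-1$, and its high part $\sum_n j_nb^{2N-n}$, which occupies positions $N+1,\dots,2N-1$. Reading the low part from position $1$ upward with carries $c_n$ (so $j_n+c_{n-1}=bc_n$, $|c_n|\ll K$) forces $j_n\equiv -c_{n-1}\pmod b$. The total low part equals $c_{N-1}b^N$, which must cancel the high part. Reading the high part from position $N$ upward, starting with carry $c_{N-1}$, gives a second chain of congruences in which $j_{N-1},j_{N-2},\dots$ are each determined modulo $b$ by the incoming high-side carry. Hence every $j_n$ is pinned modulo $b$ twice, once by each chain. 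I would organize the count as a product of transfer operators acting on the pair of current carries, whose state space has size $O(K^2)$.

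\emph{Step 3: the main estimate, and the expected obstacle.} The key fact is that each one-step transfer operator has norm at most $b^{2K-2}\bigl(1+O(K^{-1/2}+b^2K^{-1})\bigr)$. One factor $b^{-1}$ is exact: digit sums are uniformly distributed mod $b$, so $\sum_{j\equiv r\,(b)}c_K(j)=b^{2K-1}$ for every $r$. The second factor $b^{-1}$ must come from the residue of the outgoing carry $\lfloor (j+c)/b\rfloor$ modulo $b$ being nearly equidistributed when $j$ is restricted to a fixed class modulo $b$. For this I would apply a local central limit theorem to the sum of $2K$ digits: it has variance $\asymp Kb^2$, so $j/b$ spreads over about $\sqrt K$ consecutive values. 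Replacing $c_K$ by the Gaussian density on a lattice of step $b$ gives equidistribution of $\lfloor j/b\rfloor$ modulo $b$ with relative error $O(K^{-1/2})$ from the Edgeworth correction, plus $O(b^2/K)$ from the Gaussian's smoothness at scale $b^2$ relative to its variance. I expect this uniform two-residue local limit estimate to be the main technical point; I would prove it via the explicit Fourier transform $\bigl(\sin(\pi b\theta)/\sin(\pi\theta)\bigr)^{2K}$ evaluated at $\theta=r/b^2$. Iterating over the $N-1$ steps, and bounding the initial and terminal carry constraints trivially (which costs at most $b^2$), gives $b^{2(K-1)N+2}\bigl(1+O(K^{-1/2}+b^2/K)\bigr)^{2N}$. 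The exponent $2N$ absorbs the fact that two error factors arise per step.
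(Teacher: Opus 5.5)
The paper gives no argument for this lemma; it is quoted from Proposition~7.1 of \cite{T-Panario}. Your proposal therefore has to stand on its own, and Step~3 has a genuine gap.

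Steps~1 and~2 are correct. The trouble is that the two carry chains run through $j_1,\dots,j_{N-1}$ in opposite orders. When you process $j_n$ with the low carry $c_{n-1}$ known, the carry entering position $2N-n$ depends on $j_{n+1},\dots,j_{N-1}$ and on $c_{N-1}$, hence on every variable. So there is no transfer operator in which both congruences on $j_n$ are imposed by the current state.

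A genuine Markov formulation uses as state the low carry $c$ entering position $n$ and the high carry $d$ leaving position $2N-n$. A step is $c'=(j_n+c)/b$, $d'=bd-j_n$, with kernel $c_K(bc'-c)\mathbf{1}[d'=b(d-c')+c]$, and all carries are bounded by $K$. Here the high-side congruence is no longer a per-step constraint. It survives only as the requirement that the expanding recursion $d\mapsto bd-j$ stay bounded.

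Consequently, for fixed $b$ and $K$ large (the range used in the applications), consider the one-step mass from the initial state $(0,0)$, or from any state with $|d|\lesssim\sqrt{K}$. It equals $\sum_{j\equiv -c\,(b),\ |j-bd|\le K}c_K(j)=b^{2K-1}(1-o(1))$, and the column sums are of the same size. Your ``key fact'' that each one-step operator has norm $b^{2K-2}(1+o(1))$ is thus false in $\ell^1$ and in $\ell^\infty$. You name no other norm in which it would hold.

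The second factor $b^{-1}$ is a multi-step, spectral phenomenon. Heuristically, let $\sigma^2=K(b^2-1)/6$ be the variance of $j$, and take $w(c,d)=e^{-3d^2/K}$, a Gaussian of variance $\sigma^2/(b^2-1)=K/6$ in the high carry. Then $Tw\approx b^{2K-2}w$. Restricting $j$ to a class modulo $b$ gives one factor $b^{-1}$. Evaluating the convolution of this Gaussian with the law of $j$ at the point $bd$ returns $b^{-1}w(d)$, which gives the other.

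To make your route rigorous you would need three things, none of which is in the proposal:
\begin{itemize}
\item a positive sub-eigenvector with $Tw\le b^{2K-2}\bigl(1+O(K^{-1/2}+b^2/K)\bigr)w$ pointwise;
\item control of the tails, where the local limit approximation to $c_K$ has no uniform relative error;
\item control of the ratio of $w$ between the initial and terminal states, to recover the constant $b^2$.
\end{itemize}

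Conversely, the estimate you single out as the main technical point, near-equidistribution of $j$ modulo $b^2$, is easy and does not supply the missing factor. The relevant Fourier coefficients $(\phi_b(r/b^2)/b)^{2K}$ vanish when $b\mid r$ and $b^2\nmid r$, and are at most $e^{-cK/b^2}$ otherwise. So that error is exponentially small rather than $O(K^{-1/2})$. More to the point, no Markov formulation of the count ever prescribes $j_n$ modulo $b^2$ in a single step.
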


\begin{proof}
See Proposition 7.1 in \cite{T-Panario}. In a recent preprint, Dartyge-Rivat-Swaenepoel \cite{Dartyge-Rivat-Swaenepoel(2025)} improved it (see Proposition 7.1 there) by saving a factor of $b^{2K}$ and gave explicit constants for the error term. Their bounds hold for any $K \in \mathbb{R^+}$. For our applications here, $K$ is bounded in terms of $b$.
\end{proof}	

\begin{lem}[Moments involving squares]\label{lem:Moments involving square moduli}
Let $D,q \geq 1$ with $q \in \mathbb{N}$ and let $N,K \geq 2$. Then
\begin{align*}
&\sup_{\beta \in \mathbb{R}}\sum_{d\leq D}\sum_{h(qd^2)}^*\Phi_N^{2K}\left(\dfrac{h}{qd^2} + \beta\right)\\
& \ll_{\epsilon} \left(qD^3 + Kb^{2N}\sqrt{D}\right)b^{2(K-1)N}\left(1 + \dfrac{q}{Kb^{2N}}\right)\left(1 + O\left(\dfrac{1}{\sqrt{K}} + \dfrac{b^2}{K}\right)\right)^{2N}\left(b^NDK\right)^{\epsilon}
\end{align*}	
for any $\epsilon > 0$. 
\end{lem}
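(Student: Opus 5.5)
The idea is to combine the large sieve with square moduli (Lemma \ref{lem:Large sieve with square moduli}) with the moment bound of Lemma \ref{lem:Moment bounds}, through the Fourier structure of $\Phi_N$. The structure gives this directly. By its definition, $\phi_b(\alpha)^2=|\sum_{0\le m<b}e(\alpha m)|^2$, so $\Phi_N^{2K}(\alpha)=|\Psi(\alpha)|^2$, where
\[
\Psi(\alpha)=\prod_{1\le n<N}\Bigl(\sum_{0\le m<b}e\bigl(\alpha m(b^n+b^{2N-n})\bigr)\Bigr)^{K}=\sum_{|\nu|\le \mathcal{N}}\gamma_\nu e(\alpha \nu).
\]
This is a trigonometric polynomial with nonnegative integer coefficients $\gamma_\nu$. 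Its frequencies lie in $[0,\mathcal{N}]$ with $\mathcal{N}\le K b\cdot 2b^{2N}\ll Kb^{2N}$.

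\textbf{Step 1.} I would absorb the shift $\beta$ into the coefficients, setting $\gamma_\nu(\beta)=\gamma_\nu e(\beta\nu)$. This leaves the moduli of the coefficients unchanged. After recentering the frequencies to $|\nu|\le \mathcal{N}/2$, which only multiplies everything by a unimodular factor, $|\Psi(h/qd^2+\beta)|^2=|\sum_\nu \gamma_\nu(\beta)e_{qd^2}(h\nu)|^2$.

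\textbf{Step 2.} Next I would apply the $L^2$ form (\ref{L^2 version}) of Lemma \ref{lem:Large sieve with square moduli} with $N$ replaced by $\mathcal{N}\asymp Kb^{2N}$. This gives
\[
\sup_\beta\sum_{d\le D}\sum_{h(qd^2)}^*\Phi_N^{2K}\Bigl(\frac{h}{qd^2}+\beta\Bigr)\ll_\epsilon \Delta_\epsilon(D,Kb^{2N},q)\sum_\nu|\gamma_\nu|^2 .
\]
Here $\Delta_\epsilon(D,Kb^{2N},q)=(DKb^{2N})^\epsilon(1+q/Kb^{2N})(qD^3+Kb^{2N}\sqrt D)$, which matches the first three factors of the claimed bound up to adjusting $\epsilon$.

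\textbf{Step 3.} By Parseval, $\sum_\nu|\gamma_\nu|^2=\int_0^1|\Psi|^2=\int_0^1\Phi_N^{2K}(\alpha)\,\der\alpha$. Lemma \ref{lem:Moment bounds} bounds this by $b^{2(K-1)N+2}(1+O(K^{-1/2}+b^2/K))^{2N}$, and $b^2$ is absorbed into the implied constant since implied constants may depend on $b$. Multiplying the estimates from Steps 2 and 3 gives the lemma.

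The main obstacle is Step 2. The $L^2$ large sieve is stated for sums indexed by $|n|\le N$ with an arbitrary complex sequence, so I must check that the recentering and the $\beta$-twist keep the problem in that form. Both do: the twist preserves $|\gamma_\nu|$, and the recentering changes each term by a unimodular phase. I must also check that the frequency length is $\asymp Kb^{2N}$ and no larger, which follows since each factor contributes frequencies at most $(b-1)(b^n+b^{2N-n})\le 2b^{2N}$ and there are $K$ copies.
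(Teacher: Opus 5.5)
Your proposal is correct and follows essentially the same route as the paper's proof: write $\Phi_N^{K}(\alpha+\beta)$ as the modulus of a trigonometric polynomial of length $\ll Kb^{2N}$ whose coefficients absorb $e(\beta\nu)$, apply the $L^2$ form of Lemma~\ref{lem:Large sieve with square moduli}, then use Parseval and Lemma~\ref{lem:Moment bounds}, with the factor $b^2$ absorbed into the $b$-dependent implied constant. One small slip: the length bound does not come from ``$K$ copies'' of a single factor, since there are $K(N-1)$ factors; it comes from the geometric sum $K(b-1)\sum_{1\le n<N}(b^n+b^{2N-n})<Kb^{2N}$ (and even a crude bound carrying an extra factor $N$ would be absorbed into $(b^N)^{\epsilon}$).
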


\begin{proof}
Let $\alpha$ be a real variable. Recalling the definition of $\Phi_N$ in (\ref{Big Phi}), one observes
$$
\Phi_N^K(\alpha + \beta) = \left|\sum_{0 \leq n \leq Kb^{2N}}\gamma_n e(\alpha n)\right|
$$	
for some complex numbers $\gamma_n$ independent of $\alpha$. Then by Lemma \ref{lem:Large sieve with square moduli}, 
\begin{align*}
&\sum_{d\leq D}\sum_{h(qd^2)}^*\Phi_N^{2K}\left(\dfrac{h}{qd^2} + \beta\right)\\ &\ll_{\epsilon}\left(b^NDK\right)^{\epsilon}\left(qD^3 + Kb^{2N}\sqrt{D}\right)\left(1 + \dfrac{q}{Kb^{2N}}\right)\sum_{0 \leq n \leq Kb^{2N}}|\gamma_n|^2
\end{align*}
for any $\epsilon > 0$. 
By Parseval's identity and the $1$-periodicity of $\Phi_N$,
\begin{align*}
\sum_{0 \leq n \leq Kb^{2N}}|\gamma_n|^2 &= \int_0^1\left|\sum_{0 \leq n \leq Kb^{2N}}\gamma_n e(\alpha n)\right|^2 \der \alpha = \int_0^1\Phi_N^{2K}(\alpha + \beta)\der \alpha\\
& = \int_0^1\Phi_N^{2K}(\alpha) \der \alpha.
\end{align*}
The result now follows from Lemma \ref{lem:Moment bounds}. 
\end{proof}	

\begin{lem}[$L^2$-bound]\label{lem:L^2 bound}
Let $L,M,N\geq  1$ be integers with $L+M < N$ and let $D,Q\geq 1$. Then
$$
\sup_{\beta \in \mathbb{R}}\sum_{\substack{ d\leq D \\ q \leq Q \\ h(qd^2)}}\prod_{L < n \leq L+M}\phi_b^2\left(\left(\dfrac{h}{qd^2} + \beta\right)\left(b^n + b^{2N - n}\right)\right) \ll Q^2D^3b^M + QD^2b^{2M + o(N)}.
$$
Moreover
$$
\sup_{\beta \in \mathbb{R}}\sum_{d \leq D}\sum_{q \leq Q}\sum_{h(qd^2)}\Phi_N\left(\dfrac{h}{qd^2} + \beta\right) \ll D^2Qb^{N + o(N)}\left(\dfrac{DQ}{b^{N/2}} + 1\right).
$$	
\end{lem}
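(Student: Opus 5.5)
The first bound will come from a large sieve plus an $L^1$-type product estimate; the second from a trivial count split into a few pieces. For the first bound, fix $\beta$ and put $\alpha_h = h/(qd^2)+\beta$. The factors $\phi_b(\alpha(b^n+b^{2N-n}))$ for $L<n\le L+M$ combine into a single exponential sum,
$$\prod_{L<n\le L+M}\phi_b\bigl(\alpha(b^n+b^{2N-n})\bigr)=\Bigl|\sum_{0\le c_j<b}e\Bigl(\alpha\sum_{L<n\le L+M}c_n(b^n+b^{2N-n})\Bigr)\Bigr|.$$
The frequencies $\sum c_n(b^n+b^{2N-n})$ are distinct integers, and there are $b^M$ of them. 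The large sieve in dual form is not directly available because the frequencies are spread over a long interval. Instead I would expand the square and sum over $h$ modulo $qd^2$ (all residues, not only the invertible ones, since the statement sums over every $h(qd^2)$). This gives
$$\sum_{h(qd^2)}|\cdots|^2=qd^2\sum_{c,c'}\mathbf{1}_{qd^2\mid f(c)-f(c')}\,e\bigl(\beta(f(c)-f(c'))\bigr),$$
which in absolute value is at most $qd^2$ times the number of pairs $(c,c')$ with $qd^2\mid f(c)-f(c')$.

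Next I would write $f(c)-f(c')=\sum_n e_n(b^n+b^{2N-n})$ with $|e_n|<b$. The diagonal $e\equiv0$ contributes $b^M$ pairs, hence $qd^2 b^M$ in total, and summing over $d\le D$ and $q\le Q$ gives $Q^2D^3b^M$. For the off-diagonal terms, each nonzero vector $e$ (at most $(2b)^M$ of them, each counted with multiplicity at most $b^M$) gives a fixed nonzero integer $F(e)$ of size at most $b^{2N+1}$. The number of pairs $(q,d)$ with $qd^2\mid F(e)$ is at most $\tau(F(e))^2=b^{o(N)}$. So the off-diagonal part is bounded by $\sum_e b^M\sum_{qd^2\mid F(e)}qd^2$, where each term $qd^2\le QD^2$. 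This yields $QD^2b^{M}\cdot b^{M}\cdot b^{o(N)}$, provided the count of nonzero $e$ is bounded by $b^{M+o(N)}$ rather than $(2b)^M$.

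That count is the delicate point. I would handle it by organizing the sum as over pairs $(c,c')$ directly, which is $b^{2M}$ pairs in all, and noting that $F\ne0$ for $c\ne c'$ because the representation is unique. This gives exactly $QD^2b^{2M+o(N)}$ as claimed.

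For the second bound I would use Cauchy--Schwarz in a way that splits $\Phi_N$ into two halves, bounded respectively by the $L^2$ count above and by a trivial estimate. A cleaner route is direct: $\Phi_N(\alpha)=|\sum_{\ell}e(\alpha\ell)|$ over the $b^{N-1}$ numbers $\ell=\sum c_n(b^n+b^{2N-n})$, so apply Cauchy--Schwarz over $h$ only,
$$\sum_{h(qd^2)}\Phi_N\le (qd^2)^{1/2}\Bigl(\sum_h\Phi_N^2\Bigr)^{1/2}.$$
By the same orthogonality computation, $\sum_h\Phi_N^2\le qd^2\,\#\{(\ell,\ell'):qd^2\mid \ell-\ell'\}$. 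Each $\ell$ lies in a range of length about $b^{2N}$, so this count is at most $b^{N}(b^{2N}/(qd^2)+1)$ up to the $b^{o(N)}$ loss from palindromic spacing. Hence
$$\sum_h\Phi_N\ll qd^2\,b^{N/2}\Bigl(\frac{b^{N}}{\sqrt{qd^2}}+1\Bigr)b^{o(N)}=\bigl(\sqrt{qd^2}\,b^{3N/2}+qd^2b^{N/2}\bigr)b^{o(N)}.$$
This is too weak, because the trivial spacing count in an interval loses information. So I would instead invoke the first part with $L=0$ and $M=N-1$, and get the second part by Cauchy--Schwarz over the whole triple sum:
$$\sum\Phi_N\le\Bigl(\sum_{d,q}qd^2\Bigr)^{1/2}\bigl(Q^2D^3b^N+QD^2b^{2N+o(N)}\bigr)^{1/2}\ll (Q^2D^3)^{1/2}\bigl(QD^{3/2}b^{N/2}+Q^{1/2}Db^{N+o(N)}\bigr).$$
This equals $Q^2D^3b^{N/2}+Q^{3/2}D^{5/2}b^{N+o(N)}$. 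The claimed bound is $D^3Q^2b^{N/2}+D^2Qb^{N}$, so it remains to remove the extra factor $\sqrt{QD}$ in the second term. I expect this to be the main obstacle. My first attempt would be to do the Cauchy--Schwarz separately for each $(q,d)$ and exploit that the divisor-counting term $\#\{(q,d):qd^2\mid F\}$ is only $b^{o(N)}$, rather than summing $qd^2$ over all pairs.
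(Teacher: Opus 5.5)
Your proof of the first bound is correct and is the paper's argument. You write the product as $\bigl|\sum_c e(\alpha f(c))\bigr|$, expand the square and use orthogonality modulo $qd^2$. The diagonal $c=c'$ gives $Q^2D^3b^M$. This uses $S(\mathbf{u},\mathbf{v})=0$ only for $\mathbf{u}=\mathbf{v}$, which holds because $L+M<N$ makes the positions $n$ and $2N-n$ distinct and all digit differences have absolute value $<b$. The $b^{2M}$ off-diagonal pairs give $QD^2b^{2M+o(N)}$, since $\#\{(q,d): qd^2\mid F\}\le\tau(F)^2=b^{o(N)}$.

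The second bound is not proved. Your final Cauchy--Schwarz gives $Q^2D^3b^{N/2}+Q^{3/2}D^{5/2}b^{N+o(N)}$, and the fix you suggest does not remove the extra $\sqrt{QD}$. Per-$(q,d)$ Cauchy--Schwarz gives $\sum_h\Phi_N\le qd^2\,(b^{N-1}+O_{q,d})^{1/2}$, where $O_{q,d}$ counts off-diagonal pairs with $qd^2\mid\ell-\ell'$. The divisor bound controls only the sum $\sum_{q,d}O_{q,d}\le b^{2N+o(N)}$. Summing $\sqrt{O_{q,d}}$ over the $\asymp QD$ moduli therefore gives $QD^2\cdot\sqrt{QD}\cdot b^{N+o(N)}$ again. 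The earlier ``cleaner route'' rested on an unjustified equidistribution count, as you noticed.

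The missing idea is the one in your first sentence, carried out with both halves treated nontrivially:
\begin{itemize}
\item Split $\Phi_N=P_1P_2$ with $P_1=\prod_{1\le n\le N/2}$ and $P_2=\prod_{N/2<n<N}$.
\item Apply Cauchy--Schwarz to the whole triple sum: $\sum P_1P_2\le(\sum P_1^2)^{1/2}(\sum P_2^2)^{1/2}$.
\item Bound both factors by the first part, with $(L,M)=(0,\lfloor N/2\rfloor)$ and $(\lfloor N/2\rfloor,\,N-1-\lfloor N/2\rfloor)$. Each gives $\ll Q^2D^3b^{N/2}+QD^2b^{N+o(N)}$.
\item The product of the two square roots is exactly $Q^2D^3b^{N/2}+QD^2b^{N+o(N)}$, as claimed.
\end{itemize}

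Your weight-one Cauchy--Schwarz is the most unbalanced split, $M_1=0$ and $M_2=N-1$. There the off-diagonal term $QD^2b^{2N}$ of the long half is paired with the full count $\sum_{q,d}qd^2\asymp Q^2D^3$ of the empty half. That produces the cross term $\sqrt{Q^2D^3\cdot QD^2}\,b^{N}$. Bounding one half trivially, pointwise or inside the Cauchy--Schwarz, still leaves the same term $Q^{3/2}D^{5/2}b^{N}$. The split must be balanced, so that each half has only $b^{N/2}$ frequencies and both halves benefit from the first bound.
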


\begin{proof}
	First note the second inequality follows from the first after factoring $\Phi_N$ as $(\prod_{1 \leq n \leq N/2})(\prod_{N/2 < n < N})$ and applying Cauchy-Schwarz. With regards to the first inequality,
expanding the square, switching orders of summation, using the orthogonality of the additive characters modulo $qd^2$ and taking absolute values, the left hand side is at most
$$
QD^2 \sum_{\substack{0 \leq u_{L+1}, \ldots, u_{L+M} < b \\ 0\leq  v_{L+1}, \ldots, v_{L+M} < b}} \sum_{\substack{d\leq D \\ q \leq Q \\ qd^2 \mid S(\mathbf{u}, \mathbf{v})}}1,
$$	
where
$$
S(\mathbf{u}, \mathbf{v}) = \sum_{L < n \leq L + M}(u_n - v_n)\left(b^n + b^{2N-n}\right).
$$
Note $S(\mathbf{u}, \mathbf{v}) =0$ if and only if $u_n = v_n$ for each $L < n \leq L+M$ (by uniqueness of the $b$-ary representation of integers). Such diagonal terms contribute at most $Q^2D^3b^M$ to the sum. For the off-diagonal terms with $u_n \neq v_n$ for some $n$, one notes the inner sum is at most
$
b^{o(N)}\tau(S(\mathbf{u}, \mathbf{v})) \ll b^{o(N)} 
$
by the bounds for the divisor function and the fact $S(\mathbf{u}, \mathbf{v}) \ll b^{2N}$.
\end{proof}

\begin{lem}[Algebraic property]\label{lem:Algebraic property}
Suppose $(q,b)=1$ and let $\beta,\delta \in \mathbb{R}$ with $\delta > 0$. For integers $0 \leq M \leq N$,
$$
\sum_{h(q)}^*\prod_{M < n < N}	\phi_b^\delta\left(\left(\dfrac{h}{q} + \beta\right)\left(b^n + b^{2N-n}\right)\right) = \sum_{h(q)}^*\Phi_{N-M}^\delta\left(\dfrac{h}{q} + b^M\beta\right). 
$$
\end{lem}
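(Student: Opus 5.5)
The plan is to match the two sides factor by factor by re-indexing the product and then absorbing the resulting power of $b$ into the residue variable $h$. This absorption uses the hypothesis $(q,b)=1$.

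First, re-index the product on the left by writing $n = k + M$, so that $M<n<N$ corresponds to $1 \leq k < N-M$. The key observation is the exact factorization
$$
b^{n} + b^{2N-n} = b^{M+k} + b^{M + (2(N-M)-k)} = b^{M}\left(b^{k} + b^{2(N-M)-k}\right).
$$
Hence the factor indexed by $n$ equals
$$
\phi_b^\delta\left(\left(\dfrac{hb^M}{q} + b^M\beta\right)\left(b^{k} + b^{2(N-M)-k}\right)\right).
$$
Comparing with the definition (\ref{Big Phi}), the product over $M<n<N$ is therefore exactly $\Phi_{N-M}^\delta\left(hb^M/q + b^M\beta\right)$. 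In the degenerate case $N - M \leq 1$ the product is empty and equals $1$, which agrees with the convention $\Phi_{N-M}=1$. (Taking the $\delta$-th power factorwise is harmless because each $\phi_b\geq 0$.)

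Second, replace $hb^M$ by $h$ in the sum over $h$. Since $(q,b)=1$, the map $h \mapsto hb^M$ permutes the invertible residue classes modulo $q$. It remains to check that the summand depends only on $h$ modulo $q$. Changing $h$ by a multiple of $q$ shifts each argument $\left(h/q + b^M\beta\right)\left(b^k+b^{2(N-M)-k}\right)$ by an integer, because $b^k + b^{2(N-M)-k}$ is an integer. Moreover $\phi_b$ is $1$-periodic, as is clear from its definition (\ref{little phi}).

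Combining the two steps, the left-hand side equals $\sum_{h(q)}^*\Phi_{N-M}^\delta\left(h/q + b^M\beta\right)$, as claimed. There is no genuine obstacle. The only points needing care are the index bookkeeping in the factorization, namely that $2N-n$ corresponds to $2(N-M)-k$, and the invertibility of $b$ modulo $q$ that makes the substitution a bijection.
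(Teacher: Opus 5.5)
Your proof is correct and follows the paper's argument essentially step for step: shift the index by $M$ to factor out $b^M$ and recognize the product as $\Phi_{N-M}^\delta\left(b^M h/q + b^M\beta\right)$, then use $(q,b)=1$ and $1$-periodicity to replace $b^M h$ by $h$ in the sum over invertible classes. Your extra checks — the empty-product convention when $N-M\le 1$, and the nonnegativity of $\phi_b$, which makes the $\delta$-th power harmless — are details the paper leaves implicit.
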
	

\begin{proof}
	The product on the left equals
\begin{align*}
&\prod_{0 < n < N-M}\phi_b^\delta\left(\left(\dfrac{h}{q} + \beta\right)\left(b^{n+M} + b^{2N-(n + M)}\right)\right) \\
&= \prod_{0 < n < N-M}\phi_b^\delta\left(\left(\dfrac{hb^M}{q} + \beta b^M\right)\left(b^{n} + b^{2(N-M)-n}\right)\right)\\
&= \Phi_{N-M}^\delta\left(\dfrac{b^M h}{q} + b^M \beta\right)
\end{align*}
by definition. Since $(q,b)=1$ and $\Phi_{N-M}$ is $1$-periodic,
$$
\sum_{h(q)}^*\Phi_{N-M}^\delta\left(\dfrac{b^M h}{q} + b^M \beta\right) = \sum_{h(q)}^*\Phi_{N-M}^\delta\left(\dfrac{h}{q} + b^M \beta\right)
$$ 
as required.	
\end{proof}	

\begin{lem}[$L^1$-bound involving squares]\label{lem:L^1 bound}
 Assume $D\gg b^{\epsilon N}$ and $DQ \ll b^{(2/5-\epsilon)N}$ for some small fixed $ \epsilon > 0$. Then	
$$
\sup_{\beta \in \mathbb{R}}\sum_{\substack{d \leq D \\ q \leq Q \\ (dq,b)=1}}\sum_{h(qd^2)}^*    \Phi_N\left(\dfrac{h}{qd^2} + \beta\right) \ll D^2Qb^{\delta N }
$$
for some $\delta  < 1$ depending only on $b,\epsilon$. 
\end{lem}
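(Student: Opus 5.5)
The plan is to mimic the proof of the $L^1$-bound in Lemma \ref{lem:T-Panario L^1 bound} (Proposition 8.1 of \cite{T-Panario}). The one new ingredient is that moments are now controlled by the square-moduli large sieve of Lemma \ref{lem:Moments involving square moduli} rather than the classical large sieve. The target saving over the trivial bound $D^2Qb^N$ is only a factor $b^{-(1-\delta)N}$, so a crude power saving suffices.

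First I factor $\Phi_N$. Fix a small parameter $\kappa$ depending on $\epsilon$, and split the range $1\le n<N$ into two blocks. The lower block is $1\le n\le M$, where $M=\lfloor \kappa N\rfloor$ or a similar short length. The upper block is $M<n<N$. Bounding the lower block of factors trivially would lose everything, so I keep both blocks and use H\"older's inequality with exponents $2K$ and $2K/(2K-1)$.

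\begin{itemize}
\item The lower-block product is handled by the $L^2$-bound of Lemma \ref{lem:L^2 bound}, through its first inequality with the block $L=0$, $M$. This gives
\[
\sum \prod_{n\le M}\phi_b^2 \ll Q^2D^3b^M+QD^2b^{2M+o(N)}.
\]
\item The upper-block product is rewritten via Lemma \ref{lem:Algebraic property}, which applies since $(qd^2,b)=1$. It becomes $\Phi_{N-M}(h/qd^2+b^M\beta)$, whose $2K$-th moment over $d\le D$, $h(qd^2)^*$ is bounded by Lemma \ref{lem:Moments involving square moduli}.
\end{itemize}

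I would then arrange the exponents so that the lower part enters only through an $L^2$ norm and the upper part through its $2K$-th moment. One way is to write the summand as $\Phi_N=A\cdot B$ and apply Cauchy--Schwarz. This gives $\sum A^2$ from Lemma \ref{lem:L^2 bound}, and $\sum B^2\le (\#\text{terms})^{1-1/K}(\sum B^{2K})^{1/K}$. Summing over $q\le Q$ of the square-moduli bound costs an extra factor $Q$.

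Next I insert sizes. Lemma \ref{lem:Moments involving square moduli} with $N' = N-M$ gives
\[
\sum B^{2K}\ll (qD^3+Kb^{2N'}\sqrt D)\,b^{2(K-1)N'}(1+O(K^{-1/2}))^{2N'}b^{o(N)}.
\]
Take $K$ large in terms of $b,\epsilon$, so that the factor $(1+O(K^{-1/2}))^{2N'}$ is $b^{\epsilon' N}$ with $\epsilon'$ tiny. In the regime $QD^3\ll b^{2N'}\sqrt D$, the $K$-th root of this is about $b^{2N'}\cdot b^{-2N'/K}\,(b^{2N'}D^{1/2})^{1/K}$ times harmless factors. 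Combined with $(\#\text{terms})^{1-1/K}\approx (D^3Q)^{1-1/K}$, this yields a genuine saving: a factor $D^{-1/(2K)}$ relative to the trivial bound, coming from $\sqrt D$ versus $D$. The hypothesis $D\gg b^{\epsilon N}$ is exactly what turns this into a saving $b^{-c\epsilon N/K}$.

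The lower block's $L^2$ bound is essentially trivial ($\approx D^2Qb^{2M}$, matching $\#\text{terms}\cdot b^{2M}/D$ up to the diagonal). It is acceptable provided $QD^2b^{2M}$ does not exceed $Q^2D^3b^M$ too much, which holds when $b^M\ll DQ$. So I choose $b^M\asymp DQ$; this is admissible because $DQ\ll b^{(2/5-\epsilon)N}$ keeps $M<N/2$. The condition $QD^3\ll b^{2N'}\sqrt D$ then also follows from $DQ\ll b^{(2/5-\epsilon)N}$ after a short check. I expect that check, and the constraint that the $o(N)$ divisor losses and the $K$-dependent $(1+O(K^{-1/2}))^{2N}$ factor stay below the $\epsilon$-saving, to be the main obstacle. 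The fallback is to take $K$ very large, then $\kappa$ accordingly, and accept any $\delta<1$.

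Finally, the case where the large-sieve term $qD^3$ dominates would be excluded by the range assumption. The result gives $\ll D^2Qb^{\delta N}$ with $\delta=1-c(\epsilon,b)$.
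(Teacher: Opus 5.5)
\textbf{There is a genuine gap.} It starts with the bookkeeping: the trivial bound is not $D^2Qb^N$. The number of triples $(d,q,h)$ is $\mathcal{N}=\sum_{d\le D,\,q\le Q,\,(dq,b)=1}\varphi(qd^2)$, which is of order $D^3Q^2$. So trivially the sum is $\ll D^3Q^2b^N$, and the lemma asks for a saving of $DQ\cdot b^{(1-\delta)N}$. In other words, the average of $\Phi_N$ over the points must be about $b^N/(DQ)$ times a power saving.

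A single split $\Phi_N=AB$ with Cauchy--Schwarz cannot deliver this.
\begin{itemize}
\item For the lower block, Lemma \ref{lem:L^2 bound} gives $\sum A^2\ll Q^2D^3b^M+QD^2b^{2M+o(N)}$ for any block length $M$. So the average of $A^2$ is at best of size $b^M+b^{2M}/(QD)$, and one $L^2$ block saves at most $(QD)^{1/2}$ over its maximal size $b^M$. The cap comes from the off-diagonal term.
\item For the upper block, the $2K$-th moment of $B=\Phi_{N-M}$ via Lemma \ref{lem:Moments involving square moduli} saves only about $(QD^{5/2})^{1/(2K)}$. This is your $\sqrt D$ versus $D$ gain, and it is a tiny power.
\end{itemize}
Concretely, with your choice $b^M\asymp DQ$ you get
\[
\Bigl(\sum A^2\Bigr)^{1/2}\Bigl(\mathcal{N}^{1-1/K}\Bigl(\sum B^{2K}\Bigr)^{1/K}\Bigr)^{1/2}\approx Q^{3/2-1/(2K)}D^{5/2-5/(4K)}b^{N}.
\]
This exceeds $D^2Qb^N$ by roughly $(QD)^{1/2}\ge D^{1/2}\gg b^{\epsilon N/2}$. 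No other choice of $M$ helps. Even in the range where the $QD^3$ term dominates the large sieve, the saving $b^{(N-M)/K}$ is at most $(QD^{5/2})^{1/(2K)}$. The lower block's $L^2$ bound, which you call ``essentially trivial'', is in fact where the main saving has to come from.

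The missing idea is to use \emph{two} separate $L^2$ blocks, each saving $(DQ)^{1/2}$, and hence a three-factor H\"older inequality.
\begin{itemize}
\item The paper takes $b^M\asymp DQ$ and writes $\Phi_N=P_1P_2P_3$. Here $P_1,P_2$ are the products over $1\le n\le M$ and $M<n\le 2M$, and $P_3$ is the product over $2M<n<N$.
\item It applies H\"older with exponents $\ell,\ell,2K$, where $2/\ell+1/(2K)=1$.
\item Since $\ell>2$, it bounds $P_j^\ell\le b^{(\ell-2)M}P_j^2$ and uses Lemma \ref{lem:L^2 bound}, giving $\Sigma_j\ll (QD)^{\ell}QD^{2+o(1)}$. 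This costs a factor $(DQ)^{1/(2K)}$ compared with a pure $L^2$ saving.
\item The tail $P_3$, after Lemma \ref{lem:Algebraic property}, goes through Lemma \ref{lem:Moments involving square moduli} with $F=b^{N-2M}\asymp b^N/(DQ)^2$.
\end{itemize}
The large-sieve gain $(QD^{5/2})^{1/(2K)}$ more than offsets the $(DQ)^{1/(2K)}$ loss. The result is $D^2Qb^N\bigl((DQ/b^{2N/5})^{5/(2K)}+D^{-3/(4K)}\bigr)$, up to the harmless factors $F^{\frac{1}{K}\log_b(1+bc/\sqrt{K})}D^{\gamma}$. This is exactly where $D\gg b^{\epsilon N}$ and $DQ\ll b^{(2/5-\epsilon)N}$ enter, and choosing $K$ large and $\gamma$ small finishes the proof. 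Your use of Lemma \ref{lem:Algebraic property} and of large $K$ is fine; the problem is only that one $L^2$ block cannot carry the required $DQ$ saving.
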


\begin{proof}
Let $M \geq 1$ be an integer satisfying $b^M \asymp DQ$ and factorize $\Phi_N$ as $\Phi_N = P_1P_2P_3$, where $P_1,P_2,P_3$ are the products over $1 \leq n \leq M$, $M < n \leq 2M$ and $2M < n < N$, respectively. Let $b^2 < K \ll 1$ be an integer to be chosen later and let $\ell > 2$ be defined by the equation $2/\ell + 1/2K=1$. By H\"older's inequality and positivity, 
$$
\sum_{\substack{d \leq D \\ q \leq Q \\ (dq,b)=1}}\sum_{h(qd^2)}^*    \Phi_N\left(\dfrac{h}{qd^2} + \beta\right) \leq \Sigma_1^{1/\ell} \Sigma_2^{1/\ell}\Sigma_3^{1/2K},
$$
where 
\begin{align*}
\Sigma_3 &= \sum_{\substack{d \leq D \\ q \leq Q \\ (dq,b)=1}}\sum_{h(qd^2)}^*    P_3^{2K}\left(\dfrac{h}{qd^2} + \beta\right)
\end{align*}
and for $j \in \{1,2\}$, 
$$
\Sigma_j = \sum_{\substack{d \leq D \\ q \leq Q }}\sum_{h(qd^2)}P_j^{\ell}\left(\dfrac{h}{qd^2} + \beta\right). 
$$	
For $1 \leq j \leq 2$, note $P_j^\ell = P_j^{\ell-2}P_j^2 \leq b^{(\ell-2)M}P_j^2$. Thus
$$
\Sigma_j \leq b^{(\ell-2)M}\sum_{\substack{d \leq D \\ q \leq Q }}\sum_{h(qd^2)}P_j^{2}\left(\dfrac{h}{qd^2} + \beta\right).
$$
By Lemma \ref{lem:L^2 bound} and the assumptions $b^M \asymp DQ$, $D \gg b^{\epsilon N}$, we have
$$
\Sigma_j \ll b^{o(N)}(DQ)^{\ell-2}Q^3D^4 \ll (QD)^\ell QD^{2 + o(1)} \hspace{3em}(j \in \{1,2\}).
$$
Hence 
\begin{equation}\label{product of sigmas raised to the 1/l}
\Sigma_1^{1/\ell}\Sigma_2^{1/\ell} \ll (QD)^2Q^{2/\ell}D^{4/\ell + o(1)} = Q^{3-1/2K}D^{4-1/K + o(1)} 
\end{equation}
since $2/\ell + 1/2K = 1$ by assumption. 

With regards to $\Sigma_3$, Lemmas \ref{lem:Algebraic property} and \ref{lem:Moments involving square moduli} imply
\begin{align*}
\Sigma_3 &= \sum_{\substack{d \leq D \\ q \leq Q \\ (dq,b)=1}}\sum_{h(qd^2)}^*\Phi_{N-2M}^{2K}\left(\dfrac{h}{qd^2} + \beta b^{2M}\right)\\
&\ll_\gamma 
Q\left(QD^3 + b^{2(N-2M)}\sqrt{D}\right)b^{2(K-1)(N-2M)}D^\gamma\left(1 + \dfrac{bc}{\sqrt{K}} \right)^{2(N-2M)}
\end{align*}
for some absolute constant $c > 0$ and for any $\gamma > 0$. Note we used the assumptions $b^2 < K \ll 1$ and $D \gg b^{\epsilon N}$, $DQ \ll b^{N}$ with $\epsilon > 0$ fixed. For convenience, set 
$$
F = b^{N-2M} \asymp \dfrac{b^N}{D^2Q^2}.
$$
The last holds since $b^M \asymp DQ$ by assumption. The previous inequality implies
$$
\Sigma_3 \ll_\gamma \left(Q^2D^3F^{2(K-1)} + Q\sqrt{D}F^{2K}\right) F^{2\log_b(1 + bc/\sqrt{K})} D^{\gamma}.
$$
Hence 
$$
\Sigma_3^{1/2K} \ll_\gamma \left(Q^{1/K}D^{3/2K}F^{1-1/K} + Q^{1/2K}D^{1/4K}F \right)F^{\frac{1}{K}\log_b(1 + bc/\sqrt{K})} D^{\gamma}
$$
for any $\gamma > 0$. Multiplying by $(\Sigma_1 \Sigma_2)^{1/\ell}$ and recalling (\ref{product of sigmas raised to the 1/l}) gives
\begin{align*}
(\Sigma_1 \Sigma_2)^{1/\ell}\Sigma_3^{1/2K} 
&\ll_\gamma
FQ^3D^4\left(\left(\dfrac{DQ}{F^2}\right)^{1/2K} + \dfrac{1}{D^{3/4K}}\right)F^{\frac{1}{K}\log_b(1 + bc/\sqrt{K})} D^{\gamma}\\
&=
D^2Qb^N\left(\left(\dfrac{DQ}{b^{2N/5}}\right)^{5/2K} + \dfrac{1}{D^{3/4K}} \right)F^{\frac{1}{K}\log_b(1 + bc/\sqrt{K})} D^{\gamma}.
\end{align*}
The last holds since $F \asymp b^N/D^2Q^2$. By assumption, $DQ \ll b^{(2/5-\epsilon)N}$ and $D \gg b^{\epsilon N}$. Hence the above implies
$$
(\Sigma_1 \Sigma_2)^{1/\ell}\Sigma_3^{1/2K}  \ll_\gamma
D^2Qb^Nb^{-LN/4K},
$$
where
$$
L = 3\epsilon - 4\log_b\left(1 + \frac{bc}{\sqrt{K}}\right) - 4\gamma K.
$$
Remains to note we may choose $K = K(b,\epsilon)$ large enough and $0 < \gamma = \gamma(b,\epsilon)$ small enough so that $L \geq \epsilon$. 
\end{proof}

\begin{lem}[$L^1/L^2$-bound]\label{lem:L^1/L^2-bound}
For any $D,Q,N \geq 1$ with $DQ \ll b^N$,
$$
\sup_{\beta \in \mathbb{R}}\sum_{\substack{d\leq D \\ q \leq Q \\ (dq,b)=1}}\sum_{h(qd^2)}^*\Phi_N\left(\dfrac{h}{qd^2} + \beta\right) \ll_\epsilon b^{N + \epsilon N}D^2Q\left(\dfrac{DQ}{b^{N/2}} + \left(\dfrac{Q^2}{D}\right)^{1/8}\right)
$$
for any $\epsilon > 0$. 
\end{lem}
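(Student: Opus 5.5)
We want to bound
$$\Sigma:=\sup_\beta\sum_{d\le D,\,q\le Q,\,(dq,b)=1}\ \sideset{}{^*}\sum_{h(qd^2)}\Phi_N\Bigl(\frac{h}{qd^2}+\beta\Bigr).$$
The plan is to split $\Phi_N=P_1P_2$ at a cut point $M$ and apply Cauchy--Schwarz. Here $P_1$ is the product over $1\le n\le M$ and $P_2$ the product over $M<n<N$. The short factor $P_1$ will be handled by the $L^2$ bound of Lemma \ref{lem:L^2 bound}. The long factor $P_2$ will be handled by the large sieve with square moduli (Lemma \ref{lem:Large sieve with square moduli}), after rewriting it via Lemma \ref{lem:Algebraic property}. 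Concretely,
$$\Sigma\le\Bigl(\sum P_1^2\Bigr)^{1/2}\Bigl(\sum{}^{*}P_2^2\Bigr)^{1/2}.$$
In the first factor we drop the coprimality constraint on $h$ by positivity, and both factors are taken with $\sup_\beta$.

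For the first factor, Lemma \ref{lem:L^2 bound} (with $L=0$) gives
$$\sum P_1^2\ll Q^2D^3b^M+QD^2b^{2M+o(N)}.$$
For the second factor we apply Lemma \ref{lem:Algebraic property} separately to each modulus $qd^2$, which is coprime to $b$. This turns $P_2$ into $\Phi_{N-M}(h/qd^2+b^M\beta)$. We then expand $\Phi_{N-M}$ as a trigonometric polynomial of length $\ll b^{2(N-M)}$ whose coefficients have $\ell^2$ mass equal to $\int_0^1\Phi_{N-M}^2=b^{N-M}$ (this is the case $K=1$ of the expansion in Lemma \ref{lem:Moments involving square moduli}; here Parseval applies directly, with mass computed exactly as the number of digit strings). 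For each fixed $q\le Q$, the inequality (\ref{L^2 version}) then bounds the second factor by
$$\ll_\epsilon b^{\epsilon N}\Bigl(1+\frac{q}{b^{2(N-M)}}\Bigr)\bigl(qD^3+b^{2(N-M)}\sqrt D\bigr)\,b^{N-M}.$$
Summing over $q\le Q$ gives
$$\sum{}^{*}P_2^2\ll b^{\epsilon N}\bigl(Q^2D^3b^{N-M}+Q\sqrt D\,b^{3(N-M)}\bigr),$$
provided $Q\le b^{2(N-M)}$, which we will check holds for our choice of $M$.

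Multiplying the two factors produces several cross terms, and we then choose $M$. The natural choice balances $b^{2M}$ against $b^{2(N-M)}/D^{3/2}$-type quantities. The target bound $b^ND^2Q\bigl((Q^2/D)^{1/8}+DQ/b^{N/2}\bigr)$ suggests the following main balance:
$$(QD^2b^{2M})\cdot(Q\sqrt D\,b^{3N-3M})\approx b^{2N}D^4Q^2(Q^2/D)^{1/4}.$$
This gives $b^{M}\approx b^N Q^{-1/2}D^{-1/4}\cdot(\text{adjust})$, and we will solve for the exact exponent and round $M$ to an integer (losing at most a factor $b$). The diagonal-type product $(Q^2D^3b^M)(Q^2D^3b^{N-M})$ yields the term $b^{N}Q^2D^3$. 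Since $DQ\ll b^N$, this term is dominated by $b^ND^2Q\cdot DQ/b^{N/2}$ whenever $DQ\ge b^{N/2}$, and otherwise by the main term. The term $DQ/b^{N/2}$ should absorb the remaining mixed products such as $(Q^2D^3b^M)(Q\sqrt D\,b^{3(N-M)})$.

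I expect the main obstacle to be this bookkeeping: verifying that for the chosen $M$ (with $0\le M<N$, and $M$ possibly pushed to an endpoint when $Q^2/D$ is large or small) every cross term is bounded by one of the two terms in the statement. If the plain two-factor split does not produce exactly the exponent $1/8$, the fallback is a three-factor split $P_1P_2P_3$ as in the proof of Lemma \ref{lem:L^1 bound}, with Hölder exponents $(1/4,1/4,1/2)$ and $b^M\asymp$ an appropriate power of $DQ$. In that version $P_1$ and $P_2$ are both controlled by Lemma \ref{lem:L^2 bound} and the square-moduli large sieve is applied to $P_3$; the exponent $1/8$ is then expected to come from the $\sqrt D$ term in $\Delta_\epsilon$ raised to the power $1/4$.
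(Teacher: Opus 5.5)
Your plan matches the paper's proof: split $\Phi_N=P_1P_2$ at $U=b^M$, apply Cauchy--Schwarz, use Lemma~\ref{lem:L^2 bound} for $P_1$, and use Lemma~\ref{lem:Algebraic property} with the square-moduli large sieve for $P_2$, which leads to the same four cross terms. In the bookkeeping you left open, three things change: your balance actually solves to $b^M\asymp b^N/(D^{5/4}Q^{1/2})$ (not $D^{-1/4}$), which makes the normalized cross terms $QD/b^{N/2}$, $QD/b^{N/2}$, $(Q^2/D)^{1/8}$, $(Q^2/D)^{1/8}$; the diagonal product has square root $Q^2D^3b^{N/2}$, so it is always exactly the $DQ/b^{N/2}$ term and needs no case split; and when $b^N\ll D^{5/4}Q^{1/2}$ the paper takes $b^M\asymp DQ$ instead.
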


\begin{proof}
Let $1 \leq M < N$ be an integer to be chosen later and split the product as $\Phi_N = P_1P_2$, where $P_1,P_2$ are the products over $1 \leq n \leq M$ and $M < n < N$, respectively. Set $U = b^M$. Applying Cauchy-Schwarz, the sum on the left above is at most
$
\sqrt{\Sigma_1\Sigma_2},
$
say. Lemma \ref{lem:L^2 bound} gives
$$
\Sigma_1 \ll Q^2D^3U + QD^2U^2b^{o(N)}.
$$
With regards to $\Sigma_2$, Lemmas \ref{lem:Algebraic property} and \ref{lem:Large sieve with square moduli} imply 
$$
\Sigma_2\ll_\epsilon Q\left(QD^3 + \dfrac{b^{2N}\sqrt{D}}{U^2}\right)\dfrac{b^{N + \epsilon N}}{U}
$$
for any $\epsilon > 0$.
Hence
\[
\sqrt{\Sigma_1\Sigma_2}\ \ll_\epsilon\ 
Q D^2 b^{N + \epsilon N}\Biggl(
\frac{QD}{b^{N/2}}
\;+\;
\frac{\sqrt{Qb^N}}{D^{1/4}U}
\;+\;
\frac{\sqrt {UDQ}}{b^{N/2}}
\;+\;
\frac{b^{N/2}}{D^{3/4}\sqrt U}
\Biggr)\
\]
 for any $\epsilon > 0$. If $b^N \gg D^{5/4}Q^{1/2}$, we may choose $U \asymp  b^N/D^{5/4}Q^{1/2}$. In this case,
$$
\sqrt{\Sigma_1\Sigma_2}\ \ll_\epsilon Q D^2 b^{N + \epsilon N}\left(\frac{QD}{b^{N/2}} + \left(\dfrac{Q^2}{D}\right)^{1/8}\right) .
$$
If $b^N \ll D^{5/4}Q^{1/2}$, a choice of $U \asymp DQ$ gives
$$
\sqrt{\Sigma_1\Sigma_2} \ll_\epsilon Q D^2 b^{N + \epsilon N}\dfrac{QD}{b^{N/2}}.
$$
The result now follows.
\end{proof}	

\begin{proof}[Proof of Proposition \ref{prop:equidistribution involving squares}]
For $q \sim Q$ and $d \sim D$, a Fourier expansion gives
$$
\sum_{n\in \mathscr{P}_b^*(y)}\left(\mathbf{1}_{n\equiv a(qd^2)} - \dfrac{1}{qd^2}\right) \ll \dfrac{1}{QD^2}\sum_{1 \leq h < qd^2}\left|\sum_{n \in \mathscr{P}_b^*(y)}e_{qd^2}(hn)\right|
$$
uniformly in $a$. Since every $n \in \mathscr{P}_b^*(y)$ is coprime to $b^3-b$ by definition, and $b + 1$ divides $b^3-b$, then $(n,b+1)=1$. Since every $b$-palindrome $n$ with $\lfloor \log_b n\rfloor$ odd is divisible by $b+1$, it follows $\mathbf{1}_{n \in \mathscr{P}_b^*(y)} = \mathbf{1}_{n \in \mathscr{P}_b^0(y)}\mathbf{1}_{(n,b^3-b)=1}$, where $\mathscr{P}_b^0(y)$ (defined in (\ref{P^0(x)})) is the set of all $b$-palindromes $1 \leq n \leq y$ with $\lfloor \log_b n\rfloor$ even. By the M\"obius inversion formula $\mathbf{1}_{(n,b^3-b)=1} = \sum_{r \mid (b^3-b,n)}\mu(r)$, a Fourier expansion of $\mathbf{1}_{r \mid n}$ and Lemma \ref{lem:Sum-to-product}, it follows that the left hand sides of (\ref{DQ <= x^{1/5}}) and (\ref{DQ <= x^{1/4} and Q <= x^{1/15}}) are
\begin{align*}
\ll 
(\log x)^2\max_{\substack{0 \leq N \leq \frac{1}{2}\log_b x \\ k \in \mathbb{Z}}}\mathscr{S}(D,Q,N,k),
\end{align*}
where
$$
\mathscr{S}(D,Q,N,k) = \dfrac{1}{QD^2}\sum_{\substack{d \sim D \\ q \sim Q \\ (dq,b^3-b)=1}}\sum_{1 \leq h < qd^2}\Phi_N\left(\dfrac{h}{qd^2} + \dfrac{k}{b^3-b}\right).
$$
Note we also used the constraint $(dq,b)=1$. 
The cases when $N \leq (\frac{1}{2} - \epsilon)\log_b x$ are unsubstantial. Indeed, by Lemma \ref{lem:L^2 bound} and the assumption $DQ \ll x^{1/4 - \epsilon}$, $\mathscr{S}(D,Q,N,k) \ll x^{\frac{1}{2} - \epsilon + o(1)}$ for $N \leq (\frac{1}{2}-\epsilon)\log_b x$. Thus the inequality above becomes
\begin{equation}\label{N large close to 1/2log_b(x)}
\ll x^{\frac{1}{2} - \epsilon +o(1)}+
(\log x)^2\max_{\substack{(\frac{1}{2}-\epsilon)\log_b x \leq N \leq \frac{1}{2}\log_b x \\ k \in \mathbb{Z}}}\mathscr{S}(D,Q,N,k).
\end{equation}
We may thus assume $(\frac{1}{2}-\epsilon)\log_b x \leq N \leq \frac{1}{2}\log_b x$ in what follows.

Splitting the sum according to the GCD of $h,d$ and substituting variables gives
\begin{align*}
\mathscr{S}(D,Q,N,k) &= \dfrac{1}{QD^2}\sum_{\substack{dr \sim D \\ q \sim Q \\ (dqr,b^3-b)=1}}\sum_{\substack{1 \leq h < qrd^2 \\ (h,d)=1}}\Phi_N\left(\dfrac{h}{qrd^2} + \dfrac{k}{b^3-b}\right)\\
&\ll 
\dfrac{1}{QD^2}\sum_{\substack{d \leq D \\ qr \asymp QD/d \\ (dqr,b^3-b)=1}}\sum_{\substack{1 \leq h < qrd^2 \\ (h,d)=1}}\Phi_N\left(\dfrac{h}{qrd^2} + \dfrac{k}{b^3-b}\right)\\
&\leq
\dfrac{1}{QD^2}\sum_{\substack{d \leq D \\ q \asymp QD/d \\ (dq,b^3-b)=1}}\tau(q)\sum_{\substack{1 \leq h < qd^2 \\ (h,d)=1}}\Phi_N\left(\dfrac{h}{qd^2} + \dfrac{k}{b^3-b}\right).
\end{align*}
We now split the last sum according to the GCD of $h,q$ and substitute variables as before. This shows the above equals
\begin{align*}
& \dfrac{1}{QD^2}\sum_{\substack{d \leq D \\ qr \asymp QD/d  \\ dq > 1\\ (dqr,b^3-b)=1}}\tau(qr)\sum_{\substack{h( qd^2) }}^*\Phi_N\left(\dfrac{h}{qd^2} + \dfrac{k}{b^3-b}\right)\\
&\ll 
\dfrac{(\log x)}{D}\sum_{\substack{d \leq D \\ q \ll DQ/d  \\ dq > 1\\ (dq,b^3-b)=1}}\dfrac{\tau(q)}{dq}\sum_{\substack{h( qd^2) }}^*\Phi_N\left(\dfrac{h}{qd^2} + \dfrac{k}{b^3-b}\right)
\end{align*}
after summing over $r \asymp DQ/dq$ while using the inequalities $\tau(qr) \leq \tau(q)\tau(r)$ and $\sum_{r \asymp DQ/dq}\tau(r) \ll DQ(\log x)/dq $ for $DQ \ll x$. Thus after dyadic decompositions,
\begin{equation}\label{sup involving S and T}
\mathscr{S}(D,Q,N,k) \ll (\log x)^3\sup_{\substack{1 \leq C \leq D \\ 1 \leq R \ll DQ/C}}\mathscr{T}(C,D,R,N,k),
\end{equation}
where
$$
\mathscr{T}(C,D,R,N,k) = \dfrac{R^{o(1)}}{CDR}\sum_{\substack{c \sim C \\ r \sim R \\ cr > 1 \\ (cr,b^3-b)=1}}\sum_{h(rc^2)}^* \Phi_N\left(\dfrac{h}{rc^2} + \dfrac{k}{b^3-b}\right).
$$

We bound $\mathscr{T}(C,D,R,N,k)$ according to the sizes of $C,D,Q,R$. 

{\bf Case 1 ($CR \ll x^{1/5-\epsilon}$):} We split this case into two subcases.

{\bf Subcase 1.1 ($C \ll x^{\epsilon/2}$):} Since $C \leq D$ as well by the constraint under the $\sup$ above, necessarily $C \ll \min(D, x^{\epsilon/2})$. By bounds for the divisor function and Lemma \ref{lem:T-Panario L^1 bound},
\begin{align*}
\mathscr{T}(C,D,R,N,k) &\ll \dfrac{(CR)^{o(1)}}{CDR}\sum_{\substack{1 < q \leq C^2R \\  (q,b^3-b)=1}}\sum_{h(q)}^*\Phi_N\left(\dfrac{h}{q} + \dfrac{k}{b^3-b}\right)\\
&\ll_\delta
\dfrac{(CR)^{o(1)}}{CDR}\left(C^4R^2b^{3N/5+\delta N} + b^N\left(C^2R\right)^{1 - \sigma_1}\exp\left(-\dfrac{\sigma_\infty N}{\log CR}\right)\right)\\
&\ll_\delta x^{1/2-\epsilon/2 + \delta} + x^{1/2}\left(C^2R\right)^{-\sigma_1 + o(1)}\exp\left(-\dfrac{\sigma_\infty' \log x}{\log C^2 R}\right)\\
&\ll_\delta 
x^{1/2-\epsilon/2 + \delta} + \sqrt{x}\exp\left(-\sigma \sqrt{\log x}\right)\\
& \ll \sqrt{x}\exp\left(-\sigma \sqrt{\log x}\right)
\end{align*}
for any $\delta > 0$ and some $\sigma_1,\sigma_\infty,\sigma_\infty',\sigma > 0$ depending only on $b,\delta,\epsilon$.
Note we used the assumptions $CR \leq x^{1/5-\epsilon}$, $C \ll\min(D, x^{\epsilon/2})$, $(\frac{1}{2}-\epsilon)\log_b x \leq N \leq \frac{1}{2}\log_b x$ together with the inequality $ \exp(-\kappa K-\sigma_\infty' \log (x)/K) \ll \exp(-\kappa_0 \sqrt{\log x})$ for any $K\geq 0$ and some $\kappa_0$ depending only on $\kappa,\sigma_\infty' >0$. 

{\bf Subcase 1.2 ($C \gg x^{\epsilon/2}$):} Since $x \geq b^{2N}$ by assumption, then $C \gg b^{\epsilon N}$. Since $CR \ll x^{1/5-\epsilon}$ and $ (\frac{1}{2} - \epsilon)\log_b x \leq N$, one can show $CR \ll b^{2N/5 - \epsilon N}$. Now Lemma \ref{lem:L^1 bound} implies $\mathscr{T}(C,D,R,N,k) \ll x^{\delta/2}$ for some $\delta < 1$ depending only on $b,\epsilon$. Combining this with the conclusion of the previous subcase, we obtain that if $CR \ll x^{1/5-\epsilon}$, then $\mathscr{T}(C,D,R,N,k) \ll \sqrt{x}\exp\left(-\sigma \sqrt{\log x}\right)$. This also implies (\ref{DQ <= x^{1/5}}) since if $DQ \ll x^{1/5-\epsilon}$, then $CR \ll x^{1/5-\epsilon}$. Indeed, by the constraints under the $\sup$ in (\ref{sup involving S and T}), $CR \ll DQ$. With regards to (\ref{DQ <= x^{1/4} and Q <= x^{1/15}}), note it follows from (\ref{DQ <= x^{1/5}}) in the case when $DQ \ll x^{1/5-\epsilon}$. Thus to conclude the proof of Proposition \ref{prop:equidistribution involving squares}, it only remains to consider the case when $x^{1/5-\epsilon} \ll DQ \ll x^{1/4-\epsilon}$ with $Q \ll x^{1/15-\epsilon}$.

{\bf Case 2} ($x^{1/5-\epsilon} \ll DQ \ll x^{1/4-\epsilon}$ with $Q \ll x^{1/15-\epsilon}$): By Lemma \ref{lem:L^1/L^2-bound} and the assumptions $C \leq D$, $R \ll DQ/C$, $DQ \ll x^{1/4-\epsilon}$ and $D\gg x^{1/5-\epsilon}/Q$,
\begin{align*}
\mathscr{T}(C,D,R,N,k) 
&\ll_\delta 
x^{1/2-\epsilon + \delta} + x^{1/2 + \delta}\dfrac{C}{D}\left(\dfrac{R^2}{C}\right)^{1/8}\\
& \ll_\delta 
x^{1/2-\epsilon + \delta} + x^{1/2 + \delta}\left(\dfrac{Q^2}{D}\right)^{1/8}\\
&\ll_\delta
x^{1/2-\epsilon + \delta} + x^{1/2 + \delta}\left(\dfrac{Q^3}{x^{1/5-\epsilon}}\right)^{1/8}
\end{align*}
for any $\delta > 0$. Now the result follows from the assumption $Q\ll x^{1/15-\epsilon}$. 
\end{proof}



\section{Proof of Theorem \ref{thm:equidistribution of square-free palindromes}}\label{sec:Proof of Theorem equidistribution of square-free palindromes}

We now proceed with a proof of Theorem \ref{thm:equidistribution of square-free palindromes}.
Let $1 \leq Q \leq x^{1/16-\epsilon}$ and consider the sum
\begin{equation}\label{E(Q)}
E(Q) = \sum_{\substack{q \sim  Q \\ (q,b^3-b)=1}}\sup_{y\leq x}\max_{(a,q)=1}\left|\sum_{\substack{n\in \mathscr{P}_b^*(y) \\ n\equiv a (q)}}\mu^2(n) - \dfrac{6\mathfrak{S}\left(b^3q-bq\right)|\mathscr{P}_b^*(y)|}{\pi^2q}\right|,
\end{equation}
where, for a natural number $k$, 
$$
\mathfrak{S}(k) = \prod_{p \mid k}\left(1 - \dfrac{1}{p^2}\right)^{-1}.
$$
Clearly $\mathfrak{S}$ is multiplicative; particularly $\mathfrak{S}(b^3q-bq) = \mathfrak{S}(b^3-b)\mathfrak{S}(q)$ for $(q,b^3-b)=1$.  
By the M\"obius inversion formula, $\mu^2(n) = \sum_{d^2 \mid n}\mu(d)$. Hence the $n$-sum equals
$$
\sum_{d \leq \sqrt{x}}\mu(d)\sum_{\substack{n \in \mathscr{P}_b^*(y) \\ n\equiv a (q) \\ n \equiv 0(d^2)}}1 = \sum_{\substack{d \leq \sqrt{x} \\ (d,q(b^3-b))=1}}\mu(d)\sum_{\substack{n \in \mathscr{P}_b^*(y) \\ n\equiv a (q) \\ n \equiv 0(d^2)}}1.
$$
To see why the last equality holds, note that since $(a,q)=1$ and $n\equiv a (q)$, then $(n,q)=1$. Moreover $(n,b^3-b)=1$ for $n \in \mathscr{P}_b^*(y)$. Since $d \mid n$, consequently $(d,(b^3-b)q)=1$ as needed. 

Note
$$
\dfrac{6}{\pi^2}\mathfrak{S}\left(b^3q-bq\right) 
 = \sum_{\substack{d \geq 1 \\ (d,q(b^3-b))=1}} \dfrac{\mu(d)}{d^2}  
 = 
 \sum_{\substack{d \leq \sqrt{x} \\ (d,q(b^3-b))=1}} \dfrac{\mu(d)}{d^2} + O\left(\dfrac{1}{\sqrt{x}}\right).
$$
The first equality holds by an Euler product expansion of the sum on the right. Since $|\mathscr{P}_b^*(y)| \ll \sqrt{y} \leq \sqrt{x}$ for $y \leq x$, it follows
\begin{align}
E(Q) &\ll 1 +  \sum_{\substack{ q \sim Q \\ (q,b^3-b)=1 }}\sup_{y \leq x}\max_{(a,q)=1}\sum_{\substack{d \leq \sqrt{x} \\ (d,q(b^3-b))=1}}\left|\sum_{\substack{n \in \mathscr{P}_b^*(y) \\ n\equiv a (q) \\ n \equiv 0 (d^2)}}1 - \dfrac{|\mathscr{P}_b^*(y)|}{qd^2}\right| \nonumber\\
 &\ll 1 + (\log x)\sup_{1 \leq D \leq \sqrt{x}}E(Q,D), \label{E(Q) bound in terms of E(D,Q)}
\end{align}
where
\begin{equation}\label{E(Q,D)}
E(Q,D) = \sum_{\substack{q \sim Q \\ (q,b^3-b)=1 \\ }}\sup_{y \leq x}\max_{(a,q)=1}\sum_{\substack{d\sim D \\ (d,q(b^3-b))=1}}\left|\sum_{\substack{n \in \mathscr{P}_b^*(y) \\ n\equiv a (q) \\ n \equiv 0 (d^2)}}1 - \dfrac{|\mathscr{P}_b^*(y)|}{qd^2}\right|.
\end{equation}
Since $(d,q)=1$, we may combine the two 
congruences via the Chinese remainder theorem; thus 
$$
E(Q,D) \leq \sum_{\substack{d \sim D \\ q \sim Q \\ (dq,b^3-b)=1 \\ (d,q)=1}}\sup_{y \leq x}\max_{\substack{(a,q)=1 \\ d^2 \mid a}}\left|\sum_{\substack{n \in \mathscr{P}_b^*(y) \\ n\equiv a (qd^2)}}1 - \dfrac{|\mathscr{P}_b^*(y)|}{qd^2}\right|.
$$
If $DQ \ll x^{1/4-\epsilon}$, then, as $Q \leq x^{1/16-\epsilon}$ by assumption, the inequality in (\ref{DQ <= x^{1/4} and Q <= x^{1/15}}) of Proposition \ref{prop:equidistribution involving squares} implies $E(Q,D) \ll \sqrt{x}\exp(-\sigma \sqrt{\log x})$ for some $\sigma > 0$ depending only on $b,\epsilon$. We may then assume $DQ \gg x^{1/4-\epsilon}$. In this case, the definition of $E(Q,D)$ in (\ref{E(Q,D)}) implies
\begin{align*}
&E(Q,D) \ll  \sum_{\substack{q \sim Q \\ (q,b)=1 }}\max_{\substack{(a,q)=1 }}\sum_{d \sim D}\sum_{\substack{n \in \mathscr{P}_b^*(x) \\ d^2 \mid n\equiv a (q)}}1 + \sqrt{x}\sum_{\substack{d \sim D \\ q \sim Q }}\dfrac{1}{qd^2}\\
&\ll
\dfrac{\sqrt{x}}{D} + (\log x)\sum_{\substack{ q \sim Q \\ (q,b)=1 }}\max_{\substack{(a,q)=1  \\0 \leq L \leq \log_b x}}\sum_{d \sim D}\sum_{\substack{ \ell \in \Pi_b(L) \\ (\ell,b)=1\\d^2 \mid \ell \\ \ell \equiv a(q)}}1\\
&\ll_\delta
\dfrac{\sqrt{x}}{D} + x^{1/2 + \delta}\sqrt{Q}
\left( \left(\dfrac{Q^{1/2}}{D}\right)^{1/5} + \left(\dfrac{Q^{1/3}}{D}\right)^{3/16} + \dfrac{D^{3/5}Q^{7/10} + (DQ)^{5/8} + Q}{\sqrt{x}}   \right)
\end{align*}
for any $\delta > 0$, by Theorem \ref{thm: square divisors}. The right hand side above is
\begin{align*}
&\ll_\delta x^{1/2 + \delta}\left(\left(\dfrac{Q^{3}}{D}\right)^{1/5} + \left(\dfrac{Q^3}{D}\right)^{3/16} + \dfrac{D^{3/5}Q^{6/5} + D^{5/8}Q^{9/8} + Q^{3/2}}{\sqrt{x}}\right)\\
&\ll_\delta
x^{1/2 + \delta}\left(\left(\dfrac{Q^{4}}{x^{1/4-\epsilon}}\right)^{1/5} + \left(\dfrac{Q^4}{x^{1/4-\epsilon}}\right)^{3/16} + \dfrac{x^{3/10}Q^{6/5} + x^{5/16}Q^{9/8} + Q^{3/2}}{\sqrt{x}}\right).
\end{align*}
The last holds by the assumption $x^{1/4-\epsilon}/Q \ll D \ll x^{1/2}$. Now Theorem \ref{thm:equidistribution of square-free palindromes} follows from the assumption $Q \leq x^{1/16 - \epsilon}$.

\section{Acknowledgements}\label{sec:Acknowledgements}

We thank Universidad de Cantabria for its warm hospitality and for making our time in Santander so memorable, both personally and mathematically. We are especially grateful to Ana G\'omez-P\'erez, Domingo G\'omez-P\'erez and Isabel Pirsic for their exceptional kindness. We thank Igor E.\ Shparlinski for bringing the problem to our attention and for reading an earlier draft of this work. We thank Daniel Panario for earlier discussions and reading a preliminary draft. We thank Daniel R.\ Johnston and Bryce Kerr for collegiate discussions and encouragement. Part of the work was supported by
C\'atedra Universidad de Cantabria - INCIBE de nuevos retos en ciberseguridad, financed by European Union NextGeneration-EU, the Recovery Plan, Transformation and Resilience, through INCIBE. A preliminary stage of this project was supported in part by the Natural Sciences and Engineering Research Council of Canada (NSERC).

\appendix

\section{Large square divisors and digital $A$-process}\label{sec: large squares}

Here we prove the following proposition. The argument in the proof is motivated in part by Mauduit-Rivat's \cites{Mauduit-Rivat(2009), Mauduit-Rivat(2010)} use of van der Corput's inequality to truncate the sum-of-digits function. See Lemme 16 in \cite{Mauduit-Rivat(2009)} and Lemme 5 in \cite{Mauduit-Rivat(2010)}. Rather than apply the concept to exponential sums as they do there, we can exploit the defining structure of the palindromes and apply the idea on the set directly.
The proposition shows that, for any fixed $\delta > 0$, there are few palindromes in $\Pi_b(2L)$ divisible by squares $n^2$ with $n \gg x^{3/8 + \delta}$, where $x = b^{2L}$. Note from the proof that the statement of the proposition, and argument in its proof, can be easily adapted for $\Pi_b(L)$ with $L$ odd as well. 

\begin{prop}\label{prop: tail bound}
	For any positive integer $L$ with $x \asymp b^{2L}$ and any $N \geq 1$,
	\begin{equation}\label{eqn: bound for T}
	\sum_{n \sim N}\sum_{\substack{\ell \in \Pi_b(2L) \\ (\ell,b)=1\\ n^2 \mid \ell}}1 \ll \dfrac{x^{3/10+o(1)}}{N^{4/5}}\#\Pi_b(2L).
	\end{equation}
\end{prop}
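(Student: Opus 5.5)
The plan is an elementary counting argument of Mauduit--Rivat type: we truncate the digit structure of the palindrome itself, not an exponential sum, and then optimize two truncation levels. We may assume $(n,b)=1$, since $(\ell,b)=1$, and that $N\ll \sqrt{x}$. Each $\ell\in\Pi_b(2L)$ is determined by its lower $L+1$ digits. For a parameter $1\le k\le L$, write
\[
\ell \;=\; u+b^{2L+1-k}\rho_k(u)+b^k w,
\]
where $u$ runs over the $\asymp b^k$ admissible lower blocks with $(u,b)=1$, $\rho_k$ reverses $k$ digits as in Lemma~\ref{lem:Reversal-type function}, and $w$ runs over the palindromes of length $2L+1-2k$. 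There are $\asymp \sqrt{x}/b^{k}$ such $w$.

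\textbf{Step 1 (trivial truncation).} We drop the palindromic constraint on $w$, so that $w$ ranges over an interval of length $\asymp x/b^{2k}$. Since $n$ is coprime to $b$, the condition $n^2\mid\ell$ fixes $w$ modulo $n^2$. Summing over $u$ and $n\sim N$ gives the bound
\[
b^kN\left(\frac{x}{b^{2k}N^2}+1\right)=\frac{x}{b^kN}+b^kN .
\]
On its own this only recovers a bound of size $\sqrt{x}$. It does, however, serve as the base estimate in the range where the square divisor is very large.

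\textbf{Step 2 (digital $A$-process).} To go beyond Step 1 we count pairs. For fixed $n$ and $u$, let $S_n(u)$ be the number of admissible $w$ with $n^2\mid\ell$. By Cauchy--Schwarz,
\[
\sum_u S_n(u)\le b^{k/2}\Big(\sum_u S_n(u)^2\Big)^{1/2}.
\]
Two palindromes sharing the outer block $u$ differ by $b^k(w-w')$, so $n^2\mid w-w'$. We then split $w$ once more, at a second level $j$: its own outer $j$ digits and its inner part. The inner parts contribute a free interval whose divisibility by $n^2$ is counted as in Step 1. The outer $j$ digits of $w$ and $w'$ are palindromically linked, and this is where the structure is used. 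Concretely, the off-diagonal pairs with $w\neq w'$ should be counted by $\ll x^{o(1)}\big(\sqrt{x}\,b^{-k}\big)\big(x b^{-2k}N^{-2}b^{-j}+b^{j}\big)$, while the diagonal contributes $\sqrt{x}\,b^{-k}$. Summing over $n\sim N$ and optimizing $b^k$ and $b^j$ as powers of $x$ and $N$ should produce $x^{1/2}\cdot x^{3/10+o(1)}N^{-4/5}$.

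\textbf{Step 3 (bookkeeping).} The remaining work is to check that the divisor-type losses are $x^{o(1)}$ and that the choices of $k$ and $j$ are admissible integers in $[1,L]$. In the ranges where an optimal choice is not admissible, we fall back on the trivial bound, which is at least as strong there.

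The main obstacle is Step 2. The difference $w-w'$ of two palindromes is not itself palindromic because of carries, so the number of representations of a given multiple of $n^2$ as such a difference is not obviously bounded. My first attempt is to control carries by fixing the outer $j$ digits of both $w$ and $w'$. Once these are fixed, the remaining difference lies in an arithmetic progression of modulus $n^2$, with at most $O(1)$ carry patterns at the boundary, and this is the point where the exponent pair $(3/10,4/5)$ should emerge from the optimization.
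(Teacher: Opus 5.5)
\textbf{There is a genuine gap in Step 2: the pairing you set up carries no information.}

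Two admissible $w,w'$ with the same outer block $u$ satisfy $n^2\mid w-w'$ automatically, because both already lie in the single residue class modulo $n^2$ fixed by $u$. The off-diagonal count you propose, $xb^{-2k-j}N^{-2}+b^j$ per $w$, is therefore just your Step~1 residue-class count at level $k+j$: fix $j$ outer digits of $w'$ and free the inner part.

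Any bound $\#\{w' : w'\equiv w \ (n^2)\}\le C$ gives $\sum_u S_n(u)^2\le C\sum_u S_n(u)$. Cauchy--Schwarz then returns only $\sum_u S_n(u)\le b^kC$, which is what you get by summing $S_n(u)\le C$ directly. With your $C$, this gives $\sum_{n\sim N}\sum_u S_n(u)\ll xb^{-(k+j)}N^{-1}+Nb^{k+j}$, which is $\gg\sqrt{x}$ for every choice of $k,j$. That is never better than the trivial bound. Inserting your counts literally, with $\sqrt{x}b^{-k}$ for the number of $w$, is worse still: it gives $\asymp\sqrt{xN}$ at best.

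So the exponents $(3/10,4/5)$ cannot come out of this scheme. The ``should be counted by'' and ``should produce'' steps are exactly where a new input is required. The underlying problem is not carries. Two palindromes in the same class modulo $b^kn^2$ can differ by $\asymp x$, so their digits are simply unrelated.

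\textbf{The missing idea is to pair palindromes that are \emph{close}.} The paper does this as follows.
\begin{itemize}
\item It writes $\ell=mn^2$ with $m\asymp M=x/N^2$, applies Cauchy--Schwarz over $m$, and then applies van der Corput's inequality (Lemma~\ref{lem: van der Corput}) to the $n$-sum with shifts $|h|\le H\ll Nx^{-\delta}$, keeping $m$ fixed.
\item The paired palindromes $mn^2$ and $m(n+h)^2$ then differ only by $m(2nh+h^2)\ll xH/N\le x^{1-\delta}$.
\item \emph{Carry case.} If a carry propagates beyond a level $\lambda$, the digits of $mn^2$ in positions $(\log_b(3MNH),\lambda]$ must all equal $b-1$ (or $0$ when $h<0$). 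This is rare among palindromes and contributes $\ll x^{3/2+o(1)}H^2/(Nb^\lambda)$.
\item \emph{No-carry case.} Otherwise all digits above $\lambda$ agree. Palindromic symmetry then forces the lowest $2L-\lambda$ digits to agree, so $b^{2L-\lambda}\mid 2nh+h^2$ since $(m,b)=1$. This is counted with Lemma~\ref{lem: cong bound} and Lemma~\ref{lem: Banks-Shparlinski}.
\item Choosing $b^\lambda\asymp x\sqrt{H/N}$ gives an off-diagonal count $\ll (x^{1+o(1)}H^3/N)^{1/2}$. The diagonal $h=0$ returns the count $\mathscr{T}$ itself.
\item Taking $H=\mathscr{T}^{2/3}N^{1/3}x^{-1/3}$ yields $\mathscr{T}\ll x^{2/3+o(1)}\mathscr{T}^{1/6}N^{-2/3}$, that is, $\mathscr{T}\ll x^{4/5+o(1)}N^{-4/5}$.
\end{itemize}
What makes the digit rigidity usable is shifting the square-root divisor $n$ while holding the cofactor $m$ fixed. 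Moving within a residue class, as you do, leaves the two palindromes far apart and gives nothing.
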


We need the following three lemmas.

\begin{lem}[Banks-Shparlinski \cite{Banks-Shparlinski}, Theorem 7]\label{lem: Banks-Shparlinski}
	For any positive integers $L,q$, 
	$$
	\sum_{\substack{\ell \in \Pi_b(L) \\ q \mid \ell}}1 \ll \dfrac{\#\Pi_b(L)}{\sqrt{q}}.
	$$
\end{lem}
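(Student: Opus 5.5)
The plan is to detect the divisibility condition with additive characters modulo $q$ and to bound the resulting complete sum over frequencies by Cauchy--Schwarz together with the classical large sieve (the single-modulus analogue of the inequality (\ref{L^2 version}) in Lemma \ref{lem:Large sieve with square moduli}). No digital structure of palindromes is needed beyond $|\Pi_b(L)|\asymp_b b^{L/2}$ and $\Pi_b(L)\subset[b^L,b^{L+1})$.

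\textbf{Step 1 (orthogonality and the zero frequency).} By orthogonality,
$$
\sum_{\substack{\ell\in\Pi_b(L)\\ q\mid \ell}}1=\frac{1}{q}\sum_{h(q)}\ \sum_{\ell\in\Pi_b(L)}e_q(h\ell).
$$
The frequency $h\equiv 0\ (q)$ contributes exactly $|\Pi_b(L)|/q$, which is at most $|\Pi_b(L)|/\sqrt{q}$. So it is enough to bound $\frac1q\sum_{h(q)}|S(h)|$, where $S(h)=\sum_{\ell\in\Pi_b(L)}e_q(h\ell)$; it does no harm to keep $h=0$ inside this sum.

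\textbf{Step 2 (Cauchy--Schwarz and the large sieve).} By Cauchy--Schwarz,
$$
\frac1q\sum_{h(q)}|S(h)|\le \frac{1}{\sqrt q}\Bigl(\sum_{h(q)}|S(h)|^2\Bigr)^{1/2}.
$$
The sequence $a_n=\mathbf 1_{n\in\Pi_b(L)}$ is supported on an interval of length at most $b^{L+1}$. The classical large sieve at the $q$ equally spaced points $h/q$ (spacing $1/q$) therefore gives
$$
\sum_{h(q)}|S(h)|^2\le \bigl(q+b^{L+1}\bigr)\,|\Pi_b(L)|.
$$
(Equivalently, expand the square and count pairs $\ell\equiv\ell'\ (q)$ inside an interval of length $b^{L+1}$.)

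\textbf{Step 3 (the two ranges of $q$).} If $q\le b^{L+1}$, Steps 1--2 give a count
$$
\ll \frac{1}{\sqrt q}\bigl(b^{L+1}|\Pi_b(L)|\bigr)^{1/2}\ll_b \frac{|\Pi_b(L)|}{\sqrt q},
$$
since $b^{L/2}\asymp_b|\Pi_b(L)|$ gives $b^{L+1}|\Pi_b(L)|\asymp_b|\Pi_b(L)|^2$. If $q>b^{L+1}$, no element of $\Pi_b(L)$ (all positive and less than $b^{L+1}$) is divisible by $q$, so the count is $0$ and the bound is trivial.

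There is no real obstacle here. The only point to check is that the large sieve loss $q+b^{L+1}$ is dominated by $b^{L+1}\asymp|\Pi_b(L)|^2$ in the nontrivial range, and this is exactly what makes $|\Pi_b(L)|/\sqrt q$ the natural bound. Consistent with this, the bound has no coprimality requirement on $q$ and $b$, and none is used in the argument.
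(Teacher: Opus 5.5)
Your argument breaks at Step 3, and the problem is structural, not a slip. Since $|\Pi_b(L)|\asymp_b b^{L/2}$, it is $b^{L+1}$ that is $\asymp_b|\Pi_b(L)|^2$. Hence $b^{L+1}|\Pi_b(L)|\asymp_b|\Pi_b(L)|^3$, and Steps 1--2 only give $|\Pi_b(L)|^{3/2}/\sqrt q$. That is worse than the trivial bound whenever $q\le|\Pi_b(L)|$.

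No sharper input can rescue this route. Step 2 is the identity $\frac1q\sum_{h(q)}|S(h)|^2=\sum_{c(q)}A_c^2$ with $A_c=\#\{\ell\in\Pi_b(L):\ell\equiv c\ (\mathrm{mod}\ q)\}$. Since $\sum_c A_c^2\ge\sum_c A_c=|\Pi_b(L)|$, orthogonality plus Cauchy--Schwarz over all $h$ can never go below $|\Pi_b(L)|^{1/2}$. The target $|\Pi_b(L)|/\sqrt q$ is smaller than that once $q$ is much larger than $|\Pi_b(L)|\asymp b^{L/2}$. This is exactly the regime where the paper uses the lemma: in the appendix, $q=n^2$ with $n\gg x^{3/8}$, while $|\Pi_b(2L)|\asymp\sqrt x$.

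Your remark that only the size and support of $\Pi_b(L)$ matter is also false. For $q\le b^{L/2}$, take a set of $\asymp b^{L/2}$ multiples of $q$ in $[b^L,b^{L+1})$. It has the same size and support as $\Pi_b(L)$, yet every element is divisible by $q$.

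The missing ingredient is the palindromic digit structure. The paper gives no proof here; it quotes Theorem 7 of Banks--Shparlinski. One elementary argument that does use the structure goes as follows. Fix $0\le r\le L+1$ and let $t=\lfloor \ell/b^{L+1-r}\rfloor$ be the top $r$ digits of $\ell\in\Pi_b(L)$. Because $\ell$ is a palindrome, $\ell\equiv t'\ (\mathrm{mod}\ b^r)$, where $t'$ is $t$ with its $r$ digits reversed.

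For fixed $t$, $\ell$ lies in an interval of length $b^{L+1-r}$. The conditions $q\mid \ell$ and $\ell\equiv t'\ (\mathrm{mod}\ b^r)$ are compatible only if $g=(q,b^r)$ divides $t'$. Reversal permutes $r$-digit strings, so this holds for at most $b^r/g$ values of $t$. When compatible, the two conditions define a single class modulo $qb^r/g$. Hence
\[
\#\{\ell\in\Pi_b(L):q\mid \ell\}\le \frac{b^r}{g}\Bigl(\frac{g\,b^{L+1-r}}{q\,b^r}+1\Bigr)\le \frac{b^{L+1-r}}{q}+b^r .
\]
If $q\le b^{L+1}$, choose $r$ with $b^r\le (b^{L+1}/q)^{1/2}<b^{r+1}$. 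This gives $\le (b+1)(b^{L+1}/q)^{1/2}\ll_b|\Pi_b(L)|/\sqrt q$. If $q>b^{L+1}$, the count is $0$.
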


\begin{lem}[van der Corput]\label{lem: van der Corput}
	Let $z_1, \ldots, z_N$ be complex numbers. Then for any integer $H \geq 1$, 
	$$
	\left|\sum_{1 \leq n \leq N} z_n\right|^2 \leq \dfrac{N+H-1}{H} \sum_{|h| < H} \left(1 - \dfrac{|h|}{H} \right) \sum_{\substack{1 \leq n\leq  N \\ 1 \leq n+h \leq N }} z_{n+h}\overline{z_n}. 
	$$
\end{lem}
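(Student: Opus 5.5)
We will prove the van der Corput inequality in Lemma \ref{lem: van der Corput} by the classical shifting argument: average the sum over $H$ shifts, apply Cauchy--Schwarz once, and expand the square. No earlier result of the paper is needed.

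\emph{Step 1 (averaging over shifts).} Extend the sequence by setting $z_n = 0$ for $n \notin [1,N]$, and put $S = \sum_{1 \leq n \leq N} z_n$. For each fixed $0 \leq j < H$, the shifted sum $\sum_{m} z_{m-j}$ again equals $S$. Summing over $j$, and noting that $z_{m-j}$ vanishes unless $1 \leq m \leq N+H-1$, we get
$$
H S = \sum_{1 \leq m \leq N+H-1} \ \sum_{0 \leq j < H} z_{m-j}.
$$

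\emph{Step 2 (Cauchy--Schwarz over $m$).} The outer sum has $N+H-1$ terms, so
$$
H^2 |S|^2 \leq (N+H-1) \sum_{m} \Bigl| \sum_{0 \leq j < H} z_{m-j} \Bigr|^2 .
$$
Here the sum over $m$ may be taken over all integers, since the extra terms vanish.

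\emph{Step 3 (expanding the square).} Expanding gives
$$
\sum_{0 \leq j,k < H} \ \sum_{m} z_{m-j}\overline{z_{m-k}}.
$$
Substitute $n = m-k$ and $h = k-j$. The inner sum becomes $\sum_n z_{n+h}\overline{z_n}$, which depends only on $h$. For each $h$ with $|h| < H$, the number of pairs $(j,k)$ in $[0,H)^2$ with $k-j = h$ is $H-|h|$. Because of the zero extension, the $n$-sum is automatically restricted to $1 \leq n \leq N$ and $1 \leq n+h \leq N$. Hence
$$
\sum_m \Bigl|\sum_{0\leq j<H} z_{m-j}\Bigr|^2 = \sum_{|h|<H} (H-|h|) \sum_{\substack{1 \leq n \leq N \\ 1 \leq n+h \leq N}} z_{n+h}\overline{z_n}.
$$

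\emph{Step 4 (conclusion).} Dividing by $H^2$ and writing $(H-|h|)/H^2 = H^{-1}(1-|h|/H)$ gives exactly the claimed inequality. The right-hand side is real and nonnegative, since it equals a sum of squared moduli.

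The only point needing care is the bookkeeping in Step 3: counting the pairs $(j,k)$ with a given difference, and checking that the zero extension reproduces precisely the stated range of $n$. Both are routine, so there is no real obstacle.
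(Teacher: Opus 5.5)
Your argument is correct: the shift-averaging identity $HS=\sum_{1\le m\le N+H-1}\sum_{0\le j<H}z_{m-j}$, Cauchy--Schwarz over the $N+H-1$ values of $m$, and the count of $H-|h|$ pairs $(j,k)$ with $k-j=h$ give exactly the stated constant $(N+H-1)/H$. The paper does not prove the lemma itself but cites Lemma 8.17 of Iwaniec--Kowalski, which is proved by this same standard argument, so your proposal takes essentially the same approach.
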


\begin{proof}
	See for instance Lemma 8.17 of Iwaniec-Kowalski \cite{Iwaniec-Kowalski}.	
\end{proof}

In the following, $\tau(q) = \sum_{d \mid q}1$ is the divisor function.  

\begin{lem}\label{lem: cong bound}
	Let $M,N,q $ be positive integers. Then
	$$
	\sum_{m \leq M } \max_{a \in \mathbb{Z}}\sum_{\substack{n \leq N \\ mn \equiv a (q)}}1 \leq \dfrac{MN}{q}\tau(q) + M\tau(q).
	$$
\end{lem}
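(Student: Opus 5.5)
The statement to be proved is Lemma \ref{lem: cong bound}: for positive integers $M,N,q$,
$$
\sum_{m \leq M } \max_{a \in \mathbb{Z}}\sum_{\substack{n \leq N \\ mn \equiv a (q)}}1 \leq \dfrac{MN}{q}\tau(q) + M\tau(q).
$$
The plan is to bound each inner maximum separately in terms of the gcd $g=(m,q)$, and then sum over $m$ with Lemma \ref{lem:sums of GCDs}.

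Fix $m$ and $a$, and write $g=(m,q)$. The congruence $mn\equiv a \ (q)$ has no solutions unless $g\mid a$. When $g \mid a$ it is equivalent to
$$
(m/g)\,n\equiv a/g \pmod{q/g}.
$$
Since $(m/g,q/g)=1$, this pins $n$ down to a single residue class modulo $q/g$. An interval of length $N$ meets a fixed residue class modulo $q/g$ at most $N/(q/g)+1$ times. This gives, uniformly in $a$,
$$
\max_{a}\sum_{\substack{n\le N\\ mn\equiv a (q)}}1 \le \frac{N(m,q)}{q}+1 .
$$

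Now sum over $m\le M$. Lemma \ref{lem:sums of GCDs} gives $\sum_{m\le M}(m,q)\le M\tau(q)$, so the total is at most $\frac{MN}{q}\tau(q)+M$. This is already stronger than claimed, because $M\le M\tau(q)$.

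There is no genuine obstacle here. The only point requiring care is the uniformity in $a$: the count for each residue class depends only on the modulus $q/g$ and not on $a$. That is exactly why the maximum over $a$ costs nothing beyond the single-class bound above.
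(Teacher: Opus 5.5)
Your proof is correct and is essentially the paper's argument: both reduce $mn\equiv a \ (q)$ to a single residue class for $n$ modulo $q/(m,q)$, which gives at most $N(m,q)/q+1$ solutions uniformly in $a$. The paper groups $m$ according to $d=q/(m,q)$ and bounds the number of such $m$ by $dM/q$, whereas you sum $(m,q)$ directly via Lemma \ref{lem:sums of GCDs}, which incidentally gives the slightly sharper second term $M$ in place of $M\tau(q)$.
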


\begin{proof}
	Splitting the $m$-sum according to the GCD of $m,q$ and substituting variables, the left hand side equals
	$$
	\sum_{\substack{d\mid q \\ m \leq dM/q \\ (m,d)=1}}\max_{a \in \mathbb{Z}} \sum_{\substack{n \leq N \\ n \equiv a(d)}}1 .
	$$ 
	The inner sum is at most $N/d + 1$ 
	and the result follows.
\end{proof}

\begin{proof}[{Proof of Proposition \ref{prop: tail bound}}]
	We may assume $x^{3/8} \ll N \ll \sqrt{x}$ as otherwise the statement is trivial. Note 
	\begin{equation}\label{eqn: def of T}
	\mathscr{T} := \sum_{n \sim N}\sum_{\substack{\ell \in \Pi_b(2L) \\ (\ell,b)=1\\ n^2 \mid \ell }}1 \ll \sup_{M \asymp x/N^2} S(L,M,N),
	\end{equation}
	where
	$$
	S(L,M,N) = \sum_{\substack{m \sim M \\ n \sim N \\ mn^2 \in \Pi_b(2L)\\ (mn,b)=1}}1.
	$$
	
	By the Cauchy-Schwarz inequality, 
	$$
	S^2(L,M,N) \leq  M \sum_{\substack{m \sim M \\ (m,b)=1}}\left(\sum_{\substack{n \sim N \\ (n,b)=1\\mn^2 \in \Pi_b(2L)}}1 \right)^2.
	$$
	Fixing a $\delta > 0$ sufficiently small and
	applying Lemma \ref{lem: van der Corput}, we obtain
	$$
	S^2(L,M,N) \ll \dfrac{MN }{H} \sum_{\substack{m \sim M \\ |h| \leq H \\ n, n + h \sim N \\ (mn,b)=1\\mn^2, mn^2 + 2mnh + mh^2 \in \Pi_b(2L) }} 1
	$$
	for any $1 \ll H \ll  Nx^{-\delta}$. The term with $h=0$ contributes
	$
	\ll MN \mathscr{T}/H  
	$
	to $S^2(L,M,N)$. Thus
	\begin{equation}\label{eqn: bound for S^2}
	S^2(L,M,N) \ll \dfrac{MN \mathscr{T}}{H}   + \dfrac{MN \mathscr{S}}{H},
	\end{equation}
	where
	$$
	\mathscr{S} = \sum_{\substack{m \sim M \\ 1 \leq |h| \leq H \\ n, n + h \sim N \\ (mn,b)=1\\mn^2, mn^2 + 2mnh + mh^2 \in \Pi_b(2L) }} 1.
	$$
	
	Let us first consider the contribution of the terms with $h > 0$; that is, consider
	$$
	\mathscr{S}_+ = \sum_{\substack{m \sim M \\ 1 \leq h \leq H \\ n, n + h \sim N \\ (mn,b)=1 \\ mn^2, mn^2 + 2mnh + mh^2 \in \Pi_b(2L) }} 1.
	$$
	Note that for any $h,m,n$ as above with $H \ll N x^{-\delta}$, we have $2mnh + mh^2 \leq 3MNH \asymp xH/N \ll x^{1-\delta}$.
	For an integer $\lambda$ satisfying
	\begin{equation}\label{eqn: assumptions on lambda}
	\max\left(L,  \log_b(3MNH)\right) < \lambda < 2L,
	\end{equation}
	define the set
	$$
	\mathcal{G}_\lambda = \left\{ (k,\ell) \in \mathbb{N}^2 \ : \ \exists j > \lambda \ \text{s.t.} \ d_j(k ) \neq d_j(\ell)\right\}. 
	$$
	We will choose $\lambda$ later.
	
	Let us now split $\mathscr{S}_+$ as $\mathscr{S}_+ = \mathscr{S}_{+1} + \mathscr{S}_{+2}$,
	where
	\begin{align*}
	\mathscr{S}_{+1} &= \sum_{\substack{m \sim M \\ 1 \leq h \leq H \\ n, n + h \sim N \\ (mn,b)=1\\ (mn^2, mn^2 + 2mnh + mh^2) \in \Pi_b(2L)^2 \cap \mathcal{G}_\lambda}} 1,\\
	\mathscr{S}_{+2} &= \sum_{\substack{m \sim M \\ 1 \leq h \leq H \\ n, n + h \sim N \\ (mn,b)=1\\(mn^2, mn^2 + 2mnh + mh^2) \in \Pi_b(2L)^2 \setminus \mathcal{G}_\lambda}} 1.
	\end{align*}
	When we compute the sum $mn^2 + 2mnh + mh^2$, the $b$-adic digits of $mn^2$ with index $j > \lambda$ are unchanged unless there occurs a carry propagation. In order for the latter to happen, we must have $d_j(mn^2) = b-1$ for each $\log_b(3MNH) < j \leq \lambda$.
	Consequently,
	\begin{align*}
	\mathscr{S}_{+1} &\leq H \sum_{\substack{\ell \in \Pi_b(2L) \\  n\sim N  \\  d_j(\ell) = b-1 \ \forall \  \log_b(3MNH) < j \leq \lambda   \\   n^2 \mid \ell \\}}1.
	\end{align*}
	Note $MNH \gg MN \asymp x/N \gg x^{1/2} \asymp b^L$ by assumption. Then 
	the number of $b$-palindromes in $\Pi_b(2L)$, with such prescribed digits, is $\ll b^{L-(\lambda - \log_b(MNH))} \asymp \sqrt{x}MNH  b^{-\lambda}$. Any such has at most $x^{o(1)}$ square divisors. Thus, since $MN^2 \asymp x$,
	\begin{equation}\label{bound for S+1}
	\mathscr{S}_{+1} \ll \dfrac{x^{3/2 + o(1)}H^2 }{N b^{\lambda}}. 
	\end{equation}
	
	Consider now $\mathscr{S}_{+2}$. Let $h,m,n$, obeying the size restrictions above with $(m,b)=1$, be such that 
	$(mn^2, mn^2 + 2mnh + mh^2) \in \Pi_b(2L )^2 \setminus \mathscr{G}_\lambda$. Since the pair is not in $\mathscr{G}_\lambda$, we must have $d_j(mn^2 + 2mnh + mh^2) = d_j(mn^2)$ for each $j > \lambda$. Since both $mn^2, mn^2 + 2mnh + mh^2 \in \Pi_b(2L )$, this implies $d_j(mn^2 + 2mnh + mh^2) = d_j(mn^2)$ for each $0 \leq j \leq 2L - \lambda-1$, whence $d_j(2mnh + mh^2) = 0$ for each $0 \leq j \leq 2L-\lambda-1$. Consequently, $2mnh + mh^2$ is divisible by $b^{2L-\lambda}$. Since $(m,b)=1$ by assumption, we may then bound $\mathscr{S}_{+2}$ as
	\begin{align}\label{first bound for S+2}
	\mathscr{S}_{+2} 
	&\leq 
	\sum_{1 \leq h \leq H} \sum_{\substack{n\sim N \\ 2hn \equiv -h^2 (b^{2L-\lambda})}}\sum_{\substack{m \sim M \\ (m,b)=1 \\ mn^2 \in \Pi_b(2L)}}1.
	\end{align}
	Lemma \ref{lem: Banks-Shparlinski} gives
	$$
	\sum_{\substack{m \sim M \\ (m,b)=1 \\ mn^2 \in \Pi_b(2L)}}1 \ll \dfrac{\sqrt{x}}{N}
	$$
	for any $n \sim N$. We also have
	\begin{align*}
	\sum_{1 \leq h \leq H } \sum_{\substack{n \sim N \\ 2hn \equiv -h^2 (b^{2L-\lambda})}} 1 &\leq \sum_{1 \leq m \leq 2H } \max_{a \in \mathbb{Z}}\sum_{\substack{n \sim N \\ mn \equiv a (b^{2L-\lambda})}} 1 \\
	&\ll \dfrac{HNb^{\lambda}L^{\omega(b)}}{x} + HL^{\omega(b)}
	\end{align*}
	by Lemma \ref{lem: cong bound},
	where $\omega(b)$ is the number of distinct prime divisors of $b$.
	Then (\ref{first bound for S+2}) yields
	\begin{equation}\label{eqn: bound for S+2}
	\mathscr{S}_{+2} \ll \dfrac{b^{\lambda}HL^{\omega(b)}}{\sqrt{x}} + \dfrac{\sqrt{x}H L^{\omega(b)}}{N}.
	\end{equation}
	Thus by (\ref{bound for S+1}) and (\ref{eqn: bound for S+2}),
	\begin{equation}\label{bound for S+}
	\mathscr{S}_+ \ll \dfrac{x^{3/2 +o(1)} H^2}{Nb^{\lambda}} + \dfrac{b^{\lambda}HL^{\omega(b)}}{\sqrt{x}} + \dfrac{\sqrt{x}H L^{\omega(b)}}{N}.
	\end{equation}
	The bound is minimized when $b^{\lambda} \asymp x \sqrt{H/N}$. We may set $\lambda = \lfloor \log_b x \sqrt{H/N} \rfloor$ since this does not contradict the assumption (\ref{eqn: assumptions on lambda}) on the size of $\lambda$. Indeed, this follows from the assumption $ 1 \ll H \ll Nx^{-\delta}$. With this choice of $\lambda$, (\ref{bound for S+}) becomes
	\begin{equation}\label{bound for S+ part 2}
	\mathscr{S}_+ \ll \sqrt{\dfrac{x^{1+o(1)} H^3}{N}} + \dfrac{\sqrt{x}H L^{\omega(b)}}{N} \ll \sqrt{\dfrac{x^{1+o(1)} H^3}{N}}.
	\end{equation}

	It remains to consider the contribution of the terms with $h < 0$ to $\mathscr{S}$; that is, to consider
	$$
	\mathscr{S}_- = \sum_{\substack{m\sim M \\ 1 \leq h \leq H \\ n,n-h \sim N \\ (mn,b)=1 \\ mn^2, mn^2 - 2mnh + mh^2 \in \Pi_b(2L)}}1.
	$$
	Here similar arguments and bounds apply, the main difference being that the analogue $\mathscr{S}_{-1}$ of $\mathscr{S}_{+1}$ is bounded above by
	$$
	H \sum_{\substack{\ell \in \Pi_b(2L) \\  n\sim N \\  d_j(\ell) = 0 \ \forall \  \log_b(3MNH) < j \leq \lambda  \\ n^2 \mid \ell \\}}1.
	$$
	This is bounded by the same quantity there. In conclusion, 
	$$
	\mathscr{S} \ll \sqrt{\dfrac{x^{1+o(1)} H^3}{N}}.
	$$
	Inserting this in (\ref{eqn: bound for S^2}) gives
	\begin{align*}
	S^2(L,M,N) &\ll \dfrac{MN \mathscr{T}}{H} + M \sqrt{x^{1 + o(1) }NH}\\
	&\asymp 
	\dfrac{x \mathscr{T}}{NH} + \dfrac{x^{3/2 + o(1)} H^{1/2}}{N^{3/2}}
	\end{align*}
	for any $M \asymp x/N^2$, where $\mathscr{T}$ is defined as in (\ref{eqn: def of T}).
	The bound is minimized when $H = \mathscr{T}^{2/3}N^{1/3}x^{-1/3}$, but some care is needed to ensure that this does not contradict our earlier assumption $1 \ll H \ll Nx^{-\delta}$. Note that by Lemma \ref{lem: Banks-Shparlinski}, $\mathscr{T} \ll \sqrt{x}$; hence $\mathscr{T}^{2/3}N^{1/3}x^{-1/3} \ll N^{1/3} \ll Nx^{-\delta}$. The condition $\mathscr{T}^{2/3}N^{1/3}x^{-1/3} \gg 1$ is satisfied as long as $\mathscr{T} \gg \sqrt{x/N}$. One may indeed assume this is the case, as otherwise (\ref{eqn: bound for T}) holds trivially. We may thus set $H = \mathscr{T}^{2/3}N^{1/3}x^{-1/3}$ above. Doing so and recalling (\ref{eqn: def of T}) we obtain
	$$
	\mathscr{T} \ll \dfrac{x^{2/3 + o(1)}\mathscr{T}^{1/6}}{N^{2/3}};
	$$
	hence
	$
	\mathscr{T} \ll x^{4/5 + o(1)}N^{-4/5}. 
	$
	Now the result follows from $\sqrt{x} \asymp b^{L} \asymp \#\Pi_b(2L)$. 
\end{proof}

\end{document}